\documentclass[11pt,a4paper]{amsart}
\usepackage[a4paper,margin=1in]{geometry}
\usepackage{amsmath,amssymb,amsthm,amsfonts,mathtools}
\usepackage{mathrsfs}
\usepackage{bm}
\usepackage{graphicx}
\usepackage{hyperref}
\usepackage{enumitem}
\usepackage{cite}
\usepackage{color}
\usepackage{caption}

\hypersetup{
	colorlinks=true,
	linkcolor=blue,
	citecolor=blue,
	urlcolor=blue
}

\newtheorem{theorem}{Theorem}[section]
\newtheorem{lemma}[theorem]{Lemma}
\newtheorem{proposition}[theorem]{Proposition}
\newtheorem{corollary}[theorem]{Corollary}

\newtheorem{remark}[theorem]{Remark}

\title[Theodorsen equation and Elliptic PDEs]{Conformal Parametrisation of Star-Shaped Domains and Persistence of Semilinear Elliptic Solutions}

\author{Gianni Arioli}
\address{Department of Mathematics\\
Politecnico di Milano\\
piazza Leonardo da Vinci 32, Milano, Italy}
\email{gianni.arioli@polimi.it}
\urladdr{https://arioli.faculty.polimi.it}
\date{}

\begin{document}
	
\begin{abstract}
We study the conformal parametrisation of smooth star-shaped planar
domains and its dependence on perturbations of the boundary. The
boundary correspondence of the normalized Riemann map is described by
the Theodorsen equation, which we formulate in weighted Wiener
algebras. We derive an explicit representation of the inverse of the
linearized Theodorsen operator through a canonical Wiener--Hopf
factorisation and obtain quantitative a posteriori estimates that are
stable under perturbations of the boundary function.

For a collection of nonperturbative reference domains, rigorous
computer-assisted estimates yield explicit neighborhoods in a
weighted Wiener algebra such that every boundary function in these
neighborhoods admits a uniquely determined conformal parametrisation in the validated ball,
with quantitative control of the corresponding Riemann map. Thus the
validation applies to open, infinite-dimensional families of
star-shaped domains.

As an application, we consider the semilinear Dirichlet problem
\[
-\Delta v=v^3\quad\text{in }\Omega,
\qquad
v=0\quad\text{on }\partial\Omega.
\]
Pulling the equation back by the validated conformal maps yields a
problem on the unit disk. Uniform
a posteriori estimates then imply persistence of the elliptic
solutions throughout explicit neighborhoods of the reference
domains.
\end{abstract}

	\maketitle

\begin{center}
	\emph{In memory of Hans Koch,\\
		a valued friend and collaborator for over twenty years,\\
		from whose insight and experience I learned much.}
\end{center}
\bigskip

	\tableofcontents

\section{Introduction}

We study the conformal parametrisation of smooth star-shaped planar
domains and its application to the semilinear Dirichlet problem
\begin{equation}\label{eq:elliptic-intro}
	\begin{cases}
		-\Delta v=v^3,&\text{in }\Omega,\\
		v=0,&\text{on }\partial\Omega.
	\end{cases}
\end{equation}
The domains under consideration are bounded by Jordan curves admitting
a polar parametrisation
\begin{equation}\label{eq:boundary-intro}
	\gamma(t)=r(t)(\cos t,\sin t),
	\qquad t\in[0,2\pi],
\end{equation}
where $r$ is positive and belongs to a weighted Wiener algebra.

Our starting point is the Theodorsen equation \cite{Theodorsen}, which characterizes the
boundary correspondence of the normalized Riemann map from the unit
disk onto $\Omega$. More precisely, if
\[
\theta(t)=t+u(t)
\]
denotes the change of angular variable induced by the conformal map,
then $u$ satisfies
\begin{equation}\label{eq:theodorsen-intro}
	u(t)
	=
	\mathcal H\bigl(\log r(t+u(t))\bigr),
\end{equation}
where $\mathcal H$ denotes the periodic Hilbert transform. Once a
solution of \eqref{eq:theodorsen-intro} has been obtained, the boundary
values of the Riemann map are given by
\begin{equation}\label{eq:boundary-riemann-intro}
	f(e^{it})
	=
	r(t+u(t))e^{i(t+u(t))}.
\end{equation}

The principal purpose of this paper is to develop a quantitative theory
for \eqref{eq:theodorsen-intro} which is stable under perturbations of
the boundary function $r$. We work in weighted Wiener algebras and
derive an explicit inverse for the linearized Theodorsen operator by
means of a Wiener--Hopf factorisation. This yields quantitative
a posteriori conditions which imply not only existence and uniqueness
of the boundary correspondence for a given domain, but also its
persistence for all boundary functions in an explicitly determined
neighborhood of that domain.

More precisely, let $1<\rho<\sigma$ and consider
\[
F(r,u)
=
u-\mathcal H\bigl(\log r(\,\cdot+u(\cdot)\,)\bigr).
\]
The derivative with respect to $u$ is
\[
D_uF(r,u)
=
I-\mathcal H M_{\psi_{r,u}},\qquad
\psi_{r,u}=\frac{r'(\,\cdot+u(\cdot)\,)}{r(\,\cdot+u(\cdot)\,)},
\]
where $M_{\psi_{r,u}}$ denotes the multiplication operator by $\psi_{r,u}$.
The loss from the weight $\sigma$ to the smaller weight $\rho$ is
essential in controlling composition and differentiation. In
particular, if $\|u\|_\rho\leq c$ and
\[
\rho e^c<\sigma,
\]
then composition by the boundary correspondence defines a bounded map
from $\mathcal A_\sigma$ to $\mathcal A_\rho$. This permits quantitative
control of both
\[
r(\,\cdot+u(\cdot)\,)
\quad\text{and}\quad
\frac{r'}{r}(\,\cdot+u(\cdot)\,)
\]
under perturbations of $r$ in $\mathcal A_\sigma$.

A central analytical ingredient is the inversion of operators of the
form
\[
\Phi=I+\mathcal H M_a.
\]
Introducing
\[
A_+=1-ia,
\qquad
A_-=1+ia,
\qquad
W=\frac{A_-}{A_+},
\]
we show that, whenever $W$ admits a canonical Wiener--Hopf
factorisation and a scalar nondegeneracy condition is satisfied,
$\Phi$ possesses an explicit inverse. The resulting formula provides
quantitative operator bounds and forms the basis of the stability
analysis for the Theodorsen equation.

The abstract estimates are then applied to several strongly
noncircular star-shaped domains. For each reference boundary function
$r_0$ considered in the paper, all quantities entering the a posteriori
estimates are evaluated with interval arithmetic. As a consequence, we
obtain an explicit number $\delta_r>0$ such that the Theodorsen equation
admits a uniquely determined solution for every boundary function
$r\in\mathcal A_\sigma$ satisfying
\[
\|r-r_0\|_\sigma<\delta_r.
\]
Thus the validated examples are not isolated domains: each of them
generates an open, infinite-dimensional family of star-shaped domains
for which the conformal parametrisation is rigorously controlled.

We next use this conformal description to study
\eqref{eq:elliptic-intro}. If
\[
f:\mathbb D\longrightarrow\Omega
\]
is the normalized Riemann map and
\[
w(z)=v(f(z)),
\]
then conformal invariance of the planar Laplacian transforms
\eqref{eq:elliptic-intro} into
\begin{equation}\label{eq:elliptic-disk-intro}
	\begin{cases}
		-\Delta w=qw^3,&\text{in }\mathbb D,\\
		w=0,&\text{on }\partial\mathbb D,
	\end{cases}
	\qquad
	q=|f'|^2.
\end{equation}
The analytic control of $f$ obtained from the Theodorsen equation
therefore yields quantitative control of the coefficient $q$ in a
weighted Zernike algebra.

Using a posteriori estimates for \eqref{eq:elliptic-disk-intro}, we
prove existence of positive and sign-changing solutions on the
reference domains. More importantly, the quantitative dependence of
the conformal map on $r$ allows these existence results to be continued
to explicit neighborhoods of the reference domains. In particular,
for each of the domains considered below, there exists an open set of
boundary functions in $\mathcal A_\sigma$ for which
\eqref{eq:elliptic-intro} possesses a rigorously controlled solution
with the prescribed qualitative properties.

The results therefore establish a chain of quantitative stability
properties
\[
r
\longmapsto
u_r
\longmapsto
f_r
\longmapsto
|f_r'|^2
\longmapsto
v_r,
\]
linking perturbations of the boundary of the domain to the persistence
of solutions of the nonlinear elliptic equation. Computer assistance
enters only in the rigorous evaluation of the explicit inequalities
which determine the admissible neighborhoods; the analytical
implications of these inequalities are established independently.

The paper is organized as follows. In Section~\ref{sec:main-results} we introduce the
weighted Wiener and Zernike spaces and state the main results.
Section~\ref{sec:theo} recalls the relation between the Theodorsen equation and the
Riemann map. Section~\ref{sec:aposteriori} develops the a posteriori formulation of the
Theodorsen equation. In Section~\ref{sec:WH} we derive the Wiener--Hopf
representation of the inverse linearized operator. Section~\ref{sec:stability} combines
these estimates with quantitative composition bounds to establish
stability of the conformal parametrisation under perturbations of the
boundary. The computer-assisted verification for the reference
domains is then described in Section~\ref{sec:verification}. The application to the
semilinear elliptic equation and its persistence under perturbations
of the domain is developed in Section~\ref{sec:elliptic}. The reference domains, the corresponding elliptic solutions, and the coefficients \(q\) are illustrated in Section~\ref{sec:pics}. Technical results concerning
weighted Wiener algebras, Wiener--Hopf factorisation, Zernike
representations, positivity, and the rigorous implementation are
collected in the appendices.

\section{Functional setting and main results}
\label{sec:main-results}

For $\rho>1$, let $\mathcal A_\rho$ be the weighted Wiener algebra of Fourier series defined by
\[
\mathcal A_\rho
=
\left\{
u(t)
=
a_0+\sum_{k\geq1}
\bigl(a_k\cos(kt)+b_k\sin(kt)\bigr):
\|u\|_\rho<\infty
\right\},
\]
where
\[
\|u\|_\rho
=
|a_0|
+
\sum_{k\geq1}
\bigl(|a_k|+|b_k|\bigr)\rho^k.
\]
We also use the weighted Wiener algebra of Laurent series
\[
\mathcal B_\rho
=
\left\{
f(z)=\sum_{k\in\mathbb Z}f_kz^k:
\|f\|_\rho
=
\sum_{k\in\mathbb Z}|f_k|\rho^{|k|}
<\infty
\right\}.
\]
The precise correspondence between $\mathcal A_\rho$ and the
real-symmetric subspace of $\mathcal B_\rho$ is recalled in
Appendix~\ref{sec:appendix-wiener}.
The distinction between two exponential weights will play an
important role. Throughout the stability analysis we fix
\[
1<\rho<\sigma.
\]
Boundary functions are measured in $\mathcal A_\sigma$, whereas the
boundary correspondence is controlled in $\mathcal A_\rho$. This loss
of analyticity permits uniform estimates for nonlinear composition and
differentiation.

Indeed, if $r_1,r_2\in\mathcal A_\sigma$ and
$u\in\mathcal A_\rho$ satisfies $\|u\|_\rho\leq c$, then
\[
\rho e^c\leq\sigma
\]
implies
\begin{equation}\label{eq:composition-main}
	\bigl\|
	r_1(\,\cdot+u(\cdot)\,)
	-
	r_2(\,\cdot+u(\cdot)\,)
	\bigr\|_\rho
	\leq
	\|r_1-r_2\|_\sigma.
\end{equation}
Analogous estimates for the logarithmic derivatives give quantitative
control of
\[
\frac{r_1'}{r_1}(\,\cdot+u(\cdot)\,)
-
\frac{r_2'}{r_2}(\,\cdot+u(\cdot)\,),
\]
provided the boundary functions remain uniformly bounded away from
zero. These estimates are proved in Section~\ref{sec:stability}.

We can now state the first main result. For a positive
$r\in\mathcal A_\sigma$, define
\[
F_r(u)
=
u-\mathcal H\bigl(\log r(\,\cdot+u(\cdot)\,)\bigr).
\]

\begin{theorem}[Stability of the conformal parametrisation]
	\label{thm:conformal-stability}
	Let $r_0\in\mathcal A_\sigma$ be one of the reference boundary
	functions listed in Table~\ref{tab:domains}. There exist explicitly
	computable constants
	\[
	\delta_r>0,\qquad R_u>0,
	\]
	and a function $\bar u\in\mathcal A_\rho$ such that, for every
	$r\in\mathcal A_\sigma$ satisfying
	\[
	\|r-r_0\|_\sigma<\delta_r,
	\]
	the following assertions hold:
	\begin{enumerate}
		\item $r$ is positive and determines a star-shaped Jordan domain
		$\Omega_r$;
		
		\item the Theodorsen equation
		\[
		F_r(u)=0
		\]
		has a unique solution $u_r$ in
		\[
		B_{R_u}(\bar u)\subset\mathcal A_\rho;
		\]
		
		\item the function
		\[
		f_r(e^{it})
		=
		r(t+u_r(t))e^{i(t+u_r(t))}
		\]
		is the boundary trace of the normalized Riemann map
		\[
		f_r:\mathbb D\longrightarrow\Omega_r;
		\]
		
		\item $f_r$ admits a holomorphic continuation beyond the closed unit
		disk and satisfies explicit quantitative bounds in a weighted
		Wiener algebra.
	\end{enumerate}
	In the case where $\sigma=2$ and $\rho=65/64$, the constants $\delta_r$ and $R_u$ are given in Table~\ref{tab:domains}. The explicit coefficients of $\bar u$ can be found in \cite{program}.
\end{theorem}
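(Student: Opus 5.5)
The proof will be a parametrised Newton--Kantorovich (radii polynomial) argument for $F_r$ on $\mathcal A_\rho$, centred at the numerical approximation $\bar u$ and uniform in $r\in B_{\delta_r}(r_0)\subset\mathcal A_\sigma$. We fix an approximate inverse $\mathcal K$ of $D_uF(r_0,\bar u)=I-\mathcal H M_{\psi_{r_0,\bar u}}$, built from the Wiener--Hopf representation of $(I+\mathcal H M_a)^{-1}$ with $a=-\psi_{r_0,\bar u}$. In practice this means a truncated canonical factorisation of $W=A_-/A_+$, together with a check of the scalar nondegeneracy condition. We then consider the Newton map
\[
T_r(u)=u-\mathcal K F_r(u).
\]
The goal is to show that $T_r$ maps $\overline{B_{R_u}(\bar u)}$ into itself and is a contraction there. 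Every $u$ in this ball satisfies $\|u\|_\rho\le\|\bar u\|_\rho+R_u=:c$, and we require $\rho e^c<\sigma$, so the composition estimate \eqref{eq:composition-main} and its logarithmic-derivative analogue hold throughout the ball.

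Positivity in part (1) comes first. Since $\|\cdot\|_\infty\le\|\cdot\|_\sigma$, we have $r\ge\min r_0-\delta_r>0$ once $\delta_r$ is below a validated lower bound for $r_0$. A positive continuous $2\pi$-periodic $r$ gives a Jordan curve that is star-shaped with respect to $0$, so $\Omega_r$ is well defined.

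For the contraction estimates we bound three quantities, all uniformly in $r$.
\begin{itemize}
\item[(a)] A residual bound:
\[
\|\mathcal K F_r(\bar u)\|_\rho\le \|\mathcal K F_{r_0}(\bar u)\|_\rho+\|\mathcal K\|\,\|\log r(\cdot+\bar u)-\log r_0(\cdot+\bar u)\|_\rho .
\]
The first term is evaluated with interval arithmetic. The second is controlled by \eqref{eq:composition-main}, using $\|\mathcal H\|\le1$ and a Lipschitz bound for $\log$ on the Banach algebra, valid since $r$ stays away from zero. This gives a bound $Y_0+C\delta_r$.
\item[(b)] A bound $\|I-\mathcal K D_uF(r_0,\bar u)\|\le Z_0$, obtained from the explicit Wiener--Hopf formula by estimating the truncation and factorisation error.
\item[(c)] A bound on $\|\mathcal K(D_uF(r,u)-D_uF(r_0,\bar u))\|$ of the form $Z_1\delta_r+Z_2R_u$. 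This uses the estimates for $\psi_{r,u}-\psi_{r_0,u}$ and a mean-value bound for $\psi_{r_0,u}-\psi_{r_0,\bar u}$. The latter requires $r_0''/r_0$ and the derivative of composition, which is where the loss $\rho<\sigma$ is spent, via $\sum k\sigma^{-k}\rho^k e^{kc}<\infty$.
\end{itemize}
The contraction mapping theorem then applies once
\[
Y_0+C\delta_r+(Z_0+Z_1\delta_r+Z_2R_u)R_u<R_u,\qquad Z_0+Z_1\delta_r+2Z_2R_u<1 .
\]
These inequalities are checked rigorously for the tabulated values, and they give existence and uniqueness of $u_r$ in the ball, which is part (2).

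For part (3), we use the fact recalled in Section~\ref{sec:theo} that any solution of the Theodorsen equation with $t\mapsto t+u(t)$ strictly increasing produces the boundary trace of the normalised Riemann map. The mechanism is that $\log r(t+u)+iu$ is the trace of $\log(f(z)/z)$, a function holomorphic in $\mathbb D$. Monotonicity follows from $\|u_r'\|_\infty\le\sum k(|a_k|+|b_k|)\le C'\|u_r\|_\rho<1$, where $C'=\sup_k k\rho^{-k}$ and $\|u_r\|_\rho\le\|\bar u\|_\rho+R_u$; this bound is also verified. With monotonicity, the argument principle gives injectivity and hence the Riemann map.

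For part (4), $\log(f_r(z)/z)=\log r(\cdot+u_r)+iu_r$ lies in $\mathcal B_\rho$ with explicit norm. Its nonnegative-frequency part therefore extends holomorphically to $|z|<\rho$, and exponentiating in the algebra yields the explicit bounds for $f_r$.

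The main obstacle is (b) together with the uniform bound on $\|\mathcal K\|$. We need a rigorous canonical factorisation of $W$ with a computable error, and the nondegeneracy constant of the Wiener--Hopf inverse must be controlled. We would try first to compute $\log W$ in $\mathcal B_\rho$ with validated winding number zero, split it into its $\pm$ frequency parts to obtain the factors $e^{P_\pm\log W}$, and bound the truncation tails using the weight.
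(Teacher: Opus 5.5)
Your overall architecture is the paper's: a Newton map with a Wiener--Hopf approximate inverse, residual and derivative bounds made uniform in $r$, and then Proposition~\ref{prop:Theodorsen}. However, two of the concrete inequalities you propose to verify cannot hold for the tabulated nonperturbative domains, so the argument does not close.

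\textbf{Composition.} You require $\rho e^{c}<\sigma$ with $c=\|\bar u\|_\rho+R_u$, which comes from the crude bound $\|e^{iku}\|_\rho\le e^{|k|\|u\|_\rho}$. Since $\|\bar u\|_\rho\ge\sup_t|\bar u(t)|$, and $u=\theta-t$ has oscillation of order one whenever the origin is close to the boundary, $c$ is not small. The analyticity of $\log r_0$ and $r_0'/r_0$ makes this worse. For Cusp, $r_0(z)=1+A\cos(z-\phi_0)$ with $A\approx0.92$ vanishes at $|\operatorname{Im}z|=\operatorname{arccosh}(1/A)\approx0.41$. Hence $r_0^{-1}$, $\log r_0$ and $r_0'/r_0$ are not in $\mathcal A_2$, and your scheme would actually need $\rho e^{c}<e^{0.41}\approx1.5$, i.e.\ $\|\bar u\|_\rho<0.4$. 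That is false. Your remark that the Lipschitz bound for $\log$ holds ``since $r$ stays away from zero'' conflates positivity on $\mathbb R$ with invertibility in the weighted algebra; this is why the paper works with $\tau<\sigma$ and $r_0^{-1}\in\mathcal A_\tau$.

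The missing idea is Proposition~\ref{prop:composition-strip}. Write $u=\bar u+h$ and $e^{iku}=e^{ik\bar u}e^{ikh}$. Then bound the explicitly known factor $e^{ik\bar u}$ not by its Wiener norm but by its supremum $e^{|k|\bar d}$ on a thin strip $S_{\widehat\rho}$. Here $\bar d=\sup_{S_{\widehat\rho}}|\operatorname{Im}\bar u|$ is small because $\bar u$ is real on $\mathbb R$. Lemma~\ref{lem:strip-to-wiener} converts this into a $\mathcal B_\rho$ bound at the price $C_{\rm strip}$. The admissibility condition then becomes $\rho e^{\bar d+R}<\tau$, which involves only $\operatorname{Im}\bar u$ near the real axis and the tiny radius $R$.

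\textbf{Monotonicity.} You require $\|u_r'\|_\infty\le C'\|u_r\|_\rho<1$ with $C'=\sup_k k\rho^{-k}\approx 1/(e\log(65/64))\approx 24$. This means $\|u_r\|_\rho<0.042$, which fails already for Six Star, where $\bar u\approx0.1\sin 6t$ to leading order. The target $|u_r'|<1$ is itself false for strongly noncircular domains. For instance, for a disk whose centre lies at relative distance $c$ from the origin, $\theta'=1+u'$ ranges over $[1/(1+c),1/(1-c)]$. Only $u_r'>-1$ is needed. The paper checks $\min_t\bar u'(t)-C_\rho^{\rm der}R>-1$, computing $\bar u'$ exactly and bounding only the tiny remainder crudely.

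\textbf{Smaller points.}
\begin{enumerate}
\item You never show that $\mathcal K$ is injective, which is needed to pass from $T_r(u)=u$ to $F_r(u)=0$. The paper verifies both $\|I-L_K\Phi_0\|<1$ and $\|I-\Phi_0L_K\|<1$.
\item The step you single out as the main obstacle is unnecessary. $L_K$ need not be a validated Wiener--Hopf inverse: the paper bounds the banded residual $I-L_K\Phi_K$ directly by weighted column sums, using that its columns are translation invariant beyond the bandwidth. It then adds $\Lambda_{\mathcal B}\eta_a$.
\end{enumerate}
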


The significance of Theorem~\ref{thm:conformal-stability} is that its
conclusion concerns an open subset of the infinite-dimensional space
$\mathcal A_\sigma$. The role of the computer-assisted estimates is to
identify reference points $r_0$ and to produce
explicit lower bounds for the radii $\delta_r$; the persistence result
itself follows from the quantitative estimates developed in
Sections~\ref{sec:aposteriori}--\ref{sec:stability}.

The proof is based on the structure of the linearized operator
\[
D F_r(u)
=
I-\mathcal H M_{\psi_{r,u}},
\qquad
\psi_{r,u}
=
\frac{r'(\,\cdot+u(\cdot)\,)}
{r(\,\cdot+u(\cdot)\,)}.
\]
The Wiener--Hopf analysis of Section~\ref{sec:WH} gives an explicit inverse at the
reference configuration, while
\eqref{eq:composition-main} and the corresponding logarithmic
derivative estimate control its variation with respect to $r$ and
$u$.

The reference boundary functions are of the form
\[
r_0(t)
=
1+\sum_{k=1}^{N}
\left(a_k\cos(kt)+b_k\sin(kt)\right),
\]
where the coefficients are specified in Table~\ref{tab:domains}.  The solutions to \eqref{eq:elliptic-intro} are positive, except for those marked with an asterisk, which take both signs.

\begin{table}
	\centering\small
	\begin{tabular}{lllrcc}
		Solution & $\{A_1,\ldots,A_N\}$ & $\{B_1,\ldots,B_N\}$ & $d_r$ & $r_u$ & $d_\mathrm{PDE}$\\
		\hline
		Triblob
		& $\{51,-100,55,80\}$
		& $\{0,50,-100,-90\}$
		& 24&16&48\\
		
		Triblob$_*$
		& $\{51,-100,55,80\}$
		& $\{0,50,-100,-90\}$
		& 24&16&58\\
		
		Tripuff
		& $\{51,-1,55,80,1\}$
		& $\{50,-10,-90,20,12\}$
		& 23&16&44\\
		
		Cusp
		& $\{500\}$
		& $\{800\}$
		& 35&23&61\\
		
		Eight
		& $\{0,200\}$
		& $\{200,-500\}$
		& 30&20&61\\
		
		Eight$_*$
		& $\{0,200\}$
		& $\{200,-500\}$
		& 30&20&64\\
		
		Five Star
		& $\{0,0,0,0,0\}$
		& $\{0,0,0,0,180\}$
		& 23&16&46\\
		
		Five Star$_*$
		& $\{0,0,0,0,0\}$
		& $\{0,0,0,0,120\}$
		& 22&16&53\\
		
		Mushroom
		& $\{100,0,0,200\}$
		& $\{300,0,0,0\}$
		& 27&18&55\\
		
		Shamrock
		& $\{0,0,300\}$
		& $\{0,0,0\}$
		& 23&16&45\\
		
		Six Star
		& $\{0,0,0,0,0,100\}$
		& $\{0,0,0,0,0,0\}$
		& 22&16&39\\
		
		Six Star$_*$
		& $\{0,0,0,0,0,100\}$
		& $\{0,0,0,0,0,0\}$
		& 22&16&51\\
		
		Pillow
		& $\{-51,50,-55,-80\}$
		& $\{0,30,5,-10\}$
		& 22&16&43\\
		
		Pillow$_*$
		& $\{-51,50,-55,-80\}$
		& $\{0,30,5,-10\}$
		& 22&16&55\\
		
		Square
		& $\{0,0,0,-143,0,0,0,45\}$
		& $\{0,0,0,0,0,0,0,0\}$
		& 23&16&47\\
	\end{tabular}
\bigskip
	
	\caption{Parameters defining the domains considered in the computer-assisted validation and values of the constants defined in Theorems \ref{thm:conformal-stability} and \ref{thm:elliptic-stability}. More precisely, to keep the numbers exact and the table readable, we write $a_k=2^{-10}A_k$, $b_k=2^{-10}B_k$ and $\delta_r=2^{-d_r}$,  $R_u=2^{-r_u}$, $\delta_\mathrm{PDE}=2^{-d_\mathrm{PDE}}$.}
	\label{tab:domains}
\end{table}

We next introduce the functional setting for the elliptic problem.
Let
\[
\mathcal C_\varrho
=
\left\{
u(r,\vartheta)
=
\sum_{m,l\geq0}
R_{m+2l}^{m}(r)
\left[
a_{m,l}\cos(m\vartheta)
+
b_{m,l}\sin(m\vartheta)
\right]:
\|u\|_\varrho<\infty
\right\},
\]
where
\[
\|u\|_\varrho
=
\sum_{m,l\geq0}
\bigl(|a_{m,l}|+|b_{m,l}|\bigr)
\varrho^{m+2l},
\]
with $b_{0,l}=0$. Here $R_{m+2l}^{m}$ denotes the corresponding
Zernike polynomial. Note that $\mathcal C_\varrho$ is a Banach algebra \cite{ArioliKoch1}.

For every $r$ covered by
Theorem~\ref{thm:conformal-stability}, let $f_r$ be the corresponding
Riemann map and set
\[
q_r=|f_r'|^2.
\]
After possibly decreasing the exponential weight, the estimates of
Appendix~\ref{sec:appendix-wiener} and the Taylor-to-Zernike correspondence of Appendix~\ref{sec:appendix-zernike} yield
\[
q_r\in\mathcal C_\varrho
\]
with quantitative bounds uniform for
\[
\|r-r_0\|_\sigma<\delta_r.
\]

Our second main result concerns the persistence of solutions of the
semilinear equation under such perturbations.

\begin{theorem}[Persistence of elliptic solutions under domain
	perturbations]\label{thm:elliptic-stability}
	Let $\sigma=2$ and $\varrho=513/512$.
	For each reference boundary function $r_0$ listed in
	Table~\ref{tab:domains}, there exists an explicitly computable
	constant
	\[
	0<\delta_{\mathrm{PDE}}\leq\delta_r
	\]
	such that, for every
	\[
	r\in\mathcal A_\sigma,
	\qquad
	\|r-r_0\|_\sigma<\delta_{\mathrm{PDE}},
	\]
	the Dirichlet problem
	\[
	\begin{cases}
		-\Delta v=v^3,&\text{in }\Omega_r,\\
		v=0,&\text{on }\partial\Omega_r
	\end{cases}
	\]
	admits a solution $v_r$.
	
	More precisely, after pullback by the normalized Riemann map $f_r$,
	the solution belongs to a fixed ball in $\mathcal C_\varrho$ around
	the validated reference solution. The solution \(v_r\) is positive for the unstarred reference configurations and sign-changing for the starred ones.
	
	The radii $\delta_{\mathrm{PDE}}$ are displayed in Table \ref{tab:domains}, while the explicit coefficients in $\mathcal C_\varrho$ of the validated reference solution corresponding to $r_0$ can be found in \cite{program}.
\end{theorem}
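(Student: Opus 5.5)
The plan is a Newton–Kantorovich (radii polynomial) argument on the pulled-back problem in $\mathcal C_\varrho$, made uniform in $r$. By Theorem~\ref{thm:conformal-stability}, for $\|r-r_0\|_\sigma<\delta_r$ the normalized Riemann map $f_r$ exists, and $v$ solves the Dirichlet problem on $\Omega_r$ if and only if $w=v\circ f_r$ solves
\[
-\Delta w=q_rw^3 \ \text{in }\mathbb D,\qquad w=0 \ \text{on }\partial\mathbb D,\qquad q_r=|f_r'|^2 .
\]
I will write this as a fixed point problem $w=G_r(w):=\Delta_0^{-1}(-q_rw^3)$, where $\Delta_0^{-1}$ is the Dirichlet inverse Laplacian. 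On Zernike modes $R^m_{m+2l}$, the map $\Delta_0^{-1}$ acts by an explicit, essentially bidiagonal, bounded operator on $\mathcal C_\varrho$, as in \cite{ArioliKoch1}.

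Then I choose a numerical approximation $\bar w$ of the reference solution for $r_0$ and an approximate inverse $A\approx (I-DG_{r_0}(\bar w))^{-1}$, taken finite-dimensional on low modes plus the identity on high modes. With these I set up the contraction map
\[
T_r(w)=w-A\bigl(w-G_r(w)\bigr).
\]
The key estimates are the usual ones. First, a residual bound $\|T_r(\bar w)-\bar w\|_\varrho\le \varepsilon_0+\varepsilon(\delta)$. Second, a derivative bound $\sup_{\|w-\bar w\|\le R}\|DT_r(w)\|\le K_0+K_1R+\kappa(\delta)$. Contraction on $B_R(\bar w)$ then follows from $\varepsilon+KR<R$ and $K<1$.

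The $r$-dependence enters only through $q_r$. Since $\mathcal C_\varrho$ is a Banach algebra and $\Delta_0^{-1}$ is bounded, we have
\[
\|G_r(w)-G_{r_0}(w)\|_\varrho\le\|\Delta_0^{-1}\|\,\|q_r-q_{r_0}\|_\varrho\,\|w\|_\varrho^3 ,
\]
and similarly for the derivative with $3\|w\|_\varrho^2$. So the whole task reduces to a Lipschitz bound $\|q_r-q_{r_0}\|_\varrho\le L\|r-r_0\|_\sigma$.

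I obtain this bound by the chain in the introduction. First, $\|u_r-u_{r_0}\|_\rho\le C\|r-r_0\|_\sigma$; this comes from the uniform invertibility of $DF$ in Theorem~\ref{thm:conformal-stability}, via the implicit function form of the contraction, together with \eqref{eq:composition-main} and the log-derivative estimate. Next, $\log(f_r(z)/z)$ has boundary values $\log r(t+u_r)+iu_r$, which is a holomorphic function whose real part determines the imaginary part through $\mathcal H$. Hence it is controlled in $\mathcal B_\rho$, and so is $f_r$ after exponentiation; $f_r'$ then follows with a loss of weight $\rho\to\rho'<\rho$ via Cauchy estimates on coefficients, $k\rho'^k\le C\rho^k$. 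Writing $q_r=f_r'\overline{f_r'}$ as a polynomial in $z,\bar z$, the Taylor-to-Zernike correspondence of Appendix~\ref{sec:appendix-zernike} bounds it in $\mathcal C_\varrho$ for the small $\varrho=513/512$. This explains the very small weight.

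The sign information comes from a perturbation argument. For sign-changing cases, the validated ball radius plus a sup-norm bound ($\|\cdot\|_\infty\le\|\cdot\|_\varrho$) keeps $w$ positive at one point and negative at another. For positive cases I would use the positivity appendix: for example, show that $\bar w$ satisfies $\bar w\ge c(1-|z|^2)$ and that the error is dominated by that, or argue via the maximum principle, since a nontrivial solution close to a positive one with $w^3q\ge0$ forced by positivity of the negative part being small must be positive.

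I expect the main obstacle to be making the $r$-uniform Lipschitz bound for $q_r$ explicit and sharp enough, given the two weight losses $\sigma\to\rho\to\varrho$. In particular, the Cauchy-type loss in differentiating $f_r$ and in passing to Zernike norms produces large constants, which is presumably why $\delta_{\mathrm{PDE}}$ is much smaller than $\delta_r$. Finally, $\delta_{\mathrm{PDE}}$ is chosen as the largest dyadic value for which the radii inequalities still close; it is evaluated in interval arithmetic.
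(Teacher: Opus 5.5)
Your existence argument follows the paper's route closely. It shares the pullback to $-\Delta w=q_rw^3$ on $\mathbb D$, a quasi-Newton map built from a finite-rank correction of the identity, and the $q$-perturbation bounds $C_\Delta\|q-q_0\|_\varrho\|w\|_\varrho^3$ and $3C_\Delta\|q-q_0\|_\varrho\|w\|_\varrho^2$ (Proposition~\ref{prop:elliptic-q-stability}). It also uses the same Lipschitz chain $r\mapsto u_r\mapsto f_r\mapsto f_r'\mapsto q_r$, with weight losses and the Taylor-to-Zernike isometry, and the same two-point sup-norm argument for the sign-changing cases.

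The one real difference is how you bound $f_{r_1}-f_{r_2}$. You control $\log(f_r(z)/z)$, whose boundary trace $h+i\mathcal Hh$, with $h=\log r(\cdot+u_r)$, has $\mathcal B_\rho$-norm at most $\|h\|_\rho$, and then you exponentiate in the Banach algebra. The paper (Lemma~\ref{lem:Cf}) instead bounds $F_1-F_2$ pointwise on $S_\rho$, using $z+u_j(z)\in S_\tau$, and recovers the coefficients by Cauchy's estimate. Your route avoids the loss $\rho\to\rho_f$. It pays instead a factor $e^{\|\log(f_r/z)\|_\rho}$, which may be large for the strongly non-circular domains; the paper's constant only involves bounds on $r$, $r'$ and $|e^{i(z+u)}|\le\rho e^{d}$ on the strip.

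A small point: with $T_r(w)=w-A(w-G_r(w))$, a fixed point solves the equation only if $A$ is injective. Here this follows because $A=I-M$ is Fredholm of index zero and your contraction bound makes $A(I-DG_r)$ invertible, but you need to say so. The paper's parametrisation $\mathcal N_q(h)=G_q(\bar w+Ah)-\bar w+Mh$ avoids the issue.

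The genuine gap is positivity in the unstarred cases; neither of your sketches proves it.
\begin{itemize}
\item The $\mathcal C_\varrho$-ball controls $w_r-\bar w$ only in sup norm, not by a multiple of $1-|z|^2$. Near $\partial\mathbb D$ the error therefore cannot be dominated by $c(1-|z|^2)$ without an extra boundary-derivative estimate, which you do not supply.
\item The maximum principle alone gives nothing. On $\{w<0\}$ one has $\Delta w=-qw^3>0$, so $w$ is subharmonic there and nothing prevents a negative interior minimum. The starred solutions show this actually happens for this equation.
\end{itemize}

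What is needed is a quantitative eigenvalue inequality. The paper uses Proposition~\ref{prop:positive}: rigorous positivity on intermediate circles, plus $j_{0,1}^2/r_1^2>M$ and $\lambda_1(A_{r_{i-1},r_i})>M\ge\|qw^2\|_{L^\infty}$ (Lemmas~\ref{lem:annulus} and \ref{lem:disk}). All these inequalities are strict, so they persist on the $q$-ball.

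A simpler repair in the spirit of your second idea is to test the equation with $w^-\in H^1_0(\mathbb D)$:
\[
\int_{\mathbb D}|\nabla w^-|^2=\int_{\mathbb D}q\,(w^-)^4\le\|q\|_\infty\|w^-\|_\infty^2\int_{\mathbb D}(w^-)^2 .
\]
You have $\|w^-\|_\infty\le\|\bar w^-\|_\infty+\|A\|s$ and $\|q_r\|_\infty\le\|q_{r_0}\|_\varrho+\varepsilon_q$. The Poincar\'e inequality with $\lambda_1(\mathbb D)=j_{0,1}^2$ then gives $w\ge0$ as soon as
\[
(\|q_{r_0}\|_\varrho+\varepsilon_q)(\|\bar w^-\|_\infty+\|A\|s)^2<j_{0,1}^2 .
\]
The strong minimum principle for the nonnegative superharmonic function $w\not\equiv0$ then gives $w>0$.
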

\begin{remark}
The values of \(\delta_r\), \(R_u\), and \(\delta_{\rm PDE}\) are not intended to be sharp. They could all be improved by carrying out the numerical computations with greater accuracy, at the cost of a substantial increase in computational effort, which is already considerable.\end{remark}
Theorem~\ref{thm:elliptic-stability} is obtained by combining
Theorem~\ref{thm:conformal-stability} with uniform a posteriori
estimates for
\[
-\Delta v=q_r v^3\qquad\text{in }\mathbb D.
\]
Thus the existence result is stable with respect to perturbations of
the geometry itself, rather than only with respect to perturbations of
the coefficient in a fixed-domain equation.

% ============================================================
\section{Conformal maps and the Theodorsen equation}
\label{sec:theo}
% ============================================================

Let $\mathbb D\subset\mathbb C$ denote the open unit disk centred at 0. Given a
positive $2\pi$-periodic function $r$, we denote by $\Omega_r$ the
star-shaped domain bounded by
\begin{equation}\label{eq:polarboundary}
	\gamma_r(\theta)
	=
	r(\theta)e^{i\theta},
	\qquad
	\theta\in\mathbb R.
\end{equation}
Whenever $\gamma_r$ is a positively oriented Jordan curve, the Riemann
mapping theorem yields a unique biholomorphic map
\[
f_r:\mathbb D\longrightarrow\Omega_r
\]
satisfying
\[
f_r(0)=0,
\qquad
f_r'(0)>0.
\]

The boundary correspondence associated with $f_r$ can be described by
the Theodorsen equation. We first recall this relation and then
formulate the equation in the weighted Wiener spaces introduced in
Section~\ref{sec:main-results}.

Let $\mathcal H$ denote the periodic Hilbert transform, defined on
Fourier series by
\begin{equation}\label{eq:Hilbert}
	\mathcal H
	\left(
	\sum_{k\in\mathbb Z}g_ke^{ikt}
	\right)
	=
	-i\sum_{k\ne0}
	\operatorname{sgn}(k)g_ke^{ikt}.
\end{equation}
Thus $\mathcal H$ annihilates the constant Fourier mode. With this
convention, if $h$ is real-valued, then
\[
h+i\mathcal Hh
\]
contains only nonnegative Fourier modes.

The angular parametrisation of $\partial\Omega_r$ need not agree with
the angular parametrisation induced by the Riemann map. We therefore
write
\[
\theta(t)=t+u(t),
\]
where $u$ is $2\pi$-periodic. The Theodorsen equation is
\begin{equation}\label{eq:Theodorsen}
	u(t)
	=
	\mathcal H
	\left[
	\log r\bigl(t+u(t)\bigr)
	\right].
\end{equation}
The following classical proposition identifies its solutions with the
boundary correspondence of the Riemann map; see also
\cite{Gutknecht, Hubner,Kythe,Wegmann1978,Wegmann1986}.

\begin{proposition}\label{prop:Theodorsen}
	Let $\gamma_r$ be a smooth positively oriented Jordan curve of the
	form \eqref{eq:polarboundary}, and let
	$u:\mathbb R\to\mathbb R$ be a $2\pi$-periodic solution of
	\eqref{eq:Theodorsen}. Assume that
	\[
	\theta(t)=t+u(t)
	\]
	induces an orientation-preserving homeomorphism of
	$\mathbb R/2\pi\mathbb Z$. Then
	\begin{equation}\label{eq:boundarymap}
		f_r(e^{it})
		=
		r(\theta(t))e^{i\theta(t)}
	\end{equation}
	is the boundary trace of the normalized Riemann map
	\[
	f_r:\mathbb D\longrightarrow\Omega_r,
	\qquad
	f_r(0)=0,
	\qquad
	f_r'(0)>0.
	\]
\end{proposition}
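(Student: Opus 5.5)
We construct a holomorphic function on $\mathbb D$ whose boundary values are $\log\bigl(f(e^{it})/e^{it}\bigr)$, exponentiate it, and verify via the argument principle that the resulting map is the normalized Riemann map. Set
\[
h(t)=\log r(\theta(t)),
\qquad
g(t)=h(t)+i\,u(t).
\]
By \eqref{eq:Theodorsen}, $u=\mathcal H h$, so $g=h+i\mathcal Hh$. By the remark after \eqref{eq:Hilbert}, $g$ contains only nonnegative Fourier modes. Smoothness of $\gamma_r$ and regularity of $\theta$ (which holds in our setting, e.g.\ $u$ in a Wiener algebra; in general Hölder continuity suffices) give absolute or Hölder convergence. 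Hence $g$ is the boundary trace of a function $G$ holomorphic in $\mathbb D$ and continuous on $\overline{\mathbb D}$, with
\[
G(0)=\frac1{2\pi}\int_0^{2\pi} h\,dt\in\mathbb R .
\]
Define
\[
F(z)=z\,e^{G(z)}.
\]
Then $F$ is holomorphic in $\mathbb D$, continuous up to the boundary, and
\[
F(e^{it})=e^{it}e^{h(t)+iu(t)}=r(\theta(t))e^{i\theta(t)}=\gamma_r(\theta(t)).
\]
It also satisfies $F(0)=0$ and $F'(0)=e^{G(0)}>0$.

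Next we show that $F$ is a conformal bijection onto $\Omega_r$. Since $\theta$ is an orientation-preserving homeomorphism of $\mathbb R/2\pi\mathbb Z$, the map $t\mapsto F(e^{it})$ is an orientation-preserving homeomorphism of the unit circle onto the Jordan curve $\gamma_r$. For $w\in\Omega_r$, the curve $F(\partial\mathbb D)$ has winding number $1$ about $w$. By the argument principle, applied on circles $|z|=s\to1$ using uniform continuity of $F$ on $\overline{\mathbb D}$, $F-w$ has exactly one zero in $\mathbb D$. For $w$ in the exterior, the winding number is $0$, so there are no zeros. For $w\in\gamma_r$, we use the open mapping theorem: if $F(z_0)=w$ with $z_0\in\mathbb D$, a neighbourhood of $w$ would lie in $F(\mathbb D)$ and would include exterior points, which is a contradiction. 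Therefore $F:\mathbb D\to\Omega_r$ is bijective and holomorphic, hence biholomorphic.

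By uniqueness in the Riemann mapping theorem with the normalization $f(0)=0$, $f'(0)>0$, we conclude $F=f_r$, and \eqref{eq:boundarymap} follows. The main obstacle is the boundary-regularity step: justifying that $g$ extends continuously into the disk and that the argument principle passes to the limit at $|z|=1$. Under the standing Wiener-algebra hypotheses this is immediate, since absolute convergence of the Fourier series gives uniform convergence of the power series on $\overline{\mathbb D}$. In general one would invoke Privalov's theorem on Hölder continuity of the conjugate function.
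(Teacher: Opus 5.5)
Your proof is correct and follows the same route as the paper: you form $G$ from $h+i\mathcal H h$, set $F(z)=ze^{G(z)}$, check the normalization and boundary values, obtain bijectivity onto $\Omega_r$ from the argument principle, and conclude by uniqueness of the normalized Riemann map. Your extra care fills in steps the paper leaves implicit, namely the limit over circles $|z|=s\to1$ and the open-mapping argument that keeps points of $\gamma_r$ out of $F(\mathbb D)$; note also that Privalov's theorem is unnecessary, because $h$ and $u=\theta(t)-t$ are already continuous, so $G$, being the Poisson integral of the continuous function $h+iu$ (which has only nonnegative modes), is automatically continuous on $\overline{\mathbb D}$.
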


\begin{proof}
	Set
	\[
	h(t):=\log r(\theta(t)).
	\]
	Since $\theta(t)=t+u(t)$,
	\begin{equation}\label{eq:boundaryfactor}
		r(\theta(t))e^{i\theta(t)}
		=
		e^{it}\exp\bigl(h(t)+iu(t)\bigr).
	\end{equation}
	By \eqref{eq:Theodorsen},
	\[
	u=\mathcal Hh.
	\]
	Hence
	\[
	h+i\mathcal Hh
	\]
	contains only nonnegative Fourier modes and is the boundary trace of
	a holomorphic function $G$ on $\mathbb D$. Since its constant Fourier
	coefficient is the mean of $h$ because \(\mathcal Hh\) has zero mean, then
	\[
	G(0)\in\mathbb R.
	\]
	
	Define
	\[
	f(z):=z e^{G(z)}.
	\]
	Then $f$ is holomorphic in $\mathbb D$,
	\[
	f(0)=0,
	\qquad
	f'(0)=e^{G(0)}>0,
	\]
	and \eqref{eq:boundaryfactor} gives
	\[
	f(e^{it})
	=
	r(\theta(t))e^{i\theta(t)}
	=
	\gamma_r(\theta(t)).
	\]
	Since $\theta$ is an orientation-preserving homeomorphism of the
	circle, this boundary curve traverses $\partial\Omega_r$ exactly once
	in the positive direction.
	
	For $w\notin\partial\Omega_r$, the argument principle therefore gives
	\[
	N(f-w)
	=
	\operatorname{Ind}(\partial\Omega_r,w)
	=
	\begin{cases}
		1,&w\in\Omega_r,\\
		0,&w\in\mathbb C\setminus\overline{\Omega_r},
	\end{cases}
	\]
	where zeros are counted with multiplicity. It follows that
	$f(\mathbb D)=\Omega_r$ and that every point of $\Omega_r$ has exactly
	one preimage. Thus $f$ is biholomorphic and, by its normalisation,
	$f=f_r$.
\end{proof}

\subsection{The Theodorsen operator}

For a positive boundary function $r$, define
\begin{equation}\label{eq:Fr}
	\mathcal F_r(u)
	:=
	u
	-
	\mathcal H
	\left[
	\log r\bigl(\,\cdot+u(\cdot)\bigr)
	\right].
\end{equation}
Thus the Theodorsen equation is
\[
\mathcal F_r(u)=0.
\]
It is useful to retain explicitly the dependence on $r$, since one of
our main objectives is to compare the solutions corresponding to
nearby boundary functions.

Equivalently, define
\begin{equation}\label{eq:GTheodorsen}
	\mathcal G_r(u)
	:=
	\mathcal H
	\left[
	\log r\bigl(\,\cdot+u(\cdot)\bigr)
	\right].
\end{equation}
Then
\[
\mathcal F_r=I-\mathcal G_r.
\]

Whenever the composition is well defined in the weighted Wiener
algebra, differentiation with respect to $u$ gives
\begin{equation}\label{eq:DG}
	D\mathcal G_r(u)
	=
	\mathcal H M_{\psi_{r,u}},
	\qquad
	\psi_{r,u}
	:=
	\frac{r'}{r}
	\bigl(\,\cdot+u(\cdot)\bigr),
\end{equation}
and therefore
\begin{equation}\label{eq:DF}
	D\mathcal F_r(u)
	=
	I-\mathcal H M_{\psi_{r,u}}.
\end{equation}
The quantitative dependence of $\psi_{r,u}$ on both $r$ and $u$ will
be studied in Section~\ref{sec:stability}.

\subsection{Construction and analytic continuation of the Riemann map}

Let $u$ be a solution of \eqref{eq:Theodorsen} satisfying the
hypotheses of Proposition~\ref{prop:Theodorsen}, and define
\begin{equation}\label{eq:Rboundary}
	R_r(t)
	:=
	r\bigl(t+u(t)\bigr)e^{i(t+u(t))}.
\end{equation}
By Proposition~\ref{prop:Theodorsen}, $R_r$ is the boundary trace of
$f_r$. Consequently,
\begin{equation}\label{eq:RFourier}
	R_r(t)
	=
	\sum_{k\ge1}f_{r,k}e^{ikt},
\end{equation}
and
\begin{equation}\label{eq:conformal}
	f_r(z)
	=
	\sum_{k\ge1}f_{r,k}z^k.
\end{equation}

The weighted Wiener setting gives quantitative information on the
analytic continuation of $f_r$.

\begin{lemma}\label{lem:conformalradius}
	Suppose that $R_r\in\mathcal B_\rho$ and that its nonpositive Fourier
	modes vanish. Then
	\[
	\sum_{k\ge1}|f_{r,k}|\rho^k<\infty.
	\]
	Consequently, the series \eqref{eq:conformal} converges absolutely and
	uniformly on $|z|\le\rho$ and defines a holomorphic function on
	$|z|<\rho$.
\end{lemma}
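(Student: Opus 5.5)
The proof is essentially a direct unwinding of the definition of the norm in $\mathcal B_\rho$. I will write $R_r(t)=\sum_{k\in\mathbb Z}R_ke^{ikt}$. The hypothesis that the nonpositive modes vanish means $R_k=0$ for $k\le0$, so $f_{r,k}=R_k$ for $k\ge1$, in agreement with \eqref{eq:RFourier}. Then
\[
\sum_{k\ge1}|f_{r,k}|\rho^k=\sum_{k\in\mathbb Z}|R_k|\rho^{|k|}=\|R_r\|_\rho<\infty,
\]
which is the first claim.

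For the second claim I will use the Weierstrass M-test. For $|z|\le\rho$ we have $|f_{r,k}z^k|\le|f_{r,k}|\rho^k$, and the right-hand side is summable by the first step. Hence \eqref{eq:conformal} converges absolutely and uniformly on the closed disk $|z|\le\rho$. On the open disk $|z|<\rho$ each partial sum is a polynomial, and the convergence is locally uniform. So the limit is holomorphic there by Weierstrass' theorem; equivalently, the radius of convergence of the power series is at least $\rho$.

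The only point needing care is consistency: the function so defined must be the Riemann map $f_r$ obtained in Proposition~\ref{prop:Theodorsen}, so that the continuation really is a continuation of $f_r$. On $\mathbb D$, the map $f_r$ from the proof of Proposition~\ref{prop:Theodorsen} is a bounded holomorphic function whose boundary values are $R_r$. By Abel summation, or by the Poisson/Cauchy representation, its Taylor coefficients equal the positive Fourier coefficients of its boundary trace. Since $R_r\in\mathcal B_\rho\subset\mathcal B_1$, the boundary trace is continuous and the Fourier series converges absolutely, so this identification is immediate. I do not expect any real obstacle here.
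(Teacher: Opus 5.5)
Your proposal is correct and matches the paper's proof: once the nonpositive modes vanish, $\|R_r\|_\rho$ is exactly $\sum_{k\ge1}|f_{r,k}|\rho^k$, and the convergence and holomorphy follow from the Weierstrass M-test, which the paper leaves as immediate. Your extra paragraph identifying the series with the Riemann map of Proposition~\ref{prop:Theodorsen} is harmless but not needed for the lemma itself, since the paper already takes \eqref{eq:RFourier}--\eqref{eq:conformal} as the link between $R_r$ and $f_r$.
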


\begin{proof}
	Since $R_r\in\mathcal B_\rho$,
	\[
	\sum_{k\in\mathbb Z}|(R_r)_k|\rho^{|k|}<\infty.
	\]
	The assumption on the Fourier modes gives
	\[
	(R_r)_k=
	\begin{cases}
		f_{r,k},&k\ge1,\\
		0,&k\le0,
	\end{cases}
	\]
	and hence
	\[
	\sum_{k\ge1}|f_{r,k}|\rho^k<\infty.
	\]
	The remaining assertions follow immediately.
\end{proof}

Thus quantitative control of a solution of the Theodorsen equation
in a weighted Wiener algebra yields quantitative control of the
corresponding Riemann map. In Sections~\ref{sec:aposteriori}--
\ref{sec:stability} we derive estimates which remain uniform when the
boundary function varies in a neighborhood in $\mathcal A_\sigma$.

% ============================================================
\section{Quantitative a posteriori analysis of the Theodorsen equation}
\label{sec:aposteriori}
% ============================================================

We now formulate the quantitative existence argument. Let $r$ be a positive boundary
function and let $\bar u\in\mathcal A_\rho$ be an approximate zero of
$\mathcal F_r$. For $R>0$, write
\[
B_R(\bar u)
=
\left\{
u\in\mathcal A_\rho:
\|u-\bar u\|_\rho\le R
\right\}.
\]

There are two useful forms of the argument. The first applies when the
Theodorsen map itself is contractive; the second uses an inverse, or
approximate inverse, of its linearisation.

\subsection{Direct contraction}
The following proposition makes the computer assisted part very straightforward, but it only applies to domains that are sufficiently close to a disk. We do not use it, but we include it here for completeness.
\begin{proposition}\label{prop:direct-contraction}
	Let
	\[
	Y
	:=
	\|\mathcal G_r(\bar u)-\bar u\|_\rho.
	\]
	Assume that there exist $R>0$ and $0\le\kappa<1$ such that
	\begin{equation}\label{eq:direct-derivative}
		\sup_{u\in B_R(\bar u)}
		\|\psi_{r,u}\|_\rho
		\le\kappa
	\end{equation}
	and
	\begin{equation}\label{eq:direct-radius}
		Y+\kappa R\le R.
	\end{equation}
	Then the Theodorsen equation has a unique solution in
	$B_R(\bar u)$.
\end{proposition}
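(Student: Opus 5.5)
The plan is to apply the Banach fixed point theorem to $\mathcal G_r$ on the closed ball $B_R(\bar u)\subset\mathcal A_\rho$, which is a complete metric space. Two things have to be shown: $\mathcal G_r$ maps $B_R(\bar u)$ into itself, and it is a contraction there with constant $\kappa$. Fixed points of $\mathcal G_r$ are exactly the zeros of $\mathcal F_r=I-\mathcal G_r$. Existence and uniqueness in $B_R(\bar u)$ therefore follow at once from these two facts.

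\textbf{Contraction.} For $u_1,u_2\in B_R(\bar u)$, I write the difference as an integral along the segment $u_s=u_2+s(u_1-u_2)$. This segment stays in the ball because the ball is convex. Using \eqref{eq:DG}, we get
\[
\mathcal G_r(u_1)-\mathcal G_r(u_2)=\int_0^1\mathcal H\bigl(\psi_{r,u_s}\,(u_1-u_2)\bigr)\,ds .
\]
In the Fourier description \eqref{eq:Hilbert}, $\mathcal H$ only multiplies coefficients by $-i\operatorname{sgn}(k)$ or $0$. Hence $\|\mathcal Hg\|_\rho\le\|g\|_\rho$ on $\mathcal A_\rho$: in cosine--sine form it swaps $a_k,b_k$ up to sign and kills $a_0$. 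Since $\mathcal A_\rho$ is a Banach algebra (Appendix~\ref{sec:appendix-wiener}), $\|\psi_{r,u_s}(u_1-u_2)\|_\rho\le\|\psi_{r,u_s}\|_\rho\|u_1-u_2\|_\rho$. Combining these with \eqref{eq:direct-derivative} gives the bound $\kappa\|u_1-u_2\|_\rho$.

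\textbf{Self-map.} For $u\in B_R(\bar u)$, the triangle inequality and the contraction estimate give
\[
\|\mathcal G_r(u)-\bar u\|_\rho\le\|\mathcal G_r(u)-\mathcal G_r(\bar u)\|_\rho+Y\le\kappa R+Y\le R,
\]
using \eqref{eq:direct-radius}.

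\textbf{Main obstacle.} The delicate point is justifying the integral (mean value) formula. This requires $u\mapsto\log r(\cdot+u(\cdot))$ to be Fréchet differentiable from $B_R(\bar u)$ into $\mathcal A_\rho$, with derivative $M_{\psi_{r,u}}$. That in turn needs the composition to be well defined on the whole ball, i.e.\ $r$ must be analytic on a wide enough strip: $\rho e^{\|u\|_\rho}\le\sigma$, as in the composition estimate \eqref{eq:composition-main}, with $\log r$ analytic there because $r$ stays away from zero. I would state this as an implicit standing assumption, namely that $\psi_{r,u}$ is defined for all $u$ in the ball, which is what \eqref{eq:direct-derivative} presupposes. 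I would then invoke the composition results of Section~\ref{sec:stability} for the differentiability, or verify it directly by power-series expansion of $\log r(t+u)$ in $u$ coefficientwise. Once this is settled, the contraction mapping theorem finishes the proof.
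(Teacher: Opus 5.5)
Your proposal is correct and follows the paper's proof. The paper likewise uses $\|\mathcal H\|\le1$ together with \eqref{eq:DG} to get $\|D\mathcal G_r(u)\|\le\kappa$ on $B_R(\bar u)$, deduces invariance of the ball from \eqref{eq:direct-radius}, and concludes by the contraction mapping theorem; it also tacitly assumes the composition $\log r(\cdot+u(\cdot))$ is well defined and differentiable on the whole ball (stating \eqref{eq:DG} only ``whenever the composition is well defined''), so treating this as a standing hypothesis matches the paper's level of rigor.
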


\begin{proof}
	Since $\|\mathcal H\|\le1$, equation \eqref{eq:DG} gives
	\[
	\sup_{u\in B_R(\bar u)}
	\|D\mathcal G_r(u)\|
	\le\kappa.
	\]
	Moreover,
	\[
	\|\mathcal G_r(\bar u)-\bar u\|_\rho
	\le Y.
	\]
	Condition \eqref{eq:direct-radius} shows that
	$\mathcal G_r$ maps $B_R(\bar u)$ into itself. The contraction mapping
	theorem gives the conclusion.
\end{proof}

\subsection{Newton-like a posteriori theorem}

For general star-shaped domains the norm of
$D\mathcal G_r(\bar u)$ need not be smaller than one. Then a Newton map may be used to prove the existence of a fixed point. Let $L$ be an
invertible bounded operator which approximates
$D\mathcal F_r(\bar u)^{-1}$ and define
\begin{equation}\label{eq:NewtonMap}
	\mathcal N_r(u):=u-L\mathcal F_r(u).
\end{equation}
Since $L$ is invertible,
\[
\mathcal N_r(u)=u
\quad\Longleftrightarrow\quad
\mathcal F_r(u)=0.
\]
Furthermore,
\begin{equation}\label{eq:DN}
	D\mathcal N_r(u)
	=
	I-LD\mathcal F_r(u).
\end{equation}

The following elementary result is the form of the a posteriori
theorem that will later be used uniformly with respect to $r$.

\begin{proposition}\label{prop:newtonvalidation}
	Let
	\[
	Y:=\|\mathcal N_r(\bar u)-\bar u\|_\rho.
	\]
	Suppose that there exist $R>0$ and $0\le\kappa<1$ such that
	\begin{equation}\label{eq:derivativebound}
		\sup_{u\in B_R(\bar u)}
		\|I-LD\mathcal F_r(u)\|
		\le\kappa
	\end{equation}
	and
	\begin{equation}\label{eq:newtoncontraction}
		Y+\kappa R\le R.
	\end{equation}
	Then $\mathcal F_r$ has a unique zero in $B_R(\bar u)$.
\end{proposition}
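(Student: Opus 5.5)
The argument is the standard contraction-mapping scheme applied to the Newton map $\mathcal N_r$ on the closed ball $B_R(\bar u)\subset\mathcal A_\rho$. This ball is a complete metric space, being closed in a Banach space. We tacitly assume, as in \eqref{eq:DG}, that $\mathcal F_r$ is well defined and continuously Fréchet differentiable on a neighbourhood of $B_R(\bar u)$ in $\mathcal A_\rho$. In the applications this is guaranteed by the composition estimates: $\rho e^{c}\le\sigma$ with $c$ bounding $\|u\|_\rho$ on the ball.

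\emph{Step 1: contraction.} The ball is convex, so for $u,w\in B_R(\bar u)$ the segment $s\mapsto w+s(u-w)$, $s\in[0,1]$, stays in the ball. The mean value inequality in Banach spaces, together with \eqref{eq:DN} and \eqref{eq:derivativebound}, gives
\[
\|\mathcal N_r(u)-\mathcal N_r(w)\|_\rho
\le\sup_{s\in[0,1]}\bigl\|I-LD\mathcal F_r\bigl(w+s(u-w)\bigr)\bigr\|\,\|u-w\|_\rho
\le\kappa\|u-w\|_\rho .
\]

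\emph{Step 2: self-mapping.} For $u\in B_R(\bar u)$, Step 1 with $w=\bar u$ yields
\[
\|\mathcal N_r(u)-\bar u\|_\rho\le\|\mathcal N_r(u)-\mathcal N_r(\bar u)\|_\rho+\|\mathcal N_r(\bar u)-\bar u\|_\rho\le\kappa R+Y\le R
\]
by \eqref{eq:newtoncontraction}. Hence $\mathcal N_r$ maps $B_R(\bar u)$ into itself.

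\emph{Step 3: conclusion.} Since $\kappa<1$, the Banach fixed point theorem gives a unique fixed point of $\mathcal N_r$ in $B_R(\bar u)$. Because $L$ is invertible, hence injective, $\mathcal N_r(u)=u$ holds if and only if $L\mathcal F_r(u)=0$, which holds if and only if $\mathcal F_r(u)=0$. Fixed points and zeros therefore coincide, so existence and uniqueness transfer from one to the other.

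The only delicate point is the well-definedness and differentiability of $\mathcal F_r$ on the whole ball. This is implicit in hypothesis \eqref{eq:derivativebound}, and in the application it is provided by the Section~\ref{sec:stability} composition bounds. Everything else is routine.
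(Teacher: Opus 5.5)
Your proof is correct and follows the same route as the paper: \eqref{eq:derivativebound} makes $\mathcal N_r$ a contraction on $B_R(\bar u)$, \eqref{eq:newtoncontraction} makes the ball invariant, the contraction mapping theorem gives a unique fixed point, and invertibility of $L$ identifies fixed points with zeros of $\mathcal F_r$. Your version also spells out two things the paper leaves implicit: the mean value inequality on the convex ball, and the assumption that $\mathcal F_r$ is differentiable on the ball.
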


\begin{proof}
	Equation \eqref{eq:derivativebound} implies that $\mathcal N_r$ is a
	contraction on $B_R(\bar u)$, while
	\eqref{eq:newtoncontraction} implies that this ball is invariant under
	$\mathcal N_r$. The contraction mapping theorem gives a unique fixed
	point of $\mathcal N_r$ in the ball, and the invertibility of $L$
	identifies this fixed point with a zero of $\mathcal F_r$.
\end{proof}

The essential issue is therefore to obtain an effective inverse of
\[
D\mathcal F_r(u)
=
I-\mathcal H M_{\psi_{r,u}}.
\]
This operator is not a compact perturbation of the identity in the
weighted Wiener algebra. Its particular structure, however, permits
an explicit inversion through a Wiener--Hopf factorisation. This is
the subject of the next section.

% ============================================================
\section{Wiener--Hopf inversion of the linearized Theodorsen operator}
\label{sec:WH}
% ============================================================

We study bounded operators on $\mathcal B_\rho$ of the form
\begin{equation}\label{eq:Phi_a}
	\Phi_a
	:=
	I+\mathcal H M_a,
\end{equation}
where $a\in\mathcal B_\rho$. For the linearized Theodorsen equation,
\[
a=-\psi_{r,u}.
\]
The purpose of this section is to characterize the invertibility of
$\Phi_a$ in terms of a scalar Wiener--Hopf symbol and to derive an
explicit representation of its inverse.

Let
\[
P_+u=\sum_{n>0}u_nz^n,
\qquad
P_-u=\sum_{n<0}u_nz^n,
\qquad
P_0u=u_0.
\]
Then
\[
I=P_++P_-+P_0,
\qquad
\mathcal H=-iP_++iP_-.
\]
We also introduce
\[
\mathcal B_\rho^+
=
\left\{
\sum_{n\ge0}u_nz^n\in\mathcal B_\rho
\right\},
\qquad
\mathcal B_\rho^-
=
\left\{
\sum_{n\le0}u_nz^n\in\mathcal B_\rho
\right\},
\]
and
\[
H^+:=P_+(\mathcal B_\rho),
\qquad
H^-:=P_-(\mathcal B_\rho).
\]

Set
\begin{equation}\label{eq:Apm}
	A_+:=1-ia,
	\qquad
	A_-:=1+ia.
\end{equation}
Then
\[
P_+(\Phi_a u)=P_+(A_+u),
\qquad
P_-(\Phi_a u)=P_-(A_-u),
\qquad
P_0(\Phi_a u)=P_0u.
\]
Consequently, $\Phi_a u=g$ is equivalent to
\begin{equation}\label{eq:RHsystem}
	P_+(A_+u)=g_+,
	\qquad
	P_-(A_-u)=g_-,
	\qquad
	P_0u=g_0.
\end{equation}

Assume that $A_+$ is invertible in $\mathcal B_\rho$ and introduce
the Wiener--Hopf symbol
\begin{equation}\label{eq:Wsymbol}
	W:=A_-A_+^{-1}.
\end{equation}
Suppose that $W$ admits a canonical Wiener--Hopf factorisation
\begin{equation}\label{eq:WHfactorisation}
	W=W_-W_+,
\end{equation}
where
\[
W_-^{\pm1}\in\mathcal B_\rho^-,
\qquad
W_+^{\pm1}\in\mathcal B_\rho^+.
\]
For example, this holds when the standard nonvanishing and zero-index
conditions are satisfied.

Set
\[
\alpha:=W_-^{-1},
\qquad
\beta:=W_+^{-1},
\]
and define
\[
T_W^-:H^-\longrightarrow H^-,
\qquad
T_W^-:=P_-M_W|_{H^-}.
\]

\begin{lemma}\label{lem:ToeplitzInverse}
	The operator $T_W^-$ is invertible and
	\begin{equation}\label{eq:Boperator}
		B:=(T_W^-)^{-1}
		=
		M_\alpha P_-M_\beta.
	\end{equation}
	In particular,
	\begin{equation}\label{eq:Bnorm}
		\|B\|
		\le
		\|\alpha\|_\rho\|\beta\|_\rho.
	\end{equation}
\end{lemma}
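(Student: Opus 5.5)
The plan is to verify directly that the operator $M_\alpha P_-M_\beta$ maps $H^-$ into itself and is a two-sided inverse of $T_W^-$. The only tools needed are the commutativity of $\mathcal B_\rho$ (scalar symbols) and two elementary support properties of Fourier modes:
\begin{enumerate}
\item[(i)] if $\gamma\in\mathcal B_\rho^-$ and $h\in H^-$, then $\gamma h\in H^-$, since nonpositive modes times strictly negative modes give strictly negative modes;
\item[(ii)] if $\gamma\in\mathcal B_\rho^+$ and $k\in\mathcal B_\rho^+$, then $\gamma k\in\mathcal B_\rho^+$, so $P_-(\gamma k)=0$.
\end{enumerate}
In particular, (i) applied to $\alpha=W_-^{-1}\in\mathcal B_\rho^-$ shows that $B=M_\alpha P_-M_\beta$ maps $\mathcal B_\rho$, and hence $H^-$, into $H^-$. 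So $B$ is a well-defined bounded operator on $H^-$.

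To show that $T_W^-B=I$ on $H^-$, take $h\in H^-$ and split $\beta h=P_-(\beta h)+(I-P_-)(\beta h)$, where the second term lies in $\mathcal B_\rho^+$. Using $W\alpha=W_-W_+W_-^{-1}=W_+$, we get
\[
T_W^-Bh=P_-\bigl(W_+P_-(\beta h)\bigr)=P_-(W_+\beta h)-P_-\bigl(W_+(I-P_-)(\beta h)\bigr).
\]
The last term vanishes by (ii), because $W_+\in\mathcal B_\rho^+$. Since $W_+\beta=1$, this leaves $P_-h=h$.

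For $BT_W^-=I$, again take $h\in H^-$ and write $Wh=P_-(Wh)+k$ with $k\in\mathcal B_\rho^+$. Then
\[
P_-\bigl(\beta P_-(Wh)\bigr)=P_-(\beta Wh)-P_-(\beta k).
\]
The term $P_-(\beta k)$ vanishes by (ii), since $\beta\in\mathcal B_\rho^+$. Moreover $\beta W=W_-$, so $\beta Wh=W_-h\in H^-$ by (i), and therefore $P_-(\beta Wh)=W_-h$. Multiplying by $\alpha=W_-^{-1}$ gives $BT_W^-h=h$. Hence $T_W^-$ is invertible with $(T_W^-)^{-1}=B$.

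The norm bound \eqref{eq:Bnorm} follows from the Banach algebra property $\|fg\|_\rho\le\|f\|_\rho\|g\|_\rho$ in $\mathcal B_\rho$ together with $\|P_-\|\le1$, which holds because $P_-$ simply deletes Fourier coefficients in a weighted $\ell^1$ norm. I do not expect a real obstacle here. The only point requiring care is the bookkeeping of which projection kills which product, and in particular the need to separate the zero mode correctly: $\mathcal B_\rho^\pm$ include the zero mode, whereas $H^-$ does not.
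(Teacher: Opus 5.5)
Your proposal is correct and takes the same route as the paper. The paper only cites the standard Toeplitz inversion formula and asserts that the triangular structure of $W_\pm^{\pm1}$ gives $T_W^-M_\alpha P_-M_\beta=I_{H^-}$ ``and similarly on the other side''; you carry out both identities explicitly (using $W\alpha=W_+$ and $\beta W=W_-$) and obtain the norm bound in the same way.
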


\begin{proof}
	The inversion formula is the standard one associated with a canonical
	Wiener--Hopf factorisation; see, for instance,
	\cite{BottcherSilbermann}. The triangular structure of the factors
	with respect to the positive and negative Fourier modes gives
	\[
	T_W^-M_\alpha P_-M_\beta=I_{H^-},
	\]
	and similarly on the other side. Estimate \eqref{eq:Bnorm} follows
	from the Banach algebra property and $\|P_-\|=1$.
\end{proof}

We can now give an explicit representation of the inverse of
$\Phi_a$. Set
\[
h:=A_+^{-1},
\]
and define
\begin{equation}\label{eq:Qoperator}
	\mathcal Q
	:=
	P_--P_-M_WP_+,
	\qquad
	p:=P_-W,
\end{equation}
\begin{equation}\label{eq:CDF}
	C:=1-Bp,
\end{equation}
\begin{equation}\label{eq:Dscalar}
	D:=P_0(hC),
\end{equation}
and
\begin{equation}\label{eq:Soperator}
	\mathcal S
	:=
	P_0-P_0M_hP_+-P_0M_hB\mathcal Q.
\end{equation}
Here $D$ is a scalar.

\begin{theorem}\label{thm:PhiInverse}
	Assume that $A_+$ is invertible in $\mathcal B_\rho$, that the symbol
	$W$ admits the canonical factorisation
	\eqref{eq:WHfactorisation}, and that
	\[
	D\neq0.
	\]
	Then $\Phi_a$ is invertible and
	\begin{equation}\label{eq:explicitL}
		\Phi_a^{-1}
		=
		M_h
		\left(
		P_+
		+
		B\mathcal Q
		+
		CD^{-1}\mathcal S
		\right).
	\end{equation}
	Moreover,
	\begin{equation}\label{eq:inverse-norm}
		\|\Phi_a^{-1}\|
		\le
		\|h\|_\rho
		\left(
		1+
		\|B\|\,\|\mathcal Q\|
		+
		\|C\|\,|D|^{-1}\|\mathcal S\|
		\right).
	\end{equation}
\end{theorem}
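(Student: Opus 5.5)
The plan is to solve $\Phi_a u=g$ explicitly through the equivalent system \eqref{eq:RHsystem}, after the change of unknown $v:=A_+u$. Since $A_+$ is invertible in $\mathcal B_\rho$, the map $u\mapsto v$ is a bijection of $\mathcal B_\rho$ with inverse $u=M_hv$. In terms of $v$ the system reads
\[
P_+v=g_+,\qquad P_-(Wv)=g_-,\qquad P_0(hv)=g_0,
\]
where we used $A_-u=A_-A_+^{-1}v=Wv$. I would then show two things. First, every solution of this system must equal $v=(P_++B\mathcal Q+CD^{-1}\mathcal S)g$; this gives injectivity. Second, this $v$ does satisfy all three equations; this gives surjectivity. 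Together these yield \eqref{eq:explicitL}, and boundedness of the inverse follows from the formula itself.

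For the derivation, decompose $v=v_++c+v_-$ with $v_\pm\in H^\pm$ and $c=P_0v$ a scalar. The first equation fixes $v_+=P_+g$. Substituting into the second gives
\[
T_W^-v_-=g_- - P_-M_WP_+g-c\,P_-W=\mathcal Qg-c\,p .
\]
The right-hand side lies in $H^-$, so Lemma~\ref{lem:ToeplitzInverse} gives the unique solution $v_-=B\mathcal Qg-cBp$. Hence
\[
v=P_+g+B\mathcal Qg+c(1-Bp)=P_+g+B\mathcal Qg+cC.
\]
Inserting this into the constant-mode equation yields
\[
P_0(hP_+g)+P_0(hB\mathcal Qg)+cP_0(hC)=P_0g,
\]
that is, $cD=\mathcal Sg$ by the definitions \eqref{eq:Dscalar}–\eqref{eq:Soperator}. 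Since $D\ne0$, this forces $c=D^{-1}\mathcal Sg$, and applying $M_h$ gives \eqref{eq:explicitL}.

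For existence, I would reverse the steps: define $c:=D^{-1}\mathcal Sg$ and $v$ by the formula above. Then $P_+v=P_+g$, because $B$ maps into $H^-$ and $C=1-Bp$ has no positive modes. The identity $T_W^-B=I_{H^-}$ recovers the second equation, and the choice of $c$ gives the third. The point needing the most care is the bookkeeping of mode supports: one must check that $\mathcal Qg$ and $p$ lie in $H^-$, so that $B$ applies, and that $Bp\in H^-$, so that $C$ has constant mode $1$ and no positive modes. This is what makes the three projected equations decouple triangularly, with $v_+$ solved first, then $v_-$ in terms of $c$, then $c$.

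Finally, \eqref{eq:inverse-norm} follows from the Banach algebra property $\|M_h\|\le\|h\|_\rho$ and $\|P_+\|\le1$. For the last term, $CD^{-1}\mathcal Sg$ is the function $C$ multiplied by the scalar $D^{-1}\mathcal Sg$, so its norm is bounded by $\|C\|\,|D|^{-1}\|\mathcal S\|\,\|g\|_\rho$. The triangle inequality then gives the stated bound.
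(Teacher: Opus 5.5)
Your proposal is correct and follows the paper's proof step by step: the substitution $v=A_+u$, the splitting $v=P_+g+\xi+v_-$, the inversion of $T_W^-$ via Lemma~\ref{lem:ToeplitzInverse}, the scalar equation $\xi D=\mathcal Sg$, and the norm bound. Your explicit check that the constructed $v$ satisfies all three projected equations (surjectivity) is a useful addition, since the paper leaves this implicit in the reversibility of its steps.
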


\begin{proof}
	Let
	\[
	v:=A_+u.
	\]
	The first two equations in \eqref{eq:RHsystem} become
	\[
	P_+v=g_+,
	\qquad
	P_-(Wv)=g_-.
	\]
	Write
	\[
	v=g_++\xi+v_-,
	\qquad
	\xi\in\mathbb C,
	\qquad
	v_-\in H^-.
	\]
	Then
	\[
	P_-(Wv_-)
	=
	g_--P_-(Wg_+)-\xi P_-W,
	\]
	and therefore
	\[
	T_W^-v_-
	=
	g_--P_-(Wg_+)-\xi p.
	\]
	By Lemma~\ref{lem:ToeplitzInverse},
	\[
	v_-
	=
	B\mathcal Qg-\xi Bp.
	\]
	Thus
	\[
	v
	=
	P_+g+B\mathcal Qg+\xi C.
	\]
	
	Since $u=hv$, the condition $P_0u=g_0$ gives
	\[
	P_0M_h
	\left(
	P_+g+B\mathcal Qg+\xi C
	\right)
	=
	P_0g.
	\]
	Equivalently,
	\[
	\xi D=\mathcal Sg.
	\]
	Since $D\ne0$,
	\[
	\xi=D^{-1}\mathcal Sg.
	\]
	Substitution yields \eqref{eq:explicitL}.
	
	Finally, taking operator norms in \eqref{eq:explicitL} and using
	$\|P_+\|=1$ gives \eqref{eq:inverse-norm}.
\end{proof}

\begin{remark}\label{rem:quantitative-inverse}
	The importance of Theorem~\ref{thm:PhiInverse} for the stability
	problem is quantitative. Invertibility of the linearized Theodorsen
	operator is reduced to estimates on the elements
	\[
	h,\quad W,\quad \alpha,\quad\beta,\quad C,\quad D
	\]
	of the weighted Wiener algebra. In particular, the right-hand side of
	\eqref{eq:inverse-norm} gives an explicit bound for the inverse which
	can be propagated under perturbations of the boundary function.
\end{remark}

For the rigorous computations we replace the objects entering
\eqref{eq:explicitL} by finite Laurent approximations and bound the
resulting residual in operator norm. Those finite-dimensional
estimates will be described separately in
Section~\ref{sec:verification}; they are not needed for the abstract
stability argument developed next.

% ============================================================
\section{Stability under perturbations of the boundary}
\label{sec:stability}
% ============================================================

We now study the dependence of the Theodorsen equation on the
boundary function. The main point is that the analyticity required
for the composition
\[
z\longmapsto r(z+u(z))
\]
can be controlled by separating the validated solution into the
numerically known approximate solution and a small remainder. This
allows us to exploit a bound on the imaginary part of the approximate
solution in a larger complex strip, while measuring the unknown
remainder only in the Wiener norm in which the contraction argument
is performed.

Throughout this section we use four weights
\begin{equation}\label{eq:four-weights}
	1<\rho<\widehat\rho<\tau<\sigma.
\end{equation}
Their roles are as follows:
\[
\begin{array}{cl}
	\sigma
	&
	\text{is used for the boundary perturbation }r-r_0,
	\\[1mm]
	\tau
	&
	\text{is used for }r^{-1},\,r'/r
	\text{ and their derivatives},
	\\[1mm]
	\widehat\rho
	&
	\text{is an auxiliary weight used to control the analytic continuation
		of the approximate solution},
	\\[1mm]
	\rho
	&
	\text{is the weight in which the Theodorsen equation and its stability
		are validated}.
\end{array}
\]

For $\eta>1$, set
\[
S_\eta
:=
\left\{
z\in\mathbb C:
|\operatorname{Im}z|<\log\eta
\right\}.
\]

Let $r_0\in\mathcal A_\sigma$ be a reference boundary function and
let
\[
\bar u\in\mathcal A_{\widehat\rho}
\]
be an approximate solution of the corresponding Theodorsen equation.
Fix $R>0$ and set
\begin{equation}\label{eq:U}
	\mathcal U
	:=
	B_R(\bar u)
	=
	\left\{
	u\in\mathcal A_\rho:
	\|u-\bar u\|_\rho\le R
	\right\}.
\end{equation}
Thus every $u\in\mathcal U$ can be written as
\[
u=\bar u+h,
\qquad
\|h\|_\rho\le R.
\]

The analyticity information used below concerns only the known
approximate solution $\bar u$. Set
\begin{equation}\label{eq:dbar}
	\bar d
	:=
	\sup_{z\in S_{\widehat\rho}}
	|\operatorname{Im}\bar u(z)|.
\end{equation}
We assume that
\begin{equation}\label{eq:composition-margin}
	\rho e^{\bar d+R}<\tau.
\end{equation}

% ------------------------------------------------------------
\subsection{Composition estimates}
% ------------------------------------------------------------

We first derive a composition estimate which exploits the strip bound
\eqref{eq:dbar} without requiring the unknown function $u$ to belong
to $\mathcal A_{\widehat\rho}$.

We shall use the following strip-to-Wiener estimate.
\begin{lemma}\label{lem:strip-to-wiener}
	Let $H$ be a $2\pi$-periodic function, holomorphic in
	$S_{\widehat\rho}$, and suppose that
	\[
	\sup_{z\in S_{\widehat\rho}}|H(z)|\le M.
	\]
	Let
	\[
	H(t)=\sum_{k\in\mathbb Z}\widehat H_k e^{ikt}
	\]
	be its Fourier expansion on the real axis, and define the associated
	Laurent function
	\[
	\widetilde H(\zeta)
	:=
	\sum_{k\in\mathbb Z}\widehat H_k\zeta^k.
	\]
	Then $\widetilde H\in\mathcal B_\rho$ and
	\begin{equation}\label{eq:strip-to-wiener}
		\|\widetilde H\|_{\mathcal B_\rho}
		\le
		C_{\rho,\widehat\rho}^{\rm strip}M,
	\end{equation}
	where
	\begin{equation}\label{eq:Cstrip}
		C_{\rho,\widehat\rho}^{\rm strip}
		:=
		\frac{\widehat\rho+\rho}
		{\widehat\rho-\rho}.
	\end{equation}
\end{lemma}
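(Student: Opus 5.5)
The plan is to bound each Fourier coefficient by Cauchy's estimate on horizontal lines in the strip, and then sum a geometric series.

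\emph{Coefficient bound.} Since $H$ is $2\pi$-periodic and holomorphic in $S_{\widehat\rho}$, I will shift the contour of the coefficient integral
\[
\widehat H_k=\frac1{2\pi}\int_0^{2\pi}H(t)e^{-ikt}\,dt
\]
to the line $\operatorname{Im}z=-s\,\operatorname{sgn}(k)$ for any $0<s<\log\widehat\rho$. By periodicity the two vertical sides of the rectangle cancel, so the shift is justified. On the shifted line $|e^{-ikz}|=e^{-|k|s}$, which gives
\[
|\widehat H_k|\le Me^{-|k|s}.
\]
Letting $s\uparrow\log\widehat\rho$ yields $|\widehat H_k|\le M\widehat\rho^{-|k|}$ for every $k\in\mathbb Z$. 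An equivalent route is to view $\widetilde H$ as holomorphic and bounded by $M$ on the annulus $\widehat\rho^{-1}<|\zeta|<\widehat\rho$ via $\zeta=e^{iz}$, and apply the Cauchy estimates on circles $|\zeta|=\widehat\rho^{\pm1}$ (taken as limits).

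\emph{Summation.} Writing $q=\rho/\widehat\rho<1$, I then sum:
\[
\|\widetilde H\|_{\mathcal B_\rho}=\sum_{k\in\mathbb Z}|\widehat H_k|\rho^{|k|}\le M\Bigl(1+2\sum_{k\ge1}q^k\Bigr)=M\,\frac{1+q}{1-q}=M\,\frac{\widehat\rho+\rho}{\widehat\rho-\rho}.
\]
This is exactly $C^{\rm strip}_{\rho,\widehat\rho}M$, and it also shows $\widetilde H\in\mathcal B_\rho$.

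The only delicate point is that the supremum is taken over the open strip, so the boundary lines are not available directly. This is handled by the limiting argument in $s$ above; nothing else is needed.
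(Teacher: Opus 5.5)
Your proposal is correct and follows essentially the same route as the paper. You shift the coefficient contour toward $\operatorname{Im}z=\pm\log\widehat\rho$, using a limit over slightly smaller strips, to get $|\widehat H_k|\le M\widehat\rho^{-|k|}$, and then sum the same geometric series to obtain $M(\widehat\rho+\rho)/(\widehat\rho-\rho)$.
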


\begin{proof}
	For $k\ne0$, shifting the contour in the formula for the Fourier
	coefficient $\widehat H_k$ to
	\[
	\operatorname{Im}z=\pm\log\widehat\rho,
	\]
	or first to a slightly smaller strip and then passing to the limit,
	gives
	\[
	|\widehat H_k|
	\le
	M\widehat\rho^{-|k|}.
	\]
	For $k=0$ we simply have
	\[
	|\widehat H_0|\le M.
	\]
	Thus
	\[
	|\widehat H_k|
	\le
	M\widehat\rho^{-|k|},
	\qquad k\in\mathbb Z.
	\]
	Consequently,
	\[
		\|\widetilde H\|_{\mathcal B_\rho}=
		\sum_{k\in\mathbb Z}|\widehat H_k|\rho^{|k|}\le
		M\sum_{k\in\mathbb Z}\left(\frac{\rho}{\widehat\rho}\right)^{|k|}=
		M\left[1+2\sum_{k\ge1}\left(\frac{\rho}{\widehat\rho}\right)^k\right]=
		M\frac{\widehat\rho+\rho}{\widehat\rho-\rho}.
	\]
	This proves the result.
\end{proof}
For later use, set
\[
C_{\rm strip}
:=
C_{\rho,\widehat\rho}^{\rm strip}
=
\frac{\widehat\rho+\rho}
{\widehat\rho-\rho}.
\]
Since, by Lemma~\ref{lem:AandB}, the real Fourier and symmetric
Laurent norms satisfy
\[
\|f\|_{\mathcal B_\rho}
\le
\|f\|_{\mathcal A_\rho}
\le
\sqrt2\,\|f\|_{\mathcal B_\rho},
\]
we also define
\begin{equation}\label{eq:Ccomp}
	C_{\rm comp}
	:=
	\sqrt2\,C_{\rm strip}
	=
	\sqrt2\,
	\frac{\widehat\rho+\rho}
	{\widehat\rho-\rho}.
\end{equation}
We can now obtain the required composition estimate.

\begin{proposition}\label{prop:composition-strip}
	Let $g\in\mathcal A_\tau$ and let
	\[
	u=\bar u+h\in\mathcal U.
	\]
	If \eqref{eq:composition-margin} holds, then
	\[
	g(\,\cdot+u(\cdot)\,)\in\mathcal A_\rho
	\]
	and
	\begin{equation}\label{eq:composition-strip}
		\|g(\,\cdot+u(\cdot)\,)\|_\rho
		\le
		C_{\rm comp}\|g\|_\tau.
	\end{equation}
	The estimate is uniform for $u\in\mathcal U$.
\end{proposition}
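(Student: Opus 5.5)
The plan is to pass through the complex strip. We extend $g$ holomorphically, bound the composition by a supremum on a strip, and convert that supremum back into a Wiener norm with Lemma~\ref{lem:strip-to-wiener}. The factor $\sqrt2$ in $C_{\rm comp}$ comes from Lemma~\ref{lem:AandB}, used to pass from the Laurent norm $\|\cdot\|_{\mathcal B_\rho}$ back to $\|\cdot\|_{\mathcal A_\rho}$.

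\emph{Step 1 (extension of $g$).} Write $g=a_0+\sum_{k\ge1}(a_k\cos kz+b_k\sin kz)$. Since $|\cos kz|,|\sin kz|\le\cosh(k\operatorname{Im}z)\le e^{k|\operatorname{Im}z|}$, the series converges on $\{|\operatorname{Im}z|\le\log\tau\}$. There $g$ is holomorphic, $2\pi$-periodic, and satisfies $|g(w)|\le\|g\|_\tau$.

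\emph{Step 2 (splitting $u=\bar u+h$).} The analyticity information sits unevenly on the two pieces. The approximate solution $\bar u$ is holomorphic on $S_{\widehat\rho}$ with $|\operatorname{Im}\bar u|\le\bar d$. The remainder $h$ is only known in $\mathcal A_\rho$, so $|h(z)|\le\|h\|_\rho\le R$ only for $|\operatorname{Im}z|\le\log\rho$. For such $z$ we get
\[
|\operatorname{Im}(z+u(z))|\le\log\rho+\bar d+R<\log\tau
\]
by \eqref{eq:composition-margin}. Hence $g(\cdot+u(\cdot))$ is holomorphic and bounded by $\|g\|_\tau$ on $S_\rho$.

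\emph{Step 3 (the obstacle).} Lemma~\ref{lem:strip-to-wiener} needs a bound on the larger strip $S_{\widehat\rho}$ to produce an $\mathcal B_\rho$-norm. A bound on $S_\rho$ alone gives only $|\widehat H_k|\le M\rho^{-|k|}$, which is not summable against $\rho^{|k|}$. I would first try to resolve this by expanding in the unknown remainder:
\[
g(z+\bar u(z)+h(z))=\sum_{n\ge0}\frac{g^{(n)}(z+\bar u(z))}{n!}\,h(z)^n .
\]
Each coefficient $G_n:=g^{(n)}(\cdot+\bar u(\cdot))$ involves only the known function $\bar u$, so it is holomorphic on $S_{\widehat\rho}$. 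It can be bounded there by Cauchy estimates on disks of radius $R'>R$ centred at $z+\bar u(z)$; these disks stay inside $\{|\operatorname{Im} w|<\log\tau\}$ as long as $\log\widehat\rho+\bar d+R'\le\log\tau$. Applying Lemma~\ref{lem:strip-to-wiener} to each $G_n$, and then the Banach algebra property of $\mathcal B_\rho$ to $G_n h^n$ with $\|h^n\|\le R^n$, gives
\[
\|g(\cdot+u)\|_{\mathcal B_\rho}\le C_{\rm strip}\sum_n \frac{\|g\|_\tau}{R'^n}R^n .
\]
This is a geometric series.

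Recovering exactly the constant $C_{\rm comp}$ therefore hinges on the margin. I expect either to need \eqref{eq:composition-margin} in the stronger form $\widehat\rho e^{\bar d+R}<\tau$, or to absorb the factor $R'/(R'-R)$ into the constants. Checking which version of the margin the later verification actually uses is the point I would settle first. Uniformity in $u\in\mathcal U$ is then automatic, since every bound above depends only on $\bar u$, $R$ and the weights. The final step multiplies by $\sqrt2$ via Lemma~\ref{lem:AandB}.
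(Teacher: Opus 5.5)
\textbf{Verdict: genuine gap.} Your proposal proves a weaker variant under a stronger hypothesis, not the proposition as stated.

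Your Step~3 correctly identifies the obstacle: a bound on $S_\rho$ alone does not control the $\mathcal B_\rho$-norm. The way you resolve it is what fails. Expanding in powers of $h$ forces you to apply Lemma~\ref{lem:strip-to-wiener} to $G_n=g^{(n)}(\cdot+\bar u(\cdot))$. That means evaluating $g$ itself at $z+\bar u(z)$ with $z\in S_{\widehat\rho}$, where $|\operatorname{Im}(z+\bar u(z))|$ can be as large as $\log\widehat\rho+\bar d$. The hypothesis \eqref{eq:composition-margin} only says $\rho e^{\bar d+R}<\tau$. It does not imply $\widehat\rho e^{\bar d}<\tau$, because the auxiliary weight $\widehat\rho$ may lie anywhere in $(\rho,\tau)$. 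So $G_n$ need not even be defined on $S_{\widehat\rho}$. Even under your strengthened margin $\widehat\rho e^{\bar d+R'}\le\tau$, the Cauchy estimates cost an extra factor $R'/(R'-R)>1$. Neither of your fallbacks is admissible: the margin and the constant $C_{\rm comp}$ are fixed in the statement. The weaker margin $\rho e^{\bar d+R}<\tau$ is also exactly the one imposed in hypothesis (2) of Theorem~\ref{thm:uniform-theodorsen}.

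The missing idea is to use linearity in $g$ and expand in the Fourier modes of $g$, not in powers of $h$:
\[
g(t+u(t))=\sum_{k\in\mathbb Z}\widehat g_k\,e^{ikt}\,e^{ik\bar u(t)}\,e^{ikh(t)}.
\]
\begin{itemize}
\item The strip lemma is then applied only to $H=e^{ik\bar u}$, which involves nothing but the known function $\bar u$. On $S_{\widehat\rho}$ one has $|e^{ik\bar u}|=e^{-k\operatorname{Im}\bar u}\le e^{|k|\bar d}$, hence $\|e^{ik\bar u}\|_{\mathcal B_\rho}\le C_{\rm strip}e^{|k|\bar d}$.
\item The unknown remainder is handled purely algebraically: $\|e^{ikh}\|_{\mathcal B_\rho}\le e^{|k|\|h\|_{\mathcal B_\rho}}\le e^{|k|R}$, and $\|e^{ikt}\|_{\mathcal B_\rho}=\rho^{|k|}$.
\item The Banach algebra property then gives $\|e^{ik(\cdot+u)}\|_{\mathcal B_\rho}\le C_{\rm strip}(\rho e^{\bar d+R})^{|k|}\le C_{\rm strip}\tau^{|k|}$.
\item Summing against $|\widehat g_k|$ yields $C_{\rm strip}\|g\|_{\mathcal B_\tau}\le C_{\rm strip}\|g\|_{\mathcal A_\tau}$, and the factor $\sqrt2$ enters as you describe.
\end{itemize}
In this route $g$ is never evaluated on the wide strip. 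The $\widehat\rho$-analyticity is spent only on $\bar u$, and the decay of the coefficients of $g$ at weight $\tau$ absorbs the modewise growth $(\rho e^{\bar d+R})^{|k|}$. This is why the margin is needed only at weight $\rho$ and no constant is lost.
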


\begin{proof}
	Write $g$ in complex Fourier form,
	\[
	g(t)
	=
	\sum_{k\in\mathbb Z}
	\widehat g_k e^{ikt}.
	\]
	For $u=\bar u+h$,
	\[
	e^{iku}
	=
	e^{ik\bar u}e^{ikh}.
	\]
	
	By \eqref{eq:dbar}, for $z\in S_{\widehat\rho}$,
	\[
	|e^{ik\bar u(z)}|
	=
	e^{-k\operatorname{Im}\bar u(z)}
	\le
	e^{|k|\bar d}.
	\]
	Lemma~\ref{lem:strip-to-wiener} therefore gives
	\begin{equation}\label{eq:exp-ubar}
		\|e^{ik\bar u}\|_{\mathcal B_\rho}
		\le
		C_{\rm strip}e^{|k|\bar d}.
	\end{equation}
	
	On the other hand, since $\mathcal A_\rho$ is a Banach algebra and
	$\|h\|_\rho\le R$,
	\[
	\|e^{ikh}\|_\rho
	\le
	e^{|k|\|h\|_\rho}
	\le
	e^{|k|R}.
	\]
	Hence, using the Banach algebra property in the corresponding
	complex Fourier representation,
	\begin{equation}\label{eq:exp-u}
		\|e^{iku}\|_{\mathcal B_\rho}
		\le
		C_{\rm strip}
		e^{|k|(\bar d+R)}.
	\end{equation}
	
	We now have
	\[
	g(t+u(t))
	=
	\sum_{k\in\mathbb Z}
	\widehat g_k e^{ikt}e^{iku(t)}.
	\]
	Since
	\[
	\|e^{ikt}\|_{\mathcal B_\rho}
	=
	\rho^{|k|},
	\]
	it follows from \eqref{eq:exp-u} that
	\[
		\|g(\,\cdot+u(\cdot)\,)\|_{\mathcal B_\rho}\le C_{\rm strip}\sum_{k\in\mathbb Z}
		|\widehat g_k|\left(\rho e^{\bar d+R}\right)^{|k|}\le
		C_{\rm strip}\sum_{k\in\mathbb Z}|\widehat g_k|\tau^{|k|}=
		C_{\rm strip}\|g\|_{\mathcal B_\tau},
	\]
	where we used \eqref{eq:composition-margin}.
	Therefore,
	\[
		\|g(\,\cdot+u(\cdot)\,)\|_{\mathcal A_\rho}\le \sqrt2\,
		\|g(\,\cdot+u(\cdot)\,)\|_{\mathcal B_\rho}
		\le \sqrt2\,C_{\rm strip}\|g\|_{\mathcal B_\tau}
		\le C_{\rm comp}\|g\|_{\mathcal A_\tau}.
	\]
\end{proof}

In particular, if $r_1,r_2\in\mathcal A_\sigma$, then
\begin{equation}\label{eq:r-composition-difference}
	\left\|
	r_1(\,\cdot+u\,)
	-
	r_2(\,\cdot+u\,)
	\right\|_\rho
	\le
	C_{\rm comp}
	\|r_1-r_2\|_\sigma,
\end{equation}
uniformly for $u\in\mathcal U$.

% ------------------------------------------------------------
\subsection{The logarithmic derivative}\label{ssec:logarithmic-derivative}
% ------------------------------------------------------------

The derivative of the Theodorsen operator involves
\[
\psi_{r,u}
=
\frac{r'}r(\,\cdot+u(\cdot)\,).
\]
We next control this quantity under perturbations of $r$.

Assume that
\begin{equation}\label{eq:r0-inverse-tau}
	r_0^{-1}\in\mathcal A_\tau,
	\qquad
	M_0:=\|r_0^{-1}\|_\tau.
\end{equation}
Let
\[
r=r_0+h,
\qquad
\|h\|_\sigma\le\delta.
\]
Since
\[
r=r_0(1+r_0^{-1}h),
\]
if
\begin{equation}\label{eq:inverse-condition}
	M_0\delta<1,
\end{equation}
then every
\[
r\in B_\delta(r_0)\subset\mathcal A_\sigma
\]
is invertible in $\mathcal A_\tau$, with
\begin{equation}\label{eq:Mdelta}
	\|r^{-1}\|_\tau
	\le
	M_\delta,
	\qquad
	M_\delta
	:=
	\frac{M_0}{1-M_0\delta}.
\end{equation}

Differentiation from $\mathcal A_\sigma$ to $\mathcal A_\tau$ gives
\begin{equation}\label{eq:fourier-derivative}
	\|h'\|_\tau
	\le
	C_{\sigma,\tau}^{(1)}
	\|h\|_\sigma,
\end{equation}
where
\begin{equation}\label{eq:C1}
	C_{\sigma,\tau}^{(1)}
	:=
	\sup_{k\ge1}
	k\left(\frac{\tau}{\sigma}\right)^k
	\le
	\frac1{e\log(\sigma/\tau)}.
\end{equation}

Set
\begin{equation}\label{eq:Kdelta}
	K_\delta:=\|r_0'\|_\tau+C_{\sigma,\tau}^{(1)}\delta.
\end{equation}
Then
\[
\|r'\|_\tau\le K_\delta
\]
for every $r\in B_\delta(r_0)$.

\begin{lemma}\label{lem:logderivative}
	For every
	\[
	r_1,r_2\in B_\delta(r_0)
	\]
	one has
	\begin{equation}\label{eq:logderivative}
		\left\|
		\frac{r_1'}{r_1}
		-
		\frac{r_2'}{r_2}
		\right\|_\tau
		\le
		L_{\log}(\delta)
		\|r_1-r_2\|_\sigma,
	\end{equation}
	where
	\begin{equation}\label{eq:Llog}
		L_{\log}(\delta)
		:=
		M_\delta C_{\sigma,\tau}^{(1)}
		+
		M_\delta^2K_\delta.
	\end{equation}
\end{lemma}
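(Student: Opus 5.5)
The plan is to use the elementary algebraic identity
\[
\frac{r_1'}{r_1}-\frac{r_2'}{r_2}
=
\frac{r_1'-r_2'}{r_1}
+
r_2'\left(\frac1{r_1}-\frac1{r_2}\right)
=
r_1^{-1}(r_1-r_2)'
-
r_2'\,r_1^{-1}r_2^{-1}(r_1-r_2),
\]
which holds pointwise on the circle. Every factor is then estimated in $\mathcal A_\tau$, using that $\mathcal A_\tau$ is a Banach algebra (Appendix~\ref{sec:appendix-wiener}). Since $r_1,r_2\in B_\delta(r_0)$ and \eqref{eq:inverse-condition} is in force, both are invertible in $\mathcal A_\tau$ with $\|r_j^{-1}\|_\tau\le M_\delta$ by \eqref{eq:Mdelta}. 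Moreover $\|r_2'\|_\tau\le K_\delta$ by \eqref{eq:Kdelta}.

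For the first term, I will apply the derivative estimate \eqref{eq:fourier-derivative} to $h=r_1-r_2\in\mathcal A_\sigma$. This gives
\[
\bigl\|r_1^{-1}(r_1-r_2)'\bigr\|_\tau\le M_\delta\,C^{(1)}_{\sigma,\tau}\|r_1-r_2\|_\sigma .
\]
For the second term, I will use the monotonicity $\|\cdot\|_\tau\le\|\cdot\|_\sigma$, which holds because $\tau<\sigma$ and the weights are increasing in $k$. This gives
\[
\bigl\|r_2'r_1^{-1}r_2^{-1}(r_1-r_2)\bigr\|_\tau\le K_\delta M_\delta^2\|r_1-r_2\|_\sigma .
\]
Adding the two bounds yields \eqref{eq:logderivative} with $L_{\log}(\delta)$ as in \eqref{eq:Llog}.

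Two details need checking rather than real work. First, $(r_1-r_2)'$ has zero mean, so the sum in \eqref{eq:C1} starting at $k\ge1$ is the right constant. Differentiating $a_k\cos kt+b_k\sin kt$ gives coefficients of size $k(|a_k|+|b_k|)$, and $k\tau^k\le C^{(1)}_{\sigma,\tau}\sigma^k$. Second, the bound $\|r^{-1}\|_\tau\le M_\delta$ from \eqref{eq:Mdelta} requires $\|r_0^{-1}h\|_\tau\le M_0\|h\|_\tau\le M_0\delta<1$, and then a Neumann series in $\mathcal A_\tau$. This is exactly how \eqref{eq:Mdelta} was derived, so it can be invoked directly.

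I do not expect a real obstacle. The only point requiring care is that the inverses $r_j^{-1}$ are controlled in $\mathcal A_\tau$ and not in $\mathcal A_\sigma$, so every product must be measured in the $\tau$-norm. This in turn forces the use of the norm comparison $\|\cdot\|_\tau\le\|\cdot\|_\sigma$ on the undifferentiated difference $r_1-r_2$.
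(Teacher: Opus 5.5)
Your proposal is correct and follows the paper's proof essentially verbatim. It uses the same splitting $h'/r_1+r_2'\bigl(1/r_1-1/r_2\bigr)$ with $h=r_1-r_2$, the same bounds $\|r_j^{-1}\|_\tau\le M_\delta$, $\|r_2'\|_\tau\le K_\delta$ and the derivative estimate, and it spells out the comparison $\|h\|_\tau\le\|h\|_\sigma$ that the paper uses only implicitly.
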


\begin{proof}
	Set
	\[
	h:=r_1-r_2.
	\]
	Then
	\[
	\frac{r_1'}{r_1}
	-
	\frac{r_2'}{r_2}
	=
	\frac{h'}{r_1}
	+
	r_2'
	\left(
	\frac1{r_1}-\frac1{r_2}
	\right).
	\]
	The first term satisfies
	\[
	\left\|\frac{h'}{r_1}\right\|_\tau
	\le
	M_\delta
	C_{\sigma,\tau}^{(1)}
	\|h\|_\sigma.
	\]
	Moreover,
	\[
	\frac1{r_1}-\frac1{r_2}
	=
	-\frac{r_1-r_2}{r_1r_2},
	\]
	and hence
	\[
	\left\|
	\frac1{r_1}-\frac1{r_2}
	\right\|_\tau
	\le
	M_\delta^2\|h\|_\sigma.
	\]
	Since
	\[
	\|r_2'\|_\tau\le K_\delta,
	\]
	the result follows.
\end{proof}

Combining Lemma~\ref{lem:logderivative} with
Proposition~\ref{prop:composition-strip} gives the required stability
of the coefficient in the linearized Theodorsen operator.

\begin{proposition}\label{prop:psi-r-stability}
	For every
	\[
	r_1,r_2\in B_\delta(r_0),
	\qquad
	u\in\mathcal U,
	\]
	one has
	\begin{equation}\label{eq:psi-r-stability}
		\|\psi_{r_1,u}-\psi_{r_2,u}\|_\rho
		\le
		C_\Phi(\delta)
		\|r_1-r_2\|_\sigma,
	\end{equation}
	where
	\begin{equation}\label{eq:Cphi}
		C_\Phi(\delta)
		:=
		C_{\rm comp}L_{\log}(\delta).
	\end{equation}
\end{proposition}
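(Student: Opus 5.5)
The estimate is a direct composition of the two preceding results, and the plan is to apply them in sequence. Fix $r_1,r_2\in B_\delta(r_0)$ and $u\in\mathcal U$, and set
\[
g:=\frac{r_1'}{r_1}-\frac{r_2'}{r_2}.
\]
Composition with $\cdot+u(\cdot)$ is linear in the outer function. Hence
\[
\psi_{r_1,u}-\psi_{r_2,u}=g(\,\cdot+u(\cdot)\,),
\]
which reduces the claim to bounding a single composition.

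The first step is to check that $g\in\mathcal A_\tau$. Condition \eqref{eq:inverse-condition} is assumed, so $r_1^{-1},r_2^{-1}\in\mathcal A_\tau$ with norm at most $M_\delta$ by \eqref{eq:Mdelta}. By \eqref{eq:fourier-derivative}, the derivatives $r_1',r_2'$ lie in $\mathcal A_\tau$. Since $\mathcal A_\tau$ is a Banach algebra, $g\in\mathcal A_\tau$. Lemma~\ref{lem:logderivative} then gives
\[
\|g\|_\tau\le L_{\log}(\delta)\|r_1-r_2\|_\sigma .
\]

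The second step applies Proposition~\ref{prop:composition-strip} to $g$. Under the standing margin assumption \eqref{eq:composition-margin}, it yields, uniformly for $u\in\mathcal U$,
\[
\|g(\,\cdot+u(\cdot)\,)\|_\rho\le C_{\rm comp}\|g\|_\tau .
\]
Chaining this with the first step gives \eqref{eq:psi-r-stability} with $C_\Phi(\delta)=C_{\rm comp}L_{\log}(\delta)$.

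The only point needing care is that the composition of $r_j'/r_j$ really coincides with the quotient $r_j'(\cdot+u)/r_j(\cdot+u)$ defining $\psi_{r_j,u}$. This holds because composition with $\cdot+u(\cdot)$ is multiplicative on $\mathcal A_\tau$: it acts term by term on absolutely convergent Fourier series, with the series controlled as in the proof of Proposition~\ref{prop:composition-strip}. Beyond that, the argument has no real obstacle; all the analytic work sits in the two cited results.
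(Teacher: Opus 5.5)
Your proposal is correct and is exactly the paper's argument: apply Proposition~\ref{prop:composition-strip} to $g=r_1'/r_1-r_2'/r_2$ and bound $\|g\|_\tau$ by Lemma~\ref{lem:logderivative}. Your remark about multiplicativity of composition is harmless but unnecessary, because the paper defines $\psi_{r,u}$ directly as $\frac{r'}{r}(\,\cdot+u(\cdot)\,)$.
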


\begin{proof}
	Apply Proposition~\ref{prop:composition-strip} to
	\[
	g
	=
	\frac{r_1'}{r_1}
	-
	\frac{r_2'}{r_2}
	\]
	and use Lemma~\ref{lem:logderivative}.
\end{proof}

% ------------------------------------------------------------
\subsection{Variation with respect to the boundary correspondence}
% ------------------------------------------------------------

We also need to control the variation of $\psi_{r,u}$ with respect
to $u$. Set
\[
\phi_r:=\frac{r'}r.
\]
For $u_1,u_2\in\mathcal U$, define
\[
u_s:=u_2+s(u_1-u_2),
\qquad
0\le s\le1.
\]
Since $\mathcal U$ is convex,
\[
u_s\in\mathcal U
\]
for every $s\in[0,1]$.

We first obtain a uniform bound on $\phi_r'$. Since
\[
\phi_r'
=
\frac{r''}{r}
-
\left(\frac{r'}r\right)^2,
\]
define
\begin{equation}\label{eq:C2}
	C_{\sigma,\tau}^{(2)}:=\sup_{k\ge1}k^2\left(\frac{\tau}{\sigma}\right)^k
	\le\frac{4}{e^2\log^2(\sigma/\tau)}
\end{equation}
Then
\[
\|h''\|_\tau
\le
C_{\sigma,\tau}^{(2)}
\|h\|_\sigma.
\]
Consequently, if
\begin{equation}\label{eq:K2delta}
	K_{2,\delta}
	:=
	\|r_0''\|_\tau
	+
	C_{\sigma,\tau}^{(2)}\delta,
\end{equation}
then
\[
\|r''\|_\tau
\le
K_{2,\delta}.
\]
It follows that
\begin{equation}\label{eq:phi-prime-bound}
	\|\phi_r'\|_\tau
	\le
	M_\delta K_{2,\delta}
	+
	M_\delta^2K_\delta^2.
\end{equation}
Set
\begin{equation}\label{eq:Kphi}
	K_\phi(\delta)
	:=
	M_\delta K_{2,\delta}
	+
	M_\delta^2K_\delta^2.
\end{equation}

\begin{lemma}\label{lem:psi-u-stability}
	For every
	\[
	r\in B_\delta(r_0)
	\]
	and every
	\[
	u_1,u_2\in\mathcal U,
	\]
	one has
	\begin{equation}\label{eq:psi-u-stability}
		\|\psi_{r,u_1}-\psi_{r,u_2}\|_\rho
		\le
		L_u(\delta)
		\|u_1-u_2\|_\rho,
	\end{equation}
	where
	\begin{equation}\label{eq:Lu}
		L_u(\delta)
		:=
		C_{\rm comp}K_\phi(\delta).
	\end{equation}
\end{lemma}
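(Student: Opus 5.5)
The plan is to use the fundamental theorem of calculus along the segment $u_s=u_2+s(u_1-u_2)$, which lies in $\mathcal U$ by convexity. Pointwise on the real axis,
\[
\psi_{r,u_1}(t)-\psi_{r,u_2}(t)
=\phi_r\bigl(t+u_1(t)\bigr)-\phi_r\bigl(t+u_2(t)\bigr)
=\bigl(u_1(t)-u_2(t)\bigr)\int_0^1\phi_r'\bigl(t+u_s(t)\bigr)\,ds .
\]
Taking $\mathcal A_\rho$ norms and using the Banach algebra property of $\mathcal A_\rho$, we get
\[
\|\psi_{r,u_1}-\psi_{r,u_2}\|_\rho\le\|u_1-u_2\|_\rho\,\Bigl\|\int_0^1\phi_r'(\,\cdot+u_s(\cdot)\,)\,ds\Bigr\|_\rho .
\]
The integral is then estimated by moving the norm inside (Minkowski's inequality for the Bochner/Riemann integral), giving the bound $\sup_{s\in[0,1]}\|\phi_r'(\,\cdot+u_s(\cdot)\,)\|_\rho$.

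For each fixed $s$ we apply Proposition~\ref{prop:composition-strip} with $g=\phi_r'$ and $u=u_s\in\mathcal U$. This is legitimate because \eqref{eq:composition-margin} holds and the constant $C_{\rm comp}$ is uniform on $\mathcal U$. It yields $\|\phi_r'(\,\cdot+u_s\,)\|_\rho\le C_{\rm comp}\|\phi_r'\|_\tau$. Finally, \eqref{eq:phi-prime-bound} gives $\|\phi_r'\|_\tau\le K_\phi(\delta)$, since $\|r^{-1}\|_\tau\le M_\delta$, $\|r'\|_\tau\le K_\delta$ and $\|r''\|_\tau\le K_{2,\delta}$ for $r\in B_\delta(r_0)$, using $\phi_r'=r''r^{-1}-(r'r^{-1})^2$ together with the algebra property in $\mathcal A_\tau$. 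Combining these bounds gives $L_u(\delta)=C_{\rm comp}K_\phi(\delta)$.

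The only delicate point is justifying the integral identity and the norm interchange in the Wiener algebra, rather than merely pointwise. I would first try to verify the identity at the level of Fourier series. Writing $\phi_r'=\sum_k \widehat g_k e^{ikt}$, each mode contributes
\[
e^{ik(t+u_1)}-e^{ik(t+u_2)}=ik(u_1-u_2)\int_0^1 e^{ik(t+u_s)}\,ds .
\]
This is an identity in the Banach algebra $\mathcal B_\rho$, because $s\mapsto e^{iku_s}$ is continuous into $\mathcal B_\rho$ with the derivative computed by the algebra's exponential series. Summing over $k$ then converges absolutely, by the bound \eqref{eq:exp-u} and $\rho e^{\bar d+R}<\tau$, exactly as in the proof of Proposition~\ref{prop:composition-strip}. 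The same mode-by-mode bound also gives the sup estimate directly, which avoids any measurability issues. The passage between the $\mathcal A_\rho$ and $\mathcal B_\rho$ norms is already absorbed into $C_{\rm comp}$. For the product with $u_1-u_2$ I would take the algebra property in $\mathcal A_\rho$ from Appendix~\ref{sec:appendix-wiener}. If that product estimate instead carries an extra constant, I would carry out the whole argument in $\mathcal B_\rho$, where the algebra property is exact.
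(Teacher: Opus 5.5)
Your proposal is correct and follows the paper's proof essentially verbatim. It uses the same identity $\phi_r(\cdot+u_1)-\phi_r(\cdot+u_2)=\int_0^1\phi_r'(\cdot+u_s)(u_1-u_2)\,ds$ along the segment $u_s\in\mathcal U$, then Proposition~\ref{prop:composition-strip} with $g=\phi_r'$, the bound \eqref{eq:phi-prime-bound}, and the Banach algebra property of $\mathcal A_\rho$. Your mode-by-mode justification of the integral identity in $\mathcal B_\rho$ is a detail the paper takes for granted, and it yields the same constant $\sqrt2\,C_{\rm strip}=C_{\rm comp}$.
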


\begin{proof}
	We have
	\[
		\psi_{r,u_1}-\psi_{r,u_2}=
		\phi_r(\,\cdot+u_1\,)-\phi_r(\,\cdot+u_2\,)=
		\int_0^1\phi_r'(\,\cdot+u_s\,)(u_1-u_2)\,ds.
	\]
	Since $u_s\in\mathcal U$, Proposition~\ref{prop:composition-strip}
	gives
	\[
	\|\phi_r'(\,\cdot+u_s\,)\|_\rho
	\le
	C_{\rm comp}\|\phi_r'\|_\tau
	\le
	C_{\rm comp}K_\phi(\delta).
	\]
	The Banach algebra property of $\mathcal A_\rho$ therefore yields
	\[
		\|\psi_{r,u_1}-\psi_{r,u_2}\|_\rho\le
		\int_0^1\|\phi_r'(\,\cdot+u_s\,)\|_\rho\|u_1-u_2\|_\rho\,ds\le
		C_{\rm comp}K_\phi(\delta)\|u_1-u_2\|_\rho.
	\]
\end{proof}

\subsection{Variation of the nonlinear Theodorsen operator}
\label{sec:variation-F}

For a positive boundary function $r$, recall that
\[
\mathcal F_r(u)
=
u-\mathcal H\bigl[\log r(\,\cdot+u(\cdot)\,)\bigr].
\]
We now estimate the variation of $\mathcal F_r$ with respect to the
boundary function $r$.

As in Section~\ref{ssec:logarithmic-derivative}, assume that
\[
r_0^{-1}\in\mathcal A_\tau,
\qquad
M_0:=\|r_0^{-1}\|_\tau,
\]
and let
\[
r\in B_\delta(r_0)\subset\mathcal A_\sigma,
\qquad
M_0\delta<1.
\]
Since $\tau<\sigma$, we have
\[
\|r-r_0\|_\tau
\le
\|r-r_0\|_\sigma
\le
\delta,
\]
and therefore
\[
\|r_0^{-1}(r-r_0)\|_\tau
\le
M_0\delta
<
1.
\]
Consequently,
\[
r
=
r_0\bigl(1+r_0^{-1}(r-r_0)\bigr)
\]
is invertible in $\mathcal A_\tau$. Since $r_0$ is positive on the real axis and invertible in
$\mathcal A_\tau$, its winding number is zero; hence it admits a
$2\pi$-periodic logarithm in $\mathcal A_\tau$, which we fix once and
for all. Moreover, the logarithm of the
second factor is well defined by its convergent Banach-algebra series
\[
\log\bigl(1+r_0^{-1}(r-r_0)\bigr)
=
\sum_{n\ge1}
\frac{(-1)^{n+1}}{n}
\bigl[r_0^{-1}(r-r_0)\bigr]^n.
\]
Thus, we define
\[
\log r-\log r_0
:=
\log\bigl(1+r_0^{-1}(r-r_0)\bigr)
\in\mathcal A_\tau.
\]
On the real axis this agrees with the usual real logarithm, since
$r_0$ and, after possibly decreasing $\delta$, every
$r\in B_\delta(r_0)$ are positive.

For the quantitative estimate it is convenient to use the line
segment
\[
r_s:=r_0+s(r-r_0),
\qquad
0\le s\le1.
\]
For every $s\in[0,1]$,
\[
\|r_s-r_0\|_\sigma
\le
\delta,
\]
and hence
\[
\|r_s^{-1}\|_\tau
\le
M_\delta,
\qquad
M_\delta
:=
\frac{M_0}{1-M_0\delta}.
\]
The map
\[
s\longmapsto \log r_s
\]
is differentiable as an $\mathcal A_\tau$-valued map and
\[
\frac{d}{ds}\log r_s
=
r_s^{-1}(r-r_0).
\]
It follows that
\[
\log r-\log r_0
=
\int_0^1
r_s^{-1}(r-r_0)\,ds.
\]
Therefore, by the Banach algebra property,
\begin{equation}\label{eq:log-r-variation}
	\|\log r-\log r_0\|_\tau
	\le
	M_\delta\|r-r_0\|_\sigma.
\end{equation}

We can now estimate the nonlinear Theodorsen operator.

\begin{proposition}\label{prop:F-boundary-variation}
	Let $r\in B_\delta(r_0)$ and let
	\[
	u\in U=B_R(\bar u)\subset\mathcal A_\rho.
	\]
	Assume that the hypotheses of
	Proposition~\ref{prop:composition-strip} hold. Then
	\begin{equation}\label{eq:F-boundary-variation}
		\|\mathcal F_r(u)-\mathcal F_{r_0}(u)\|_\rho
		\le
		C_F(\delta)\|r-r_0\|_\sigma,
	\end{equation}
	uniformly for $u\in U$, where
	\begin{equation}\label{eq:CF}
		C_F(\delta)
		:=
		C_{\rm comp}M_\delta.
	\end{equation}
\end{proposition}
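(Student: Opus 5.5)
The plan is to reduce the estimate to the composition bound of Proposition~\ref{prop:composition-strip}, applied to the function $\log r-\log r_0$, together with the logarithm estimate \eqref{eq:log-r-variation}.

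First I will cancel the identity parts of the two operators. By definition,
\[
\mathcal F_r(u)-\mathcal F_{r_0}(u)
=
-\mathcal H\bigl[\log r(\,\cdot+u(\cdot)\,)-\log r_0(\,\cdot+u(\cdot)\,)\bigr].
\]
Next I will check that the bracket equals $g(\,\cdot+u(\cdot)\,)$ with
\[
g:=\log r-\log r_0=\log\bigl(1+r_0^{-1}(r-r_0)\bigr)\in\mathcal A_\tau .
\]
On the real axis, composition with $t\mapsto t+u(t)$ commutes with the pointwise difference. Moreover, the Banach-algebra logarithm agrees with the real logarithm there, as noted before the statement. So no issue of branch choice arises after composing.

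Since $\|\mathcal H\|\le1$ on $\mathcal A_\rho$ (it only multiplies Fourier modes by $\pm i$ or $0$), it suffices to bound $\|g(\,\cdot+u(\cdot)\,)\|_\rho$. The hypotheses of Proposition~\ref{prop:composition-strip} hold, in particular \eqref{eq:composition-margin}, and $g\in\mathcal A_\tau$. That proposition therefore gives
\[
\|g(\,\cdot+u(\cdot)\,)\|_\rho\le C_{\rm comp}\|g\|_\tau ,
\]
uniformly for $u\in U$. Combining this with \eqref{eq:log-r-variation}, which states $\|g\|_\tau\le M_\delta\|r-r_0\|_\sigma$, yields \eqref{eq:F-boundary-variation} with $C_F(\delta)=C_{\rm comp}M_\delta$.

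The only delicate point is justifying the identification of the composed difference with the composition of $g$. This requires the logarithms of $r$ and $r_0$ entering $\mathcal F_r$ and $\mathcal F_{r_0}$ to be the branches fixed above. It also requires that any constant-mode ambiguity in the logarithm be harmless, and it is: a constant stays constant after composition and is annihilated by $\mathcal H$. Everything else is a direct chaining of earlier estimates.
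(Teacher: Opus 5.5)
Your proposal is correct and follows the paper's proof: you rewrite $\mathcal F_r(u)-\mathcal F_{r_0}(u)$ as $-\mathcal H\bigl[(\log r-\log r_0)(\,\cdot+u(\cdot)\,)\bigr]$, apply Proposition~\ref{prop:composition-strip} to $\log r-\log r_0\in\mathcal A_\tau$, and then use $\|\mathcal H\|\le1$ together with \eqref{eq:log-r-variation}. Your additional remark that any constant-mode ambiguity in the logarithm is killed by $\mathcal H$ is harmless; the paper avoids the issue by fixing the branch of the logarithm once and for all.
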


\begin{proof}
	By definition,
	\[
	\begin{aligned}
		\mathcal F_r(u)-\mathcal F_{r_0}(u)
		&=
		-\mathcal H
		\left[
		\log r(\,\cdot+u(\cdot)\,)
		-
		\log r_0(\,\cdot+u(\cdot)\,)
		\right]
		\\
		&=
		-\mathcal H
		\left[
		(\log r-\log r_0)(\,\cdot+u(\cdot)\,)
		\right].
	\end{aligned}
	\]
	Since
	\[
	\log r-\log r_0\in\mathcal A_\tau,
	\]
	Proposition~\ref{prop:composition-strip} gives
	\[
	\|(\log r-\log r_0)(\,\cdot+u(\cdot)\,)\|_\rho
	\le
	C_{\rm comp}
	\|\log r-\log r_0\|_\tau.
	\]
	Using
	\[
	\|\mathcal H\|_{\mathcal A_\rho\to\mathcal A_\rho}\le1
	\]
	and \eqref{eq:log-r-variation}, we obtain
	\[
	\begin{aligned}
		\|\mathcal F_r(u)-\mathcal F_{r_0}(u)\|_\rho
		&\le
		C_{\rm comp}
		\|\log r-\log r_0\|_\tau
		\\
		&\le
		C_{\rm comp}M_\delta
		\|r-r_0\|_\sigma.
	\end{aligned}
	\]
	This proves \eqref{eq:F-boundary-variation}.
\end{proof}
% ------------------------------------------------------------
\subsection{Uniform a posteriori stability}
% ------------------------------------------------------------

We can now formulate the quantitative stability theorem. Importantly,
the whole fixed-point argument is performed in the single Banach
space $\mathcal A_\rho$; the stronger weight $\widehat\rho$ is used
only to obtain the strip estimate \eqref{eq:dbar} for the known
approximate solution $\bar u$.

Let
\[
L:\mathcal A_\rho\longrightarrow\mathcal A_\rho
\]
be an invertible bounded operator used in the Newton-like map
\[
\mathcal N_r(u)
:=
u-L\mathcal F_r(u).
\]
In the applications, $L$ is obtained from the Wiener--Hopf
construction of Section~\ref{sec:WH}.

Assume that
\begin{equation}\label{eq:Y0}
	Y_0
	\ge
	\|\mathcal N_{r_0}(\bar u)-\bar u\|_\rho
\end{equation}
and
\begin{equation}\label{eq:Z0}
	Z_0
	\ge
	\sup_{u\in\mathcal U}
	\|I-LD\mathcal F_{r_0}(u)\|
	_{\mathcal A_\rho\to\mathcal A_\rho}.
\end{equation}

\begin{theorem}[Uniform stability of the Theodorsen equation]
	\label{thm:uniform-theodorsen}
	Let
	\[
	1<\rho<\widehat\rho<\tau<\sigma
	\]
	and let
	\[
	\mathcal U=B_R(\bar u)\subset\mathcal A_\rho,
	\qquad
	\bar u\in\mathcal A_{\widehat\rho}.
	\]
	Assume that:
	\begin{enumerate}
		\item $r_0\in\mathcal A_\sigma$ and $r_0^{-1}\in\mathcal A_\tau;$
		\item $\bar d:=\sup_{z\in S_{\widehat\rho}}|\operatorname{Im}\bar u(z)|$
satisfies $\rho e^{\bar d+R}<\tau;$
		\item $M_0\delta<1;$
		\item the reference estimates \eqref{eq:Y0} and \eqref{eq:Z0} hold.
		\item $\kappa_\delta:=Z_0+\|L\|C_\Phi(\delta)\delta<1$
		\item $Y_0+\|L\|C_F(\delta)\delta+\kappa_\delta R\le R.$
	\end{enumerate}
Then, for every
	\[
	r\in\mathcal A_\sigma,
	\qquad
	\|r-r_0\|_\sigma\le\delta,
	\]
	the Theodorsen equation
	\[
	\mathcal F_r(u)=0
	\]
	has a unique solution in
	\[
	B_R(\bar u)\subset\mathcal A_\rho.
	\]
\end{theorem}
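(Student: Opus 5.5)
The plan is to apply Proposition~\ref{prop:newtonvalidation} to $\mathcal F_r$ for each fixed $r$ with $\|r-r_0\|_\sigma\le\delta$. The operator $L$, the centre $\bar u$ and the radius $R$ are kept the same for all such $r$. The work reduces to checking the two hypotheses \eqref{eq:derivativebound} and \eqref{eq:newtoncontraction}, with $\kappa=\kappa_\delta$ and with $Y$ replaced by an upper bound for $\|\mathcal N_r(\bar u)-\bar u\|_\rho$. Before that, one notes that hypotheses (1)--(3) make everything well defined uniformly in $r$. By \eqref{eq:Mdelta}, $r^{-1}\in\mathcal A_\tau$ with $\|r^{-1}\|_\tau\le M_\delta$. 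The logarithm $\log r$ is defined in $\mathcal A_\tau$ as in Section~\ref{sec:variation-F}. By Proposition~\ref{prop:composition-strip} and hypothesis (2), compositions $g(\cdot+u(\cdot))$ with $g\in\mathcal A_\tau$ and $u\in\mathcal U$ lie in $\mathcal A_\rho$. Hence $\mathcal F_r:\mathcal U\to\mathcal A_\rho$ is well defined and differentiable, with derivative \eqref{eq:DF}.

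\emph{Derivative bound.} For $u\in\mathcal U$, write
\[
I-LD\mathcal F_r(u)=\bigl(I-LD\mathcal F_{r_0}(u)\bigr)+L\bigl(D\mathcal F_{r_0}(u)-D\mathcal F_r(u)\bigr).
\]
By \eqref{eq:DF} the second difference equals $\mathcal H M_{\psi_{r,u}-\psi_{r_0,u}}$. Using $\|\mathcal H\|\le1$, the Banach algebra property of $\mathcal A_\rho$ (so $\|M_g\|\le\|g\|_\rho$), and Proposition~\ref{prop:psi-r-stability} with $r_1=r$, $r_2=r_0$, its norm is at most $C_\Phi(\delta)\delta$. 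Together with \eqref{eq:Z0} this gives
\[
\sup_{u\in\mathcal U}\|I-LD\mathcal F_r(u)\|\le Z_0+\|L\|C_\Phi(\delta)\delta=\kappa_\delta<1
\]
by hypothesis (5). Lemma~\ref{lem:psi-u-stability} is not needed here, because \eqref{eq:Z0} already takes the supremum over $\mathcal U$ at $r_0$. Since $\mathcal U$ is convex, the mean value inequality then shows that $\mathcal N_r$ is $\kappa_\delta$-Lipschitz on $\mathcal U$.

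\emph{Residual bound.} Write
\[
\mathcal N_r(\bar u)-\bar u=-L\mathcal F_r(\bar u)=\bigl(\mathcal N_{r_0}(\bar u)-\bar u\bigr)-L\bigl(\mathcal F_r(\bar u)-\mathcal F_{r_0}(\bar u)\bigr).
\]
By \eqref{eq:Y0} and Proposition~\ref{prop:F-boundary-variation} at $u=\bar u\in\mathcal U$, its norm is at most $Y_0+\|L\|C_F(\delta)\delta$. Hypothesis (6) then gives exactly \eqref{eq:newtoncontraction}. So $\mathcal N_r$ maps $B_R(\bar u)$ into itself: for $u\in\mathcal U$,
\[
\|\mathcal N_r(u)-\bar u\|_\rho\le\|\mathcal N_r(\bar u)-\bar u\|_\rho+\kappa_\delta R\le R.
\]
It is also a contraction there, and Proposition~\ref{prop:newtonvalidation}, using the invertibility of $L$, yields a unique zero of $\mathcal F_r$ in $B_R(\bar u)$.

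The main obstacle is the well-definedness and differentiability of $\mathcal F_r$ on $\mathcal U$ uniformly in $r$, together with justifying \eqref{eq:DF} there. The derivative involves composing $\log r$ with $\cdot+u$, and this requires the margin (2) \emph{for every} $u\in\mathcal U$, not just $\bar u$. I would handle this by observing that Proposition~\ref{prop:composition-strip} applies to every $u\in\mathcal U$ and to $\phi_r,\phi_r'\in\mathcal A_\tau$. I would then verify Fréchet differentiability via the integral formula used in Lemma~\ref{lem:psi-u-stability}, whose remainder is controlled by $L_u(\delta)$. A secondary point is that the real logarithm agrees with the Banach-algebra one; this needs $r>0$, which follows from $\|r-r_0\|_\infty\le\delta$ and $M_0\delta<1$, since $\min r_0\ge 1/M_0$.
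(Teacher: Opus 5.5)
Your proposal is correct and follows the paper's proof essentially step for step. You bound $\mathcal N_r(\bar u)-\bar u$ through Proposition~\ref{prop:F-boundary-variation} and perturb $I-LD\mathcal F_{r_0}(u)$ through Proposition~\ref{prop:psi-r-stability}, then finish with the contraction argument, packaged via Proposition~\ref{prop:newtonvalidation}. Your extra remarks on well-definedness and differentiability on $\mathcal U$, and on positivity of $r$ following from $M_0\delta<1$ because $\min r_0\ge 1/M_0$, fill in points the paper's proof leaves implicit.
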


\begin{proof}
	For $r\in B_\delta(r_0)$,
	\[
		\|\mathcal N_r(\bar u)-\bar u\|_\rho\le
		\|\mathcal N_{r_0}(\bar u)-\bar u\|_\rho
+\|L\|\|\mathcal F_r(\bar u)-\mathcal F_{r_0}(\bar u)\|_\rho
\le Y_0+\|L\|C_F(\delta)\delta.
	\]
	
	Furthermore,
	\[
	D\mathcal F_r(u)
	-
	D\mathcal F_{r_0}(u)
	=
	-\mathcal H
	M_{\psi_{r,u}-\psi_{r_0,u}}.
	\]
	Hence Proposition~\ref{prop:psi-r-stability} gives, uniformly for
	$u\in\mathcal U$,
	\[
	\begin{aligned}
		\|I-LD\mathcal F_r(u)\|
		&\le
		\|I-LD\mathcal F_{r_0}(u)\|
+\|L\|\|D\mathcal F_r(u)-D\mathcal F_{r_0}(u)\|
		\\
		&\le
		Z_0
		+
		\|L\|C_\Phi(\delta)\delta=\kappa_\delta.
	\end{aligned}
	\]
	Thus $\mathcal N_r$ is a contraction on $\mathcal U$, uniformly
	with respect to $r\in B_\delta(r_0)$.
	
	For $u\in\mathcal U$, the mean value theorem gives
	\[
	\begin{aligned}
		\|\mathcal N_r(u)-\bar u\|_\rho
		&\le
		\|\mathcal N_r(\bar u)-\bar u\|_\rho
		+
		\|\mathcal N_r(u)-\mathcal N_r(\bar u)\|_\rho
		\\
		&\le
		Y_0
		+
		\|L\|C_F(\delta)\delta
		+
		\kappa_\delta\|u-\bar u\|_\rho
		\\
		&\le
		Y_0
		+
		\|L\|C_F(\delta)\delta
		+
		\kappa_\delta R
		\\
		&\le
		R.
	\end{aligned}
	\]
	Hence
	\[
	\mathcal N_r(\mathcal U)\subset\mathcal U.
	\]
	Since $\mathcal U$ is a closed subset of the Banach space
	$\mathcal A_\rho$, the contraction mapping theorem gives a unique
	fixed point $u_r\in\mathcal U$. Since $L$ is invertible,
	\[
	\mathcal N_r(u_r)=u_r
	\quad\Longleftrightarrow\quad
	\mathcal F_r(u_r)=0.
	\]
\end{proof}

% ------------------------------------------------------------
\subsection{Dependence of the solution on the boundary}
% ------------------------------------------------------------

The uniform contraction argument also yields a Lipschitz estimate for
the solution in $\mathcal A_\rho$.

\begin{corollary}\label{cor:u-lipschitz-r}
	Let $r_1,r_2\in B_\delta(r_0)$ and let $u_{r_1}$ and $u_{r_2}$ be
	the corresponding solutions. Then
	\begin{equation}\label{eq:u-lipschitz-r}
		\|u_{r_1}-u_{r_2}\|_\rho \le C_u\|r_1-r_2\|_\sigma,
	\end{equation}
	where
	\begin{equation}\label{eq:Cu}
		C_u:=C_u(\delta)=
		\frac{\Lambda C_{\rm comp}M_\delta}{1-\kappa_\delta},
	\end{equation}
	where $\Lambda$ is an upper bound for $\|L\|$.
\end{corollary}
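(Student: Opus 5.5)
The plan is the standard Lipschitz estimate for fixed points of a uniformly contractive family. By Theorem~\ref{thm:uniform-theodorsen}, $u_{r_j}$ is the unique fixed point of $\mathcal N_{r_j}$ in $\mathcal U=B_R(\bar u)$. Both maps are contractions on $\mathcal U$ with constant $\kappa_\delta$, uniformly in $r\in B_\delta(r_0)$, as established in the proof of that theorem. Starting from the fixed-point identities, I would write
\[
u_{r_1}-u_{r_2}
=
\mathcal N_{r_1}(u_{r_1})-\mathcal N_{r_1}(u_{r_2})
+
\mathcal N_{r_1}(u_{r_2})-\mathcal N_{r_2}(u_{r_2}).
\]

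For the first difference, the mean value inequality on the convex set $\mathcal U$ applies. The bound $\sup_{u\in\mathcal U}\|I-LD\mathcal F_{r_1}(u)\|\le\kappa_\delta$ then gives a contribution of at most $\kappa_\delta\|u_{r_1}-u_{r_2}\|_\rho$.

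For the second difference, the terms $u$ cancel in the definition of $\mathcal N_r$, leaving
\[
\mathcal N_{r_1}(u)-\mathcal N_{r_2}(u)
=
-L\bigl(\mathcal F_{r_1}(u)-\mathcal F_{r_2}(u)\bigr),
\qquad u=u_{r_2}.
\]
The one point needing care is here. Proposition~\ref{prop:F-boundary-variation} is stated only relative to the reference function $r_0$, so it has to be generalized to an arbitrary pair $r_1,r_2\in B_\delta(r_0)$. I would repeat its argument along the segment $r_s=r_2+s(r_1-r_2)$, which stays in the convex ball $B_\delta(r_0)$. Each $r_s$ is therefore invertible in $\mathcal A_\tau$ with $\|r_s^{-1}\|_\tau\le M_\delta$. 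Integrating $\frac{d}{ds}\log r_s=r_s^{-1}(r_1-r_2)$ gives
\[
\|\log r_1-\log r_2\|_\tau\le M_\delta\|r_1-r_2\|_\sigma .
\]
The logarithms are those fixed in Section~\ref{sec:variation-F}: both differ from $\log r_0$ by Banach-algebra series, so their difference is consistently defined and agrees with the real logarithm on the real axis.

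Next I would apply Proposition~\ref{prop:composition-strip} to $g=\log r_1-\log r_2$, evaluated at $u_{r_2}\in\mathcal U$, and use $\|\mathcal H\|\le1$. This gives
\[
\|\mathcal F_{r_1}(u_{r_2})-\mathcal F_{r_2}(u_{r_2})\|_\rho
\le
C_{\rm comp}M_\delta\|r_1-r_2\|_\sigma .
\]
The second difference is then at most $\Lambda C_{\rm comp}M_\delta\|r_1-r_2\|_\sigma$.

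Combining the two bounds yields
\[
\|u_{r_1}-u_{r_2}\|_\rho
\le
\kappa_\delta\|u_{r_1}-u_{r_2}\|_\rho
+
\Lambda C_{\rm comp}M_\delta\|r_1-r_2\|_\sigma .
\]
Since $\kappa_\delta<1$, absorbing the first term on the left gives \eqref{eq:u-lipschitz-r} with $C_u$ as in \eqref{eq:Cu}. The only genuine step is the generalization of the $r$-variation estimate to arbitrary pairs, and convexity of $B_\delta(r_0)$ makes it routine.
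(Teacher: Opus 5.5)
Your proposal is correct and follows the paper's proof: the same splitting through the fixed-point identities, the uniform contraction constant $\kappa_\delta$ bounding the first term, the bound $\Lambda C_{\rm comp}M_\delta\|r_1-r_2\|_\sigma$ on the second, and then absorbing the $\kappa_\delta$ term. The paper uses the extension of Proposition~\ref{prop:F-boundary-variation} from the pair $(r,r_0)$ to an arbitrary pair $r_1,r_2\in B_\delta(r_0)$ without comment; your argument for it, along the segment $r_2+s(r_1-r_2)$ in the convex ball, is sound.
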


\begin{proof}
	Since
	\[
	u_{r_i}
	=
	\mathcal N_{r_i}(u_{r_i}),
	\qquad i=1,2,
	\]
	we have
	\[
	\begin{aligned}
		\|u_{r_1}-u_{r_2}\|_\rho
		&\le\|\mathcal N_{r_1}(u_{r_1})-\mathcal N_{r_1}(u_{r_2})\|_\rho+
		\|\mathcal N_{r_1}(u_{r_2})-\mathcal N_{r_2}(u_{r_2})\|_\rho\\
		&\le\kappa_\delta\|u_{r_1}-u_{r_2}\|_\rho+\|L\|C_{\rm comp}M_\delta\|r_1-r_2\|_\sigma.
	\end{aligned}
	\]
	Rearranging proves the result.
\end{proof}

% ------------------------------------------------------------
\subsection{Orientation of the boundary correspondence}
% ------------------------------------------------------------

Finally, to recover the Riemann map from the solution of the
Theodorsen equation, we verify uniformly that
\[
\theta_r(t)=t+u_r(t)
\]
is orientation preserving.

Assume that a rigorous estimate
\begin{equation}\label{eq:orientation}
	\inf_{u\in B_R(\bar u)}\left(\min_t u'(t)\right)\ge\eta_\theta>-1
\end{equation}
is available. Then every validated solution satisfies
\[
\theta_r'(t)
=
1+u_r'(t)
\ge
1-\eta_\theta
>
0.
\]
Consequently, $\theta_r$ is an orientation-preserving diffeomorphism
of the circle.
Note that, if
\[
u=\bar u+h,\qquad \|h\|_\rho\le R,
\]
then
\[
\|h'\|_\infty
\le
C_{\rho}^{\rm der}R,
\qquad
C_{\rho}^{\rm der}:=
\sup_{k\ge1}\frac{k}{\rho^k}
\le \frac{1}{e\log\rho},
\]
therefore
\[
	\inf_{u\in B_R(\bar u)}\left(\min_t u'(t)\right)
	\ge\min_t \bar u'(t)-C_{\rho}^{\rm der}R.
\]
If, in addition,
\[
\delta
<
\min_{t\in\mathbb R}r_0(t),
\]
then every
\[
r\in B_\delta(r_0)
\]
is positive on the real axis and therefore determines a star-shaped
Jordan domain $\Omega_r$. Proposition~\ref{prop:Theodorsen} then
shows that
\[
f_r(e^{it})
=
r(t+u_r(t))e^{i(t+u_r(t))}
\]
is the boundary trace of the normalized Riemann map
\[
f_r:\mathbb D\longrightarrow\Omega_r.
\]

% ============================================================
\section{Rigorous verification at the reference domains}
\label{sec:verification}
% ============================================================

The results of Sections~\ref{sec:aposteriori}--
\ref{sec:stability} reduce the stability problem to the verification
of a finite collection of explicit inequalities at a reference
boundary function $r_0$. In this section we describe how the
corresponding quantities are evaluated rigorously.

The reference solution of the Theodorsen equation is represented by a finite Fourier
approximation $\bar u$. In particular,
\[
\bar u\in\mathcal A_{\widehat\rho}
\subset\mathcal A_\rho,
\]
and the strip quantity
\[
\bar d
=
\sup_{z\in S_{\widehat\rho}}
|\operatorname{Im}\bar u(z)|
\]
can be evaluated rigorously from its finite Fourier expansion. At the reference configuration we write
\[
a_0:=-\psi_{r_0,\bar u}=-\frac{r_0'}{r_0}\bigl(\,\cdot+\bar u(\cdot)\bigr)
\]
and
\[
\Phi_0
:=
D\mathcal F_{r_0}(\bar u)
=
I+\mathcal H M_{a_0}.
\]
The Wiener--Hopf construction of Section~5 provides the formula from which an approximate inverse is constructed. In the computer-assisted proof, the Laurent functions entering this formula are replaced by finite Laurent approximations, yielding an infinite-dimensional banded operator \(L_K\). The rigorous invertibility argument below does not require separate validation of the hypotheses of Theorem~5.2.

We need rigorous estimates of the form
\begin{equation}\label{eq:reference-Y}
	Y_0
	\ge
	\|\mathcal N_{r_0}(\bar u)-\bar u\|_\rho,
\end{equation}
\begin{equation}\label{eq:reference-Z}
	Z_0
	\ge
	\sup_{u\in B_R(\bar u)}
	\|I-L_KD\mathcal F_{r_0}(u)\|,
\end{equation}
and
\begin{equation}\label{eq:reference-L}
	\Lambda
	\ge
	\|L_K\|.
\end{equation}
These quantities are then inserted into
Theorem~\ref{thm:uniform-theodorsen}, together with the analytic
constants of Section~\ref{sec:stability}, to obtain an explicit
radius $\delta_r$ in $\mathcal A_\sigma$.
To prove that the operators $L_K$ and $\Phi_0$ are invertible, we do not use  Theorem~\ref{thm:PhiInverse}. Instead, we prove the sufficient condition 
\[
\|I-L_K\Phi_0\|_{\mathcal B_\rho\to\mathcal B_\rho}
\le
\varepsilon_{\rm mat}^{L}+\Lambda_{\mathcal B}\eta_a,
\]
and
\[
\|I-\Phi_0L_K\|_{\mathcal B_\rho\to\mathcal B_\rho}
\le
\varepsilon_{\rm mat}^{R}+\Lambda_{\mathcal B}\eta_a.
\]
If both bounds are strictly smaller than one, then
$L_K\Phi_0$ and $\Phi_0L_K$ are invertible by the Neumann series.
Hence $L_K$ has both a right inverse and a left inverse, and is
therefore invertible; the same is true of $\Phi_0$.

\subsection{Finite approximation of the inverse}

Fix a truncation order $K$. Every Laurent function appearing in the
representation \eqref{eq:explicitL} is replaced by a finite Laurent
polynomial. We denote these approximations by
\[
a_K,\quad
h_K,\quad
W_K,\quad
\alpha_K,\quad
\beta_K,
\]
and similarly for the remaining quantities.

Define
\[
\Phi_K
:=
I+\mathcal H M_{a_K}.
\]
Let
\[
\eta_a
\ge
\|a_0-a_K\|_\rho.
\]
Then
\begin{equation}\label{eq:PhiError}
	\|\Phi_0-\Phi_K\|
	\le
	\eta_a.
\end{equation}

Set
\[
B_K
=
M_{\alpha_K}P_-M_{\beta_K},
\qquad
p_K=P_-W_K,
\]
and
\[
\mathcal Q_K
=
P_--P_-M_{W_K}P_+,
\qquad
C_K
=
1-B_Kp_K.
\]
Define moreover
\[
D_K:=P_0(h_KC_K)
\]
and
\[
\mathcal S_K
:=
P_0
-
P_0M_{h_K}P_+
-
P_0M_{h_K}B_K\mathcal Q_K.
\]
The finite approximation of the inverse is
\begin{equation}\label{eq:LK}
	L_K
	:=
	M_{h_K}
	\left(
	P_+
	+
	B_K\mathcal Q_K
	+
	C_KD_K^{-1}\mathcal S_K
	\right).
\end{equation}

\subsection{The banded infinite-dimensional residual}

Consider
\[
E_K:=I-L_K\Phi_K\qquad\text{and}\qquad \widetilde{E}_K:=I-\Phi_K L_K.
\]
Although $E_K$ and $\widetilde{E}_K$ act on the infinite-dimensional space
$\mathcal B_\rho$, they are banded because all multipliers entering
$L_K$ and $\Phi_K$ are finite Laurent polynomials.

For a finite Laurent multiplier $M_f$, let $q(M_f)$ denote its
bandwidth. Then
\[
q(M_f)=\deg f,
\]
while
\[
q(P_+)=q(P_-)=q(P_0)=q(\mathcal H)=0.
\]
Moreover,
\[
q(AB)\le q(A)+q(B),
\qquad
q(A+B)\le\max\{q(A),q(B)\}.
\]

\begin{lemma}\label{lem:bandwidth}
	The operators $E_K$ and $\widetilde{E}_K$ are banded. If all Laurent approximations entering \eqref{eq:LK} have degree at most $K$, then
	\begin{equation}\label{eq:bandwidth}
		q(E_K)\le9K\qquad\text{and}\qquad q(\widetilde{E}_K)\le9K.
	\end{equation}
	In particular,
	\[
	(E_K)_{mn}=(\widetilde{E}_K)_{mn}=0
	\qquad
	\text{whenever }
	|m-n|>9K.
	\]
\end{lemma}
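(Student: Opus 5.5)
The plan is to expand $L_K$ and $\Phi_K$ as sums of products of elementary banded operators, and then bound the bandwidth of each product with the two rules stated above the lemma: $q(AB)\le q(A)+q(B)$ and $q(A+B)\le\max\{q(A),q(B)\}$. Throughout, $P_\pm$, $P_0$ and $\mathcal H$ are diagonal in the Fourier basis, so they have bandwidth zero. A multiplier by a Laurent polynomial of degree at most $K$ has bandwidth at most $K$. The identity has bandwidth zero.

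\textbf{Step 1: the terms of $L_K$.} The term $P_+$ gives bandwidth $0$. Next, $B_K=M_{\alpha_K}P_-M_{\beta_K}$ has $q\le 2K$, and $\mathcal Q_K=P_--P_-M_{W_K}P_+$ has $q\le K$. Hence $B_K\mathcal Q_K$ has $q\le 3K$.

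For the rank-one term $C_KD_K^{-1}\mathcal S_K$, I view it as multiplication by the function $C_K$ composed with the operator $\mathcal S_K$. Here $D_K^{-1}$ is a scalar and contributes nothing. The operator $\mathcal S_K=P_0-P_0M_{h_K}P_+-P_0M_{h_K}B_K\mathcal Q_K$ has $q\le K+3K=4K$. The function $C_K=1-B_Kp_K$ is a Laurent polynomial: $p_K=P_-W_K$ has degree at most $K$, and $B_K$ maps it to something of degree at most $3K$. So $M_{C_K}$ has $q\le 3K$, and the rank-one term has $q\le 7K$.

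Therefore the bracket in \eqref{eq:LK} has $q\le 7K$, and multiplying by $M_{h_K}$ gives $q(L_K)\le 8K$.

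\textbf{Step 2: the residuals.} Since $\Phi_K=I+\mathcal HM_{a_K}$, we have $q(\Phi_K)\le K$. Hence $q(L_K\Phi_K)\le 9K$ and $q(\Phi_KL_K)\le 9K$. Subtracting from $I$ does not increase the bandwidth, so both $E_K$ and $\widetilde E_K$ have $q\le 9K$. Banded with bandwidth $q$ means precisely that the matrix entries vanish when $|m-n|>q$, which gives the stated conclusion.

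\textbf{Main obstacle.} The delicate point is the rank-one term, where the paper's notation writes the function $C_K$ next to the operator $\mathcal S_K$. I must justify that this product acts as the multiplication operator $M_{C_K}$ composed with $\mathcal S_K$, and that its bandwidth is additive. This holds because $\mathcal S_K$ takes values in the constants: $(\mathcal S_K)_{mn}$ can be nonzero only for $m=0$ and $|n|\le 4K$. Multiplying the resulting constant by $C_K$, which has degree at most $3K$, then gives nonzero entries only for $|m|\le 3K$ and $|n|\le 4K$, and so $|m-n|\le 7K$.

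It is also worth checking, though not strictly needed, that $W_K$, $\alpha_K$, $\beta_K$ and $h_K$ are themselves the given degree-$K$ approximations and are not recomputed as products. This is exactly the hypothesis of the lemma.
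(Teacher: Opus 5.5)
Your proposal is correct and follows the paper's proof: your count $K+3K+4K$ for $L_K$ (the multiplier $M_{h_K}$, then $M_{C_K}$ of degree $3K$ acting after $\mathcal S_K$ of bandwidth $4K$) is exactly the paper's bound $q(L_K)\le 2d_h+2(d_\alpha+d_\beta+d_W)$. Adding $q(\Phi_K)\le d_a\le K$ by subadditivity gives $9K$, and like the paper you tacitly assume $\deg a_K\le K$, even though $a_K$ enters $\Phi_K$ rather than \eqref{eq:LK}.
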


\begin{proof}
	From \eqref{eq:LK},
	\[
	q(L_K)
	\le
	2d_h+2(d_\alpha+d_\beta+d_W),
	\]
	where the symbols on the right denote the degrees of the
	corresponding finite Laurent polynomials. Furthermore,
	\[
	q(\Phi_K)\le d_a.
	\]
	If all these degrees are bounded by $K$, then
	\[
	q(E_K)
	\le
	q(L_K)+q(\Phi_K)
	\le
	9K,
	\]
	and the same holds for $\widetilde{E}_K$.
\end{proof}
For each column $n\in\mathbb Z$, define
\[
S_n
:=
\sum_{m\in\mathbb Z}
|(E_K)_{mn}|
\rho^{|m|-|n|}.
\]
If $q=q(E_K)$, then
\[
S_n
=
\sum_{j=-q}^{q}
|(E_K)_{n+j,n}|
\rho^{|n+j|-|n|}.
\]

For $n>q$, all indices $n+j$ are positive. On such columns the
operators $P_\pm$, $P_0$, and $\mathcal H$ act in a
translation-invariant way. Hence
\[
e_j^+
:=
(E_K)_{n+j,n},
\qquad
-q\le j\le q,
\]
is independent of $n>q$. Similarly,
\[
e_j^-
:=
(E_K)_{n+j,n}
\]
is independent of $n<-q$. It follows that
\[
S_n
=
\sum_{j=-q}^{q}|e_j^+|\rho^j,
\qquad
n>q,
\]
and
\[
S_n
=
\sum_{j=-q}^{q}|e_j^-|\rho^{-j},
\qquad
n<-q.
\]

Consequently,
\begin{equation}\label{eq:matrixResidual}
	\|E_K\|_\rho
	\le
	\varepsilon^L_{\mathrm{mat}},
\end{equation}
where
\[
\varepsilon^L_{\mathrm{mat}}
:=
\max
\left\{
\max_{|n|\le q}
\sum_{j=-q}^{q}
|(E_K)_{n+j,n}|
\rho^{|n+j|-|n|},
\;
\sum_{j=-q}^{q}|e_j^+|\rho^j,
\;
\sum_{j=-q}^{q}|e_j^-|\rho^{-j}
\right\}.
\]
Since $E_K$ preserves the symmetric Laurent subspace corresponding to
real-valued Fourier functions, Lemma~\ref{lem:operator-A-B} gives
\begin{equation}\label{eq:matrixResidual-A}
	\|E_K\|_{\mathcal A_\rho\to\mathcal A_\rho}
	\le
	\sqrt2\,\varepsilon^L_{\rm mat}.
\end{equation}
A similar computation holds for $\widetilde{E}_K$, giving
\begin{equation}\label{eq:matrixResidual-B}
	\|\widetilde{E}_K\|_{\mathcal A_\rho\to\mathcal A_\rho}
	\le
	\sqrt2\,\varepsilon^R_{\rm mat}.
\end{equation}
Set
\[
\varepsilon_{\rm mat}
:=
\max\{\varepsilon^L_{\rm mat},
\varepsilon_{\rm mat}^{R}\},
\]

\subsection{Residual with respect to the exact linearisation}

The previous estimate concerns $\Phi_K$. We now compare it with the
exact reference linearisation $\Phi_0$.

Set
\[
E_0
:=
I-L_K\Phi_0.
\]
Then
\[
E_0-E_K
=
L_K(\Phi_K-\Phi_0),
\]
so that
\[
\|E_0-E_K\|
\le
\|L_K\|\eta_a.
\]
Combining this with \eqref{eq:matrixResidual} yields
\begin{equation}\label{eq:reference-linear-residual}
	\|I-L_K\Phi_0\|\le\varepsilon_{\mathrm{mat}}+\|L_K\|\eta_a\quad\text{and}\quad
	\|I-\Phi_0L_K\|\le\varepsilon_{\mathrm{mat}}+\|L_K\|\eta_a.
\end{equation}

This gives the reference inverse defect at $\bar u$. To control the
whole ball $B_R(\bar u)$ we use
\[
D\mathcal F_{r_0}(u)-D\mathcal F_{r_0}(\bar u)
=
-\mathcal H
M_{\psi_{r_0,u}-\psi_{r_0,\bar u}}.
\]
If
\begin{equation}\label{eq:reference-u-lipschitz}
	\|\psi_{r_0,u}-\psi_{r_0,\bar u}\|_\rho
	\le
	L_u\|u-\bar u\|_\rho
\end{equation}
for all $u\in B_R(\bar u)$, then
\[
\begin{aligned}
	\|I-L_KD\mathcal F_{r_0}(u)\|
	&\le
	\|I-L_K\Phi_0\|+\|L_K\|\|D\mathcal F_{r_0}(u)-D\mathcal F_{r_0}(\bar u)\| \\
	&\le\varepsilon_{\mathrm{mat}}+\|L_K\|\eta_a+\|L_K\|L_uR.
\end{aligned}
\]
Thus one may take
\begin{equation}\label{eq:Z0-computable}
Z_0
=
\sqrt2\left(
\varepsilon_{\rm mat}
+\Lambda_{\mathcal B}\eta_a
+\Lambda_{\mathcal B}L_uR
\right),
\end{equation}

The constant $L_u$ is obtained from
Lemma~\ref{lem:psi-u-stability} applied to
\[
\frac{r_0'}{r_0}.
\]
Since \(\|f\|_{\mathcal B_\rho}\le \|f\|_{\mathcal A_\rho}\) on the symmetric subspace, the estimate \eqref{eq:reference-u-lipschitz} also provides the corresponding bound in \(\mathcal B_\rho\).

\subsection{Reference residual}

The residual entering the Newton-like map is
\[
\mathcal N_{r_0}(\bar u)-\bar u
=
-L_K\mathcal F_{r_0}(\bar u).
\]
Hence
\begin{equation}\label{eq:Y0-computable}
	Y_0
	:=
	\|L_K\mathcal F_{r_0}(\bar u)\|_\rho
\end{equation}
is evaluated directly by interval arithmetic. A rigorous bound
\begin{equation}\label{eq:lambda2}
\Lambda\ge\|L_K\|
\end{equation}
is obtained with the same procedure used for $E_K$.
The weighted column-sum estimate gives a rigorous bound
\[
\Lambda_{\mathcal B}
\ge
\|L_K\|_{\mathcal B_\rho\to\mathcal B_\rho}.
\]
Since $L_K$ preserves the symmetric Laurent subspace,
Lemma~\ref{lem:operator-A-B} yields
\[
\|L_K\|_{\mathcal A_\rho\to\mathcal A_\rho}
\le
\sqrt2\,\Lambda_{\mathcal B}.
\]
We therefore set
\begin{equation}\label{eq:lambda}
	\Lambda
	:=
	\sqrt2\,\Lambda_{\mathcal B}.
\end{equation}
\subsection{From reference estimates to a neighborhood of domains}

The estimates obtained above are combined with the stability bounds
of Section~\ref{sec:stability}. Let
\[
\delta>0
\]
be such that all $r$ satisfying
\[
\|r-r_0\|_\sigma\le\delta
\]
remain in the admissible class of star-shaped boundary functions.

Let
\[
\Lambda,\qquad
C_F(\delta),
\qquad
C_\Phi(\delta)
\]
be the constants obtained from
\eqref{eq:lambda}, \eqref{eq:CF}, and \eqref{eq:Cphi}. Then
Theorem~\ref{thm:uniform-theodorsen} applies as soon as
\begin{equation}\label{eq:verification-condition-1}
	Z_0+\Lambda C_\Phi(\delta)\delta<1
\end{equation}
and
\begin{equation}\label{eq:verification-condition-2}
	Y_0
	+
	\Lambda C_F(\delta)\delta
	+
	\left(
	Z_0
	+
	\Lambda C_\Phi(\delta)\delta
	\right)R
	\le
	R.
\end{equation}

Thus the computer-assisted part of the conformal-mapping argument is
reduced to the rigorous evaluation of the finite collection of
constants and the verification of
\eqref{eq:verification-condition-1}--\eqref{eq:verification-condition-2}.

\subsection{Validated neighborhoods}

The final numerical values are reported in
Table~\ref{tab:domains}. For each reference boundary function $r_0$,
the table contains a rigorously verified radius
\[
\delta_r>0
\]
such that every
\[
r\in\mathcal A_\sigma,
\qquad
\|r-r_0\|_\sigma<\delta_r,
\]
satisfies the conclusions of
Theorem~\ref{thm:conformal-stability}.

\begin{proposition}\label{prop:validated-neighborhoods}
	For each reference boundary function $r_0$ listed in
	Table~\ref{tab:domains}, the interval computations verify the hypotheses of Theorem~\ref{thm:uniform-theodorsen} for the explicit radius $\delta_r>0$ given in the Table. Additionally
	$$
	\min_t \bar u'(t)-C_{\rho}^{\rm der}R>-1\qquad\text{and}\qquad
	\delta_r<\min_t r_0(t).
	$$	
	Consequently, for every
	\[
	r\in\mathcal A_\sigma,
	\qquad
	\|r-r_0\|_\sigma<\delta_r,
	\]
	the Theodorsen equation admits a unique solution
	\[
	u_r\in B_R(\bar u),
	\]
	and the corresponding boundary map is the normalized Riemann map onto
	$\Omega_r$.
\end{proposition}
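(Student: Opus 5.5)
The plan has two parts: a finite list of interval-arithmetic checks for each $r_0$ in Table~\ref{tab:domains}, followed by a purely analytic deduction from Theorem~\ref{thm:uniform-theodorsen}, the orientation argument, and Proposition~\ref{prop:Theodorsen}.

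\textbf{Fix the parameters and compute the constants.} I would take $\sigma=2$, $\rho=65/64$, auxiliary weights $\rho<\widehat\rho<\tau<2$, the radius $R=R_u$, and $\delta=\delta_r$ from the table. With these fixed I would compute, with rigorous enclosures:
\begin{itemize}
\item the finite Fourier data of $r_0$, a Neumann-series or residual-based enclosure of $r_0^{-1}$ in $\mathcal A_\tau$ (giving $M_0$), and the norms $\|r_0'\|_\tau$, $\|r_0''\|_\tau$;
\item the strip bound $\bar d$, by bounding $|\operatorname{Im}\bar u(z)|$ on $S_{\widehat\rho}$ through the finite trigonometric sum, namely $\sum_k(|a_k|+|b_k|)\sinh(k\log\widehat\rho)$;
\item the analytic constants $C_{\rm comp}$, $C^{(1)}_{\sigma,\tau}$, $C^{(2)}_{\sigma,\tau}$, $M_\delta$, $K_\delta$, $K_{2,\delta}$, $L_{\log}$, $K_\phi$, and from them $L_u$, $C_\Phi(\delta)$, $C_F(\delta)$.
\end{itemize}
These give hypotheses (1)--(3) of Theorem~\ref{thm:uniform-theodorsen}: $r_0^{-1}\in\mathcal A_\tau$, $\rho e^{\bar d+R}<\tau$, and $M_0\delta<1$.

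\textbf{Build the approximate inverse and bound its defects.} Next I would assemble $L_K$ from \eqref{eq:LK}. I would compute the banded residuals $E_K$ and $\widetilde E_K$ exactly in interval arithmetic, using the translation invariance of the columns with $|n|>q$, so that $\varepsilon^{L}_{\rm mat}$, $\varepsilon^{R}_{\rm mat}$ and $\Lambda_{\mathcal B}$ reduce to finitely many column sums. The approximation error $\eta_a\ge\|a_0-a_K\|_\rho$ comes from the tail of the composition $r_0'/r_0(\cdot+\bar u)$. I expect this to be the main obstacle: $a_0$ is not a polynomial, so its tail must be controlled. My first attempt would use Proposition~\ref{prop:composition-strip}, or the strip-to-Wiener bound of Lemma~\ref{lem:strip-to-wiener} applied with a larger weight, to obtain geometric decay of the discarded coefficients. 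I would then check that both bounds $\varepsilon_{\rm mat}+\Lambda_{\mathcal B}\eta_a$ are less than $1$, which makes $L_K$ invertible as explained before the proposition. This yields $Z_0$ via \eqref{eq:Z0-computable}, $Y_0$ via \eqref{eq:Y0-computable}, and $\Lambda=\sqrt2\Lambda_{\mathcal B}$. Computing $Y_0$ also requires enclosing $\mathcal F_{r_0}(\bar u)$, which again involves a composition with a tail bound handled in the same way.

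\textbf{Verify the remaining inequalities and conclude.} With the constants in hand, I would verify \eqref{eq:verification-condition-1} and \eqref{eq:verification-condition-2}; these are hypotheses (5) and (6) of the theorem. I would also check $\min_t\bar u'(t)-C^{\rm der}_\rho R>-1$, bounding $\bar u'$ from below over interval subdivisions of $[0,2\pi]$, and $\delta_r<\min_t r_0(t)$, bounding $r_0$ from below in the same way. Since the hypotheses hold at $\delta=\delta_r$, they hold for every $r$ with $\|r-r_0\|_\sigma<\delta_r$. Theorem~\ref{thm:uniform-theodorsen} then gives a unique zero $u_r\in B_R(\bar u)$. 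The orientation estimate shows that $t\mapsto t+u_r(t)$ is an orientation-preserving diffeomorphism of the circle. Finally, since $r$ is positive, Proposition~\ref{prop:Theodorsen} identifies the boundary map as the trace of the normalized Riemann map onto $\Omega_r$.
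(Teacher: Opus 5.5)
Your proposal is correct and follows the paper's proof. The paper proceeds the same way: it computes rigorous enclosures of $Y_0$, $Z_0$ (via the banded column sums for $E_K,\widetilde E_K$ and the bound $\eta_a$), $\Lambda$ and the analytic constants, verifies \eqref{eq:verification-condition-1}--\eqref{eq:verification-condition-2} together with the orientation and positivity inequalities, and then combines Theorem~\ref{thm:uniform-theodorsen}, the orientation argument and Proposition~\ref{prop:Theodorsen}. It differs only in leaving tail control (e.g.\ for $\eta_a$) to the error-tracking Fourier/Laurent types of Appendix~\ref{sec:cap} rather than to strip bounds, and in adding a closing remark that Proposition~\ref{prop:composition-strip} and Lemma~\ref{lem:conformalradius} give the holomorphic continuation of $f_r$.
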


\begin{proof}
	The proof consists of the computations of the bounds
	\eqref{eq:reference-Y},
	\eqref{eq:reference-Z},
	\eqref{eq:reference-L},
	and the verification of
	\eqref{eq:verification-condition-1}--\eqref{eq:verification-condition-2}
	for each reference configuration, see Section \ref{sec:cap} and \cite{program} for more details.
	Finally, Proposition \ref{prop:composition-strip} implies that the boundary trace belongs to \(\mathcal B_\rho\); since its nonpositive Fourier modes vanish, Lemma \ref{lem:conformalradius} yields the asserted holomorphic continuation and weighted Wiener bound.
\end{proof}

% ============================================================
\section{The semilinear elliptic equation}
\label{sec:elliptic}
% ============================================================

We now turn to the semilinear Dirichlet problem
\begin{equation}\label{eq:elliptic2}
	\begin{cases}
		-\Delta v=v^3,&\text{in }\Omega_r,\\
		v=0,&\text{on }\partial\Omega_r.
	\end{cases}
\end{equation}
The purpose of this section is not only to establish the existence of solutions on the
reference domains, but to show that the solutions persist under the
boundary perturbations considered in Theorem~\ref{thm:conformal-stability}.

Let
\[
f_r:\mathbb D\longrightarrow\Omega_r
\]
be the normalized Riemann map and define
\[
w(z):=v(f_r(z)).
\]
Since $f_r$ is conformal,
\[
\Delta w(z)=|f_r'(z)|^2\Delta(v)(f_r(z)).
\]
Consequently, \eqref{eq:elliptic2} is equivalent to
\begin{equation}\label{eq:elliptic-disk}
	\begin{cases}
		-\Delta w=q_r w^3,&\text{in }\mathbb D,\\
		w=0,&\text{on }\partial\mathbb D,
	\end{cases}
\end{equation}
where
\begin{equation}\label{eq:qr}
	q_r:=|f_r'|^2.
\end{equation}

Thus perturbations of the domain enter the transformed equation only
through the coefficient $q_r$. The first objective of this section is
therefore to quantify the dependence
\[
r\longmapsto f_r\longmapsto q_r.
\]

\subsection{Dependence of the conformal map on the boundary}

Let $r_1,r_2$ belong to the neighborhood
\[
B_{\delta}(r_0)\subset\mathcal A_\sigma
\]
provided by Theorem~\ref{thm:conformal-stability}, where we set $\delta=\delta_r$, and let
$u_1,u_2$ denote the corresponding solutions of the Theodorsen
equation. By Corollary~\ref{cor:u-lipschitz-r},
\begin{equation}\label{eq:u-r-lipschitz-elliptic}
	\|u_1-u_2\|_\rho
	\le
	C_u\|r_1-r_2\|_\sigma,
\end{equation}
where $C_u$ is defined in \eqref{eq:Cu}.
Choose
\[
1<\rho_f<\rho,
\]
set
\[
R_\delta:=\|r_0\|_\sigma+\delta,
\]
and recall \eqref{eq:Kdelta}.
By Theorem~\ref{thm:uniform-theodorsen},
\[
u_r=\bar u+h_r,
\qquad
\|h_r\|_\rho\le R.
\]
Set
\[
d:=\bar d+R.
\]
\begin{lemma}\label{lem:Cf}
	Let $r_1,r_2\in B_\delta(r_0)$, and let $u_1,u_2$ be the
	corresponding solutions of the Theodorsen equation. Assume that
	\[
	\|u_1-u_2\|_\rho
	\le
	C_u\|r_1-r_2\|_\sigma .
	\]
	Then, for every $1<\rho_f<\rho$,
	\begin{equation}\label{eq:f-lipschitz}
		\|f_{r_1}-f_{r_2}\|_{\rho_f}
		\le
		C_f\|r_1-r_2\|_\sigma ,
	\end{equation}
	where
	\begin{equation}\label{eq:Cf}
		C_f
		:=
		\frac{\rho_f}{\rho-\rho_f}\,
		\rho e^d
		\left[
		1+(K_\delta+R_\delta)C_u
		\right],
		\qquad
		d:=\bar d+R .
	\end{equation}
\end{lemma}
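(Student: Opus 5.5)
The plan is to reduce the weighted Wiener norm of $f_{r_1}-f_{r_2}$ to a supremum on the circle $|z|=\rho$ by a Cauchy estimate, and then to estimate that supremum pointwise in the complex strip using the boundary representation of Proposition~\ref{prop:Theodorsen}.

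\emph{Step 1 (Cauchy estimate).} Both $f_{r_1}$ and $f_{r_2}$ have only positive Fourier modes. Lemma~\ref{lem:conformalradius}, together with the composition estimate of Proposition~\ref{prop:composition-strip}, shows they are holomorphic on $|z|<\rho$ and bounded on $|z|\le\rho$. Hence, with $g:=f_{r_1}-f_{r_2}=\sum_{k\ge1}g_kz^k$, the Cauchy inequalities give $|g_k|\le \rho^{-k}\sup_{|z|=\rho}|g(z)|$. Summing,
\[
\|g\|_{\rho_f}\le \sup_{|z|=\rho}|g|\sum_{k\ge1}\Bigl(\frac{\rho_f}{\rho}\Bigr)^k=\frac{\rho_f}{\rho-\rho_f}\sup_{|z|=\rho}|g|,
\]
which accounts for the first factor in $C_f$. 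Writing $z=e^{it}$, the circle $|z|=\rho$ corresponds to $\operatorname{Im}t=-\log\rho$. There $g$ is the holomorphic continuation of $r_1(\theta_1)e^{i\theta_1}-r_2(\theta_2)e^{i\theta_2}$, where $\theta_j(t)=t+u_j(t)$.

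\emph{Step 2 (strip bounds).} For $t\in S_\rho\subset S_{\widehat\rho}$ we write $u_j=\bar u+h_j$. The definition of $\bar d$ gives $|\operatorname{Im}\bar u(t)|\le\bar d$, and $|h_j(t)|\le\|h_j\|_\rho\le R$, so $|\operatorname{Im}\theta_j(t)|\le\log\rho+d$. By convexity the same bound holds on the segment joining $\theta_1(t)$ and $\theta_2(t)$. Consequently $|e^{i\theta}|\le\rho e^{d}$ along this segment. Since $\rho e^d<\tau<\sigma$, every $r\in B_\delta(r_0)$ satisfies $|r(\theta)|\le\|r\|_\sigma\le R_\delta$ and $|r'(\theta)|\le\|r'\|_\tau\le K_\delta$ there. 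In addition, $|u_1(t)-u_2(t)|\le\|u_1-u_2\|_\rho$ on $S_\rho$, and $|(r_1-r_2)(\theta)|\le\|r_1-r_2\|_\sigma$.

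\emph{Step 3 (splitting the difference).} We decompose
\[
r_1(\theta_1)e^{i\theta_1}-r_2(\theta_2)e^{i\theta_2}
=(r_1-r_2)(\theta_1)e^{i\theta_1}+\bigl[r_2(\theta_1)-r_2(\theta_2)\bigr]e^{i\theta_1}+r_2(\theta_2)\bigl(e^{i\theta_1}-e^{i\theta_2}\bigr).
\]
\begin{itemize}
\item The first term is bounded by $\rho e^d\|r_1-r_2\|_\sigma$.
\item For the second term, the mean value inequality along the segment gives $|r_2(\theta_1)-r_2(\theta_2)|\le K_\delta|u_1-u_2|$, so the term is at most $\rho e^dK_\delta\|u_1-u_2\|_\rho$.
\item For the third term, $|e^{i\theta_1}-e^{i\theta_2}|\le\sup|e^{i\theta}|\,|u_1-u_2|$, so the term is at most $R_\delta\rho e^d\|u_1-u_2\|_\rho$.
\end{itemize}
Inserting the hypothesis $\|u_1-u_2\|_\rho\le C_u\|r_1-r_2\|_\sigma$ yields
\[
\sup_{|z|=\rho}|g|\le\rho e^d\bigl[1+(K_\delta+R_\delta)C_u\bigr]\|r_1-r_2\|_\sigma.
\]
Combined with Step~1, this gives \eqref{eq:f-lipschitz}.

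\emph{Main obstacle.} The delicate point is the justification in Steps~1--2 that the sup norm on $|z|=\rho$ is legitimately controlled by these pointwise strip bounds. This requires that the boundary expression continues holomorphically to the closed strip $|\operatorname{Im}t|\le\log\rho$ and agrees there with $f_{r_j}$. I would obtain this by evaluating on slightly smaller circles $|z|=\rho'<\rho$, where absolute convergence is guaranteed, and then letting $\rho'\to\rho$. It also requires that the imaginary parts of $\theta_j$ stay within the region where $r_j$ and $r_j'$ converge, which is exactly what condition \eqref{eq:composition-margin} is designed to ensure.
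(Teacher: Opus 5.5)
Your proposal is correct and follows the paper's proof essentially step by step. It uses the same three-term splitting of $r_1(\theta_1)e^{i\theta_1}-r_2(\theta_2)e^{i\theta_2}$ in the strip, the same pointwise bounds from $\bar d+R$ and the margin $\rho e^{d}<\tau$, and the same Cauchy estimate on circles $|\zeta|=\rho'<\rho$ with $\rho'\uparrow\rho$, summed against $\rho_f^k$ to produce the factor $\rho_f/(\rho-\rho_f)$.
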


\begin{proof}
	Set
	\[
	\Delta_r:=\|r_1-r_2\|_\sigma
	\]
	and, for $z\in S_\rho$, define
	\[
	F_j(z)
	:=
	r_j\bigl(z+u_j(z)\bigr)
	e^{i(z+u_j(z))},
	\qquad j=1,2.
	\]
	On the real axis,
	\[
	F_j(t)=f_{r_j}(e^{it}).
	\]
	Moreover, $F_j$ is holomorphic and $2\pi$-periodic in $S_\rho$.
	Since its boundary values are those of the Riemann map, its
	nonpositive Fourier modes vanish. Hence, by analytic continuation,
	\begin{equation}\label{eq:F-f-identification}
		F_j(z)=f_{r_j}(e^{iz}),
		\qquad z\in S_\rho.
	\end{equation}
	
	We decompose
	\[
	\begin{aligned}
		F_1(z)-F_2(z)
		={}&
		\bigl[
		r_1(z+u_1(z))-r_2(z+u_1(z))
		\bigr]e^{i(z+u_1(z))}
		\\
		&+
		\bigl[
		r_2(z+u_1(z))-r_2(z+u_2(z))
		\bigr]e^{i(z+u_1(z))}
		\\
		&+
		r_2(z+u_2(z))
		\bigl[
		e^{i(z+u_1(z))}-e^{i(z+u_2(z))}
		\bigr].
	\end{aligned}
	\]
	
	For $z\in S_\rho$, the strip inclusion used in
	Proposition~\ref{prop:composition-strip} gives
	\[
	z+u_j(z)\in S_\tau.
	\]
	Indeed,
	\[
	|\Im(z+u_j(z))|
	<
	\log\rho+\bar d+R
	<
	\log\tau.
	\]
	Therefore
	\[
	|r_1(z+u_1(z))-r_2(z+u_1(z))|
	\le
	\Delta_r.
	\]
	Furthermore,
	\[
	|e^{i(z+u_j(z))}|
	=
	e^{-\Im(z+u_j(z))}
	\le
	\rho e^d.
	\]
	Thus the first term is bounded by
	\[
	\rho e^d\Delta_r.
	\]
	For the second term, we have
	\[r_2(z+u_1(z))-r_2(z+u_2(z))=
		\int_0^1 r_2'\bigl(z+u_2(z)+s(u_1(z)-u_2(z))\bigr)
		\bigl(u_1(z)-u_2(z)\bigr)\,ds .
	\]
	Since the intermediate arguments remain in $S_\tau$ and
	$\|r_2'\|_\tau\le K_\delta$, we obtain
	\[
	|r_2(z+u_1(z))-r_2(z+u_2(z))|
	\le
	K_\delta |u_1(z)-u_2(z)|.
	\]
	Moreover,
	\[
	|u_1(z)-u_2(z)|
	\le
	\|u_1-u_2\|_\rho
	\le
	C_u\Delta_r.
	\]
	Hence the second term is bounded by
	\[
	\rho e^d K_\delta C_u\Delta_r.
	\]
	Similarly,
	\[
	\begin{aligned}
		&e^{i(z+u_1(z))}-e^{i(z+u_2(z))}
		\\
		&\qquad =
		i\int_0^1
		e^{i(z+u_2(z)+s(u_1(z)-u_2(z)))}
		\bigl(u_1(z)-u_2(z)\bigr)\,ds .
	\end{aligned}
	\]
	The intermediate values satisfy the same bound on their imaginary
	parts, and therefore
	\[
	\left|
	e^{i(z+u_1(z))}-e^{i(z+u_2(z))}
	\right|
	\le
	\rho e^d C_u\Delta_r.
	\]
	Since
	\[
	|r_2(z+u_2(z))|
	\le
	\|r_2\|_\tau
	\le
	R_\delta,
	\]
	the third term is bounded by
	\[
	\rho e^d R_\delta C_u\Delta_r.
	\]
	
	Combining the three estimates gives
	\begin{equation}\label{eq:F-strip-bound}
		\sup_{z\in S_\rho}
		|F_1(z)-F_2(z)|
		\le
		M,
	\end{equation}
	where
	\[
	M
	:=
	\rho e^d
	\left[
	1+(K_\delta+R_\delta)C_u
	\right]\Delta_r.
	\]
	
	Write
	\[
	f_{r_1}(\zeta)-f_{r_2}(\zeta)
	=
	\sum_{k\ge1}c_k\zeta^k.
	\]
	Fix $1<\rho'<\rho$.  If $|\zeta|=\rho'$, write
	\[
	\zeta=e^{iz},
	\qquad
	\Im z=-\log\rho'.
	\]
	Then $z\in S_\rho$, and \eqref{eq:F-f-identification} and
	\eqref{eq:F-strip-bound} give
	\[
	|f_{r_1}(\zeta)-f_{r_2}(\zeta)|
	\le M.
	\]
	Cauchy's estimate therefore yields
	\[
	|c_k|\le M(\rho')^{-k}.
	\]
	Letting $\rho'\uparrow\rho$, we obtain
	\[
	|c_k|\le M\rho^{-k}.
	\]
	Consequently,
	\[
	\begin{aligned}
		\|f_{r_1}-f_{r_2}\|_{\rho_f}
		&=
		\sum_{k\ge1}|c_k|\rho_f^k
		\\
		&\le
		M\sum_{k\ge1}
		\left(\frac{\rho_f}{\rho}\right)^k
		\\
		&=
		M\frac{\rho_f}{\rho-\rho_f}.
	\end{aligned}
	\]
	Substituting the value of $M$ proves
	\eqref{eq:f-lipschitz} with $C_f$ as defined in
	\eqref{eq:Cf}.
\end{proof}

Here and below, a further loss of exponential weight is allowed
whenever differentiation is required.
In particular, taking $r_2=r_0$ gives
\begin{equation}\label{eq:f-r0}
	\|f_r-f_{r_0}\|_{\rho_f}
	\le
	C_f\|r-r_0\|_\sigma.
\end{equation}

By Lemma~\ref{lem:taylor-zernike}, the Taylor-to-Zernike
correspondence identifies $f'_r$ isometrically with an element of the
complexified Zernike algebra $\mathcal C_{\rho_q}^{\mathbb C}$.
We use the same symbol $f'_r$ for this element. In particular, the
Taylor estimate gives
\[
\|f'_{r_1}-f'_{r_2}\|_{\mathcal C_{\rho_q}^{\mathbb C}}
\le
C_D C_f\|r_1-r_2\|_\sigma .
\]
Since complex conjugation is an isometry of
$\mathcal C_{\rho_q}^{\mathbb C}$ and this space is a Banach algebra,
\[
q_r
=
f'_r\overline{f'_r}
=
|f'_r|^2
\in \mathcal C_{\rho_q}.
\]
Here $q_r$ is real-valued. Therefore, when estimating its norm in the
real Zernike algebra, we use
Lemma~\ref{lem:zernike-real-complex}.
The following lemma controls the $q$ variation with the $r$ variation.
\begin{lemma}\label{lem:q-variation}
	Let
	\[
	1<\varrho<\rho_q<\rho_f<\sigma
	\]
	and let $r,r_1,r_2\in B_\delta(r_0)$. Then
	\begin{equation}\label{eq:fprime-variation}
		\|f'_{r_1}-f'_{r_2}\|_{\mathcal C_{\rho_q}^{\mathbb C}}
		\le
		C_D C_f\|r_1-r_2\|_\sigma ,
	\end{equation}
	and
	\begin{equation}\label{eq:q-variation}
		\|q_r-q_{r_0}\|_{\mathcal C_{\rho_q}}
		\le
		C_q(\delta)\,\delta ,
	\end{equation}
	where
	\begin{equation}\label{eq:Cq}
		C_q(\delta)
		:=
		\sqrt2\,C_D C_f
		\left(
		2\|f'_{r_0}\|_{\mathcal C_{\rho_q}^{\mathbb C}}
		+
		C_D C_f\,\delta
		\right).
	\end{equation}
\end{lemma}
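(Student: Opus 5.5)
The plan is to derive both estimates from Lemma~\ref{lem:Cf}, using the Taylor-to-Zernike correspondence of Lemma~\ref{lem:taylor-zernike} and the Banach algebra structure of $\mathcal C^{\mathbb C}_{\rho_q}$.

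For \eqref{eq:fprime-variation}, we start from \eqref{eq:f-lipschitz}, which gives $\|f_{r_1}-f_{r_2}\|_{\rho_f}\le C_f\|r_1-r_2\|_\sigma$ for the Taylor series in the weighted Wiener norm. Differentiating a power series $\sum_{k\ge1}c_k\zeta^k$ loses one order of weight. With the constant $C_D$ given by the derivative estimate $\sup_{k\ge1} k(\rho_q/\rho_f)^{k}$, suitably normalized as in the appendix, we get $\|g'\|_{\rho_q}\le C_D\|g\|_{\rho_f}$. The Taylor-to-Zernike identification of Lemma~\ref{lem:taylor-zernike} is isometric, so this bound transfers verbatim to $\mathcal C^{\mathbb C}_{\rho_q}$ and gives \eqref{eq:fprime-variation}. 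The only point to check is that the normalization of $C_D$ in the appendix matches this use, since a holomorphic monomial $\zeta^{k}$ corresponds to the Zernike term with $m=k$, $l=0$ and weight $\rho_q^{k}$.

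For \eqref{eq:q-variation}, we write $g:=f'_r-f'_{r_0}$ and use the algebraic identity
\[
q_r-q_{r_0}
=
f'_r\overline{f'_r}-f'_{r_0}\overline{f'_{r_0}}
=
g\,\overline{f'_{r_0}}+f'_{r_0}\,\overline g+g\,\overline g .
\]
Conjugation is an isometry of $\mathcal C^{\mathbb C}_{\rho_q}$ and the norm is submultiplicative. Hence
\[
\|q_r-q_{r_0}\|_{\mathcal C^{\mathbb C}_{\rho_q}}
\le
2\|f'_{r_0}\|\,\|g\|+\|g\|^2 ,
\]
where the norms on the right are in $\mathcal C^{\mathbb C}_{\rho_q}$. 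Applying \eqref{eq:fprime-variation} with $r_1=r$, $r_2=r_0$ gives $\|g\|\le C_DC_f\delta$. Substituting yields $C_DC_f\bigl(2\|f'_{r_0}\|+C_DC_f\delta\bigr)\delta$.

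It remains to pass from the complexified norm to the real Zernike norm of the real-valued function $q_r-q_{r_0}$. Lemma~\ref{lem:zernike-real-complex} supplies this at the cost of a factor $\sqrt2$, because complex coefficients $c_{\pm m}$ recombine into $a_m\cos+b_m\sin$ with $|a|+|b|\le\sqrt2\cdot$ (complex modulus sum). This produces exactly $C_q(\delta)$ as in \eqref{eq:Cq}.

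I expect the main obstacle to be the bookkeeping of the norm conventions, not the estimates themselves. Specifically, one must verify that $\overline{f'_r}$, whose Zernike expansion contains antiholomorphic modes $\bar\zeta^k$, has the same $\mathcal C^{\mathbb C}_{\rho_q}$-norm as $f'_r$, and that the product of a holomorphic and an antiholomorphic element is controlled by the Banach algebra property (cited from \cite{ArioliKoch1}). Both facts are encoded in the appendix lemmas. I would invoke them directly and not reprove them.
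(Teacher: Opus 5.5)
Your proposal is correct and follows the paper's proof. The paper likewise combines the derivative estimate of Lemma~\ref{lem:derivative}, which gives $C_D=\rho_q^{-1}\sup_{k\ge1}k(\rho_q/\rho_f)^k$ (the factor $\rho_q^{-1}$ is the normalization you flagged), with Lemma~\ref{lem:Cf} and the isometry of Lemma~\ref{lem:taylor-zernike}. It then uses the Banach algebra property, the conjugation isometry, and the factor $\sqrt2$ from Lemma~\ref{lem:zernike-real-complex}. The only cosmetic difference is that the paper uses the two-term split $q_r-q_{r_0}=(f'_r-f'_{r_0})\overline{f'_{r_0}}+f'_r\overline{(f'_r-f'_{r_0})}$ together with $\|f'_r\|\le\|f'_{r_0}\|+C_DC_f\delta$, while your three-term expansion reaches the identical constant directly.
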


\begin{proof}
	By the differentiation estimate of Lemma~\ref{lem:derivative},
	\[
	\|f'_{r_1}-f'_{r_2}\|_{\mathcal C_{\rho_q}^{\mathbb C}}
	\le
	C_D\|f_{r_1}-f_{r_2}\|_{\rho_f},
	\]
	where
	\[
	C_D
	:=
	\frac{1}{\rho_q}
	\sup_{k\ge1}
	k\left(\frac{\rho_q}{\rho_f}\right)^k
	\le
	\frac{1}{e\rho_q\log(\rho_f/\rho_q)}.
	\]
	Using Lemma~\ref{lem:Cf} gives
	\[
	\|f'_{r_1}-f'_{r_2}\|_{\mathcal C_{\rho_q}^{\mathbb C}}
	\le
	C_D C_f\|r_1-r_2\|_\sigma,
	\]
	which proves \eqref{eq:fprime-variation}.
	
	Since
	\[
	q_r=f'_r\overline{f'_r},
	\]
	we have
	\[
	q_r-q_{r_0}
	=
	(f'_r-f'_{r_0})\overline{f'_{r_0}}
	+
	f'_r\,
	\overline{(f'_r-f'_{r_0})}.
	\]
	Hence, by the Banach algebra property and the fact that complex
	conjugation is an isometry,
	\[
	\|q_r-q_{r_0}\|_{\mathcal C_{\rho_q}^{\mathbb C}}
	\le
	\left(
	\|f'_{r_0}\|_{\mathcal C_{\rho_q}^{\mathbb C}}
	+
	\|f'_r\|_{\mathcal C_{\rho_q}^{\mathbb C}}
	\right)
	\|f'_r-f'_{r_0}\|_{\mathcal C_{\rho_q}^{\mathbb C}}.
	\]
	Moreover, \eqref{eq:fprime-variation} gives
	\[
	\|f'_r\|_{\mathcal C_{\rho_q}^{\mathbb C}}
	\le
	\|f'_{r_0}\|_{\mathcal C_{\rho_q}^{\mathbb C}}
	+
	C_D C_f\,\delta.
	\]
	Therefore
	\[
	\|q_r-q_{r_0}\|_{\mathcal C_{\rho_q}^{\mathbb C}}
	\le
	C_D C_f
	\left(
	2\|f'_{r_0}\|_{\mathcal C_{\rho_q}^{\mathbb C}}
	+
	C_D C_f\,\delta
	\right)\delta.
	\]
	Since $q_r-q_{r_0}$ is real-valued,
	Lemma~\ref{lem:zernike-real-complex} yields
	\[
	\|q_r-q_{r_0}\|_{\mathcal C_{\rho_q}}
	\le
	\sqrt2\,
	\|q_r-q_{r_0}\|_{\mathcal C_{\rho_q}^{\mathbb C}}.
	\]
	Combining the last two estimates proves \eqref{eq:q-variation} with
	$C_q(\delta)$ as defined in \eqref{eq:Cq}.
\end{proof}

\subsection{An a posteriori persistence theorem}

We use the fixed point approach and the treatment of the
infinite-dimensional Zernike tails developed in \cite{ArioliKoch1},
but here we formulate the argument in a way that is uniform with respect to
the coefficient $q$.

\begin{proposition}[Stability with respect to the coefficient]
	\label{prop:elliptic-q-stability}
	Let
	\[
	G_q(w)
	:=
	(-\Delta)^{-1}(qw^3),
	\]
	where $(-\Delta)^{-1}$ denotes the inverse of the Dirichlet Laplacian
	on $\mathbb D$ acting on $\mathcal{C}_\varrho$.
	
	Let
	\[
	q_0,\bar w\in\mathcal C_\varrho,
	\]
	and let $M$ be a finite-rank linear operator on $\mathcal C_\varrho$ such that
	\[
	A:=I-M
	\]
is invertible. For $q\in\mathcal C_\varrho$, define
	\begin{equation}\label{eq:Nq}
		\mathcal N_q(h)
		:=
		G_q(\bar w+Ah)-\bar w+Mh.
	\end{equation}
	For $s>0$, let
	\[B_s:=\left\{h\in\mathcal C_\varrho:\|h\|_\varrho\le s\right\},
	\]
	and define
	\begin{equation}\label{eq:Us}
		W_s:=\|\bar w\|_\varrho+\|A\|\,s.
	\end{equation}
	Let moreover
	\[
	C_\Delta:=
	\|(-\Delta)^{-1}\|_{\mathcal C_\varrho\to\mathcal C_\varrho}=\frac{1+\varrho^2}{8},
	\]
(the last equality follows from Lemma \ref{lem:invlapnorm}).
	Assume that
	\begin{equation}\label{eq:Y0ell}
		Y_0:=\|\mathcal N_{q_0}(0)\|_\varrho=\|G_{q_0}(\bar w)-\bar w\|_\varrho
	\end{equation}
	is finite, and that
	\begin{equation}\label{eq:Z0ell}
		Z_0\ge\sup_{h\in B_s}\|D\mathcal N_{q_0}(h)\|.
	\end{equation}
	Let $q\in\mathcal C_\varrho$ satisfy
	\[
	\|q-q_0\|_\varrho\le\varepsilon_q.
	\]
	Set
	\begin{equation}\label{eq:kappaell}
		\kappa:=Z_0+3C_\Delta W_s^2\|A\|\,\varepsilon_q.
	\end{equation}
	If
	\begin{equation}\label{eq:ell-contract}
		\kappa<1
	\end{equation}
	and
	\begin{equation}\label{eq:ell-radius}
		Y_0+C_\Delta W_s^3\varepsilon_q+\kappa s\le s,
	\end{equation}
	then $\mathcal N_q$ is a contraction of $B_s$ into itself. Hence it
	has a unique fixed point
	\[
	h_q\in B_s.
	\]
	The function
	\[
	w_q:=\bar w+Ah_q
	\]
	is a fixed point of $G_q$ and therefore satisfies
	\[
	\begin{cases}
		-\Delta w_q=q\,w_q^3,
		&\text{in }\mathbb D,\\
		w_q=0,
		&\text{on }\partial\mathbb D.
	\end{cases}
	\]
	Moreover,
	\begin{equation}\label{eq:u-error}
		\|w_q-\bar w\|_\varrho
		\le
		\|A\|\,s.
	\end{equation}
\end{proposition}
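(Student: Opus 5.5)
The argument mirrors Theorem~\ref{thm:uniform-theodorsen}: compare $\mathcal N_q$ with $\mathcal N_{q_0}$ and absorb the difference into the contraction estimates. The only structural fact needed is that $G_q$ depends linearly on $q$. The norm bounds come from $C_\Delta$ and the Banach algebra property of $\mathcal C_\varrho$. First, observe that for every $h\in B_s$ the argument $w=\bar w+Ah$ satisfies $\|w\|_\varrho\le W_s$ by \eqref{eq:Us}.

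\textbf{Step 1 (difference of the maps).} Since $\mathcal N_q(h)-\mathcal N_{q_0}(h)=(-\Delta)^{-1}\bigl((q-q_0)(\bar w+Ah)^3\bigr)$, the algebra property gives
\[
\|\mathcal N_q(h)-\mathcal N_{q_0}(h)\|_\varrho\le C_\Delta W_s^3\varepsilon_q ,
\qquad h\in B_s.
\]
In particular $\|\mathcal N_q(0)\|_\varrho\le Y_0+C_\Delta W_s^3\varepsilon_q$.

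\textbf{Step 2 (derivative).} Differentiating gives
\[
D\mathcal N_q(h)k=3(-\Delta)^{-1}\bigl(q\,w^2Ak\bigr)+Mk .
\]
Therefore
\[
D\mathcal N_q(h)-D\mathcal N_{q_0}(h)=3(-\Delta)^{-1}M_{(q-q_0)w^2}A ,
\]
whose norm is at most $3C_\Delta W_s^2\|A\|\varepsilon_q$. Together with \eqref{eq:Z0ell} this yields $\sup_{h\in B_s}\|D\mathcal N_q(h)\|\le\kappa$. Since $B_s$ is convex, the mean value inequality shows that $\mathcal N_q$ is $\kappa$-Lipschitz on $B_s$.

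\textbf{Step 3 (self-map and fixed point).} For $h\in B_s$,
\[
\|\mathcal N_q(h)\|_\varrho\le\|\mathcal N_q(0)\|_\varrho+\kappa\|h\|_\varrho\le Y_0+C_\Delta W_s^3\varepsilon_q+\kappa s\le s
\]
by \eqref{eq:ell-radius}. Because of \eqref{eq:ell-contract}, the contraction mapping theorem on the closed set $B_s$ gives a unique fixed point $h_q$.

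\textbf{Step 4 (identification).} The fixed point equation $h_q=G_q(\bar w+Ah_q)-\bar w+Mh_q$ rearranges to $\bar w+(I-M)h_q=G_q(\bar w+Ah_q)$, that is, $w_q=G_q(w_q)$. This is the reason for the particular form of $\mathcal N_q$, and no invertibility of $A$ is even needed here. Then $-\Delta w_q=qw_q^3$ holds in the sense of $(-\Delta)^{-1}$ on $\mathcal C_\varrho$. The estimate \eqref{eq:u-error} follows from $\|Ah_q\|_\varrho\le\|A\|s$.

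The main obstacle is justifying that $w_q$ is a genuine solution of the Dirichlet problem. One must know that $(-\Delta)^{-1}$ on $\mathcal C_\varrho$ is the Dirichlet inverse, so that its range consists of functions vanishing on $\partial\mathbb D$ and regular enough (elements of $\mathcal C_\varrho$ with $\varrho>1$ are real-analytic up to the boundary). I would cite the construction in \cite{ArioliKoch1} together with Lemma~\ref{lem:invlapnorm} for this. A secondary check is Fréchet differentiability of $\mathcal N_q$ on $\mathcal C_\varrho$. This is immediate, since $w\mapsto qw^3$ is a polynomial map on a Banach algebra and $(-\Delta)^{-1}$ is bounded.
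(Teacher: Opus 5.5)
Your proposal is correct and follows the paper's proof essentially step by step. It has the same four ingredients: the $q$-variation bound $C_\Delta W_s^3\varepsilon_q$ for $\mathcal N_q$, the $q$-variation bound $3C_\Delta W_s^2\|A\|\varepsilon_q$ for $D\mathcal N_q$ giving $\kappa$, the mean-value self-map estimate followed by the contraction mapping theorem on $B_s$, and the rearrangement $Ah_q=G_q(\bar w+Ah_q)-\bar w$ identifying $w_q$ as a fixed point of $G_q$. Your remark that the invertibility of $A$ is never used is also accurate; the paper's argument does not use it either.
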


\begin{proof}
	For every $h\in B_s$,
	\[
	\|\bar w+Ah\|_\varrho
	\le
	\|\bar w\|_\varrho+\|A\|\,\|h\|_\varrho
	\le
	W_s.
	\]
	
	We first estimate the variation of $\mathcal N_q$ with respect to
	$q$. Since
	\[
	G_q(w)-G_{q_0}(w)
	=
	(-\Delta)^{-1}\bigl((q-q_0)w^3\bigr),
	\]
	the Banach algebra property gives
	\[
	\begin{aligned}
		\|
		\mathcal N_q(h)-\mathcal N_{q_0}(h)
		\|_\varrho
		&=
		\|
		G_q(\bar w+Ah)-G_{q_0}(\bar w+Ah)
		\|_\varrho
		\\
		&\le
		C_\Delta
		\|q-q_0\|_\varrho
		\|\bar w+Ah\|_\varrho^3
		\\
		&\le
		C_\Delta W_s^3\varepsilon_q.
	\end{aligned}
	\]
	In particular, at $h=0$,
	\begin{equation}\label{eq:Nq0bound}
		\|\mathcal N_q(0)\|_\varrho
		\le
		Y_0+C_\Delta W_s^3\varepsilon_q.
	\end{equation}
	
	Next,
	\[
	DG_q(w)v
	=
	3(-\Delta)^{-1}(qw^2v),
	\]
	and therefore
	\[
	D\mathcal N_q(h)v
	=
	3(-\Delta)^{-1}
	\left(
	q(\bar w+Ah)^2Av
	\right)
	+
	Mv.
	\]
	Hence
	\[
	\begin{aligned}
		\bigl(
		D\mathcal N_q(h)
		-
		D\mathcal N_{q_0}(h)
		\bigr)v
		&=
		3(-\Delta)^{-1}
		\left(
		(q-q_0)(\bar w+Ah)^2Av
		\right).
	\end{aligned}
	\]
	Thus
	\[
	\begin{aligned}
		\|
		D\mathcal N_q(h)
		-
		D\mathcal N_{q_0}(h)
		\|
		&\le
		3C_\Delta
		\|q-q_0\|_\varrho
		\|\bar w+Ah\|_\varrho^2
		\|A\|
		\\
		&\le
		3C_\Delta W_s^2\|A\|\varepsilon_q.
	\end{aligned}
	\]
	Using \eqref{eq:Z0ell}, we obtain
	\[
	\sup_{h\in B_s}
	\|D\mathcal N_q(h)\|
	\le
	Z_0
	+
	3C_\Delta W_s^2\|A\|\varepsilon_q
	=
	\kappa.
	\]
	By \eqref{eq:ell-contract}, $\mathcal N_q$ is a contraction on
	$B_s$.
	
	Moreover, for $h\in B_s$,
$$		\|\mathcal N_q(h)\|_\varrho
		\le
		\|\mathcal N_q(0)\|_\varrho
		+
		\sup_{\widetilde h\in B_s}
		\|D\mathcal N_q(\widetilde h)\|
		\|h\|_\varrho\le
		Y_0+C_\Delta W_s^3\varepsilon_q+\kappa s.
$$
	Condition \eqref{eq:ell-radius} therefore implies
	\[
	\mathcal N_q(B_s)\subset B_s.
	\]
	The contraction mapping theorem yields a unique fixed point
	\[
	h_q\in B_s.
	\]	
	Then, since
	\[
	h_q
	=
	G_q(\bar w+Ah_q)-\bar w+Mh_q,
	\]
	we have
	\[
	Ah_q
	=
	G_q(\bar w+Ah_q)-\bar w.
	\]
	Thus, with
	\[
	w_q:=\bar w+Ah_q,
	\]
	we obtain
	\[
	w_q=G_q(w_q).
	\]
	By the definition of $G_q$,
	\[
	w_q=(-\Delta)^{-1}(q w_q^3),
	\]
	which is equivalent to
	\[
	-\Delta w_q=q w_q^3
	\]
	with homogeneous Dirichlet boundary conditions.
	
	Finally,
	\[
	\|w_q-\bar w\|_\varrho
	=
	\|Ah_q\|_\varrho
	\le
	\|A\|\,\|h_q\|_\varrho
	\le
	\|A\|\,s.
	\]
\end{proof}

\subsection{Persistence under perturbations of the domain}

Combining Lemma \ref{lem:q-variation} with
Proposition~\ref{prop:elliptic-q-stability} gives the desired
domain-stability result.

\begin{theorem}\label{thm:elliptic-domain-persistence}
	Let $r_0$ be a reference boundary function for which the conformal
	parametrisation has been validated as in
	Theorem~\ref{thm:conformal-stability}.
	Suppose that, with $q_0=q_{r_0}$, the hypotheses of
	Proposition~\ref{prop:elliptic-q-stability} hold for
	$\bar w$, $M$, $A=I-M$, $s$, and $\varepsilon_q$.
	Then there exists an explicitly computable number
	\[
	\delta_{\mathrm{PDE}}>0
	\]
	such that, for every
	\[
	r\in\mathcal A_\sigma,
	\qquad
	\|r-r_0\|_\sigma<\delta_{\mathrm{PDE}},
	\]
	the problem
	\[
	\begin{cases}
		-\Delta v=v^3,&\text{in }\Omega_r,\\
		v=0,&\text{on }\partial\Omega_r
	\end{cases}
	\]
	admits a solution.
\end{theorem}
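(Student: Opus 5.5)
The plan is to chain the quantitative estimates already established: the Theodorsen stability (Theorem~\ref{thm:conformal-stability}, via Theorem~\ref{thm:uniform-theodorsen}), the coefficient variation bound (Lemma~\ref{lem:q-variation}), and the coefficient-uniform fixed point argument (Proposition~\ref{prop:elliptic-q-stability}), and then pull the disk solution back to $\Omega_r$. Concretely, I will define
\[
\delta_{\mathrm{PDE}}:=\sup\Bigl\{\delta\in(0,\delta_r]:\ \sqrt2\,C_q(\delta)\,\delta\le\varepsilon_q\Bigr\},
\]
or any explicit $\delta$ below it. This number is positive because $C_q(\delta)$ is finite and nondecreasing in $\delta$ on $(0,\delta_r]$: the constants $M_\delta$, $K_\delta$, $\kappa_\delta$ and hence $C_u$, $C_f$ are all monotone. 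Note that Lemma~\ref{lem:q-variation} is stated with $\delta=\delta_r$ fixed in $C_u$. So for smaller radii one may either keep the constants at $\delta_r$, which are still valid upper bounds, or recompute them at $\delta$. Either way the product $C_q\delta$ tends to $0$ as $\delta\to0$.

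\textbf{Step 1.} Fix $r$ with $\|r-r_0\|_\sigma<\delta_{\mathrm{PDE}}\le\delta_r$. Theorem~\ref{thm:conformal-stability} gives the unique $u_r\in B_R(\bar u)$, positivity of $r$, and the fact that $f_r$ is the normalized Riemann map onto $\Omega_r$, holomorphic on a disk of radius $\rho>\rho_f$. The Lipschitz bound of Corollary~\ref{cor:u-lipschitz-r}, fed into Lemma~\ref{lem:Cf} and Lemma~\ref{lem:q-variation}, then yields
\[
\|q_r-q_{r_0}\|_{\mathcal C_{\rho_q}}\le C_q(\delta)\,\delta .
\]

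\textbf{Step 2.} Pass to the weight $\varrho<\rho_q$. The Zernike norms are monotone in the weight, since each coefficient is multiplied by $\varrho^{m+2l}\le\rho_q^{m+2l}$. Hence $\|q_r-q_{r_0}\|_\varrho\le\varepsilon_q$ by the choice of $\delta_{\mathrm{PDE}}$; the factor $\sqrt2$ in the definition is harmless slack. With $q_0=q_{r_0}$, all hypotheses of Proposition~\ref{prop:elliptic-q-stability} hold by assumption. It therefore yields $w_r:=\bar w+Ah_{q_r}\in\mathcal C_\varrho$ solving $-\Delta w_r=q_rw_r^3$ in $\mathbb D$ with $w_r=0$ on $\partial\mathbb D$.

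\textbf{Step 3.} Set $v_r:=w_r\circ f_r^{-1}$. Since $f_r$ is biholomorphic, the identity $\Delta(v\circ f)=|f'|^2(\Delta v)\circ f$ gives $-\Delta v_r=v_r^3$ in $\Omega_r$. The boundary condition requires care: $f_r$ extends holomorphically past $\overline{\mathbb D}$, and $f_r'\neq0$ on $\partial\mathbb D$ because $\theta_r'>0$ and $\partial\Omega_r$ is a smooth curve. Hence $f_r$ is a homeomorphism $\overline{\mathbb D}\to\overline{\Omega_r}$, and $v_r$ is continuous up to the boundary with $v_r=0$ there. Regularity is not an issue: $w_r\in\mathcal C_\varrho$ is real-analytic on a neighborhood of $\overline{\mathbb D}$.

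\textbf{Main obstacle.} The main obstacle is bookkeeping of the weights rather than any new analysis. One must ensure the chain $1<\varrho<\rho_q<\rho_f<\rho$ is consistent, and that the bound obtained in $\mathcal C_{\rho_q}$ legitimately controls the $\mathcal C_\varrho$ norm in which Proposition~\ref{prop:elliptic-q-stability} is formulated. One must also check that $q_{r_0}$ itself lies in $\mathcal C_\varrho$, so that the reference quantities $Y_0,Z_0$ make sense; this follows from the same Taylor-to-Zernike argument.
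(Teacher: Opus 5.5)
Your proposal is correct and follows the paper's proof essentially step for step. Both choose $\delta_{\mathrm{PDE}}\le\delta_r$ with $C_q(\delta_{\mathrm{PDE}})\,\delta_{\mathrm{PDE}}\le\varepsilon_q$, pass from $\mathcal C_{\rho_q}$ to $\mathcal C_\varrho$ by monotonicity of the weights via Lemma~\ref{lem:q-variation}, apply Proposition~\ref{prop:elliptic-q-stability}, and pull back by $f_r^{-1}$. The only differences are minor: your extra factor $\sqrt2$ is redundant, since the real/complex Zernike factor is already built into $C_q$. Your check that $f_r'\neq0$ on $\partial\mathbb D$, so that $v_r$ attains zero boundary values, is a detail the paper leaves implicit.
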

\begin{proof}
	Choose $0<\delta_{\rm PDE}<\delta_r$ such that $C_q(\delta_{\rm PDE})\,\delta_{\rm PDE}\le\varepsilon_q$. We have
\[
\|q_r-q_{r_0}\|_{\mathcal C_\varrho}
\le
\|q_r-q_{r_0}\|_{\mathcal C_{\rho_q}}
\le
C_q(\delta_{\rm PDE})\delta_{\rm PDE}
\le\varepsilon_q.
\]
Proposition~\ref{prop:elliptic-q-stability} therefore provides \(h_r\in B_s\), such that
\[
w_r=\bar w+Ah_r
\]
solves
\[
-\Delta w_r=q_rw_r^3,\qquad w_r|_{\partial \mathbb D}=0.
\]
Setting
\[
v_r=w_r\circ f_r^{-1}
\]
and using the conformal transformation law gives
\[
-\Delta v_r=v_r^3\quad\text{in }\Omega_r,
\qquad v_r=0\quad\text{on }\partial\Omega_r.
\]
\end{proof}

\subsection{Persistence of qualitative properties}

For \(q_1,q_2\) in the same validated coefficient ball, with corresponding fixed points \(h_1,h_2\), one has
\[
	\|h_1-h_2\|_\varrho=
	\|\mathcal N_{q_1}(h_1)-\mathcal N_{q_2}(h_2)\|_\varrho\le
	\kappa\|h_1-h_2\|_\varrho+C_\Delta W_s^3\|q_1-q_2\|_\varrho.
\]
Therefore
\[
\|h_1-h_2\|_\varrho
\le
\frac{C_\Delta W_s^3}{1-\kappa}
\|q_1-q_2\|_\varrho,
\]
and hence, since \(w_i=\bar w+Ah_i\),
\[
	\|w_1-w_2\|_\varrho
	\le
	\frac{\|A\|C_\Delta W_s^3}{1-\kappa}
	\|q_1-q_2\|_\varrho .
\]
Then, for positive reference solutions, positivity can also be made uniform
with respect to the domain perturbation. The argument uses the
criteria of Appendix~\ref{sec:positivity}.

Suppose that the reference validation yields strict inequalities in
the positivity test: positivity on the selected intermediate circles
and strict spectral gaps
\[
\lambda_1(A_{a,b})
>
\|q_{r_0}w_{r_0}^2\|_{L^\infty(A_{a,b})}.
\]
All quantities involved depend continuously on $q$ and on the
solution in the validated Zernike norm. Hence these strict
inequalities persist after decreasing
$\delta_{\mathrm{PDE}}$ if necessary.

Similarly, suppose that a reference solution is sign-changing and
that the rigorous enclosure provides points $x_+,x_-\in\mathbb D$
and a number $\eta>0$ such that
\[
w_{r_0}(x_+)>\eta,
\qquad
w_{r_0}(x_-)<-\eta.
\]
By the estimate above and the continuous dependence of \(q_r\) on \(r\), after decreasing \(\delta_{\rm PDE}\) if necessary, the same strict inequalities persist throughout the neighborhood.
This observation leads to the following corollary:
\begin{corollary}\label{cor:qualitative-persistence}
	For each reference domain, the radius
	$\delta_{\mathrm{PDE}}$ in
	Theorem~\ref{thm:elliptic-domain-persistence} may be chosen so that
	the qualitative type of the validated solution is preserved
	throughout the neighborhood. In particular:
	\begin{enumerate}
		\item if the reference solution is positive, then $v_r>0$ in
		$\Omega_r$ for every
		$\|r-r_0\|_\sigma<\delta_{\mathrm{PDE}}$;
		
		\item if the reference solution is sign-changing, then $v_r$ is
		sign-changing for every
		$\|r-r_0\|_\sigma<\delta_{\mathrm{PDE}}$.
	\end{enumerate}
\end{corollary}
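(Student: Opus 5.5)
The argument transfers the qualitative certificates of the reference solution to the perturbed solutions through the Lipschitz bound on $w$ established just before the statement. First, for every $r$ with $\|r-r_0\|_\sigma<\delta_{\rm PDE}$, Theorem~\ref{thm:elliptic-domain-persistence} gives $w_r=\bar w+Ah_r$. Lemma~\ref{lem:q-variation} and the inclusion $\mathcal C_{\rho_q}\subset\mathcal C_\varrho$ give
\[
\|q_r-q_{r_0}\|_\varrho\le C_q(\delta_{\rm PDE})\,\delta_{\rm PDE}.
\]
The contraction estimate preceding the corollary then yields
\[
\|w_r-w_{r_0}\|_\varrho\le \frac{\|A\|C_\Delta W_s^3}{1-\kappa}\,C_q(\delta_{\rm PDE})\,\delta_{\rm PDE}=:\epsilon(\delta_{\rm PDE}).
\]
Since $C_q(\delta)$ stays bounded as $\delta\downarrow0$, we have $\epsilon(\delta)\to0$. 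The Zernike polynomials are bounded by $1$ on $\overline{\mathbb D}$ and $\varrho>1$, so $\|\cdot\|_{L^\infty(\mathbb D)}\le\|\cdot\|_\varrho$. Hence $w_r$ is uniformly close to $w_{r_0}$, and likewise $q_rw_r^2$ is close to $q_{r_0}w_{r_0}^2$ in $L^\infty$ by the Banach algebra property. Because $v_r=w_r\circ f_r^{-1}$ and $f_r$ is a bijection onto $\Omega_r$, every sign statement about $w_r$ on $\mathbb D$ transfers directly to $v_r$ on $\Omega_r$.

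For the sign-changing case (2), I would take the points $x_\pm$ and margin $\eta$ from the reference enclosure and shrink $\delta_{\rm PDE}$ until $\epsilon(\delta_{\rm PDE})<\eta$. Then $w_r(x_+)>0>w_r(x_-)$, so $v_r$ takes both signs at $f_r(x_\pm)$.

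For the positive case (1), I would rerun the positivity criterion of Appendix~\ref{sec:positivity} for each $r$. Its hypotheses are strict inequalities of two kinds: lower bounds $\min w_{r_0}\ge m>0$ on the chosen circles and closed subregions, and spectral gaps $\lambda_1(A_{a,b})>\|q_{r_0}w_{r_0}^2\|_{L^\infty(A_{a,b})}$. Both are stable under $L^\infty$ perturbations of $w$ and $qw^2$. The eigenvalues $\lambda_1(A_{a,b})$ depend only on the fixed annuli in $\mathbb D$, not on $r$, which is the reason for pulling back to the disk. So I would choose $\delta_{\rm PDE}$ with $\epsilon<m$ and with the $L^\infty$ perturbation of $qw^2$ smaller than the spectral gap margin. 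This perturbation is bounded by $\|q_r-q_{r_0}\|\,\|w_r\|^2+\|q_{r_0}\|\,(\|w_r\|+\|w_{r_0}\|)\,\|w_r-w_{r_0}\|$, all in $\varrho$-norm. The criterion then gives $w_r>0$ in $\mathbb D$, hence $v_r>0$ in $\Omega_r$.

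The main obstacle is that the positivity criterion needs pointwise control up to the boundary, where $w$ vanishes, so a uniform lower bound cannot hold near $\partial\mathbb D$. I expect that the appendix handles the boundary annulus through the spectral gap (a maximum principle argument), not through a lower bound. In that case only the finitely many strict inequalities above have to persist, and the argument goes through. The final $\delta_{\rm PDE}$ is the minimum of these finitely many explicit thresholds and the radius from Theorem~\ref{thm:elliptic-domain-persistence}.
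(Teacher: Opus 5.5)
Your proposal is correct and follows essentially the same route as the paper. Both combine the Lipschitz bound of $w_q$ in $q$ with Lemma~\ref{lem:q-variation}, using $\|\cdot\|_{L^\infty(\mathbb D)}\le\|\cdot\|_\varrho$ (which the paper leaves implicit), and then persist finitely many strict inequalities: positivity on the interior circles together with the spectral gaps in the positive case, and the signs at $x_\pm$ with margin $\eta$ in the sign-changing case. Your concern about the boundary is resolved as you anticipated: in Proposition~\ref{prop:positive} the outermost annulus $A_{r_{N-1},1}$ only requires $u\ge0$ on $|x|=1$, which is the Dirichlet condition, so no lower bound near $\partial\mathbb D$ is needed.
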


\subsection{Computer-assisted verification}

The remaining computer-assisted task is now finite and quantitative.
\begin{proposition}\label{prop:validated-elliptic}
	For each reference boundary function $r_0$ listed in
	Table~\ref{tab:domains} and corresponding value of $\delta_\mathrm{PDE}$, the interval computations verify the	hypotheses of Theorem~\ref{thm:elliptic-domain-persistence} and Corollary \ref{cor:qualitative-persistence}.
\end{proposition}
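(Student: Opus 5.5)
The verification is a finite sequence of interval-arithmetic computations for each reference configuration. Each inequality is a hypothesis of Theorem~\ref{thm:elliptic-domain-persistence}, of Proposition~\ref{prop:elliptic-q-stability}, or of the positivity and sign-change criteria behind Corollary~\ref{cor:qualitative-persistence}.

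\textbf{Step 1: the coefficient.} For each $r_0$ I take the validated Theodorsen solution from Proposition~\ref{prop:validated-neighborhoods} in the form $u_{r_0}=\bar u+h$ with $\|h\|_\rho\le R$. I then enclose $f_{r_0}$ as a Taylor polynomial plus a rigorous tail in $\mathcal B_{\rho_f}$, using Proposition~\ref{prop:composition-strip} and Lemma~\ref{lem:conformalradius}. From this I form $f'_{r_0}$ and pass to $\mathcal C_{\rho_q}^{\mathbb C}$ by Lemma~\ref{lem:taylor-zernike}. Finally I compute $q_0=q_{r_0}=f'_{r_0}\overline{f'_{r_0}}$ as a finite Zernike polynomial $\bar q_0$ together with an error bound $\eta_q\ge\|q_0-\bar q_0\|_\varrho$.

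\textbf{Step 2: the fixed-point estimates.} I fix a finite approximate solution $\bar w$ of $-\Delta w=\bar q_0w^3$ and a finite-rank $M$ acting on the low Zernike modes, chosen from the inverse of the truncated Jacobian. I then bound:
\begin{itemize}
\item the defect $Y_0$, by computing $G_{\bar q_0}(\bar w)-\bar w$ exactly in interval arithmetic and adding $C_\Delta\eta_q\|\bar w\|_\varrho^3$;
\item $\|A\|$, by a weighted column-sum bound;
\item $Z_0$, by splitting $D\mathcal N_{q_0}(h)$ into its finite block and its tail. The tail is controlled by $C_\Delta$ acting on high modes, following \cite{ArioliKoch1}. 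The dependence on $h\in B_s$ and the error from $\eta_q$ are added through the Banach algebra property.
\end{itemize}
With these constants I choose $s$ and $\varepsilon_q$ so that \eqref{eq:ell-contract} and \eqref{eq:ell-radius} hold. I then evaluate $C_f$, $C_D$ and $C_q(\delta)$ from Lemmas~\ref{lem:Cf} and~\ref{lem:q-variation}, with $C_u$ taken from Corollary~\ref{cor:u-lipschitz-r}. For the tabulated value $\delta_{\rm PDE}=2^{-d_{\rm PDE}}$ I check that $\delta_{\rm PDE}<\delta_r$ and that $C_q(\delta_{\rm PDE})\delta_{\rm PDE}\le\varepsilon_q$. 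The last check is monotone in $\delta$, so a single evaluation suffices.

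\textbf{Step 3: qualitative type.} Every $q$ with $\|q-q_0\|_\varrho\le\varepsilon_q$ gives a solution $w_q$ in a ball around $\bar w$ whose radius is the explicit quantity $\|A\|s$. The qualitative tests must therefore hold uniformly on this whole ball, not just at $\bar w$.

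For the starred (sign-changing) cases, I evaluate $\bar w$ at two chosen points $x_\pm$. Since $\sup_{\mathbb D}|w|\le\|w\|_\varrho$ for $\varrho\ge1$, it suffices to check $\bar w(x_+)>\|A\|s$ and $\bar w(x_-)<-\|A\|s$.

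For the positive cases, I apply the criteria of Appendix~\ref{sec:positivity} on the enlarged set. I check positivity on intermediate circles using a lower bound on $\bar w$ minus $\|A\|s$. I check the spectral-gap condition with $\|q w^2\|_{L^\infty}$ bounded above by $(\|q_0\|+\varepsilon_q)(\|\bar w\|+\|A\|s)^2$, localised to each annulus.

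\textbf{Main obstacle.} I expect the hardest step to be the $Z_0$ bound, uniform over $B_s$, combined with enough slack for $\varepsilon_q$ to be usable. The constant $C_q$ contains the factors $C_DC_f$, which are large because of the losses $\rho_f/(\rho-\rho_f)$ and $1/\log(\rho_f/\rho_q)$ with $\rho$ close to $1$; this is why $\delta_{\rm PDE}$ is much smaller than $\delta_r$. To keep these losses manageable I would first tune $\rho_f$ and $\rho_q$, placed between $\varrho=513/512$ and $\rho=65/64$, to minimise $C_q$. Only after that would I increase the truncation order of $M$ until $Z_0$ is sufficiently far below $1$.
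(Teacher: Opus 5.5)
Your proposal is correct and follows the paper's route. The paper's proof likewise computes $Y_0$ and $Z_0$ (using the Zernike tail machinery of \cite{ArioliKoch1}) for the enclosure of $q_{r_0}$ enlarged by the ball of radius $C_q(\delta_{\rm PDE})\delta_{\rm PDE}$, and then checks \eqref{eq:ell-contract}--\eqref{eq:ell-radius}; this is exactly your Step~2 with $\varepsilon_q\ge C_q(\delta_{\rm PDE})\delta_{\rm PDE}$. Your Step~3 is more explicit than the paper, which obtains persistence of sign only from strict inequalities and an unquantified further decrease of $\delta_{\rm PDE}$, whereas your uniform checks on the ball of radius $\|A\|s$ certify the tabulated $\delta_{\rm PDE}$ directly.
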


\begin{proof}
	The proof consists of the computations of the bounds \eqref{eq:Y0ell}, \eqref{eq:Z0ell} and the verification of   \eqref{eq:ell-contract} and \eqref{eq:ell-radius}
	for each reference configuration, see Section \ref{sec:cap} and \cite{program} for more details.
\end{proof}

% ============================================================
\section{Open problems}
\label{sec:open}
% ============================================================
Several questions remain open. A first natural problem is to extend the present approach beyond star-shaped domains, replacing the Theodorsen equation by a suitable boundary correspondence formulation for more general analytic Jordan curves. A second direction is rigorous continuation with respect to the shape of the domain. Rather than validating a single neighborhood of a reference boundary, one could attempt to cover continuous paths in the space of boundary functions by overlapping validated neighborhoods, and thereby follow the corresponding conformal maps and elliptic solutions up to a genuine loss of invertibility or a geometric degeneration. This would also provide a framework for the computer-assisted study of bifurcations induced by domain deformation. Finally, for sign-changing solutions it would be interesting to go beyond persistence of sign and obtain quantitative control of their nodal sets, including the preservation or bifurcation of the number and topology of the nodal domains.

% ============================================================
\section{Reference configurations and graphs}
\label{sec:pics}
% ============================================================

The purpose of the examples in this section is to exhibit reference configurations to which
Theorems~\ref{thm:conformal-stability} and \ref{thm:elliptic-stability}
 apply. Each displayed domain is therefore the center of an explicitly validated open neighborhood in $\mathcal A_\sigma$, rather than an isolated example.
The choice of the reference configurations (and their names) is quite arbitrary, but it serves the purpose of showing that the techniques described in the paper apply to a wide class of domains. In particular, all domains are far from a perturbation of the unit disk. The coefficients of the domains {\tt Triblob}, {\tt Tripuff} and {\tt Pillow} have been chosen randomly.  The coefficients of the domain {\tt Cusp} are chosen in such a way that part of the boundary of the domain is close to $0$ and exhibits a behaviour close to a cusp. The names of the other domains are self-explanatory. Of course, the domain {\tt Square} does not serve the purpose of solving the PDE in a square, which could be accomplished in a much more direct way with a two-dimensional Fourier expansion, but rather to show how a domain with corners can be approximated.
For each reference boundary function $r_0$, we display the contour-plot of the coefficient
\[
q_{r_0}=|f_{r_0}'|^2
\]
of the transformed equation on the unit disk, together with the
corresponding validated elliptic solution. When two solutions are
shown for the same reference domain, they correspond respectively to
the positive and sign-changing branches described in
Corollary~\ref{cor:qualitative-persistence}.

\begin{center}
	\includegraphics[height=4cm,width=5cm]{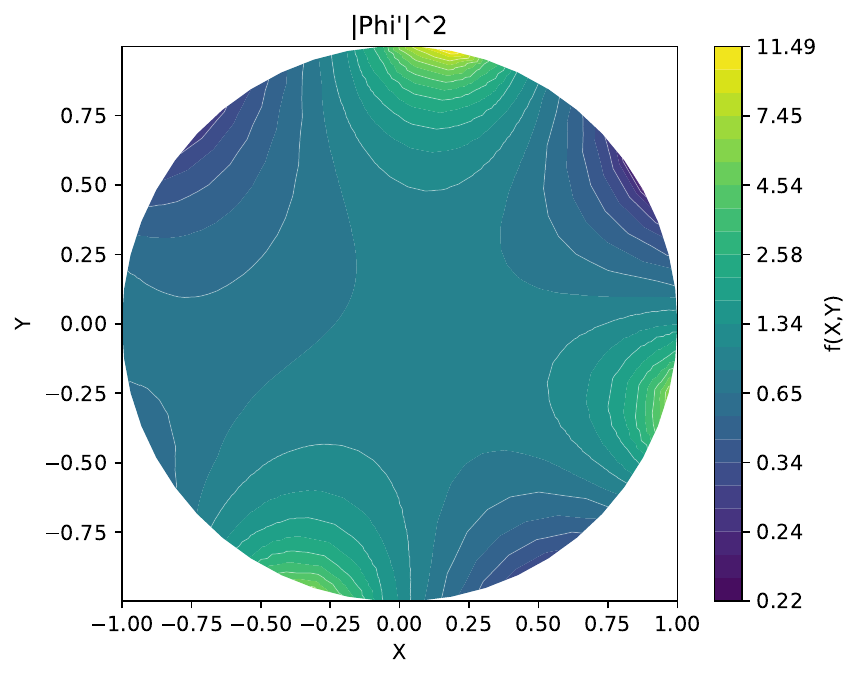}
	\qquad
	\includegraphics[height=4cm,width=4.5cm]{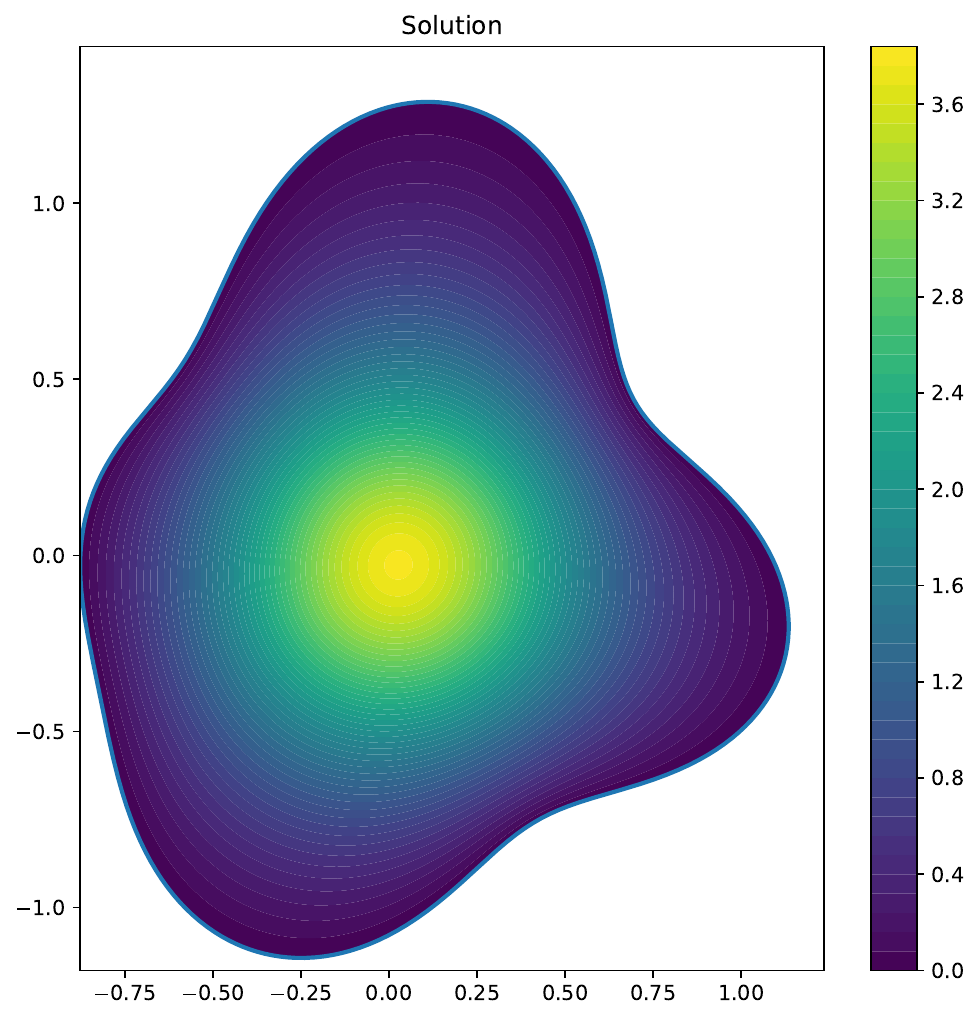}
	\qquad
	\includegraphics[height=4cm,width=4.5cm]{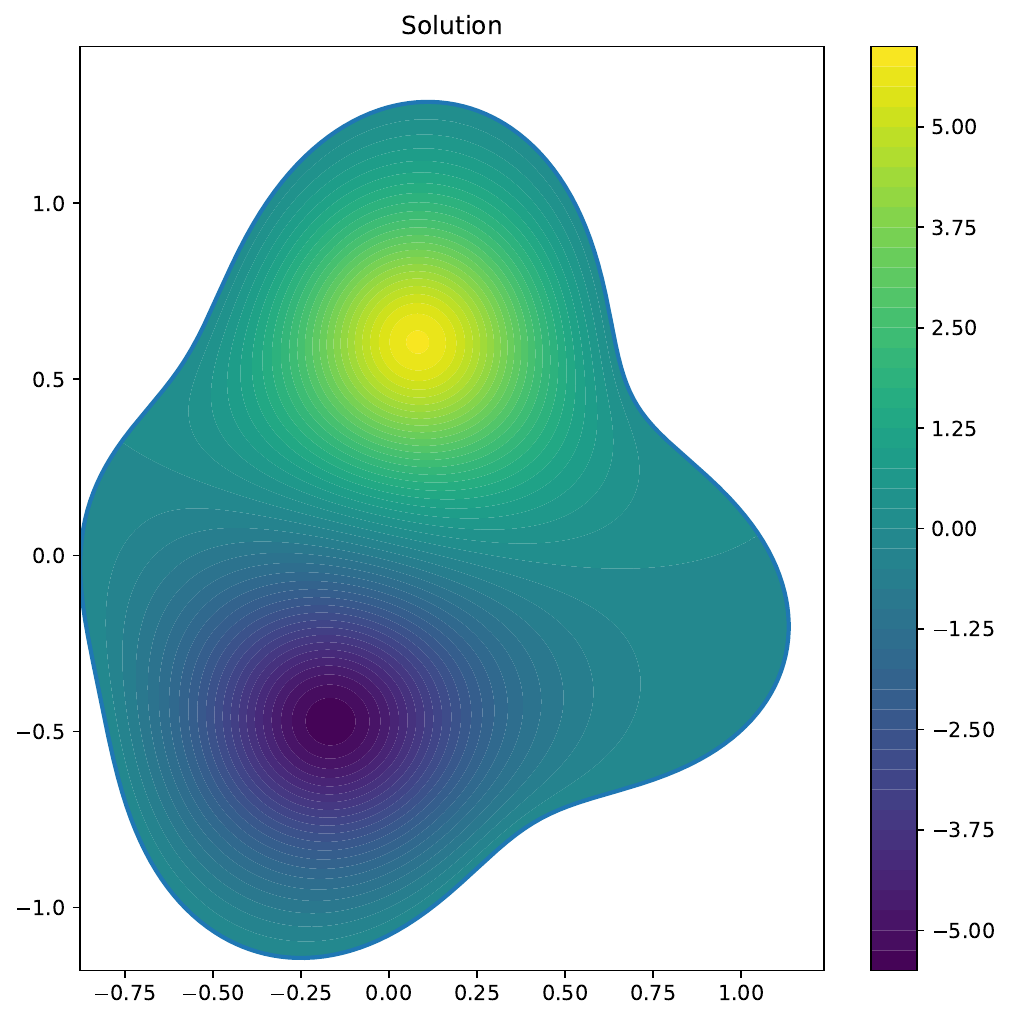}
	\captionof{figure}{{\tt Triblob} reference configuration. Left:
		$q_{r_0}=|f_{r_0}'|^2$ on the unit disk. Center and right:
		the positive and sign-changing solutions, respectively.}
	\label{fig:triblob}
\end{center}
\begin{center}
	\includegraphics[height=4cm,width=5cm]{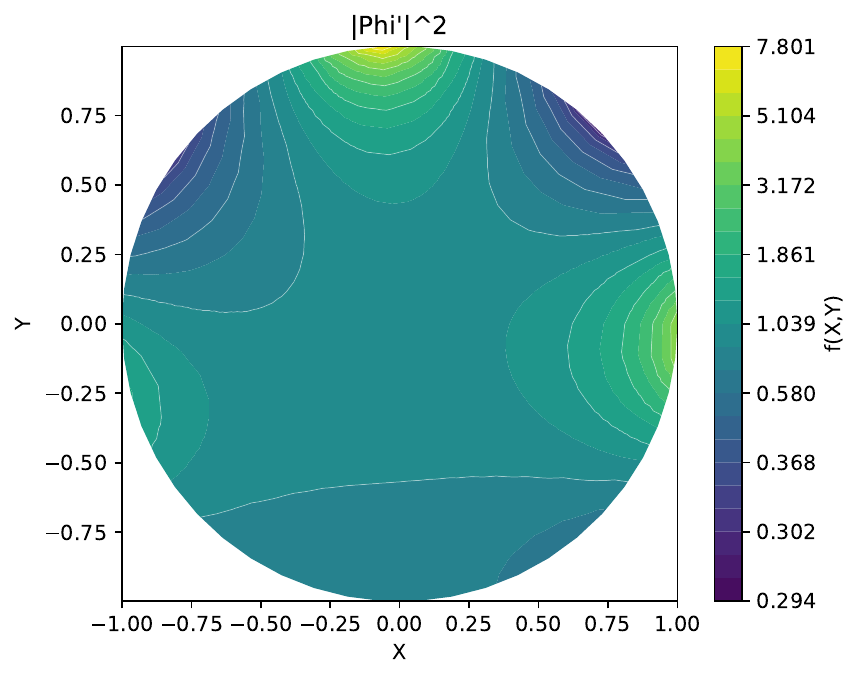}
	\qquad
	\includegraphics[height=4cm,width=4.5cm]{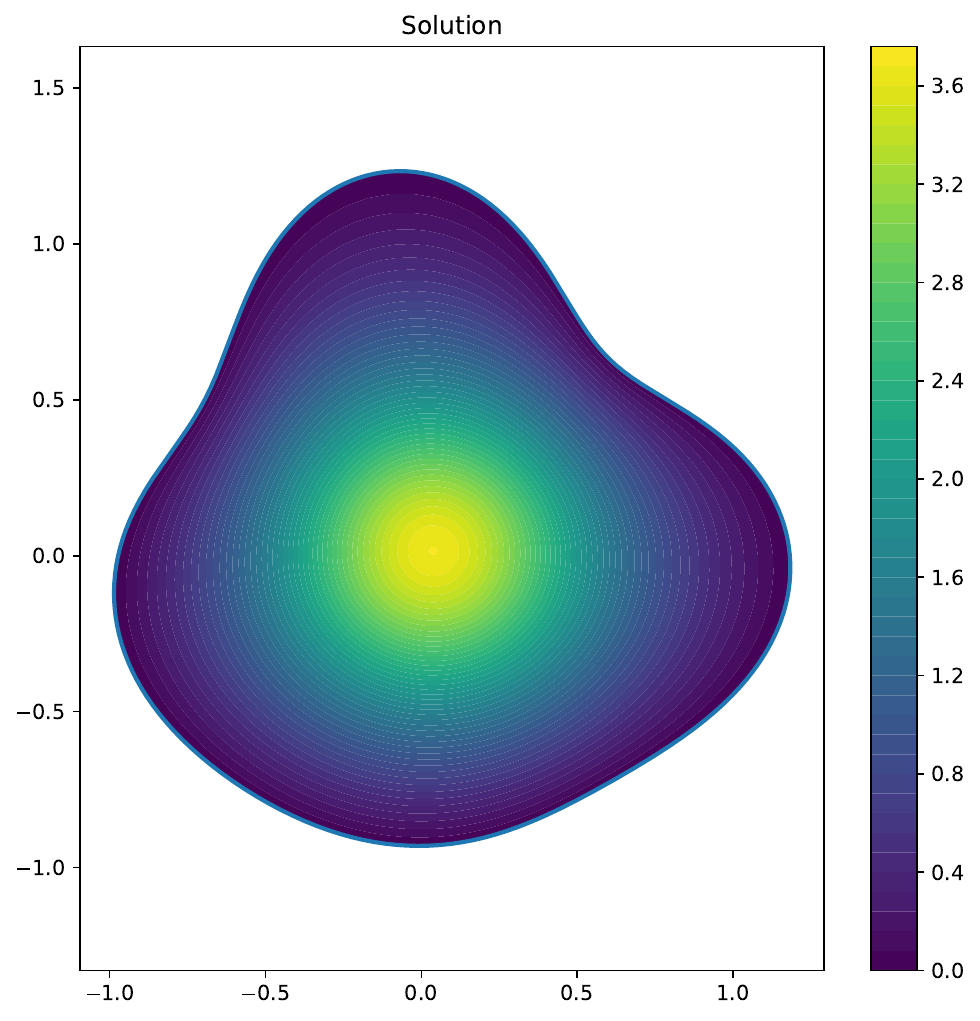}
\captionof{figure}{{\tt Tripuff} reference configuration.}
\end{center}
\begin{center}
	\includegraphics[height=4cm,width=5cm]{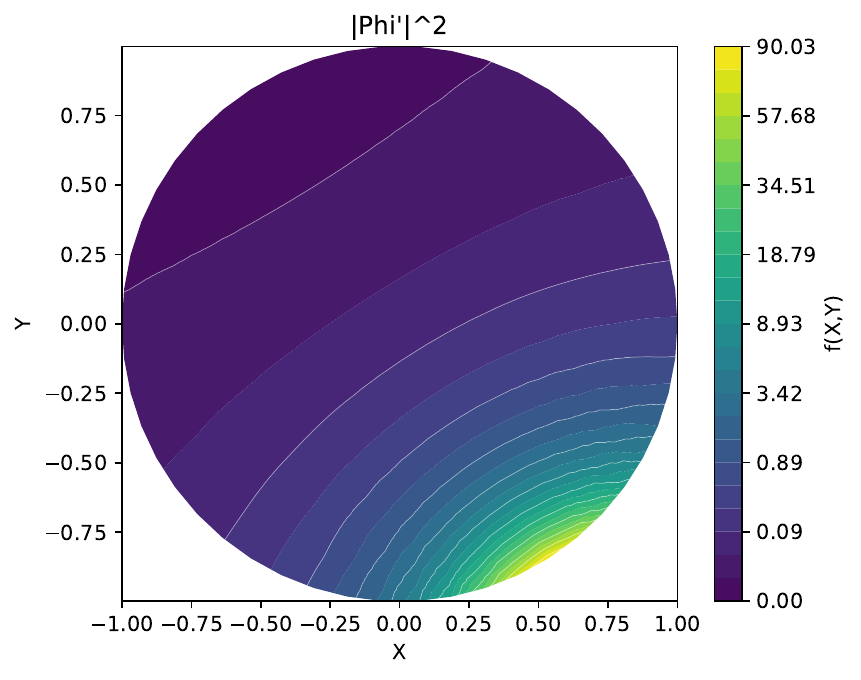}
	\qquad
	\includegraphics[height=4cm,width=4.5cm]{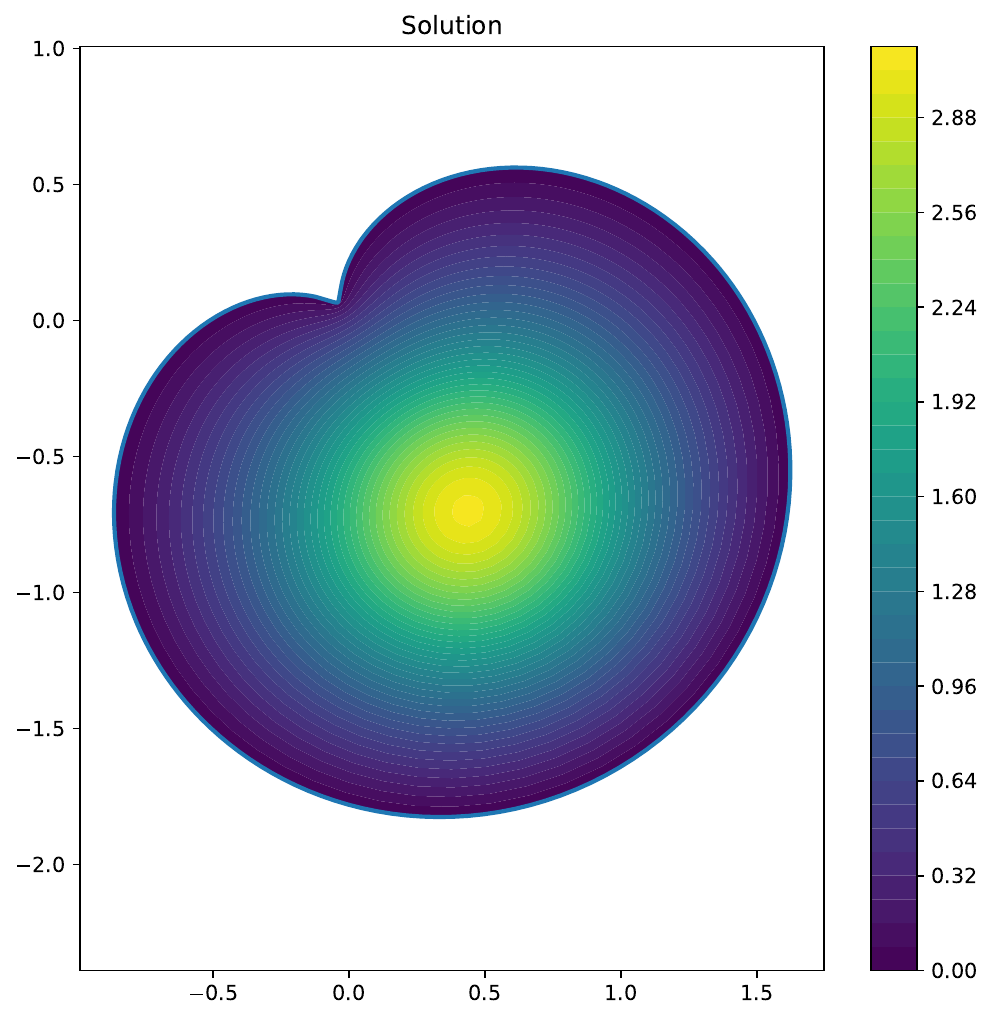}
\captionof{figure}{{\tt Cusp} reference configuration.}
\end{center}
\begin{center}
	\includegraphics[height=4cm,width=5cm]{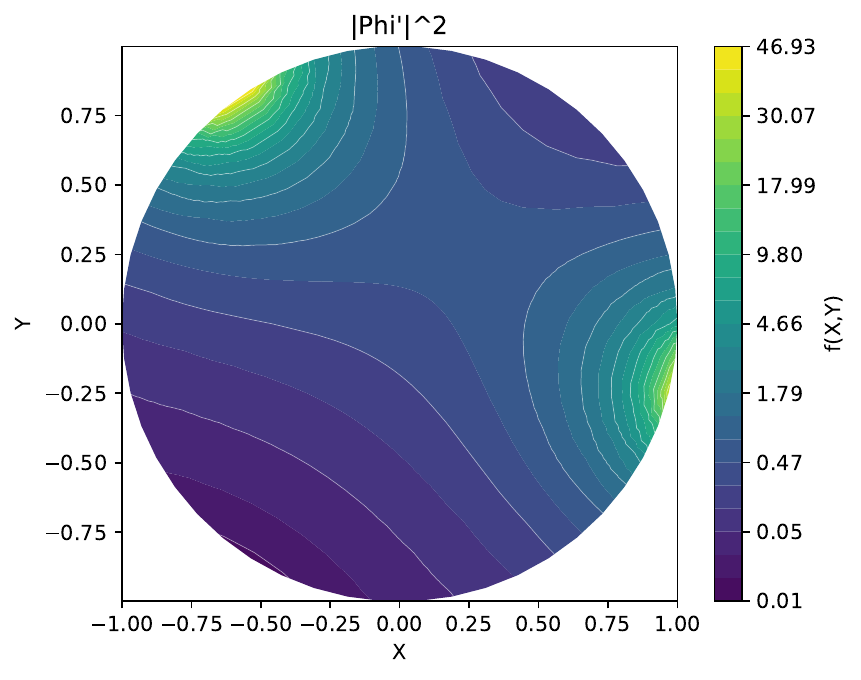}
	\qquad
	\includegraphics[height=4cm,width=4.5cm]{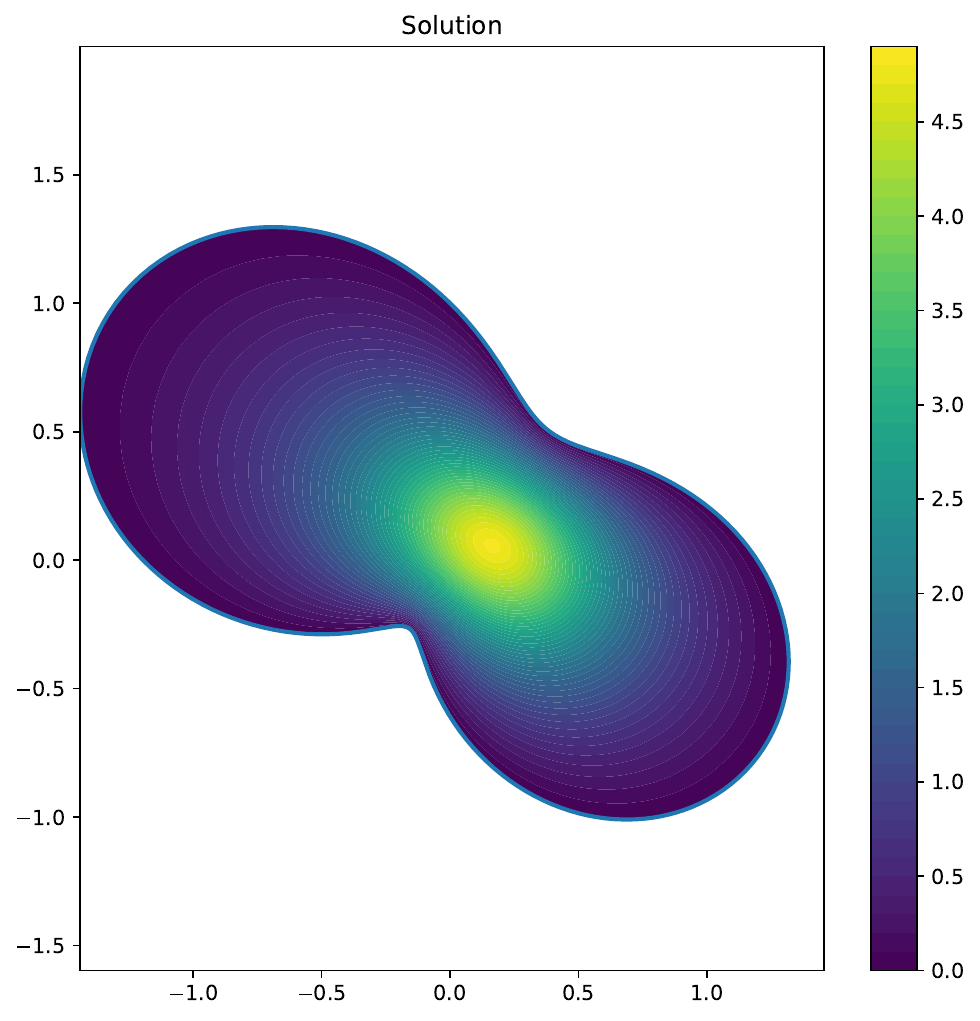}
	\qquad
	\includegraphics[height=4cm,width=4.5cm]{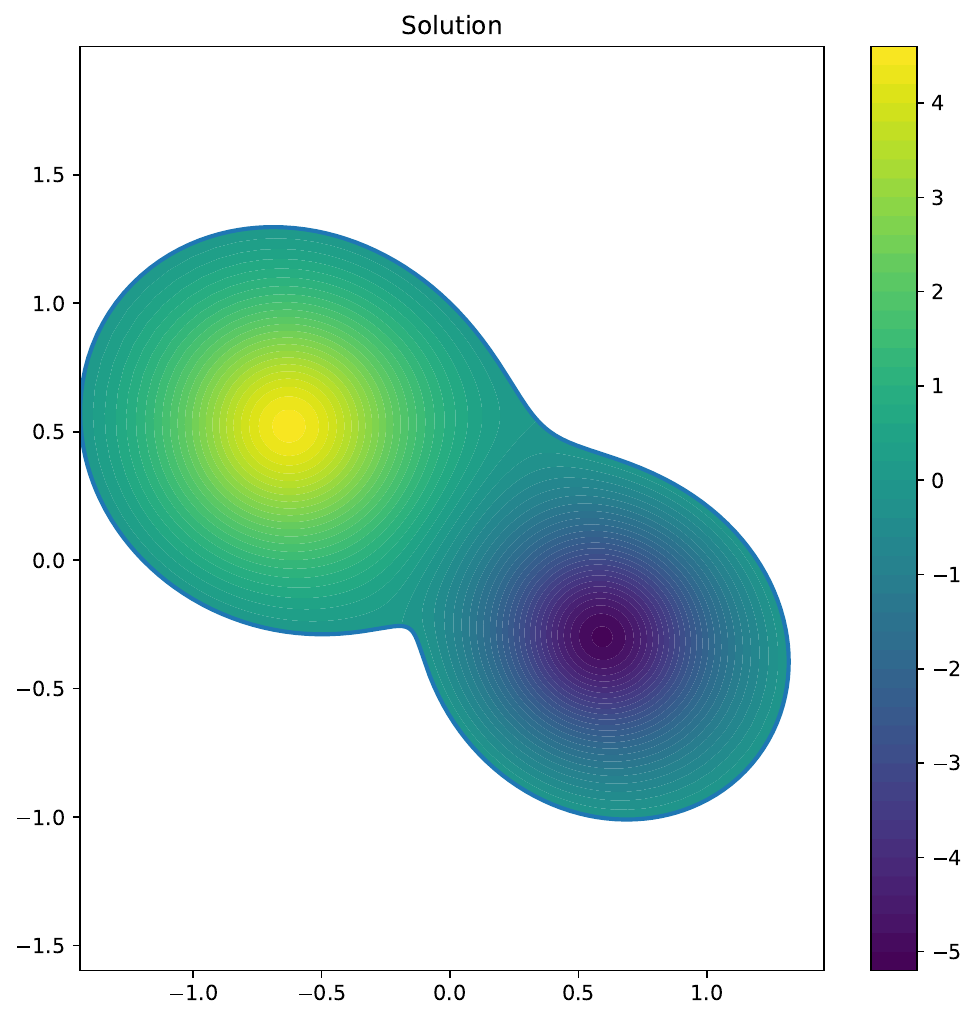}
\captionof{figure}{{\tt Eight and Eight$_*$} reference configuration.}
\end{center}
\newpage
\begin{center}
	\includegraphics[height=4cm,width=5cm]{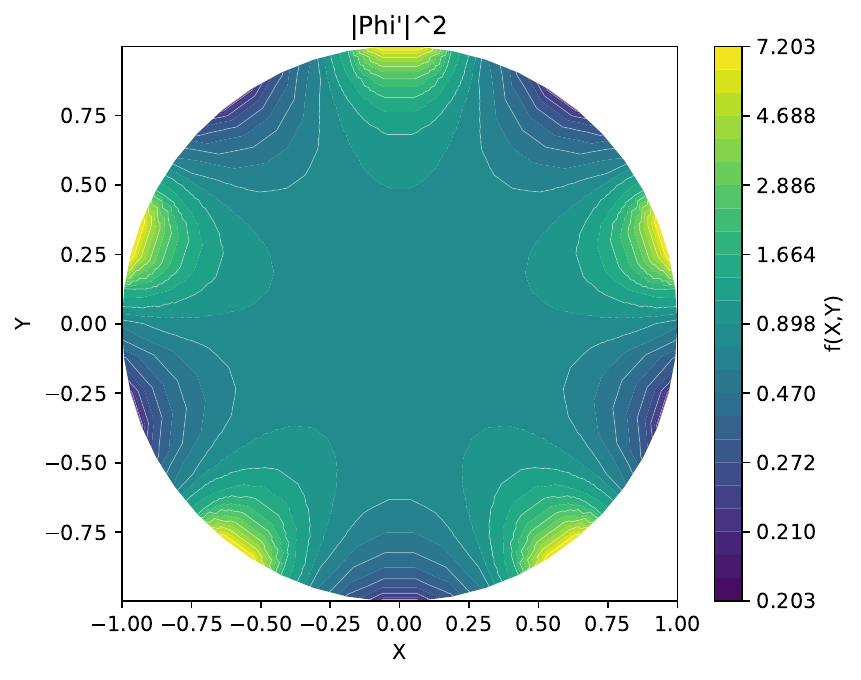}
	\qquad
	\includegraphics[height=4cm,width=4.5cm]{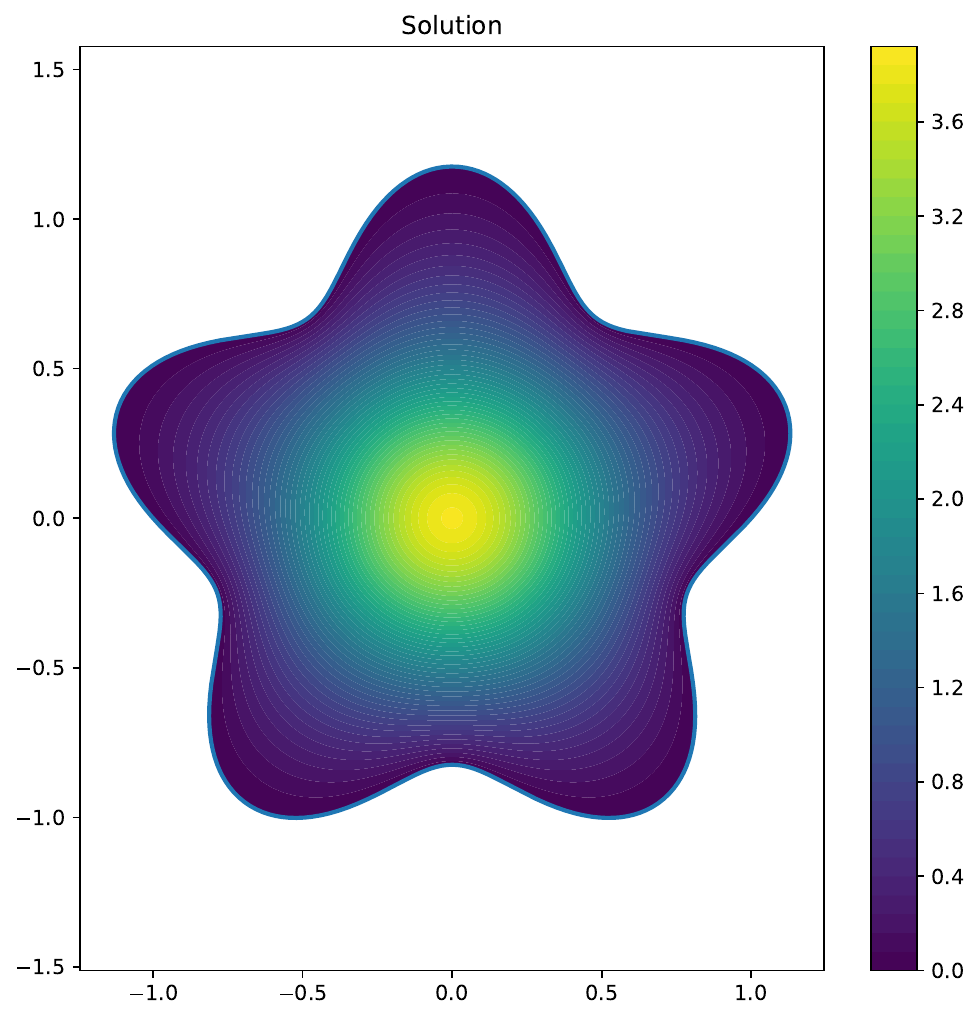}
\captionof{figure}{{\tt Five Star} reference configuration.}
\end{center}
\begin{center}
	\includegraphics[height=4cm,width=5cm]{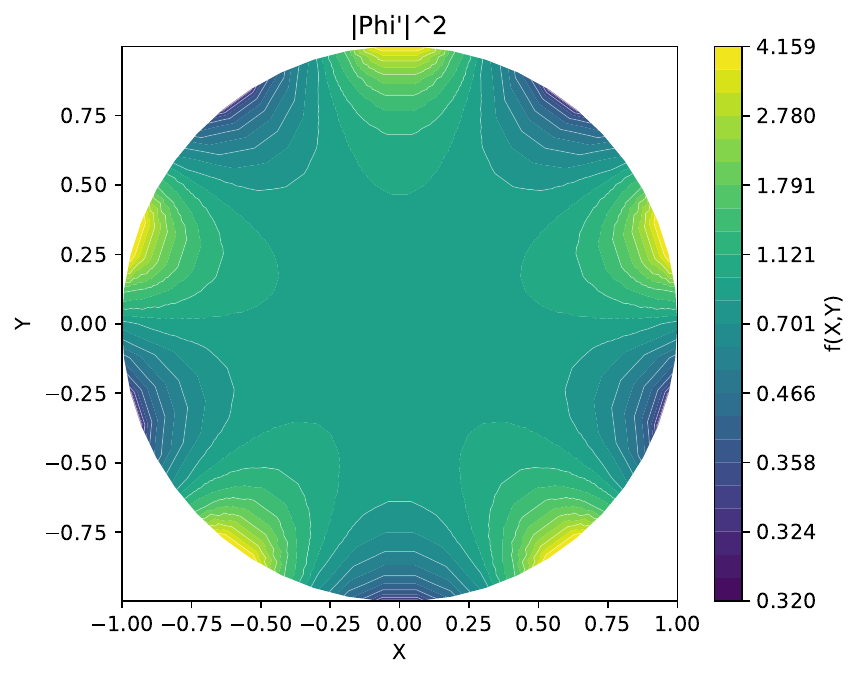}
	\qquad
	\includegraphics[height=4cm,width=4.5cm]{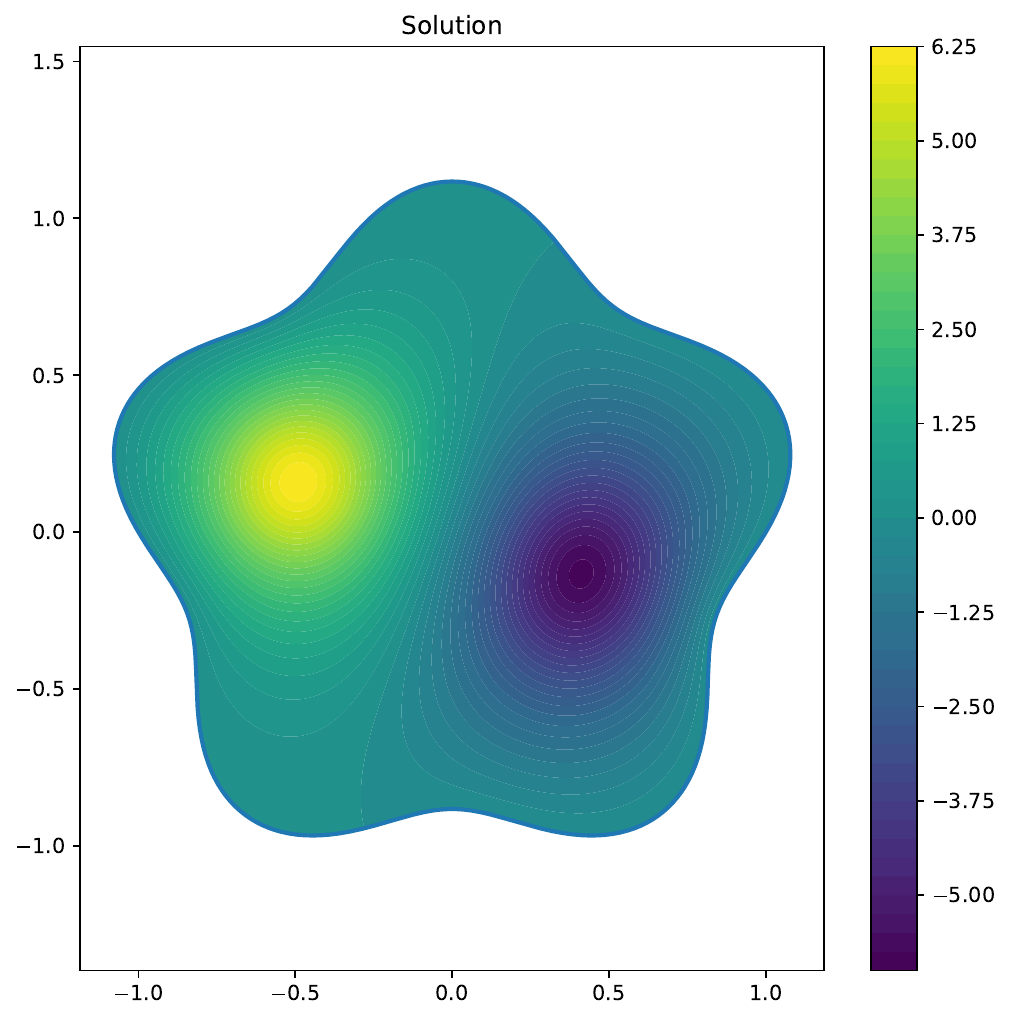}
\captionof{figure}{{\tt Five Star$_*$} reference configuration.}
\end{center}
\begin{center}
	\includegraphics[height=4cm,width=5cm]{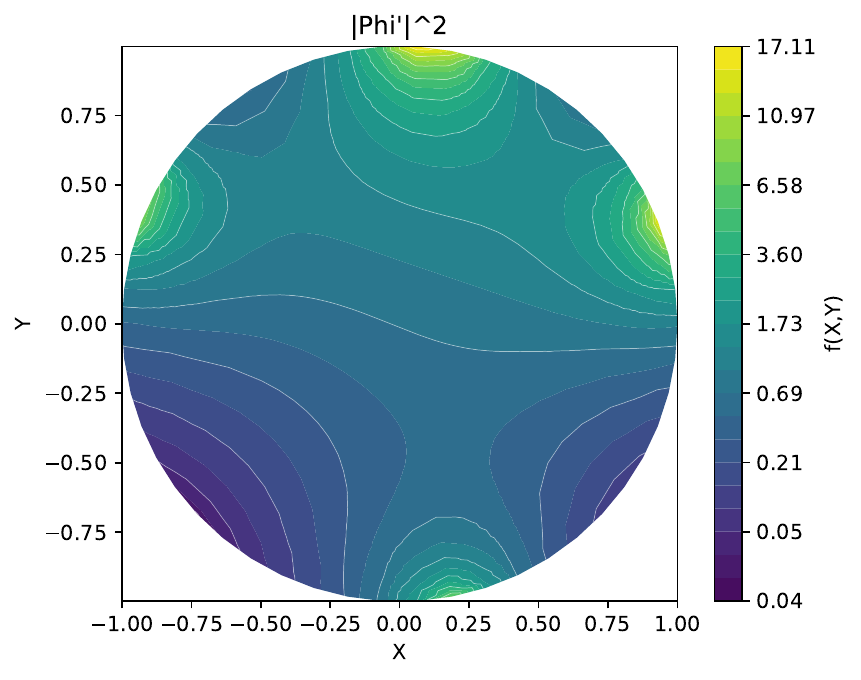}
	\qquad
	\includegraphics[height=4cm,width=4.5cm]{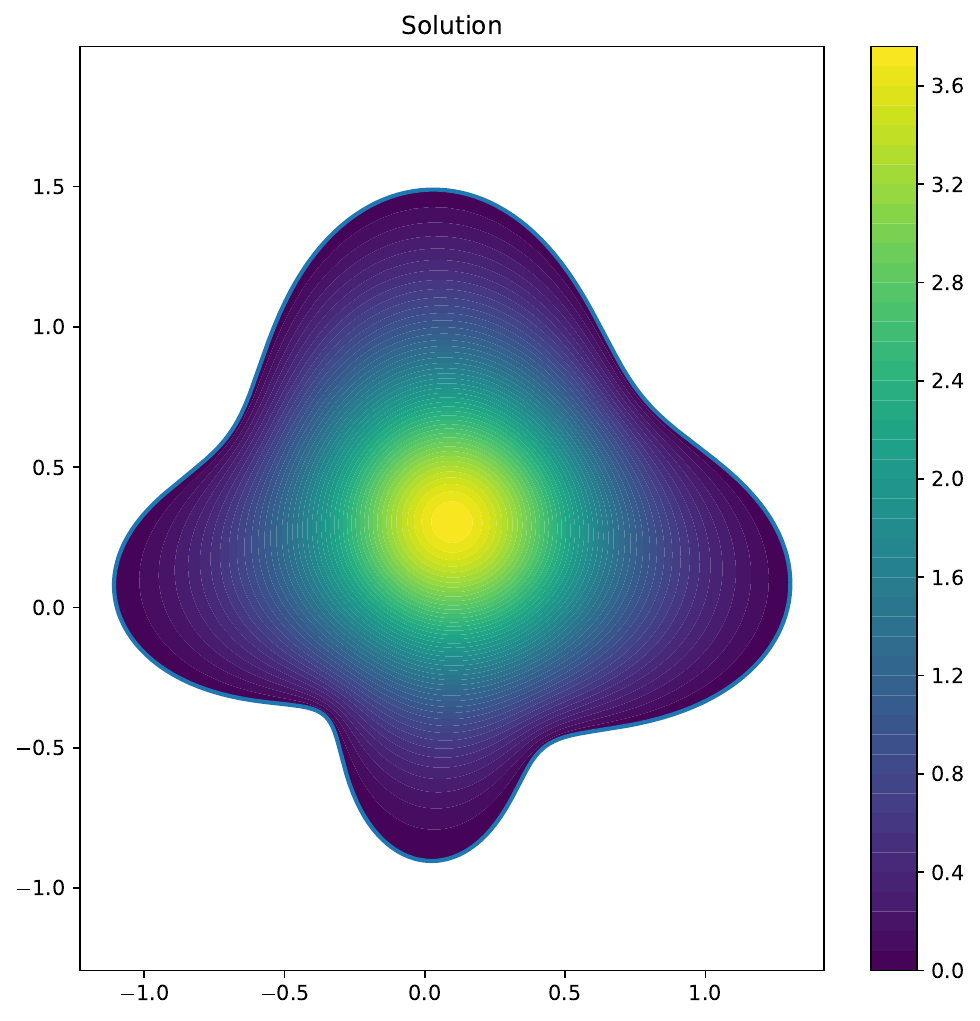}
\captionof{figure}{{\tt Mushroom} reference configuration.}
\end{center}
\begin{center}
	\includegraphics[height=4cm,width=5cm]{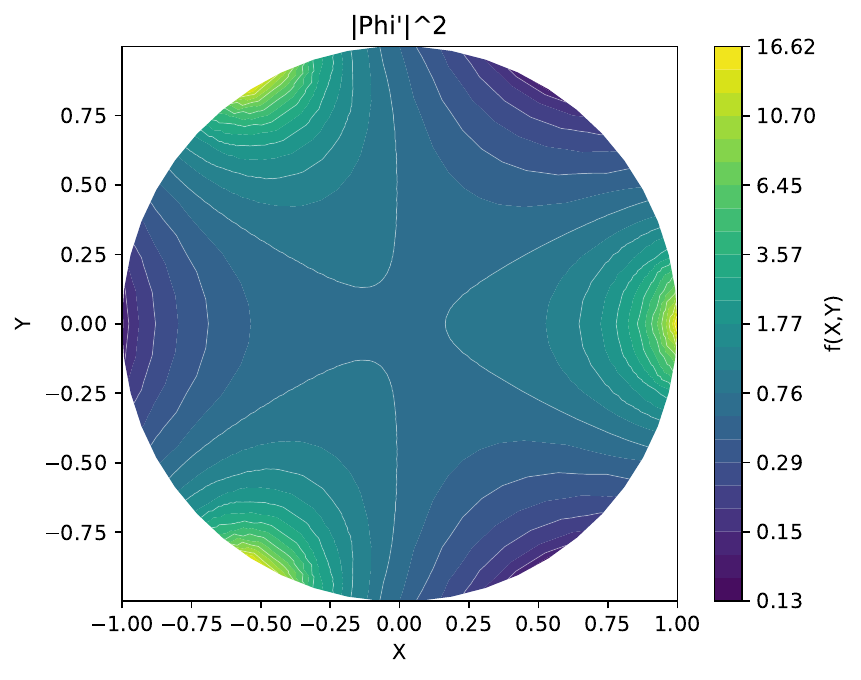}
	\qquad
	\includegraphics[height=4cm,width=4.5cm]{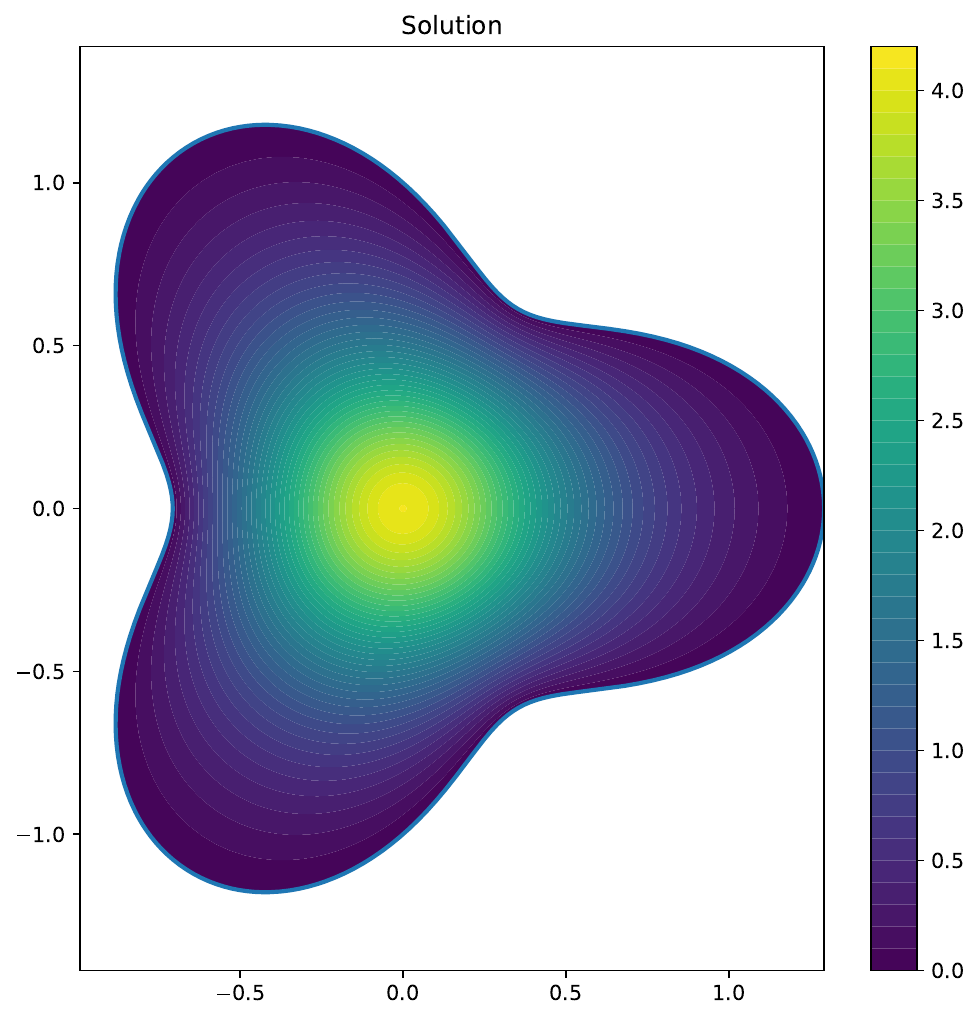}
\captionof{figure}{{\tt Shamrock} reference configuration.}
\newpage
\end{center}
\begin{center}
	\includegraphics[height=4cm,width=5cm]{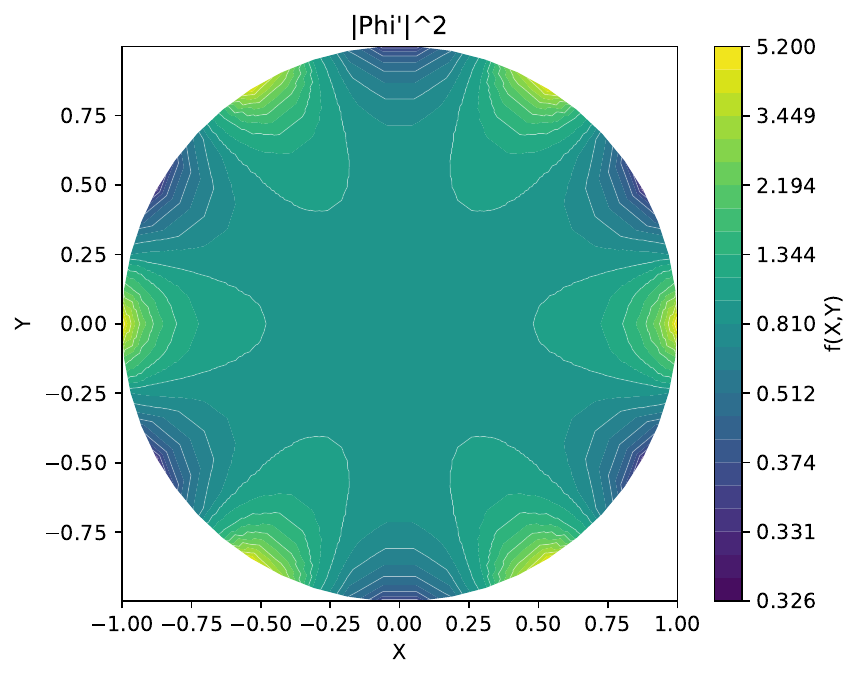}
	\qquad
	\includegraphics[height=4cm,width=4.5cm]{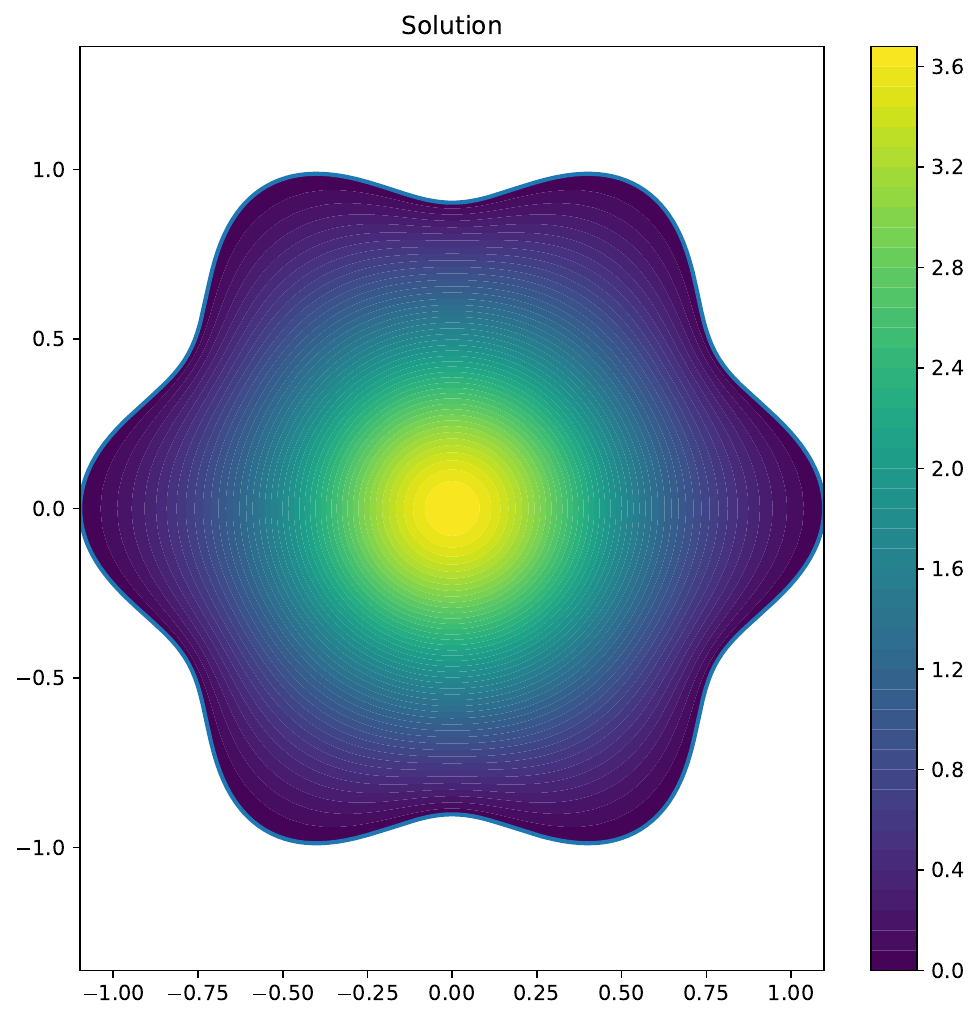}
	\qquad
	\includegraphics[height=4cm,width=4.5cm]{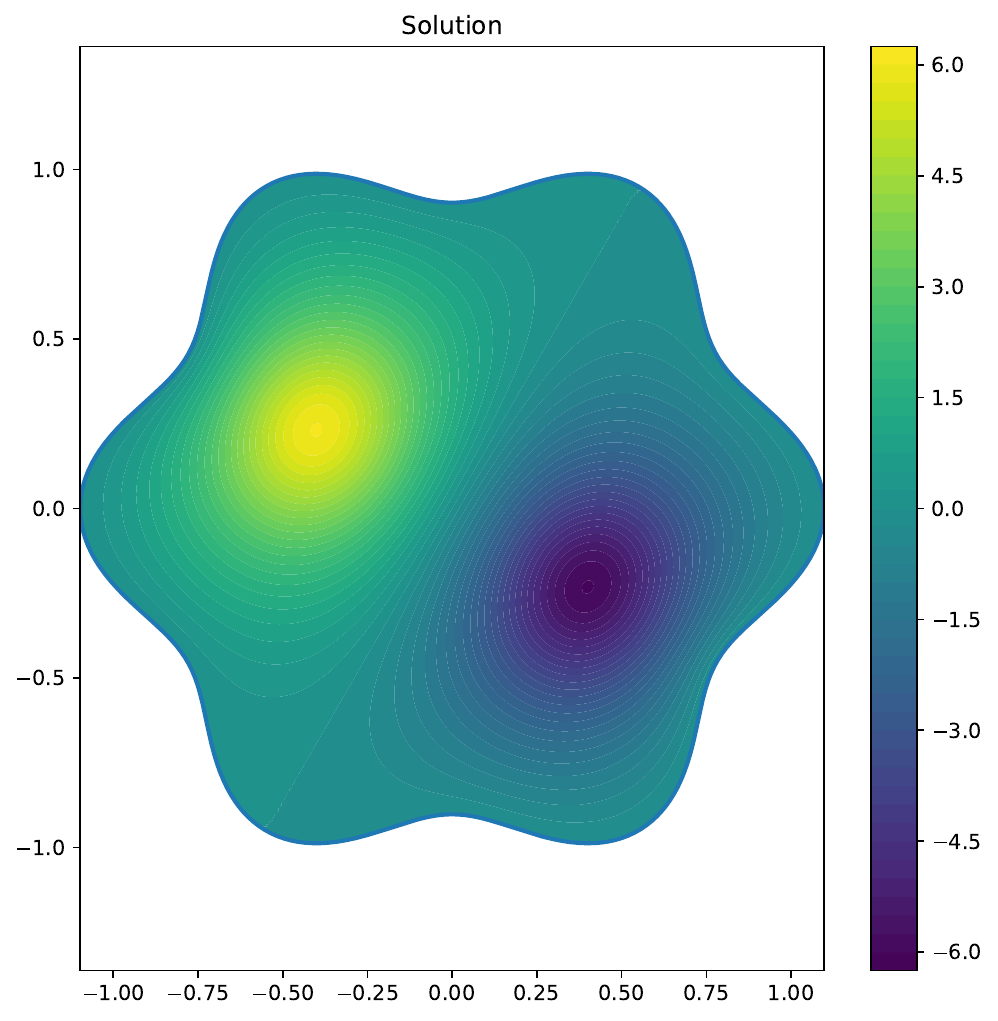}
\captionof{figure}{{\tt Six Star and Six Star$_*$} reference configuration.}
\end{center}
\begin{center}
	\includegraphics[height=4cm,width=5cm]{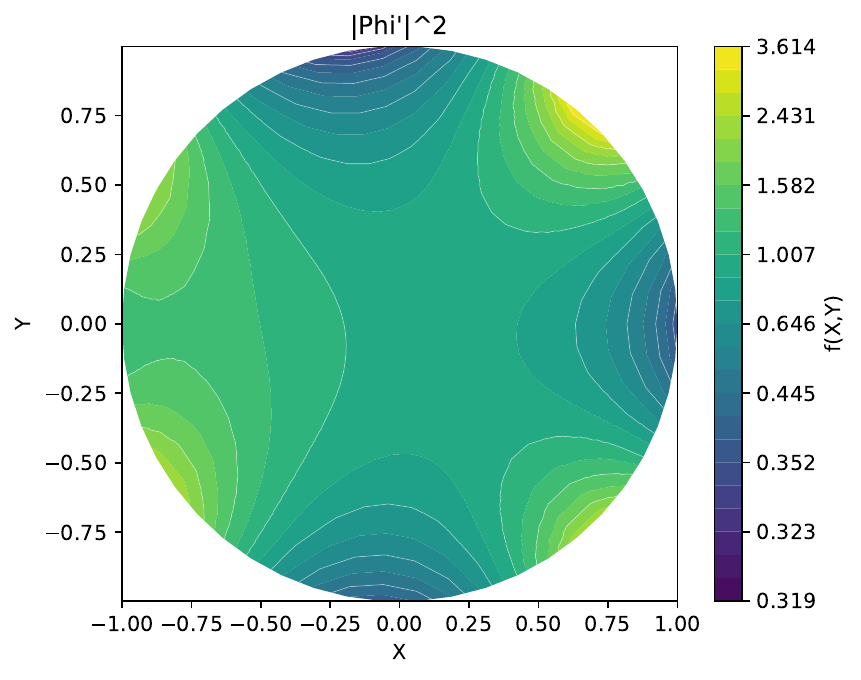}
	\qquad
	\includegraphics[height=4cm,width=4.5cm]{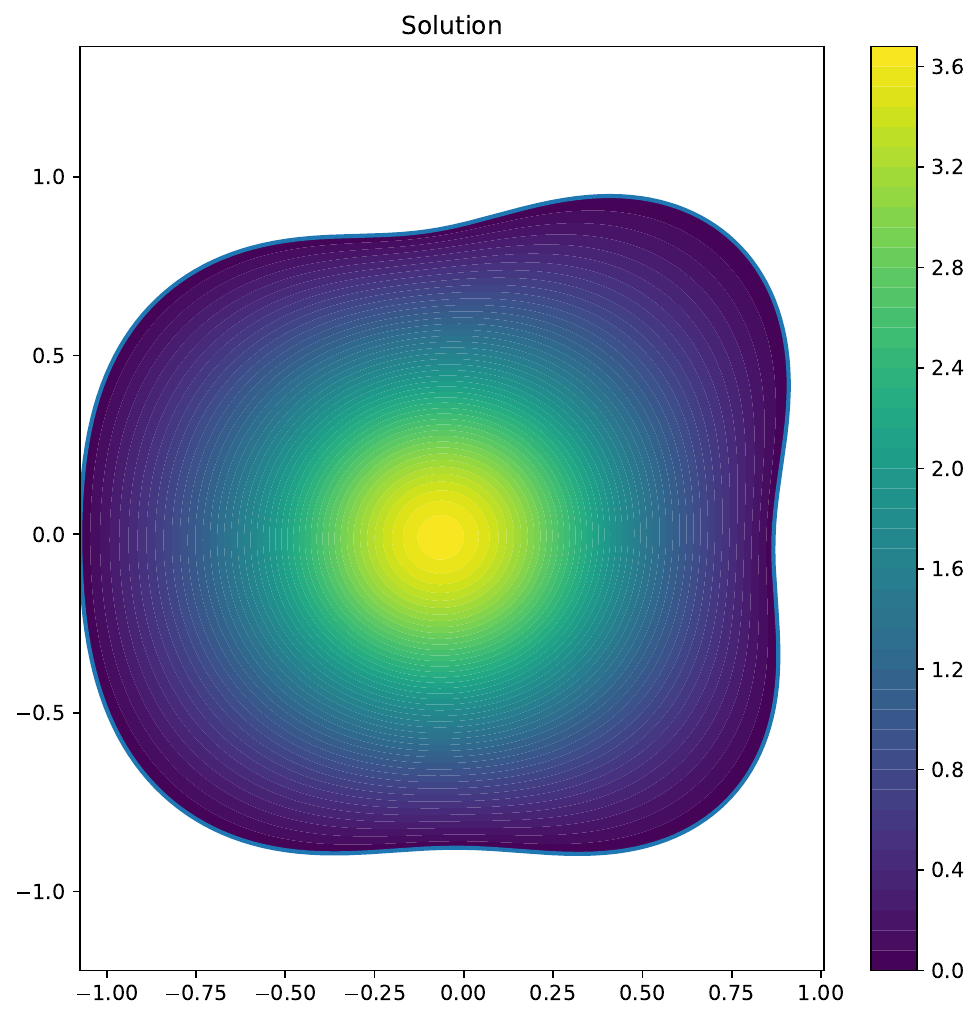}
	\qquad
	\includegraphics[height=4cm,width=4.5cm]{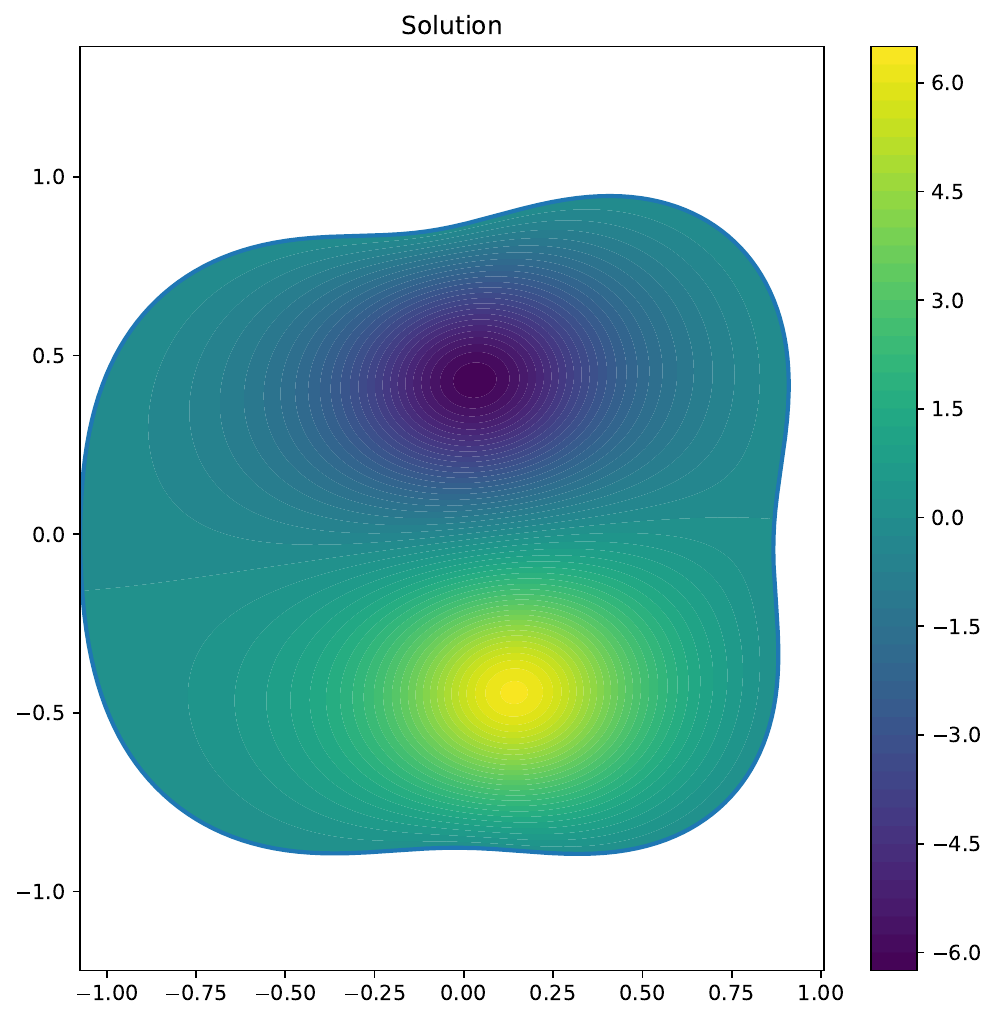}
\captionof{figure}{{\tt Pillow and Pillow$_*$} reference configuration.}
\end{center}
\begin{center}
	\includegraphics[height=4cm,width=5cm]{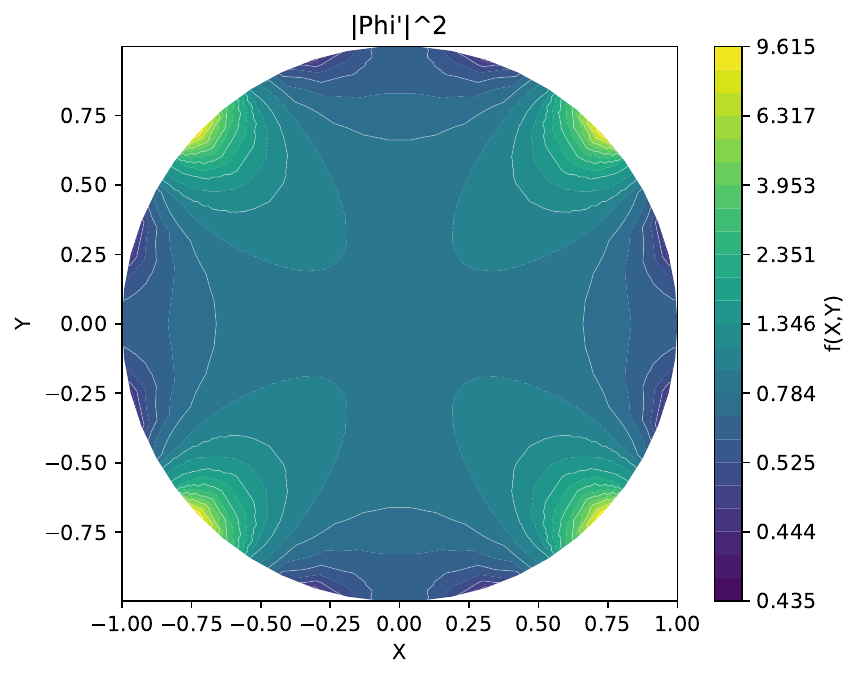}
	\qquad
	\includegraphics[height=4cm,width=4.5cm]{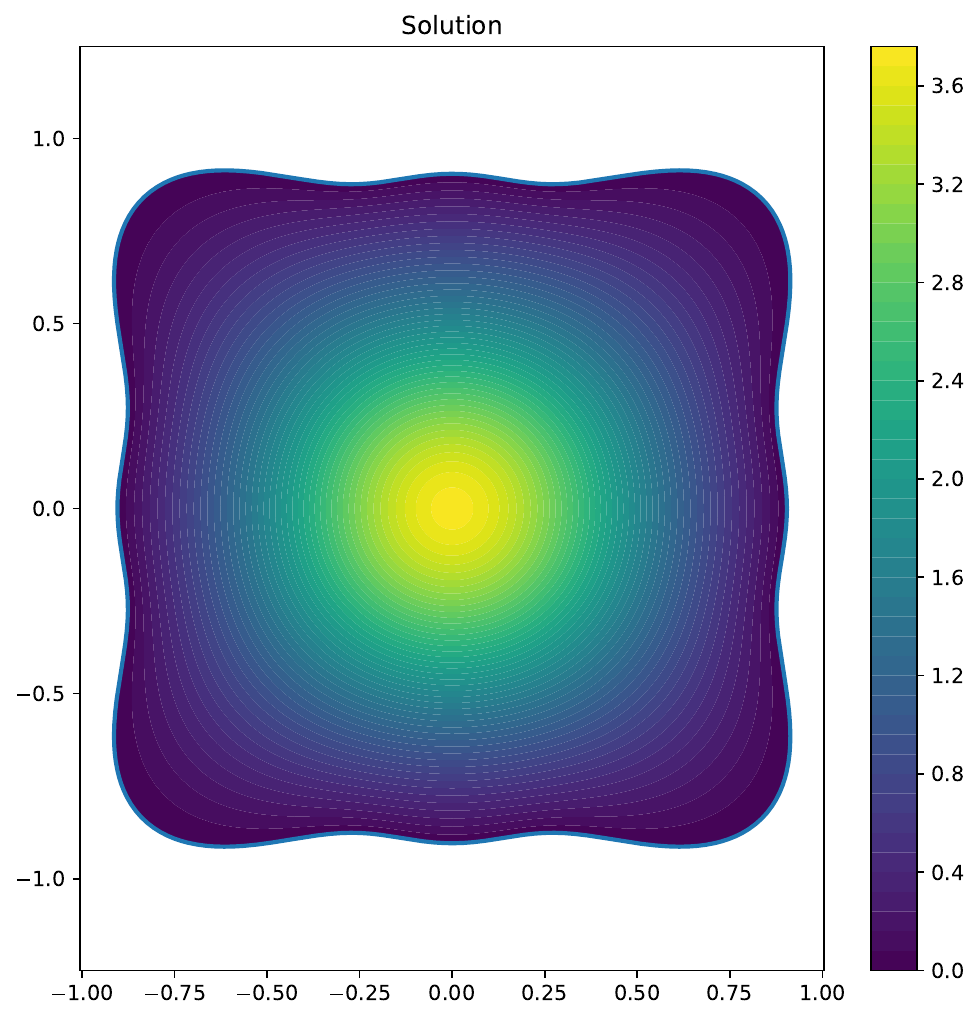}
\captionof{figure}{{\tt Square} reference configuration.}
\end{center}

\appendix

% ============================================================
\section{Weighted Wiener algebras}
\label{sec:appendix-wiener}
% ============================================================

\subsection{Fourier and Laurent representations}

Recall the weighted Fourier space
\[
\mathcal A_\rho
=
\left\{
u(t)
=
a_0+\sum_{k\ge1}
\bigl(a_k\cos(kt)+b_k\sin(kt)\bigr):
\|u\|_\rho<\infty
\right\},
\]
where
\[
\|u\|_\rho
=
|a_0|
+
\sum_{k\ge1}(|a_k|+|b_k|)\rho^k,
\qquad
\rho>1,
\]
and the weighted Laurent space
\[
\mathcal B_\rho
=
\left\{
f(z)=\sum_{k\in\mathbb Z}f_kz^k:
\|f\|_\rho
=
\sum_{k\in\mathbb Z}|f_k|\rho^{|k|}<\infty
\right\}.
\]

\begin{lemma}\label{lem:AandB}
	Let $\rho>1$.
	\begin{enumerate}
		\item Both $\mathcal A_\rho$ and $\mathcal B_\rho$ are commutative unital Banach algebras.
		\item
		Every $u\in\mathcal A_\rho$ extends holomorphically to the strip
		\[
		|\operatorname{Im}z|<\log\rho.
		\]
		
		\item
		Every $f\in\mathcal B_\rho$ defines a holomorphic function in the
		annulus
		\[
		\rho^{-1}<|z|<\rho.
		\]
		
		\item
		Under the correspondence $z=e^{it}$, the real-valued function
		\[
		u(t)
		=
		a_0+\sum_{k\ge1}
		\bigl(a_k\cos(kt)+b_k\sin(kt)\bigr)
		\]
		corresponds to
		\[
		f(z)=\sum_{k\in\mathbb Z}f_kz^k,
		\]
		where
		\[
		f_0=a_0,
		\qquad
		f_k=\frac{a_k-ib_k}{2},
		\qquad
		f_{-k}=\frac{a_k+ib_k}{2},
		\quad k\ge1.
		\]
		In particular,
		\[
		f_{-k}=\overline{f_k}.
		\]
		The correspondence is a bounded linear isomorphism between
		$\mathcal A_\rho$ and
		\[
		\mathcal B_\rho^{\rm sym}
		:=
		\left\{
		f\in\mathcal B_\rho:
		f_{-k}=\overline{f_k}
		\right\}.
		\]
		Moreover,
		\[
		\|f\|_\rho
		\le
		\|u\|_\rho
		\le
		\sqrt2\,\|f\|_\rho.
		\]
	\end{enumerate}
\end{lemma}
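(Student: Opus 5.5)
The four assertions of Lemma~\ref{lem:AandB} will be checked in order, directly from the coefficient definitions of the norms.

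\emph{Banach algebra property.} Completeness is immediate, because both spaces are weighted $\ell^1$ spaces; unitality is also immediate, the constant $1$ having norm $1$. For $\mathcal B_\rho$, the product of $f$ and $g$ has coefficients $(fg)_n=\sum_k f_kg_{n-k}$. Since $|n|\le|k|+|n-k|$ and $\rho>1$, we have $\rho^{|n|}\le\rho^{|k|}\rho^{|n-k|}$, so Fubini gives $\|fg\|_\rho\le\|f\|_\rho\|g\|_\rho$; commutativity is clear. For $\mathcal A_\rho$ I will not argue through the correspondence in (4), since its $\sqrt2$ loss would spoil the constant. Instead I work with the product-to-sum formulas: $\cos(jt)\cos(kt)$, $\sin(jt)\sin(kt)$ and $\sin(jt)\cos(kt)$ are each half the sum of two trigonometric monomials of frequencies $j+k$ and $|j-k|$. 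Every basis monomial $e$ with frequency $j$ has $\|e\|_\rho=\rho^{j}$. Hence the product of two monomials has norm at most $\tfrac12(\rho^{j+k}+\rho^{|j-k|})\le\rho^{j+k}$. Summing over all pairs of coefficients yields submultiplicativity.

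\emph{Holomorphic extension, items (2) and (3).} For $|\operatorname{Im}z|<\log\rho$ we have $|\cos kz|,|\sin kz|\le e^{k|\operatorname{Im}z|}$. The series for $u$ is therefore dominated on compact substrips by $\sum(|a_k|+|b_k|)\rho'^k$ with $\rho'<\rho$, so it converges locally uniformly and defines a holomorphic function. The same Weierstrass M-test argument applies on annuli $\rho'^{-1}\le|z|\le\rho'$, using $|z|^k\le\rho'^{|k|}$.

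\emph{The correspondence, item (4).} The coefficient formulas follow from $\cos kt=(e^{ikt}+e^{-ikt})/2$ and $\sin kt=(e^{ikt}-e^{-ikt})/(2i)$, and the symmetry $f_{-k}=\overline{f_k}$ holds because $a_k,b_k$ are real. For the norms, $\|f\|_\rho=|a_0|+\sum_k|a_k-ib_k|\rho^k$ since $|f_k|=|f_{-k}|$. The right inequality reduces to $|a_k|+|b_k|\le\sqrt2\sqrt{a_k^2+b_k^2}$, which is Cauchy--Schwarz; the left one is the triangle inequality $\sqrt{a_k^2+b_k^2}\le|a_k|+|b_k|$. These bounds make the map bounded with bounded inverse. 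Surjectivity onto $\mathcal B_\rho^{\rm sym}$ comes from inverting the formulas, $a_k=2\operatorname{Re}f_k$ and $b_k=-2\operatorname{Im}f_k$. No step is a real obstacle. The one point needing care is the $\mathcal A_\rho$ algebra bound, where the direct trigonometric computation is required if a submultiplicative constant of $1$ is wanted.
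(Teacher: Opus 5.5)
Your proof is correct and follows the paper's route for the Laurent algebra $\mathcal B_\rho$ (the convolution estimate $\rho^{|n|}\le\rho^{|k|}\rho^{|n-k|}$), the locally uniform convergence giving items (2) and (3), and the coefficient correspondence with the comparison $\sqrt{a_k^2+b_k^2}\le|a_k|+|b_k|\le\sqrt2\sqrt{a_k^2+b_k^2}$. The one place you go beyond the paper is the submultiplicativity of $\mathcal A_\rho$, which the paper simply calls well known. Your product-to-sum computation supplies it with constant $1$, and it works equally for complex coefficients, which is the form the paper later uses when it bounds $\|e^{iku}\|_\rho$. You are also right that transferring the algebra property through item (4) would only give a constant $\sqrt2$.
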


\begin{proof}
	The spaces being Banach algebras is a well-known fact.
	For $u\in\mathcal A_\rho$, the Fourier coefficients decay
	geometrically with weight $\rho^{-k}$. Hence the Fourier series
	converges absolutely and locally uniformly whenever
	\[
	e^{|\operatorname{Im}z|}<\rho,
	\]
	which proves (2).
	
	Similarly, if $f\in\mathcal B_\rho$, then
	\[
	\sum_{k\in\mathbb Z}|f_k|\rho^{|k|}<\infty.
	\]
	Thus the Laurent series converges absolutely and uniformly on every
	compact subannulus of
	\[
	\rho^{-1}<|z|<\rho.
	\]
	
	For the algebra property, if
	\[
	f(z)=\sum_{k\in\mathbb Z}f_kz^k,
	\qquad
	g(z)=\sum_{k\in\mathbb Z}g_kz^k,
	\]
	then
	\[
	(fg)_n
	=
	\sum_{k\in\mathbb Z}f_kg_{n-k}.
	\]
	Since
	\[
	\rho^{|n|}
	\le
	\rho^{|k|}\rho^{|n-k|},
	\]
	we obtain
	\[
	\|fg\|_\rho
	\le
	\|f\|_\rho\|g\|_\rho.
	\]
	Completeness follows from completeness of weighted $\ell^1$.
	
	Finally,
	\[
	\cos(kt)=\frac{e^{ikt}+e^{-ikt}}2,
	\qquad
	\sin(kt)=\frac{e^{ikt}-e^{-ikt}}{2i},
	\]
	which gives the stated correspondence. Moreover,
	\[
	|f_k|+|f_{-k}|
	=
	\sqrt{a_k^2+b_k^2},
	\]
	and
	\[
	\sqrt{a_k^2+b_k^2}
	\le
	|a_k|+|b_k|
	\le
	\sqrt2\,\sqrt{a_k^2+b_k^2}.
	\]
\end{proof}
We identify a periodic function with its associated Laurent function when no confusion can arise.
\begin{lemma}\label{lem:operator-A-B}
	Let $\rho>1$ and let
	\[
	T:\mathcal B_\rho\longrightarrow\mathcal B_\rho
	\]
	be a bounded linear operator such that
	\[
	T\bigl(\mathcal B_\rho^{\rm sym}\bigr)
	\subset
	\mathcal B_\rho^{\rm sym}.
	\]
	Through the correspondence of Lemma~\ref{lem:AandB}, $T$ induces
	a bounded linear operator
	\[
	T_{\mathcal A}:\mathcal A_\rho\longrightarrow\mathcal A_\rho.
	\]
	Then
	\begin{equation}\label{eq:operator-A-B}
		\|T_{\mathcal A}\|_{\mathcal A_\rho\to\mathcal A_\rho}
		\le
		\sqrt2\,
		\|T\|_{\mathcal B_\rho\to\mathcal B_\rho}.
	\end{equation}
\end{lemma}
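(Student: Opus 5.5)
The plan is to work with the isomorphism $J:\mathcal A_\rho\to\mathcal B_\rho^{\rm sym}$ of Lemma~\ref{lem:AandB}(4) and the norm comparison stated there. For $u\in\mathcal A_\rho$ we set $f=Ju$. The induced operator is then, by definition, $T_{\mathcal A}=J^{-1}TJ$. This is well defined precisely because $T$ maps $\mathcal B_\rho^{\rm sym}$ into itself, so $TJu$ again lies in $\mathcal B_\rho^{\rm sym}$ and hence has a preimage under $J$. Linearity of $T_{\mathcal A}$ is immediate from the linearity of $J$ and $T$.

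The estimate is a chain of three inequalities. First apply the upper inequality of Lemma~\ref{lem:AandB}(4) to $TJu\in\mathcal B_\rho^{\rm sym}$. Next use the definition of the operator norm of $T$ on all of $\mathcal B_\rho$. Finally apply the lower inequality $\|Ju\|_{\mathcal B_\rho}\le\|u\|_{\mathcal A_\rho}$. Together these give
\[
\|T_{\mathcal A}u\|_{\mathcal A_\rho}
=\|J^{-1}TJu\|_{\mathcal A_\rho}
\le\sqrt2\,\|TJu\|_{\mathcal B_\rho}
\le\sqrt2\,\|T\|_{\mathcal B_\rho\to\mathcal B_\rho}\|Ju\|_{\mathcal B_\rho}
\le\sqrt2\,\|T\|_{\mathcal B_\rho\to\mathcal B_\rho}\|u\|_{\mathcal A_\rho}.
\]
Taking the supremum over $\|u\|_{\mathcal A_\rho}\le1$ yields both boundedness and \eqref{eq:operator-A-B}.

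The only point that needs care is the norm comparison in Lemma~\ref{lem:AandB}. Mode by mode it reads $|f_k|+|f_{-k}|=\sqrt{a_k^2+b_k^2}\le|a_k|+|b_k|\le\sqrt2\sqrt{a_k^2+b_k^2}$, and for $k=0$ we simply have $|f_0|=|a_0|$. The upper inequality in this comparison uses that the coefficients $a_k,b_k$ are real. That is exactly why the symmetry hypothesis on $T$ is needed: without it, $TJu$ could be a non-symmetric Laurent function, which does not correspond to a real Fourier series at all. Beyond this, no genuine obstacle arises.
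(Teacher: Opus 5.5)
Your proof is correct and follows essentially the same route as the paper's. You use the symmetry hypothesis to define $T_{\mathcal A}=J^{-1}TJ$, then chain the upper comparison of Lemma~\ref{lem:AandB} applied to $TJu$, the operator norm of $T$ on $\mathcal B_\rho$, and the lower comparison $\|Ju\|_{\mathcal B_\rho}\le\|u\|_{\mathcal A_\rho}$, which is exactly the paper's argument.
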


\begin{proof}
	Let $u\in\mathcal A_\rho$ and let
	$f\in\mathcal B_\rho^{\rm sym}$ be its associated Laurent function.
	By Lemma~\ref{lem:AandB},
	\[
	\|f\|_{\mathcal B_\rho}
	\le
	\|u\|_{\mathcal A_\rho}.
	\]
	Since $T$ preserves $\mathcal B_\rho^{\rm sym}$, the function
	$Tf$ corresponds to a real-valued function $T_{\mathcal A}u$ in
	$\mathcal A_\rho$. Again by Lemma~\ref{lem:AandB},
	\[
	\|T_{\mathcal A}u\|_{\mathcal A_\rho}
	\le
	\sqrt2\,
	\|Tf\|_{\mathcal B_\rho}.
	\]
	Hence
	\[
	\begin{aligned}
		\|T_{\mathcal A}u\|_{\mathcal A_\rho}
		&\le
		\sqrt2\,
		\|T\|_{\mathcal B_\rho\to\mathcal B_\rho}
		\|f\|_{\mathcal B_\rho}
		\\
		&\le
		\sqrt2\,
		\|T\|_{\mathcal B_\rho\to\mathcal B_\rho}
		\|u\|_{\mathcal A_\rho}.
	\end{aligned}
	\]
	Taking the supremum over $u\ne0$ proves
	\eqref{eq:operator-A-B}.
\end{proof}
\begin{remark}\label{rem:symmetric-operators}
	Although some of the projections entering the Wiener--Hopf
	representation do not individually preserve
	$\mathcal B_\rho^{\rm sym}$, the operators corresponding to the
	real Theodorsen problem do. In particular, the exact linearisation
	$\Phi_0$, its validated approximation $\Phi_K$, the approximate
	inverse $L_K$, and the residual operators
	\[
	E_K=I-L_K\Phi_K,
	\qquad
	E_0=I-L_K\Phi_0,
	\]
	map $\mathcal B_\rho^{\rm sym}$ into itself.
	Consequently, their operator norms in $\mathcal A_\rho$ may be
	bounded from their Laurent-space norms by
	Lemma~\ref{lem:operator-A-B}.
\end{remark}

% ------------------------------------------------------------
\subsection{Multiplicative inversion}
% ------------------------------------------------------------

The following elementary estimate is used repeatedly.

\begin{lemma}\label{lem:inverse-banach}
	Let $\mathcal C$ be a commutative unital Banach algebra and let
	$f,g\in\mathcal C$. Assume that
	\[
	\|1-fg\|\le\varepsilon<1.
	\]
	Then $g$ is invertible and
	\[
	\|g^{-1}\|
	\le
	\frac{\|f\|}{1-\varepsilon}.
	\]
	Moreover,
	\[
	\|g^{-1}-f\|
	\le
	\frac{\varepsilon}{1-\varepsilon}\|f\|.
	\]
\end{lemma}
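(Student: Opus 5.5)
The plan is to use the Neumann series in the algebra $\mathcal C$. Set $e:=1-fg$, so that $\|e\|\le\varepsilon<1$. The series $\sum_{n\ge0}e^n$ converges absolutely in $\mathcal C$, because $\|e^n\|\le\|e\|^n\le\varepsilon^n$ by submultiplicativity. Its sum $s$ satisfies $s(1-e)=(1-e)s=1$, and $\|s\|\le\sum_n\varepsilon^n=(1-\varepsilon)^{-1}$. Hence $fg=1-e$ is invertible, with $(fg)^{-1}=s$.

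Next I will extract the inverse of $g$. By commutativity, $g\cdot(fs)=(fg)s=1$ and $(fs)\cdot g=s(fg)=1$. So $g$ is invertible with $g^{-1}=fs$, and submultiplicativity gives $\|g^{-1}\|\le\|f\|\,\|s\|\le\|f\|/(1-\varepsilon)$.

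For the second estimate I will write $g^{-1}-f=f(s-1)$ and note that $s-1=\sum_{n\ge1}e^n$. This gives $\|s-1\|\le\sum_{n\ge1}\varepsilon^n=\varepsilon/(1-\varepsilon)$, and therefore $\|g^{-1}-f\|\le\|f\|\,\varepsilon/(1-\varepsilon)$.

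I do not expect any genuine obstacle. The only points needing care are the use of commutativity, so that $fs$ is a two-sided inverse of $g$ (without it one obtains only a one-sided inverse), and the normalisation $\|1\|=1$ implicit in the Neumann bound. If $\|1\|=1$ is not assumed, one should instead bound $\|s\|\le\|1\|+\varepsilon/(1-\varepsilon)$. In the algebras $\mathcal A_\rho$, $\mathcal B_\rho$ and $\mathcal C_\varrho$ used in the paper, however, the unit has norm one.
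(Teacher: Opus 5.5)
Your proposal is correct and is essentially the paper's proof: the paper also inverts $fg$ by the Neumann series and sets $g^{-1}=(fg)^{-1}f$. The only difference is cosmetic. The paper gets the second bound from the identity $g^{-1}-f=(1-fg)g^{-1}$ together with the first estimate, whereas you sum $\sum_{n\ge1}e^n$ directly, which (as you note) has the minor advantage of not using $\|1\|=1$.
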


\begin{proof}
	Set
	\[
	r:=fg-1.
	\]
	Since $\|r\|<1$,
	\[
	(1+r)^{-1}
	=
	\sum_{n=0}^\infty(-r)^n,
	\qquad
	\|(1+r)^{-1}\|
	\le
	\frac1{1-\|r\|}.
	\]
	Since
	\[
	g^{-1}=(1+r)^{-1}f,
	\]
	the first estimate follows. Furthermore,
	\[
	g^{-1}-f=-rg^{-1},
	\]
	and hence
	\[
	\|g^{-1}-f\|
	\le
	\varepsilon\|g^{-1}\|
	\le
	\frac{\varepsilon}{1-\varepsilon}\|f\|.
	\]
\end{proof}

% ------------------------------------------------------------
\subsection{Differentiation between weighted spaces}
% ------------------------------------------------------------

Differentiation is unbounded on a fixed exponentially weighted
Wiener algebra, but becomes bounded after a loss of exponential
weight.

\begin{lemma}\label{lem:derivative-A}
	Let
	\[
	1<\rho_2<\rho_1,
	\]
	and let $u\in\mathcal A_{\rho_1}$. Then
	\[
	u'\in\mathcal A_{\rho_2}
	\]
	and
	\begin{equation}\label{eq:derivative-A}
		\|u'\|_{\rho_2}
		\le
		C^F_{\rho_1,\rho_2}\|u\|_{\rho_1},
	\end{equation}
	where
	\begin{equation}\label{eq:C-weight}
		C^F_{\rho_1,\rho_2}
		:=
		\sup_{k\ge1}
		k
		\left(
		\frac{\rho_2}{\rho_1}
		\right)^k.
	\end{equation}
	In particular,
	\begin{equation}\label{eq:C-weight-simple}
		C^F_{\rho_1,\rho_2}\le\frac{1}{e\log(\rho_1/\rho_2)
		}.
	\end{equation}
\end{lemma}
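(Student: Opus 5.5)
The estimate will be proved coefficientwise: differentiation acts diagonally on the Fourier basis, and the loss of weight absorbs the factor $k$. Write
\[
u(t)=a_0+\sum_{k\ge1}\bigl(a_k\cos(kt)+b_k\sin(kt)\bigr),
\qquad
\|u\|_{\rho_1}=|a_0|+\sum_{k\ge1}(|a_k|+|b_k|)\rho_1^k<\infty .
\]
Since $\rho_1>1$, the coefficients decay geometrically. Hence the series may be differentiated term by term on the real axis, or in the strip by Lemma~\ref{lem:AandB}(2). This gives
\[
u'(t)=\sum_{k\ge1}\bigl(kb_k\cos(kt)-ka_k\sin(kt)\bigr),
\]
with vanishing constant term. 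The coefficient pair of order $k$ is therefore $(kb_k,-ka_k)$, whose absolute values sum to $k(|a_k|+|b_k|)$.

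Next I will estimate the weighted norm directly:
\[
\|u'\|_{\rho_2}
=\sum_{k\ge1}k(|a_k|+|b_k|)\rho_2^k
=\sum_{k\ge1}k\Bigl(\frac{\rho_2}{\rho_1}\Bigr)^k(|a_k|+|b_k|)\rho_1^k
\le C^F_{\rho_1,\rho_2}\sum_{k\ge1}(|a_k|+|b_k|)\rho_1^k .
\]
The right-hand side is at most $C^F_{\rho_1,\rho_2}\|u\|_{\rho_1}$. This proves both that $u'\in\mathcal A_{\rho_2}$ and estimate \eqref{eq:derivative-A}. The supremum $C^F_{\rho_1,\rho_2}$ is finite because $\rho_2/\rho_1<1$, so $k(\rho_2/\rho_1)^k\to0$.

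For \eqref{eq:C-weight-simple}, set $\lambda=\log(\rho_1/\rho_2)>0$, so that $k(\rho_2/\rho_1)^k=ke^{-\lambda k}$. Bound this by the supremum over real $x>0$ of $xe^{-\lambda x}$. The derivative of this function vanishes at $x=1/\lambda$, where the maximum value is $1/(e\lambda)$. Hence $C^F_{\rho_1,\rho_2}\le 1/(e\log(\rho_1/\rho_2))$.

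There is no real obstacle here. The only point requiring care is justifying term-by-term differentiation, which follows from the absolute convergence of the differentiated series, itself implied by the bound just established.
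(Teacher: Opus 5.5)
Your proof is correct and follows the same route as the paper: differentiate the Fourier series termwise, factor out $\sup_{k\ge1}k(\rho_2/\rho_1)^k$ from the weighted coefficient sum, and bound this supremum by the real maximum $1/(e\log(\rho_1/\rho_2))$ of $x\mapsto x(\rho_2/\rho_1)^x$. Your brief justification of termwise differentiation via absolute convergence is a small addition that the paper leaves implicit.
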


\begin{proof}
	Write
	\[
	u(t)
	=
	a_0+\sum_{k\ge1}
	\bigl(a_k\cos(kt)+b_k\sin(kt)\bigr).
	\]
	Then
	\[
	u'(t)
	=
	\sum_{k\ge1}
	k\bigl(
	b_k\cos(kt)-a_k\sin(kt)
	\bigr),
	\]
	and hence
	\[
		\|u'\|_{\rho_2}=\sum_{k\ge1}k(|a_k|+|b_k|)\rho_2^k\le
		\sup_{k\ge1}\left[k\left(\frac{\rho_2}{\rho_1}\right)^k\right]\sum_{k\ge1}
		(|a_k|+|b_k|)\rho_1^k.
	\]
	Finally, if
	\[
	q=\frac{\rho_2}{\rho_1}<1,
	\]
	then
	\[
	\sup_{x>0}xq^x
	=
	\frac1{e|\log q|},
	\]
	which gives \eqref{eq:C-weight-simple}.
\end{proof}

For Taylor series we use the following corresponding estimate.

\begin{lemma}\label{lem:derivative}
	Let $N\ge0$,
	\[
	1<\rho_2<\rho_1,
	\]
	and let
	\[
	E(z)=\sum_{k=N}^\infty f_kz^k
	\]
	satisfy
	\[
	\|E\|_{\rho_1}
	=
	\sum_{k=N}^\infty|f_k|\rho_1^k
	<\infty.
	\]
	Then
	\[
	E'(z)
	=
	\sum_{k=N}^\infty kf_kz^{k-1}
	\]
	belongs to $\mathcal B_{\rho_2}$ and
	\begin{equation}\label{eq:derivative-tail-general}
		\|E'\|_{\rho_2}
		\le
		\frac1{\rho_2}
		\sup_{k\ge N}
		\left[
		k\left(\frac{\rho_2}{\rho_1}\right)^k
		\right]
		\|E\|_{\rho_1}.
	\end{equation}
\end{lemma}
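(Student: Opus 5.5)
The plan is a direct coefficient-wise computation. For $E(z)=\sum_{k\ge N}f_kz^k$ with $\|E\|_{\rho_1}<\infty$, the series $E'(z)=\sum_{k\ge N}kf_kz^{k-1}$ has coefficients $kf_k$ at the power $k-1\ge N-1$. If $N=0$, the $k=0$ term vanishes, so no negative powers appear. Hence the $\mathcal B_{\rho_2}$ norm reduces to a Taylor-type weighted sum,
\[
\|E'\|_{\rho_2}=\sum_{k\ge \max(N,1)}k|f_k|\rho_2^{k-1}.
\]
I would first note that this sum is finite. The weights satisfy $k\rho_2^{k-1}\le C\rho_1^k$ because $\rho_2<\rho_1$, so absolute convergence and membership in $\mathcal B_{\rho_2}$ follow from the estimate itself.

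The key step is to factor out $\rho_2^{-1}$ and insert $\rho_1^k$:
\[
\sum_{k\ge N}k|f_k|\rho_2^{k-1}=\frac1{\rho_2}\sum_{k\ge N}\Bigl[k\Bigl(\frac{\rho_2}{\rho_1}\Bigr)^k\Bigr]|f_k|\rho_1^k .
\]
Each bracket is bounded by $\sup_{k\ge N}k(\rho_2/\rho_1)^k$. This supremum is finite, and in fact at most $1/(e\log(\rho_1/\rho_2))$, exactly as in the proof of Lemma~\ref{lem:derivative-A}. What remains is $\|E\|_{\rho_1}$, which gives \eqref{eq:derivative-tail-general}.

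There is no real obstacle; the only point needing care is the index bookkeeping. The shift $k\mapsto k-1$ produces the factor $1/\rho_2$, and the supremum must be restricted to $k\ge N$. That restriction is what makes the bound useful for tails with large $N$, and it is harmless when $N=0$ since the $k=0$ term contributes nothing.
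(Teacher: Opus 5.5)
Your proposal is correct and is essentially the paper's proof: factor out $1/\rho_2$, write the weight as $k(\rho_2/\rho_1)^k\rho_1^k$, and bound the bracket by its supremum over $k\ge N$. Your remark that for $N=0$ the $k=0$ term vanishes, so no negative power appears in $E'$, makes explicit a bookkeeping point the paper leaves implicit.
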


\begin{proof}
	Set
	\[
	q:=\frac{\rho_2}{\rho_1}<1.
	\]
	Then
	\[
		\|E'\|_{\rho_2}=\sum_{k=N}^\infty k|f_k|\rho_2^{k-1}=
		\frac1{\rho_2}\sum_{k=N}^\infty kq^k|f_k|\rho_1^k\le
		\frac1{\rho_2}\sup_{k\ge N}(kq^k)\|E\|_{\rho_1}.
	\]
\end{proof}

If
\[
N\ge-\frac1{\log q},
\]
then $kq^k$ is decreasing for $k\ge N$, and therefore
\begin{equation}\label{eq:derivative-tail-sharp}
	\|E'\|_{\rho_2}
	\le
	\frac{N}{\rho_2}
	\left(\frac{\rho_2}{\rho_1}\right)^N
	\|E\|_{\rho_1}.
\end{equation}
More generally,
\begin{equation}\label{eq:derivative-tail-uniform}
	\|E'\|_{\rho_2}
	\le
	\frac{
		\|E\|_{\rho_1}
	}{
		e\rho_2\log(\rho_1/\rho_2)
	}.
\end{equation}

% ------------------------------------------------------------
\subsection{Composition between weighted Wiener spaces}
\label{sec:appendix-composition}
% ------------------------------------------------------------

We now record the composition estimates used in
Section~\ref{sec:stability}.

\begin{lemma}\label{lem:composition-appendix}
	Let $1<\rho<\sigma$, let $g\in\mathcal A_\sigma$, and let
	$u\in\mathcal A_\rho$ satisfy
	\[
	\|u\|_\rho\le c.
	\]
	If
	\[
	\rho e^c\le\sigma,
	\]
	then
	\[
	g(\,\cdot+u(\cdot)\,)\in\mathcal A_\rho
	\]
	and
	\begin{equation}\label{eq:composition-appendix}
		\|g(\,\cdot+u(\cdot)\,)\|_\rho
		\le
		\|g\|_{\rho e^c}
		\le
		\|g\|_\sigma.
	\end{equation}
\end{lemma}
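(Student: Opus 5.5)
The estimate will follow from expanding $g$ in its Fourier series and bounding each exponential factor separately, as in the proof of Proposition~\ref{prop:composition-strip} but without a strip estimate. Work in the Laurent representation $\mathcal B_\rho$ and write
\[
g(t)=\sum_{k\in\mathbb Z}\widehat g_k e^{ikt},
\qquad
g(t+u(t))=\sum_{k\in\mathbb Z}\widehat g_k\, e^{ikt}e^{iku(t)}.
\]

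The key step is the bound $\|e^{iku}\|_{\mathcal B_\rho}\le e^{|k|\|u\|_\rho}$. Since $\mathcal B_\rho$ is a unital Banach algebra (Lemma~\ref{lem:AandB}), the exponential series gives $\|e^{iku}\|\le e^{|k|\|u\|_{\mathcal B_\rho}}$. By Lemma~\ref{lem:AandB} we also have $\|u\|_{\mathcal B_\rho}\le\|u\|_{\mathcal A_\rho}\le c$, so $\|e^{iku}\|_{\mathcal B_\rho}\le e^{|k|c}$. Together with $\|e^{ikt}\|_{\mathcal B_\rho}=\rho^{|k|}$ and absolute convergence of the series, this yields
\[
\|g(\cdot+u)\|_{\mathcal B_\rho}\le\sum_k|\widehat g_k|(\rho e^c)^{|k|}=\|g\|_{\mathcal B_{\rho e^c}}\le\|g\|_{\mathcal B_\sigma},
\]
where the last step uses $\rho e^c\le\sigma$ and monotonicity of the weights.

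The remaining difficulty is the constant: the statement claims the $\mathcal A$-norm bound without the factor $\sqrt2$ from Lemma~\ref{lem:AandB}. Passing through $\mathcal B$ and back would give only $\|g(\cdot+u)\|_{\mathcal A_\rho}\le\sqrt2\,\|g\|_{\mathcal A_\sigma}$, so this route does not suffice. Instead I would work directly in the real Fourier basis and expand
\[
g(t+u)=a_0+\sum_{k\ge1}\Bigl(a_k\bigl[\cos kt\cos ku-\sin kt\sin ku\bigr]+b_k\bigl[\sin kt\cos ku+\cos kt\sin ku\bigr]\Bigr).
\]
In $\mathcal A_\rho$, which is a Banach algebra by Lemma~\ref{lem:AandB}, one has $\|\cos kt\|_\rho=\|\sin kt\|_\rho=\rho^k$. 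For $\cos ku$ and $\sin ku$, the power series give the combined bound
\[
\|\cos ku\|_\rho+\|\sin ku\|_\rho\le\sum_n\frac{(k\|u\|_\rho)^n}{n!}=e^{kc},
\]
since the even-order terms of the exponential series contribute to $\cos ku$ and the odd-order terms to $\sin ku$. Each $a_k$-term is therefore bounded by $|a_k|\rho^k(\|\cos ku\|_\rho+\|\sin ku\|_\rho)\le|a_k|\rho^ke^{kc}$, and likewise for the $b_k$-term. Summing over $k$ gives $\|g(\cdot+u)\|_\rho\le\|g\|_{\rho e^c}\le\|g\|_\sigma$. It remains to note that the series converges absolutely in $\mathcal A_\rho$ and to identify its sum pointwise with $g(t+u(t))$.
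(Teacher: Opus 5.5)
Your proof is correct, and it differs from the paper's at the step that fixes the constant.

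\textbf{The paper's route.} It works with complex exponentials in ``the complexification of $\mathcal A_\rho$''. It bounds $\|e^{\pm iku}\|_\rho\le e^{kc}$ and uses $\|e^{\pm ikt}\|_\rho=\rho^k$ to get $\|e^{\pm ik(t+u)}\|_\rho\le(\rho e^c)^k$. Averaging gives $\|\cos(k(\cdot+u))\|_\rho\le(\rho e^c)^k$, and likewise for sine. It then sums the real Fourier series of $g$.

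\textbf{Your route.} You stay in the real algebra $\mathcal A_\rho$ and use the addition formula. You then split the exponential series into even and odd parts, $\|\cos ku\|_\rho\le\cosh(kc)$ and $\|\sin ku\|_\rho\le\sinh(kc)$, so that each mode costs at most $\rho^k(\|\cos ku\|_\rho+\|\sin ku\|_\rho)\le(\rho e^c)^k$.

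\textbf{What each buys.} The paper's argument is shorter and parallels Proposition~\ref{prop:composition-strip}, but it is fragile exactly where you suspected. The identity $\|e^{\pm ikt}\|=\rho^k$ holds in the Laurent norm of $\mathcal B_\rho$, and returning to $\|\cdot\|_{\mathcal A_\rho}$ then costs the $\sqrt2$ of Lemma~\ref{lem:AandB}. If instead one extends the $\mathcal A_\rho$ norm to complex coefficients in the $\cos/\sin$ basis, one gets $\|\cos kt+i\sin kt\|=2\rho^k$. In fact no complexified norm that restricts to $\|\cdot\|_{\mathcal A_\rho}$ on real functions can satisfy $\|e^{\pm ikt}\|=\rho^k$. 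Otherwise the triangle inequality would give $\|\cos kt+\sin kt\|_\rho\le\sqrt2\,\rho^k$, whereas this norm equals $2\rho^k$. So, as written, the exponential argument yields \eqref{eq:composition-appendix} only up to a factor $\sqrt2$ (or $2$). Your $\cosh/\sinh$ splitting is what actually delivers the constant $1$.

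\textbf{Two routine points for the write-up.} First, you use that the real $\mathcal A_\rho$ is a unital Banach algebra with $\|1\|_\rho=1$. The paper asserts this but proves it only for $\mathcal B_\rho$. It follows from the product-to-sum formulas, e.g.\ $\|\cos jt\cos kt\|_\rho=\tfrac12(\rho^{j+k}+\rho^{|j-k|})\le\rho^{j+k}$, and similarly for the other basis products. Second, the $\mathcal A_\rho$-valued series for $\cos ku$, $\sin ku$ and $g(\cdot+u)$ agree with the pointwise functions, because convergence in $\mathcal A_\rho$ implies uniform convergence on $\mathbb R$. Since $u$ is real, the Fourier series of $g$ also converges absolutely at each point $t+u(t)$.
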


\begin{proof}
	Since the complexification of $\mathcal A_\rho$ is a Banach algebra,
	\[
	\|e^v\|_\rho
	\le
	e^{\|v\|_\rho}.
	\]
	Consequently,
	\[
	\|e^{\pm iku}\|_\rho
	\le
	e^{k\|u\|_\rho}
	\le
	e^{kc}.
	\]
	Since
	\[
	\|e^{\pm ikt}\|_\rho=\rho^k,
	\]
	we obtain
	\[
	\|e^{\pm ik(t+u(t))}\|_\rho
	\le
	(\rho e^c)^k.
	\]
	Thus
	\[
	\|\cos(k(\,\cdot+u\,))\|_\rho
	\le
	(\rho e^c)^k,
	\]
	and likewise for the sine modes.
	
	Writing
	\[
	g(t)
	=
	a_0+
	\sum_{k\ge1}
	\bigl(a_k\cos(kt)+b_k\sin(kt)\bigr)
	\]
	therefore gives
	\[
	\|g(\,\cdot+u(\cdot)\,)\|_\rho
	\le
	|a_0|
	+
	\sum_{k\ge1}
	(|a_k|+|b_k|)(\rho e^c)^k
	=
	\|g\|_{\rho e^c}.
	\]
\end{proof}

\begin{corollary}\label{cor:composition-difference}
	Under the assumptions of
	Lemma~\ref{lem:composition-appendix}, if
	$g_1,g_2\in\mathcal A_\sigma$, then
	\[
	\left\|
	g_1(\,\cdot+u\,)
	-
	g_2(\,\cdot+u\,)
	\right\|_\rho
	\le
	\|g_1-g_2\|_\sigma.
	\]
\end{corollary}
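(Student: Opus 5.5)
The statement to prove is Corollary~\ref{cor:composition-difference}: under the hypotheses of Lemma~\ref{lem:composition-appendix}, if $g_1,g_2\in\mathcal A_\sigma$, then $\|g_1(\cdot+u)-g_2(\cdot+u)\|_\rho\le\|g_1-g_2\|_\sigma$. The plan is to reduce it directly to Lemma~\ref{lem:composition-appendix} by linearity of composition in the outer function.

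First, I observe that for fixed $u$ the map $g\mapsto g(\,\cdot+u(\cdot)\,)$ is linear. This is clear on the level of functions. It is also consistent with the Fourier-series definition used in the proof of Lemma~\ref{lem:composition-appendix}. There the composition is defined as the absolutely convergent sum
\[
a_0+\sum_{k\ge1}\bigl(a_k\cos(k(\cdot+u))+b_k\sin(k(\cdot+u))\bigr)
\]
in $\mathcal A_\rho$, and this expression is linear in the coefficients $(a_k,b_k)$. Hence
\[
g_1(\,\cdot+u\,)-g_2(\,\cdot+u\,)=(g_1-g_2)(\,\cdot+u\,).
\]

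Second, since $\mathcal A_\sigma$ is a vector space, $g:=g_1-g_2\in\mathcal A_\sigma$. The assumptions $\|u\|_\rho\le c$ and $\rho e^c\le\sigma$ are unchanged, so Lemma~\ref{lem:composition-appendix} applies to $g$. It gives
\[
\|(g_1-g_2)(\,\cdot+u\,)\|_\rho\le\|g_1-g_2\|_{\rho e^c}\le\|g_1-g_2\|_\sigma,
\]
where the last inequality uses that the weights are monotone: $\rho e^c\le\sigma$ implies $(\rho e^c)^k\le\sigma^k$.

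There is no real obstacle. The only point needing care is the identification of the difference of compositions with the composition of the difference, i.e.\ that the series defining the composition converges absolutely in $\mathcal A_\rho$ for each $g_i$ separately. This is exactly what Lemma~\ref{lem:composition-appendix} guarantees, so termwise subtraction is legitimate.
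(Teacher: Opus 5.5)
Your proof is correct and is the paper's argument. The paper's proof is the single line ``apply Lemma~\ref{lem:composition-appendix} to $g_1-g_2$''; your remark that $g\mapsto g(\,\cdot+u(\cdot)\,)$ is linear, termwise on the absolutely convergent Fourier series, spells out the identification $g_1(\,\cdot+u\,)-g_2(\,\cdot+u\,)=(g_1-g_2)(\,\cdot+u\,)$ that the paper uses implicitly.
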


\begin{proof}
	Apply Lemma~\ref{lem:composition-appendix} to $g_1-g_2$.
\end{proof}

\begin{lemma}\label{lem:composition-u-appendix}
	Let
	\[
	1<\rho,
	\qquad
	\rho e^c<\tau,
	\]
	and let $g\in\mathcal A_\tau$. If
	\[
	u_1,u_2\in\mathcal A_\rho,
	\qquad
	\|u_i\|_\rho\le c,
	\]
	then
	\begin{equation}\label{eq:composition-u-appendix}
		\|g(\,\cdot+u_1\,)-g(\,\cdot+u_2\,)\|_\rho
		\le
		C_{\tau,\rho e^c}
		\|g\|_\tau
		\|u_1-u_2\|_\rho,
	\end{equation}
	where
	\[
	C_{\tau,\rho e^c}
	=
	\sup_{k\ge1}
	k
	\left(
	\frac{\rho e^c}{\tau}
	\right)^k.
	\]
\end{lemma}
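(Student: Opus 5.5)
The plan is to follow the proof of Lemma~\ref{lem:composition-appendix}: expand $g$ in Fourier modes and estimate each mode separately. I would write
\[
g(t)=\sum_{k\in\mathbb Z}\widehat g_k e^{ikt},
\qquad
g(t+u_1(t))-g(t+u_2(t))=\sum_{k\ne0}\widehat g_k e^{ikt}\bigl(e^{iku_1(t)}-e^{iku_2(t)}\bigr).
\]
The constant mode cancels. For each $k\ne0$ I would control the difference of exponentials by the segment formula already used in Lemma~\ref{lem:psi-u-stability}. Set $u_s=u_2+s(u_1-u_2)$; convexity of the ball gives $\|u_s\|_\rho\le c$. Then
\[
e^{iku_1}-e^{iku_2}=ik\int_0^1 e^{iku_s}\,ds\,(u_1-u_2).
\]

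Next I would use the Banach algebra property of the complexified $\mathcal A_\rho$ (equivalently $\mathcal B_\rho$), as in the proof of Lemma~\ref{lem:composition-appendix}. This gives $\|e^{iku_s}\|\le e^{|k|c}$, and $\|e^{ikt}\|=\rho^{|k|}$. Hence
\[
\bigl\|e^{ikt}(e^{iku_1}-e^{iku_2})\bigr\|\le |k|(\rho e^c)^{|k|}\|u_1-u_2\|_\rho .
\]

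Summing over $k$, I factor $|k|(\rho e^c)^{|k|}=|k|(\rho e^c/\tau)^{|k|}\tau^{|k|}$. The first factor is bounded by $C_{\tau,\rho e^c}$, which is finite because $\rho e^c<\tau$. This yields
\[
\|g(\cdot+u_1)-g(\cdot+u_2)\|\le C_{\tau,\rho e^c}\,\|u_1-u_2\|_\rho\sum_{k\ne0}|\widehat g_k|\tau^{|k|}.
\]

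The main obstacle is bookkeeping between the real Fourier norm $\|\cdot\|_{\mathcal A}$ and the Laurent norm $\|\cdot\|_{\mathcal B}$, since Lemma~\ref{lem:AandB} only gives $\|\cdot\|_{\mathcal B}\le\|\cdot\|_{\mathcal A}\le\sqrt2\|\cdot\|_{\mathcal B}$. A naive pass through $\mathcal B$ would cost a factor $\sqrt2$. To avoid it I would work directly with the real modes, as in Lemma~\ref{lem:composition-appendix}. For $k\ge1$, write $a_k\cos(kt)+b_k\sin(kt)$ and note that $\cos(k(t+u))$ and $\sin(k(t+u))$ are averages of $e^{\pm ik(t+u)}$. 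Applying the exponential estimate to each of $e^{\pm ik(\cdot)}$, the difference of each real mode composed with $u_1$ and $u_2$ is bounded in $\|\cdot\|_\rho$ by $k(\rho e^c)^k\|u_1-u_2\|_\rho$.

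Because each mode composed with $u_1$ or $u_2$ is real-valued, its $\|\cdot\|_\rho$-norm in the complexified algebra is the $\mathcal A_\rho$ norm, so the estimate is stated there. Summing then gives exactly $\sum_k(|a_k|+|b_k|)k(\rho e^c)^k\|u_1-u_2\|_\rho\le C_{\tau,\rho e^c}\|g\|_\tau\|u_1-u_2\|_\rho$, which is the claimed bound. Convergence of the series in $\mathcal A_\rho$ follows from the same majorant, which justifies the termwise manipulation.
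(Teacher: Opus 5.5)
Your skeleton is the paper's: a segment formula in $u$, composition with a loss of weight from $\tau$ to $\rho e^c$, and the factor $k$ absorbed into $C_{\tau,\rho e^c}$. You only unfold it mode by mode. The gap is exactly at the step you call the main obstacle, because your per-mode bound uses two facts that hold in different norms.
\begin{itemize}
\item $\|e^{\pm ikt}\|=\rho^k$ holds in $\mathcal B_\rho$.
\item ``A real-valued function has its $\mathcal A_\rho$ norm'' holds in the coefficientwise complexification of $\mathcal A_\rho$. There, however, $\|e^{ikt}\|=\|\cos(k\cdot)\|_\rho+\|\sin(k\cdot)\|_\rho=2\rho^k$.
\end{itemize}
No norm can have both properties. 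If one did, then $\cos kt+\sin kt=\tfrac{1-i}{2}e^{ikt}+\tfrac{1+i}{2}e^{-ikt}$ would have norm at most $\sqrt2\,\rho^k$, whereas its $\mathcal A_\rho$ norm is $2\rho^k$. So averaging $e^{\pm ik(\cdot)}$ proves your per-mode estimate only up to a factor $\sqrt2$ (via $\mathcal B_\rho$ and Lemma~\ref{lem:AandB}) or $2$ (in the complexified $\mathcal A_\rho$). The constant $C_{\tau,\rho e^c}$ of the statement is not reached. The written proof of Lemma~\ref{lem:composition-appendix} takes the same shortcut; its statement is nonetheless true, by the argument below.

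To repair this, stay in the real algebra $\mathcal A_\rho$ and use the real addition formulas instead of complex exponentials. Since $u_s$ is real, $\|u_s\|_\rho\le c$, and $\mathcal A_\rho$ is a unital Banach algebra, the power series give $\|\cos(ku_s)\|_\rho\le\cosh(kc)$ and $\|\sin(ku_s)\|_\rho\le\sinh(kc)$. Hence
\[
\|\sin(k(\cdot+u_s))\|_\rho\le\|\sin(k\cdot)\|_\rho\|\cos(ku_s)\|_\rho+\|\cos(k\cdot)\|_\rho\|\sin(ku_s)\|_\rho\le\rho^k\bigl(\cosh(kc)+\sinh(kc)\bigr)=(\rho e^c)^k,
\]
and likewise for $\cos(k(\cdot+u_s))$. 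Combine this with
\[
\cos(k(t+u_1))-\cos(k(t+u_2))=-k\int_0^1\sin(k(t+u_s))\,ds\,(u_1-u_2)
\]
and the analogous identity for the sine mode. This gives the bound $k(\rho e^c)^k\|u_1-u_2\|_\rho$ for each real mode directly in $\mathcal A_\rho$, and your summation then yields exactly $C_{\tau,\rho e^c}\|g\|_\tau\|u_1-u_2\|_\rho$.

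Alternatively, follow the paper, which never leaves $\mathcal A_\rho$:
\begin{itemize}
\item write $g(\cdot+u_1)-g(\cdot+u_2)=\int_0^1 g'(\cdot+u_s)\,ds\,(u_1-u_2)$;
\item bound $\|g'(\cdot+u_s)\|_\rho\le\|g'\|_{\rho e^c}$ by the statement of Lemma~\ref{lem:composition-appendix};
\item use Lemma~\ref{lem:derivative-A} with $\rho_1=\tau$, $\rho_2=\rho e^c$ to get $\|g'\|_{\rho e^c}\le C_{\tau,\rho e^c}\|g\|_\tau$.
\end{itemize}
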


\begin{proof}
	For $s\in[0,1]$, set
	\[
	u_s=u_2+s(u_1-u_2).
	\]
	Then
	\[
	g(\,\cdot+u_1\,)-g(\,\cdot+u_2\,)
	=
	\int_0^1
	g'(\,\cdot+u_s\,)(u_1-u_2)\,ds.
	\]
	Since
	\[
	\|u_s\|_\rho\le c,
	\]
	Lemma~\ref{lem:composition-appendix} gives
	\[
	\|g'(\,\cdot+u_s\,)\|_\rho
	\le
	\|g'\|_{\rho e^c}.
	\]
	The differentiation estimate gives
	\[
	\|g'\|_{\rho e^c}
	\le
	C_{\tau,\rho e^c}\|g\|_\tau.
	\]
	The result follows from the Banach algebra property.
\end{proof}

% ============================================================
\section{Wiener--Hopf factorisation}
\label{sec:appendix-WH}
% ============================================================

In this appendix we describe the construction and a posteriori
validation of the Wiener--Hopf factors used in
Section~\ref{sec:WH}. Numerical root splitting provides accurate
initial approximations, while
Proposition~\ref{prop:WH-validation} certifies the factors and their
inverses in the weighted Laurent algebra. Thus no rigorous enclosure
of the individual roots is required.

We use the closed subalgebras
\[
\mathcal B_\rho^+
=
\left\{
\sum_{k\ge0}f_kz^k\in\mathcal B_\rho
\right\},
\qquad
\mathcal B_\rho^-
=
\left\{
\sum_{k\le0}f_kz^k\in\mathcal B_\rho
\right\}.
\]
We also set
\[
Q_+:=P_++P_0,
\qquad
Q_-:=P_-,
\]
so that
\[
I=Q_++Q_-.
\]

% ------------------------------------------------------------
\subsection{Root splitting for Laurent polynomials}
\label{subsec:WH}
% ------------------------------------------------------------

Let
\[
p(z)
=
\sum_{k=-m}^{n}a_kz^k
=
z^{-m}P(z),
\]
where
\[
P(z)=\sum_{j=0}^{m+n}b_jz^j.
\]
Assume that
\[
p(z)\neq0
\qquad
\text{for }|z|=1.
\]
Writing
\[
P(z)
=
c\prod_{j=1}^{m+n}(z-\alpha_j),
\]
we split the roots according to the unit circle,
\[
\{\alpha_j\}
=
\{\alpha_j^-:|\alpha_j^-|<1\}
\cup
\{\alpha_j^+:|\alpha_j^+|>1\}.
\]

If $N_{\rm in}$ denotes the number of zeros of $P$ in the unit disk,
counted with multiplicity, then
\[
\operatorname{ind}_{|z|=1}p
=
N_{\rm in}-m.
\]
Thus the factorisation has index zero precisely when
\[
N_{\rm in}=m.
\]

For $|\alpha|<1$ and $|z|=1$,
\[
\frac1{z-\alpha}
=
\frac1z\frac1{1-\alpha/z}
=
\sum_{\ell=0}^{\infty}
\alpha^\ell z^{-\ell-1},
\]
whereas for $|\alpha|>1$,
\[
\frac1{z-\alpha}
=
-\frac1\alpha\frac1{1-z/\alpha}
=
-\sum_{\ell=0}^{\infty}
\alpha^{-\ell-1}z^\ell.
\]
Thus roots inside and outside the unit circle yield respectively
negative- and nonnegative-mode expansions of the corresponding
inverse factors.

This splitting is used only to construct accurate initial
approximations. Membership and invertibility in $\mathcal B_\rho$ are
certified a posteriori below.

% ------------------------------------------------------------
\subsection{Improvement of approximate Wiener--Hopf factors}
% ------------------------------------------------------------

Suppose that
\[
G\approx G_-G_+,
\]
where
\[
G_-\in\mathcal B_\rho^-,
\qquad
G_+\in\mathcal B_\rho^+.
\]
Define
\[
R
:=
G_-^{-1}GG_+^{-1}
=
1+E.
\]
We seek corrected factors
\[
\widetilde G_-
=
(1+A_-)G_-,
\qquad
\widetilde G_+
=
G_+(1+A_+),
\]
where
\[
A_-=Q_-A_-,
\qquad
A_+=Q_+A_+.
\]
Then
\[
\widetilde G_-^{-1}
G
\widetilde G_+^{-1}
=
(1+A_-)^{-1}
R
(1+A_+)^{-1}.
\]
To first order,
\[
(1+A_-)^{-1}
R
(1+A_+)^{-1}
=
1+E-A_--A_+
+
O(\|E\|_\rho^2).
\]
Using
\[
E_+:=Q_+E,
\qquad
E_-:=Q_-E,
\]
we choose
\[
A_+=E_+,
\qquad
A_-=E_-.
\]
Thus
\[
G_-\leftarrow(1+E_-)G_-,
\qquad
G_+\leftarrow G_+(1+E_+).
\]
Repeating the correction reduces the residual before the final
interval validation.

% ------------------------------------------------------------
\subsection{A posteriori validation of Wiener--Hopf factors}
% ------------------------------------------------------------

\begin{lemma}\label{lem:WHlog}
	Let $R\in\mathcal B_\rho$ satisfy
	\[
	\|R-1\|_\rho\le\eta<1.
	\]
	Then
	\[
	L:=\log R
	\]
	is well defined in $\mathcal B_\rho$ and
	\[
	\|L\|_\rho
	\le
	-\log(1-\eta).
	\]
\end{lemma}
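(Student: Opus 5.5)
Set $E:=R-1$, so that $\|E\|_\rho\le\eta<1$. The plan is to define the logarithm by its power series in the Banach algebra $\mathcal B_\rho$,
\[
L:=\log(1+E):=\sum_{n\ge1}\frac{(-1)^{n+1}}{n}E^n .
\]
By Lemma~\ref{lem:AandB}, $\mathcal B_\rho$ is a commutative unital Banach algebra, so $\|E^n\|_\rho\le\|E\|_\rho^n\le\eta^n$. Hence the series converges absolutely in norm, and
\[
\|L\|_\rho\le\sum_{n\ge1}\frac{\eta^n}{n}=-\log(1-\eta).
\]
This already gives the quantitative bound in the statement.

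It remains to check that this $L$ is a genuine logarithm, i.e.\ $e^L=R$ in $\mathcal B_\rho$. I would argue via the functional calculus. The scalar identity $\exp(\log(1+x))=1+x$ holds for $|x|<1$, and both $\log(1+x)$ and the composed series are power series with absolutely convergent rearrangements whenever $|x|<1$. The elements $E^n$ commute and $\sum\|E\|^n$ converges, so the formal power-series composition can be carried out in the algebra. This yields
\[
e^L=\sum_{m\ge0}\frac{L^m}{m!}=1+E=R.
\]

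An alternative that avoids rearrangements uses the curve $s\mapsto L_s:=\log(1+sE)$. One has $\frac{d}{ds}L_s=(1+sE)^{-1}E$, where the inverse exists by the Neumann series since $\|sE\|<1$. By commutativity, $\frac{d}{ds}\bigl[e^{-L_s}(1+sE)\bigr]=0$, and the expression equals $1$ at $s=0$.

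As a pointwise sanity check, on the annulus $\rho^{-1}<|z|<\rho$ (Lemma~\ref{lem:AandB}(3)), norm convergence in $\mathcal B_\rho$ implies uniform convergence. There $|E(z)|\le\|E\|_\rho\le\eta<1$, so $L(z)$ is the principal branch of $\log R(z)$, and $R$ has winding number zero on the unit circle.

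The only mildly delicate step is justifying $e^L=R$. The norm bound itself is routine.
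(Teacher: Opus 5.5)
Your proof is correct and follows the paper's argument: define $L$ by the logarithmic power series in the commutative Banach algebra $\mathcal B_\rho$, then sum the bounds $\|E^n\|_\rho\le\eta^n$ to get $\|L\|_\rho\le-\log(1-\eta)$. Your extra check that $e^L=R$ (the $s$-derivative argument is the cleaner of your two justifications) is not written out in the paper's proof, but it is the identity used immediately afterwards in Proposition~\ref{prop:WH-validation}, so it usefully completes the argument.
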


\begin{proof}
	Write
	\[
	R=1+E.
	\]
	Since $\|E\|_\rho<1$,
	\[
	L
	=
	\sum_{n=1}^{\infty}
	(-1)^{n+1}\frac{E^n}{n}
	\]
	converges in $\mathcal B_\rho$, and
	\[
	\|L\|_\rho
	\le
	\sum_{n=1}^{\infty}\frac{\eta^n}{n}
	=
	-\log(1-\eta).
	\]
\end{proof}

The Wiener--Hopf factors and their inverse do not need to be validated, since they are only needed to build an {\em approximate} inverse $L_K$ of the linear operator $\Phi$. The rigorous computation only involves the operator $I-L_K D\mathcal F_r(u)$. Nonetheless they could be validated for other purposes, and for completeness we describe the procedure to obtain such validation.

\begin{proposition}[A posteriori validation of Wiener--Hopf factors]
	\label{prop:WH-validation}
	Let
	\[
	G\in\mathcal B_\rho,
	\]
	and let
	\[
	G_-\in\mathcal B_\rho^-,
	\qquad
	G_+\in\mathcal B_\rho^+
	\]
	be approximate Wiener--Hopf factors with approximate inverses
	\[
	F_-\in\mathcal B_\rho^-,
	\qquad
	F_+\in\mathcal B_\rho^+.
	\]
	Define
	\[
	\varepsilon_1
	:=
	\|G-G_-G_+\|_\rho,
	\]
	and
	\[
	\varepsilon_-
	:=
	\|1-G_-F_-\|_\rho,
	\qquad
	\varepsilon_+
	:=
	\|1-G_+F_+\|_\rho.
	\]
	Assume
	\[
	\varepsilon_-<1,
	\qquad
	\varepsilon_+<1.
	\]
	Then $G_-$ and $G_+$ are invertible and
	\[
	\|G_-^{-1}\|_\rho
	\le
	\frac{\|F_-\|_\rho}{1-\varepsilon_-},
	\qquad
	\|G_+^{-1}\|_\rho
	\le
	\frac{\|F_+\|_\rho}{1-\varepsilon_+},
	\]
	as well as
	\[
	\|G_-^{-1}-F_-\|_\rho
	\le
	\frac{\|F_-\|_\rho\varepsilon_-}{1-\varepsilon_-},
	\]
	and
	\[
	\|G_+^{-1}-F_+\|_\rho
	\le
	\frac{\|F_+\|_\rho\varepsilon_+}{1-\varepsilon_+}.
	\]
	
	If, in addition,
	\[
	\eta
	:=
	\frac{
		\|F_-\|_\rho\,
		\varepsilon_1\,
		\|F_+\|_\rho
	}{
		(1-\varepsilon_-)(1-\varepsilon_+)
	}
	<1,
	\]
	then $G$ admits exact factors
	\[
	G=\widehat G_-\widehat G_+,
	\]
	with
	\[
	\widehat G_-\in\mathcal B_\rho^-,
	\qquad
	\widehat G_+\in\mathcal B_\rho^+,
	\]
	and
	\[
	\|\widehat G_-^{-1}-F_-\|_\rho
	\le
	\|F_-\|_\rho
	\left[
	\frac{\varepsilon_-}
	{(1-\eta)(1-\varepsilon_-)}
	+
	\frac{\eta}{1-\eta}
	\right],
	\]
	\[
	\|\widehat G_+^{-1}-F_+\|_\rho
	\le
	\|F_+\|_\rho
	\left[
	\frac{\varepsilon_+}
	{(1-\eta)(1-\varepsilon_+)}
	+
	\frac{\eta}{1-\eta}
	\right].
	\]
\end{proposition}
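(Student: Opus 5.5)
The plan is to reduce everything to Lemma~\ref{lem:inverse-banach} and a Banach-algebra perturbation argument. The first part, invertibility of $G_\pm$ with the stated bounds, is immediate. Applying Lemma~\ref{lem:inverse-banach} in $\mathcal B_\rho$ with $f=F_\pm$ and $g=G_\pm$ gives $\|G_\pm^{-1}\|\le\|F_\pm\|/(1-\varepsilon_\pm)$ and $\|G_\pm^{-1}-F_\pm\|\le\varepsilon_\pm\|F_\pm\|/(1-\varepsilon_\pm)$.

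It remains to show that the inverses lie in the correct subalgebras. The Neumann series gives $G_-^{-1}=\sum_n(1-F_-G_-)^nF_-$, and every term lies in the closed subalgebra $\mathcal B_\rho^-$, so $G_-^{-1}\in\mathcal B_\rho^-$. The same argument gives $G_+^{-1}\in\mathcal B_\rho^+$.

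For the exact factorisation, set
\[
R:=G_-^{-1}GG_+^{-1}=1+E,\qquad E=G_-^{-1}(G-G_-G_+)G_+^{-1}.
\]
Then $\|E\|_\rho\le\eta<1$. By Lemma~\ref{lem:WHlog}, $L=\log R\in\mathcal B_\rho$ is well defined. Split $L=Q_-L+Q_+L$ and put
\[
\widehat G_-:=G_-e^{Q_-L},\qquad \widehat G_+:=e^{Q_+L}G_+.
\]
Commutativity gives $\widehat G_-\widehat G_+=G_-e^{L}G_+=G$. The exponentials lie in the closed subalgebras $\mathcal B_\rho^\mp$, since $Q_-$ and $Q_+$ project onto these closed subalgebras. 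Each exponential is invertible with inverse $e^{-Q_\mp L}$ in the same subalgebra.

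The quantitative bound is the part I expect to be the main obstacle. The exponential route gives estimates involving $\|Q_\pm L\|\le-\log(1-\eta)$, and these do not match the stated constants $\eta/(1-\eta)$. To match them I would instead write
\[
\widehat G_-^{-1}-F_-=(e^{-Q_-L}-1)G_-^{-1}+(G_-^{-1}-F_-),
\]
and aim for a bound of the form $\|e^{-Q_-L}\|\le 1/(1-\eta)$ and $\|e^{-Q_-L}-1\|\le\eta/(1-\eta)$. Combined with the first part, this would yield
\[
\|F_-\|\Bigl[\tfrac{1}{1-\eta}\cdot\tfrac{\varepsilon_-}{1-\varepsilon_-}+\tfrac{\eta}{1-\eta}\Bigr]
\]
after a careful regrouping. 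The $+$ case is symmetric.

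Getting exactly $1/(1-\eta)$ may require replacing the log/exp construction with a fixed-point or Neumann-type construction of the factors of $1+E$. For instance, one could iterate the first-order correction of the previous subsection and bound its accumulated factors by a geometric series in $\eta$. I would first try that iteration. I would fall back on the log bound if only existence, rather than the precise constants, is needed.
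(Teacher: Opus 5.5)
Up to the final estimate your argument is the paper's proof. It uses Lemma~\ref{lem:inverse-banach} for $G_\pm^{-1}$, the normalised residual $R=G_-^{-1}GG_+^{-1}$ with $\|R-1\|_\rho\le\eta$, $L=\log R$ from Lemma~\ref{lem:WHlog}, and the factors $G_-e^{Q_-L}$ and $e^{Q_+L}G_+$. Your Neumann-series argument that $G_\pm^{-1}\in\mathcal B_\rho^\pm$ is a useful addition that the paper leaves implicit.

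The gap is that you leave the quantitative bound unproved. You do so because you believe $\|L\|_\rho\le-\log(1-\eta)$ cannot yield the constants $1/(1-\eta)$ and $\eta/(1-\eta)$, but it yields them exactly. Since $Q_\pm$ are coordinate projections of norm one, $\ell:=\|Q_-L\|_\rho\le-\log(1-\eta)$. Summing the exponential series termwise then gives
\[
\|e^{-Q_-L}\|_\rho\le e^{\ell}\le e^{-\log(1-\eta)}=\frac{1}{1-\eta},
\qquad
\|e^{-Q_-L}-1\|_\rho\le e^{\ell}-1\le\frac{\eta}{1-\eta}.
\]
So the detour through an iterated first-order correction is unnecessary, and you have not shown that iteration converges. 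As written, your proposal does not establish the stated estimates for $\widehat G_\pm^{-1}-F_\pm$, although the missing step is this one line.

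With these two bounds your own splitting closes the argument:
\[
\|(e^{-Q_-L}-1)G_-^{-1}\|_\rho+\|G_-^{-1}-F_-\|_\rho
\le\frac{\eta}{1-\eta}\cdot\frac{\|F_-\|_\rho}{1-\varepsilon_-}+\frac{\varepsilon_-\|F_-\|_\rho}{1-\varepsilon_-}.
\]
Both this expression and the stated one equal $\|F_-\|_\rho(\varepsilon_-+\eta-\eta\varepsilon_-)/\bigl((1-\eta)(1-\varepsilon_-)\bigr)$, so no regrouping is needed beyond this. The paper uses the other splitting $\widehat G_-^{-1}-F_-=e^{-L_-}(G_-^{-1}-F_-)+(e^{-L_-}-1)F_-$, which gives the stated form directly. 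The plus factor is handled identically, using $\|Q_+L\|_\rho\le\|L\|_\rho$.
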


\begin{proof}
	Lemma~\ref{lem:inverse-banach} gives the inverse estimates for
	$G_\pm$. Define
	\[
	R:=G_-^{-1}GG_+^{-1}.
	\]
	Then
	\[
	R-1
	=
	G_-^{-1}
	(G-G_-G_+)
	G_+^{-1},
	\]
	and therefore
	\[
	\|R-1\|_\rho\le\eta<1.
	\]
	By Lemma~\ref{lem:WHlog},
	\[
	L:=\log R
	\]
	is well defined and
	\[
	\|L\|_\rho
	\le
	\ell,
	\qquad
	\ell:=-\log(1-\eta).
	\]
	Set
	\[
	L_-:=Q_-L,
	\qquad
	L_+:=Q_+L.
	\]
	Then
	\[
	L=L_-+L_+.
	\]
	Since $\mathcal B_\rho$ is commutative,
	\[
	R=e^L=e^{L_-}e^{L_+}.
	\]
	Hence exact factors are
	\[
	\widehat G_-:=G_-e^{L_-},
	\qquad
	\widehat G_+:=e^{L_+}G_+.
	\]
	
	For the minus factor,
	\[
	\widehat G_-^{-1}
	=
	e^{-L_-}G_-^{-1},
	\]
	and therefore
	\[
	\widehat G_-^{-1}-F_-
	=
	e^{-L_-}(G_-^{-1}-F_-)
	+
	(e^{-L_-}-1)F_-.
	\]
	Since
	\[
	\|L_-\|_\rho\le\ell,
	\]
	we have
	\[
	\|e^{-L_-}\|_\rho
	\le
	e^\ell
	=
	\frac1{1-\eta},
	\]
	and
	\[
	\|e^{-L_-}-1\|_\rho
	\le
	e^\ell-1
	=
	\frac{\eta}{1-\eta}.
	\]
	The stated estimate follows. The proof for the plus factor is
	identical.
\end{proof}

% ============================================================
\section{The Zernike representation}
\label{sec:appendix-zernike}
% ============================================================

\subsection{Taylor series and the complexified Zernike algebra}

We shall also use the complexification
$\mathcal C_\varrho^{\mathbb C}$ of the Zernike algebra. It is
convenient to write its elements in complex angular form,
\[
u(r,\vartheta)
=
\sum_{m\in\mathbb Z}
\sum_{l\ge0}
c_{m,l}
R_{|m|+2l}^{|m|}(r)e^{im\vartheta},
\]
with norm
\begin{equation}\label{eq:complex-zernike-norm}
	\|u\|_{\mathcal C_\varrho^{\mathbb C}}
	:=
	\sum_{m\in\mathbb Z}
	\sum_{l\ge0}
	|c_{m,l}|\,
	\varrho^{|m|+2l}.
\end{equation}
With this norm, $\mathcal C_\varrho^{\mathbb C}$ is a complex Banach
algebra and complex conjugation is an isometry:
\begin{equation}\label{eq:zernike-conjugation}
	\|\overline u\|_{\mathcal C_\varrho^{\mathbb C}}
	=
	\|u\|_{\mathcal C_\varrho^{\mathbb C}}.
\end{equation}

For a Taylor series
\[
f(z)=\sum_{n\ge0}c_nz^n,
\qquad
\|f\|_\varrho^{\rm T}
:=
\sum_{n\ge0}|c_n|\varrho^n,
\]
define
\[
(\mathcal T f)(r,\vartheta)
:=
f(re^{i\vartheta}).
\]
\begin{lemma}\label{lem:zernike-real-complex}
	Let $\varrho>1$. Consider the real Zernike expansion
	\[
	u(r,\vartheta)
	=
	\sum_{m,l\ge 0}
	R_{m+2l}^{m}(r)
	\left[
	a_{m,l}\cos(m\vartheta)
	+
	b_{m,l}\sin(m\vartheta)
	\right],
	\]
	with $b_{0,l}=0$, and the corresponding complex angular expansion
	\[
	u(r,\vartheta)
	=
	\sum_{m\in\mathbb Z}
	\sum_{l\ge 0}
	c_{m,l}
	R_{|m|+2l}^{|m|}(r)e^{im\vartheta}.
	\]
	Then, for $m\ge1$,
	\[
	c_{m,l}=\frac{a_{m,l}-ib_{m,l}}{2},
	\qquad
	c_{-m,l}=\frac{a_{m,l}+ib_{m,l}}{2},
	\]
	while
	\[
	c_{0,l}=a_{0,l}.
	\]
	In particular, the complex coefficients satisfy
	\[
	c_{-m,l}=\overline{c_{m,l}},
	\]
	and
	\[
	\|u\|_{\mathcal C_\varrho^{\mathbb C}}
	\le
	\|u\|_{\mathcal C_\varrho}
	\le
	\sqrt2\,
	\|u\|_{\mathcal C_\varrho^{\mathbb C}}.
	\]
	Thus the real Zernike algebra $\mathcal C_\varrho$ is continuously
	identified with the real-symmetric subspace
	\[
	\mathcal C_{\varrho,\mathrm{sym}}^{\mathbb C}
	:=
	\left\{
	u\in\mathcal C_\varrho^{\mathbb C}:
	c_{-m,l}=\overline{c_{m,l}}
	\right\}.
	\]
\end{lemma}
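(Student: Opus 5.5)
The statement is Lemma~\ref{lem:zernike-real-complex}. The plan is to compare the real and complex angular expansions mode by mode, exactly as in the proof of Lemma~\ref{lem:AandB}(4). Both expansions use the same radial polynomials $R^{|m|}_{|m|+2l}$ and the same weights $\varrho^{|m|+2l}$. So everything reduces to an identity for each fixed pair $(m,l)$ between the trigonometric and exponential forms of the angular factor, followed by summation.

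\emph{Step 1 (coefficient identities).} For fixed $m\ge1$ and $l\ge0$, I substitute
\[
\cos(m\vartheta)=\tfrac12\bigl(e^{im\vartheta}+e^{-im\vartheta}\bigr),
\qquad
\sin(m\vartheta)=\tfrac1{2i}\bigl(e^{im\vartheta}-e^{-im\vartheta}\bigr)
\]
into $a_{m,l}\cos(m\vartheta)+b_{m,l}\sin(m\vartheta)$. Collecting terms gives the coefficient $(a_{m,l}-ib_{m,l})/2$ of $e^{im\vartheta}$ and $(a_{m,l}+ib_{m,l})/2$ of $e^{-im\vartheta}$. Both multiply the common radial factor $R^m_{m+2l}=R^{|-m|}_{|-m|+2l}$. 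For $m=0$ the condition $b_{0,l}=0$ gives $c_{0,l}=a_{0,l}$.

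\emph{Step 2 (uniqueness).} The complex coefficients are uniquely determined because the functions $\{R^{|m|}_{|m|+2l}e^{im\vartheta}\}$ are linearly independent. Indeed, they are $L^2(\mathbb D)$-orthogonal: different $m$ are separated by angular orthogonality, and for fixed $m$ the Zernike radial polynomials are orthogonal. Since the series converge absolutely in sup norm on $\overline{\mathbb D}$, as $\varrho>1$ and $|R^m_n|\le1$, one can take inner products term by term. Reality of $a,b$ then gives $c_{-m,l}=\overline{c_{m,l}}$.

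\emph{Step 3 (norm comparison).} For $m\ge1$,
\[
|c_{m,l}|+|c_{-m,l}|=\sqrt{a_{m,l}^2+b_{m,l}^2},
\]
and
\[
\sqrt{a^2+b^2}\le|a|+|b|\le\sqrt2\,\sqrt{a^2+b^2}.
\]
For $m=0$ the two contributions coincide. Multiplying by $\varrho^{m+2l}$ and summing over $m,l$ gives both inequalities.

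\emph{Step 4 (identification).} Conversely, any symmetric family $c_{-m,l}=\overline{c_{m,l}}$ with $c_{0,l}$ real defines real coefficients $a_{m,l}=2\operatorname{Re}c_{m,l}$ and $b_{m,l}=-2\operatorname{Im}c_{m,l}$. This shows the correspondence is a bijection onto $\mathcal C^{\mathbb C}_{\varrho,\mathrm{sym}}$, and the norm bounds of Step~3 make it bicontinuous.

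There is a subtle point: $c_{0,l}$ must be real in the symmetric subspace, which the relation $c_{-m,l}=\overline{c_{m,l}}$ at $m=0$ already encodes. Apart from that, the only non-routine ingredient is the uniqueness justification in Step~2, and I expect no real obstacle there.
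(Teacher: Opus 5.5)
Your proof is correct and follows the paper's own argument: the same substitution of $\cos(m\vartheta)$ and $\sin(m\vartheta)$ by exponentials, the identity $|c_{m,l}|+|c_{-m,l}|=\sqrt{a_{m,l}^2+b_{m,l}^2}$ combined with $\sqrt{a^2+b^2}\le|a|+|b|\le\sqrt2\sqrt{a^2+b^2}$, and summation against the weights $\varrho^{m+2l}$, with the $m=0$ modes contributing equally to both norms. Your Steps~2 and~4 add two things the paper leaves implicit: uniqueness of the coefficients via $L^2(\mathbb D)$-orthogonality together with uniform convergence from $|R^m_n|\le1$, and the explicit inverse map onto the symmetric subspace; both are sound.
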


\begin{proof}
	For $m\ge1$,
	\[
	\cos(m\vartheta)
	=
	\frac{e^{im\vartheta}+e^{-im\vartheta}}{2},
	\qquad
	\sin(m\vartheta)
	=
	\frac{e^{im\vartheta}-e^{-im\vartheta}}{2i}.
	\]
	Hence
	\[
	a_{m,l}\cos(m\vartheta)
	+
	b_{m,l}\sin(m\vartheta)
	=
	\frac{a_{m,l}-ib_{m,l}}{2}e^{im\vartheta}
	+
	\frac{a_{m,l}+ib_{m,l}}{2}e^{-im\vartheta},
	\]
	which gives the stated relation between the real and complex
	coefficients.
	
	Moreover,
	\[
	|c_{m,l}|+|c_{-m,l}|
	=
	\sqrt{a_{m,l}^2+b_{m,l}^2}.
	\]
	Therefore
	\[
	\sqrt{a_{m,l}^2+b_{m,l}^2}
	\le
	|a_{m,l}|+|b_{m,l}|
	\le
	\sqrt2\,
	\sqrt{a_{m,l}^2+b_{m,l}^2}.
	\]
	Multiplying by $\varrho^{m+2l}$ and summing over $m\ge1$ and $l\ge0$,
	together with the identical contribution of the $m=0$ modes, yields
	\[
	\|u\|_{\mathcal C_\varrho^{\mathbb C}}
	\le
	\|u\|_{\mathcal C_\varrho}
	\le
	\sqrt2\,
	\|u\|_{\mathcal C_\varrho^{\mathbb C}}.
	\]
\end{proof}
\begin{lemma}\label{lem:taylor-zernike}
	The map
	\[
	\mathcal T:
	\left\{
	f(z)=\sum_{n\ge0}c_nz^n:
	\|f\|_\varrho^{\rm T}<\infty
	\right\}
	\longrightarrow
	\mathcal C_\varrho^{\mathbb C}
	\]
	is an isometric embedding. More precisely,
	\begin{equation}\label{eq:taylor-zernike-isometry}
		\|\mathcal T f\|_{\mathcal C_\varrho^{\mathbb C}}
		=
		\|f\|_\varrho^{\rm T}.
	\end{equation}
\end{lemma}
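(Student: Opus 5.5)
The plan is to compute the Zernike expansion of $\mathcal T f$ explicitly, monomial by monomial, and observe that each monomial is a single Zernike basis element whose weight equals its Taylor weight. The first step is the identity
\[
z^n=r^ne^{in\vartheta}=R_n^n(r)\,e^{in\vartheta},\qquad z=re^{i\vartheta},\ n\ge0.
\]
This uses the standard fact that the Zernike radial polynomial with $m=n$ and $l=0$ is $R_n^n(r)=r^n$. It follows directly from the explicit formula
\[
R_{m+2l}^m(r)=\sum_{j=0}^{l}(-1)^j\binom{m+2l-j}{j}\binom{m+2l-2j}{l-j}\,r^{m+2l-2j},
\]
which for $l=0$ reduces to the single term $r^m$. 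Thus $z^n$ is the basis element with angular index $m=n$ and radial index $l=0$, and its weight in \eqref{eq:complex-zernike-norm} is $\varrho^{|m|+2l}=\varrho^n$.

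Next, for $f=\sum_{n\ge0}c_nz^n$ with $\|f\|_\varrho^{\rm T}<\infty$, define the candidate coefficient array
\[
c_{m,l}=\begin{cases} c_m, & m\ge0,\ l=0,\\ 0, & \text{otherwise}.\end{cases}
\]
Its weighted $\ell^1$ norm is $\sum_{n\ge0}|c_n|\varrho^n=\|f\|_\varrho^{\rm T}<\infty$, so it defines an element $u\in\mathcal C_\varrho^{\mathbb C}$. I then check that $u=\mathcal T f$ as functions on the closed disk. Since $|R^{|m|}_{|m|+2l}|\le1$ on $[0,1]$ and $\varrho>1$, the Zernike series converges absolutely and uniformly on $\overline{\mathbb D}$. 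Its partial sums are exactly the Taylor partial sums of $f(re^{i\vartheta})$, which also converge uniformly there. Hence the two limits agree pointwise.

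To conclude that $\|\mathcal T f\|_{\mathcal C^{\mathbb C}_\varrho}$ is the norm of this particular array, I need the Zernike coefficients of a function in $\mathcal C_\varrho^{\mathbb C}$ to be unique. I would get this from the $L^2(\mathbb D)$-orthogonality of the functions $R^{|m|}_{|m|+2l}(r)e^{im\vartheta}$. Uniform convergence allows termwise integration against each basis element, so every coefficient is recovered by an inner product and is therefore determined by the function.

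With uniqueness in hand, $\|\mathcal T f\|_{\mathcal C_\varrho^{\mathbb C}}=\sum_n|c_n|\varrho^n=\|f\|^{\rm T}_\varrho$. This proves \eqref{eq:taylor-zernike-isometry}. Linearity of $\mathcal T$ is clear, and injectivity follows from the isometry, so $\mathcal T$ is an isometric embedding.

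The main obstacle is not any computation but the two bookkeeping facts: the normalisation $R_n^n(r)=r^n$, which holds for the standard convention and should be checked against the convention of \cite{ArioliKoch1}, and the uniqueness of the expansion. The orthogonality argument is the first thing I would try for the latter.
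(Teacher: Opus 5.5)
Your proposal is correct and follows the paper's own proof. Both use $R_n^n(r)=r^n$, so that $z^n$ is the single Zernike basis element with indices $(m,l)=(n,0)$ and weight $\varrho^n$, and the two norms coincide; your extra checks (uniform convergence on $\overline{\mathbb D}$ and uniqueness of the Zernike coefficients via $L^2$-orthogonality) make explicit what the paper assumes tacitly when it reads off the coefficients.
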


\begin{proof}
	Since
	\[
	R_n^n(r)=r^n,
	\]
	we have
	\[
	z^n
	=
	r^ne^{in\vartheta}
	=
	R_n^n(r)e^{in\vartheta}.
	\]
	Therefore
	\[
	(\mathcal T f)(r,\vartheta)
	=
	\sum_{n\ge0}
	c_nR_n^n(r)e^{in\vartheta}.
	\]
	In the representation
	\eqref{eq:complex-zernike-norm}, the only nonzero coefficients are
	\[
	c_{n,0}=c_n,
	\qquad n\ge0.
	\]
	Hence
	\[
	\|\mathcal T f\|_{\mathcal C_\varrho^{\mathbb C}}
	=
	\sum_{n\ge0}|c_n|\varrho^n
	=
	\|f\|_\varrho^{\rm T}.
	\]
\end{proof}

As a consequence, if
\[
f'(z)=\sum_{n\ge0}d_nz^n
\]
has finite Taylor norm at weight $\varrho$, then, identifying $f'$
with its image under $\mathcal T$,
\begin{equation}\label{eq:fprime-taylor-zernike}
	\|f'\|_{\mathcal C_\varrho^{\mathbb C}}
	=
	\sum_{n\ge0}|d_n|\varrho^n.
\end{equation}
Moreover, by the Banach algebra property and
\eqref{eq:zernike-conjugation},
\begin{equation}\label{eq:q-zernike-bound}
	\||f'|^2\|_{\mathcal C_\varrho^{\mathbb C}}
	=
	\|f'\overline{f'}\|_{\mathcal C_\varrho^{\mathbb C}}
	\le
	\|f'\|_{\mathcal C_\varrho^{\mathbb C}}^2.
\end{equation}

For the elliptic problem, holomorphic functions arising from the
conformal map must be represented in the Zernike basis. Whenever
complex-valued functions occur, we use the complexification of
$\mathcal C_\varrho$.

\begin{lemma}\label{lem:Taylor2Zerni}
	Let
	\[
	c=c_r+ic_i.
	\]
	For every $n\ge0$,
	\[
	c z^n
	=
	c_r Z^{(c)}_{n,0}
	-
	c_i Z^{(s)}_{n,0}
	+
	i
	\left(
	c_i Z^{(c)}_{n,0}
	+
	c_r Z^{(s)}_{n,0}
	\right),
	\]
	where
	\[
	Z^{(c)}_{n,0}(r,\theta)
	=
	R_n^n(r)\cos(n\theta),
	\]
	\[
	Z^{(s)}_{n,0}(r,\theta)
	=
	R_n^n(r)\sin(n\theta),
	\]
	and
	\[
	R_n^n(r)=r^n.
	\]
\end{lemma}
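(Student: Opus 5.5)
The plan is to expand the product $cz^n$ into real and imaginary parts and then read off the Zernike form from Lemma~\ref{lem:taylor-zernike}, namely $R_n^n(r)=r^n$. In polar coordinates $z=re^{i\theta}$ we have
\[
z^n=r^ne^{in\theta}=R_n^n(r)\bigl(\cos(n\theta)+i\sin(n\theta)\bigr)=Z^{(c)}_{n,0}+iZ^{(s)}_{n,0},
\]
which follows directly from the definitions of $Z^{(c)}_{n,0}$ and $Z^{(s)}_{n,0}$. The case $n=0$ needs no separate treatment: there $Z^{(s)}_{0,0}=0$ and $Z^{(c)}_{0,0}=1$, so the identity reduces to $c=c_r+ic_i$.

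Next we multiply by $c=c_r+ic_i$ and collect terms:
\[
(c_r+ic_i)\bigl(Z^{(c)}_{n,0}+iZ^{(s)}_{n,0}\bigr)
=c_rZ^{(c)}_{n,0}-c_iZ^{(s)}_{n,0}+i\bigl(c_iZ^{(c)}_{n,0}+c_rZ^{(s)}_{n,0}\bigr).
\]
The two real parts come from $c_r\cdot Z^{(c)}_{n,0}$ and from $ic_i\cdot iZ^{(s)}_{n,0}=-c_iZ^{(s)}_{n,0}$. This is exactly the asserted identity.

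There is no genuine obstacle here. The only point needing care is a matter of bookkeeping. The real and imaginary parts of the result should be read as elements of the real Zernike algebra $\mathcal C_\varrho$. They carry coefficients $a_{n,0}$ and $b_{n,0}$ on the basis functions $R_n^n\cos$ and $R_n^n\sin$, with $l=0$. This reading matches the convention $b_{0,l}=0$ and is consistent with the complex-angular correspondence of Lemma~\ref{lem:zernike-real-complex}.
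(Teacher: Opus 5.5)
Your proposal is correct and is exactly the paper's proof: write $z^n=r^ne^{in\theta}=Z^{(c)}_{n,0}+iZ^{(s)}_{n,0}$, then multiply by $c=c_r+ic_i$ and collect real and imaginary parts. The $n=0$ check and the bookkeeping remark are harmless extras, and since $R_n^n(r)=r^n$ is stated in the lemma itself, you do not need to cite another lemma for it.
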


\begin{proof}
	Since
	\[
	z^n=r^ne^{in\theta},
	\]
	we have
	\[
	z^n
	=
	Z^{(c)}_{n,0}
	+
	iZ^{(s)}_{n,0}.
	\]
	Multiplication by $c$ gives the result.
\end{proof}

\begin{corollary}\label{cor:Taylor2Zerni}
	Let
	\[
	f(z)=\sum_{n\ge0}c_nz^n
	\]
	satisfy
	\[
	\sum_{n\ge0}|c_n|\rho^n<\infty.
	\]
	Then, for every
	\[
	1<\varrho\le\rho,
	\]
	the corresponding function on the unit disk belongs to the
	complexification of $\mathcal C_\varrho$. Moreover, only Zernike
	modes with radial index $l=0$ occur.
\end{corollary}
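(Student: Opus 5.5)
The statement to prove is Corollary~\ref{cor:Taylor2Zerni}: if $f(z)=\sum_{n\ge0}c_nz^n$ has $\sum_n|c_n|\rho^n<\infty$ and $1<\varrho\le\rho$, then $f$ lies in the complexified Zernike algebra $\mathcal C_\varrho^{\mathbb C}$ and involves only the $l=0$ modes. I would derive it directly from Lemma~\ref{lem:taylor-zernike} together with the termwise identity of Lemma~\ref{lem:Taylor2Zerni}.

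\textbf{Step 1: weight monotonicity.} Since $\varrho\le\rho$, we have $\varrho^n\le\rho^n$ for every $n\ge0$. Hence
\[
\|f\|_\varrho^{\rm T}=\sum_{n\ge0}|c_n|\varrho^n\le\sum_{n\ge0}|c_n|\rho^n<\infty .
\]
So $f$ belongs to the domain of the map $\mathcal T$ at weight $\varrho$.

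\textbf{Step 2: apply the isometric embedding.} By Lemma~\ref{lem:taylor-zernike}, $\mathcal Tf$ belongs to $\mathcal C_\varrho^{\mathbb C}$ and $\|\mathcal Tf\|_{\mathcal C_\varrho^{\mathbb C}}=\|f\|_\varrho^{\rm T}$. The proof of that lemma already identifies the expansion: the only nonzero coefficients are $c_{n,0}=c_n$, attached to $R_n^n(r)e^{in\vartheta}$ with $R_n^n(r)=r^n$. So only the radial index $l=0$ occurs.

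\textbf{Step 3: real form of the modes.} To express the result with the real modes $Z^{(c)}_{n,0}$ and $Z^{(s)}_{n,0}$, as in the complexification of $\mathcal C_\varrho$, I would apply Lemma~\ref{lem:Taylor2Zerni} to each term $c_nz^n$. This splits each term into real and imaginary parts, each a combination of $l=0$ modes with coefficients bounded by $|c_n|$.

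\textbf{Step 4: convergence of the real and imaginary parts.} The real and imaginary parts then have $\mathcal C_\varrho$-norm at most $\sum_n 2|c_n|\varrho^n<\infty$. The series therefore converge absolutely in $\mathcal C_\varrho$, and the pair defines an element of the complexification.

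The only point needing care is matching conventions: the $n=0$ term must satisfy $b_{0,l}=0$, which holds because $Z^{(s)}_{0,0}=0$. I also need to check that the resulting Zernike series represents the function $f(re^{i\vartheta})$ on the unit disk. This follows because absolute convergence in the weighted norm with $\varrho>1$ gives uniform convergence on $\overline{\mathbb D}$, since $|R_n^n|\le1$ there. I expect no genuine obstacle.
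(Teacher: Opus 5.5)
Your proposal is correct and follows essentially the paper's argument. The paper's proof is one line: apply Lemma~\ref{lem:Taylor2Zerni} term by term, then use $R_n^n(r)=r^n$ and $\varrho\le\rho$ to get absolute summability with weight $\varrho^n$. Routing the complex form through the isometry of Lemma~\ref{lem:taylor-zernike} and checking uniform convergence on $\overline{\mathbb D}$ are harmless elaborations of that same idea.
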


\begin{proof}
	Apply Lemma~\ref{lem:Taylor2Zerni} term by term. Since
	\[
	R_n^n(r)=r^n
	\]
	and $\varrho\le\rho$, the resulting Zernike coefficients are
	absolutely summable with weight $\varrho^n$.
\end{proof}

\begin{lemma}\label{lem:invlapnorm}
	Let $\rho>1$ and $(-\Delta)^{-1}$ be the inverse Laplacian on the unit disk, with homogeneous boundary conditions. Then
	\[
	\left\|(-\Delta)^{-1}\right\|_{\mathcal C_\rho\to\mathcal C_\rho}
	=
	\frac{1+\rho^2}{8}.
	\]
\end{lemma}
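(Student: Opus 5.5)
The plan is to use that $\mathcal C_\rho$ is a weighted $\ell^1$ space on the Zernike basis. For such a space, the norm of a bounded operator $T$ equals the supremum over basis elements of the weighted column norms:
\[
\|T\|=\sup_{m,l}\rho^{-(m+2l)}\,\bigl\|T\bigl(R^m_{m+2l}\cos(m\vartheta)\bigr)\bigr\|_\rho ,
\]
and likewise with $\sin(m\vartheta)$. The Laplacian commutes with rotations, so $(-\Delta)^{-1}$ preserves the angular mode $m$ and the cos/sin type. It therefore suffices to compute $w_{m,n}:=(-\Delta)^{-1}\bigl(R^m_n e^{im\vartheta}\bigr)$ for $n=m+2l$ and to take the supremum of the resulting ratios. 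The supremum will turn out to be attained, which gives equality rather than just an upper bound.

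\textbf{Step 1: three-term structure.} Write $w_{m,n}=r^m p(r^2)e^{im\vartheta}$. The radial equation $-(p''$-type operator$)=\dots$ forces $p$ to have degree $l+1$ in $r^2$, and the Dirichlet condition gives $p(1)=0$. The Laplacian maps $r^m\cdot(\text{degree }k\text{ in }r^2)$ to $r^m\cdot(\text{degree }k-1)$, and the $R^m_{m+2j}$ are orthogonal with weight $r\,dr$. Hence the expansion of $w_{m,n}$ in $R^m_{m+2j}$ can involve only $j=l+1,l,l-1$. To see this, pair $-\Delta w_{m,n}=R^m_n$ against $R^m_{m+2j}$ and integrate by parts twice; the boundary terms are handled by $w=0$ on $|z|=1$. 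Consequently
\[
w_{m,n}=\bigl(c_+R^m_{n+2}+c_0R^m_n+c_-R^m_{n-2}\bigr)e^{im\vartheta},
\qquad c_-=0 \text{ if } n=m.
\]

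\textbf{Step 2: explicit coefficients.} I will compute $c_\pm,c_0$ using the Jacobi representation $R^m_{m+2l}(r)=(-1)^lr^mP^{(m,0)}_l(1-2r^2)$ together with the standard recurrence for $x P_l$ and the derivative formulas. I expect the form $c_+=\tfrac{1}{4(n+1)(n+2)}$, $c_-=\tfrac{1}{4n(n+1)}$ (for $n>m$), with $c_0$ of order $n^{-2}$ and negative. As a sanity check, take $m=n=0$. Then $-\Delta w=1$ gives $w=(1-r^2)/4=(R_0^0-R_2^0)/8$, so $c_+=c_0$ in absolute value and both equal $1/8$ (with $c_0=-1/8$). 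Matching the constants at $r=1$ through $R^m_n(1)=1$ gives $c_++c_0+c_-=0$, which pins down $c_0$ once $c_\pm$ are known.

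\textbf{Step 3: maximisation, the main obstacle.} The weighted column ratio is
\[
\Theta_{m,n}(\rho)=|c_+|\rho^{2}+|c_0|+|c_-|\rho^{-2}.
\]
The claim is $\sup_{m,n}\Theta_{m,n}=\Theta_{0,0}=(1+\rho^2)/8$. I would first use $c_+>0$, $c_-\ge0$, $c_0=-(c_++c_-)$, which give
\[
\Theta=c_+(1+\rho^2)+c_-(1+\rho^{-2}).
\]
It then remains to show $c_+(1+\rho^2)+c_-(1+\rho^{-2})\le(1+\rho^2)/8$ for all $n\ge1$. Using $\rho^{-2}\le\rho^2$, this reduces to the elementary inequality $c_++c_-\le1/8$ for $n\ge1$. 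With the formulas above, at $n=1$ one gets $c_+=1/24$ and $c_-=0$; at $n=2$ one gets $1/48+1/24<1/8$; and both coefficients decrease in $n$. The hardest part is getting Step 2's coefficients exactly right, in particular the $n=m$ edge case and the sign of $c_0$. The final inequality is routine.

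Finally, the real cos/sin structure is preserved, so the same column norms hold in the real norm of $\mathcal C_\rho$. Equality is attained on the constant function $R^0_0$, which proves the identity.
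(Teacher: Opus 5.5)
Your proposal is correct and follows essentially the paper's route: weighted column sums in the Zernike basis, the three-term formula for $(-\Delta)^{-1}R^m_n$ (your absolute values $\frac{1}{4(n+1)(n+2)}$ and $\frac{1}{4n(n+1)}$ match the paper's coefficients, which it takes from \cite{ArioliKoch1} rather than deriving), domination of every column by the $n=0$ column, and equality at the constant function. The only slip is in the signs: in fact $c_\pm\le 0$ and $c_0>0$ (e.g.\ $(R^0_0-R^0_2)/8$ has $c_0=+\tfrac18$ and $c_+=-\tfrac18$), but you only use that $c_+$ and $c_-$ share a sign, so that $|c_0|=|c_+|+|c_-|$ follows from $w=0$ at $r=1$, and the argument is unaffected.
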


\begin{proof}
	Since the norm of $\mathcal C_\rho$ is a weighted $\ell^1$-norm on
	the Zernike coefficients, the operator norm of $(-\Delta)^{-1}$ is
	the supremum of its weighted column sums.
	
	For $n=m$,
	\[
	(-\Delta)^{-1}R_n^n=\frac{1}{4(n+1)(n+2)}
	\left(R_n^n-R_{n+2}^n\right)
	\]
	(see \cite{ArioliKoch1}), hence the corresponding weighted column sum is
	\[
	\frac{1+\rho^2}{4(n+1)(n+2)}
	\le
	\frac{1+\rho^2}{8},
	\]
	with equality for $n=0$.
	
	For $n>m$,
	\[
	(-\Delta)^{-1}R_n^m=
	-\frac{1}{4(n+1)(n+2)}R_{n+2}^m+
	\frac{1}{2n(n+2)}R_n^m-\frac{1}{4n(n+1)}R_{n-2}^m,
	\]
	so the weighted column sum equals
	\[
	\frac{\rho^2}{4(n+1)(n+2)}
	+
	\frac{1}{2n(n+2)}
	+
	\frac{\rho^{-2}}{4n(n+1)}.
	\]
	Since $n\ge2$, this is bounded by
	\[
	\frac{\rho^2}{48}
	+\frac{1}{16}
	+\frac{\rho^{-2}}{24}
	<
	\frac{1+\rho^2}{8},
	\]
	since
	$$
\frac{1+\rho^2}{8}-\left(\frac{\rho^2}{48}+\frac1{16}+\frac{\rho^{-2}}{24}\right)
	=\frac{3+5\rho^2-2\rho^{-2}}{48}>0
	$$
	Therefore
	\[
	\left\|(-\Delta)^{-1}\right\|
	\le
	\frac{1+\rho^2}{8}.
	\]
	
	Finally, observe that
	\[
	(-\Delta)^{-1}1=\frac{1-r^2}{4}=\frac18\left(R_0^0-R_2^0\right),
	\]
	and hence
	\[
	\left\|(-\Delta)^{-1}1\right\|_\rho=\frac{1+\rho^2}{8},\qquad\|1\|_\rho=1,
	\]
	hence the upper bound is attained.
\end{proof}

% ============================================================
\section{Positivity of the elliptic solutions}
\label{sec:positivity}
% ============================================================

The following results provide a quantitative criterion for proving
positivity of solutions of \eqref{eq:elliptic-disk}. Since all
inequalities involved are strict, the criterion is stable under
sufficiently small perturbations of both the coefficient $q$ and the
solution. This observation is used in
Corollary~\ref{cor:qualitative-persistence}.

Let
\[
0<a<b\le1,
\qquad
A_{a,b}=\{x\in\mathbb R^2:a<|x|<b\}.
\]

\begin{lemma}\label{lem:annulus}
	Let $u\in\mathcal C_\varrho$ solve
	\[
	-\Delta u=qu^3
	\]
	in $A_{a,b}$, with $q(x)>0$. Assume
	\[
	u>0
	\quad\text{on }|x|=a,
	\qquad
	u\ge0
	\quad\text{on }|x|=b,
	\]
	and
	\[
	\lambda_1(A_{a,b})
	>
	\|qu^2\|_{L^\infty(A_{a,b})}.
	\]
	Then
	\[
	u>0
	\qquad
	\text{in }A_{a,b}.
	\]
\end{lemma}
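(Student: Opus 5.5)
We argue by contradiction using the variational characterisation of $\lambda_1(A_{a,b})$, the first Dirichlet eigenvalue of the annulus. Set $u^-:=\max\{-u,0\}$ restricted to $A_{a,b}$. Since $u\in\mathcal C_\varrho$, the function $u$ is real-analytic on a neighbourhood of the closed disk, so $u^-$ is Lipschitz on $\overline{A_{a,b}}$. The boundary hypotheses $u>0$ on $|x|=a$ and $u\ge0$ on $|x|=b$ give $u^-=0$ on $\partial A_{a,b}$, hence $u^-\in H^1_0(A_{a,b})$.

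The first step is to test the equation against $u^-$. Multiplying $-\Delta u=qu^3$ by $u^-$ and integrating by parts (the boundary term vanishes) gives
\[
\int_{A_{a,b}}\nabla u\cdot\nabla u^-\,dx=\int_{A_{a,b}}qu^3u^-\,dx .
\]
On the support of $u^-$ we have $u=-u^-$, so $\nabla u\cdot\nabla u^-=-|\nabla u^-|^2$ almost everywhere and $u^3u^-=-(u^-)^4$. Therefore
\[
\int|\nabla u^-|^2\,dx=\int q\,u^2(u^-)^2\,dx\le\|qu^2\|_{L^\infty(A_{a,b})}\int (u^-)^2\,dx .
\]
If $u^-\not\equiv0$, the Poincaré inequality $\int|\nabla u^-|^2\ge\lambda_1(A_{a,b})\int(u^-)^2$ then gives $\lambda_1\le\|qu^2\|_\infty$, contradicting the hypothesis. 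Hence $u\ge0$ in $A_{a,b}$.

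The second step upgrades this to strict positivity. Since $u\ge0$ and $q>0$, we have $-\Delta u=qu^3\ge0$, so $u$ is superharmonic in the connected open set $A_{a,b}$. By the strong minimum principle, either $u>0$ in $A_{a,b}$ or $u\equiv0$ there. The second alternative is excluded because $u$ is continuous on the closure and $u>0$ on $|x|=a$. Thus $u>0$ in $A_{a,b}$.

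The only delicate point is justifying the integration by parts with the nonsmooth test function $u^-$. This is routine because $u$ is smooth up to the boundary of the annulus, so $u^-\in H^1_0$ and $\nabla u^-=-\nabla u\,\chi_{\{u<0\}}$ almost everywhere. Alternatively, one can avoid the weak formulation: if $u$ had a negative minimum inside, the set $\{u<0\}$ would be a nonempty open set $\Omega'\Subset A_{a,b}\cup\{|x|=b\}$. On $\Omega'$, $u$ solves $-\Delta u=(qu^2)u$ with $u=0$ on $\partial\Omega'$. Domain monotonicity gives $\lambda_1(\Omega')\ge\lambda_1(A_{a,b})>\|qu^2\|_\infty$, and the same Rayleigh-quotient argument yields the contradiction.
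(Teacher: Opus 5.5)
Your proof is correct and follows the paper's argument: you test the equation with $u^-\in H^1_0(A_{a,b})$, compare with the Rayleigh quotient for $\lambda_1(A_{a,b})$ to get $u\ge0$, and then upgrade to $u>0$ with the strong maximum (minimum) principle applied to the superharmonic $u$. One small improvement over the paper: you rule out $u\equiv0$ explicitly using $u>0$ on $|x|=a$ and continuity, whereas the paper only asserts that $u$ is not identically zero.
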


\begin{proof}
	Rewrite the equation as
	\[
	-\Delta u=c(x)u,
	\qquad
	c(x):=q(x)u(x)^2\ge0.
	\]
	Let
	\[
	u^-:=\max\{-u,0\}.
	\]
	Since $u\ge0$ on both boundary components,
	\[
	u^-\in H_0^1(A_{a,b}).
	\]
	Testing the equation with $u^-$ gives
	\[
	\int_{A_{a,b}}|\nabla u^-|^2\,dx
	=
	\int_{A_{a,b}}
	q(x)u(x)^2(u^-)^2\,dx.
	\]
	Hence
	\[
	\int_{A_{a,b}}|\nabla u^-|^2\,dx
	\le
	\|qu^2\|_{L^\infty(A_{a,b})}
	\int_{A_{a,b}}(u^-)^2\,dx.
	\]
	On the other hand,
	\[
	\lambda_1(A_{a,b})
	\int_{A_{a,b}}(u^-)^2\,dx
	\le
	\int_{A_{a,b}}|\nabla u^-|^2\,dx.
	\]
	The strict inequality in the hypothesis therefore implies
	\[
	u^-\equiv0.
	\]
	Thus $u\ge0$ in the annulus. Since $u$ is not identically zero and $u\in\mathcal C_\varrho$, the	strong maximum principle gives $u>0$.
\end{proof}
A similar argument proves the following

\begin{lemma}\label{lem:disk}Let $D_b=\{x\in\mathbb R^2:|x|<b\}$.
	Let $u\in\mathcal C_\varrho$ solve
	\[
	-\Delta u=qu^3
	\]
	in $D_b$, with $q>0$. Assume
	\[
	u>0
	\quad\text{on }|x|=b,
	\]
	and
	\[
	\lambda_1(D_b)>	\|qu^2\|_{L^\infty(D_b)}.
	\]
	Then
	\[
	u>0	\qquad	\text{in }D_b.
	\]
\end{lemma}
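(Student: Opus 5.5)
The plan is to follow the proof of Lemma~\ref{lem:annulus}, replacing the annulus by the disk $D_b$. The boundary of $D_b$ now has a single component, the circle $|x|=b$. I rewrite the equation as a linear problem with nonnegative potential,
\[
-\Delta u=c(x)u,\qquad c(x):=q(x)u(x)^2\ge0 .
\]
Since $u\in\mathcal C_\varrho$ is smooth (indeed real analytic) up to $|x|=b$, its negative part $u^-=\max\{-u,0\}$ lies in $H^1(D_b)$. The hypothesis $u>0$ on $|x|=b$ makes the trace of $u^-$ vanish, so $u^-\in H_0^1(D_b)$.

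Next I test the equation against $u^-$. Using $\nabla u\cdot\nabla u^-=-|\nabla u^-|^2$ and $u\,u^-=-(u^-)^2$, the boundary term vanishes and I obtain
\[
\int_{D_b}|\nabla u^-|^2\,dx=\int_{D_b}q u^2 (u^-)^2\,dx\le\|qu^2\|_{L^\infty(D_b)}\int_{D_b}(u^-)^2\,dx .
\]
The Rayleigh quotient characterisation of the first Dirichlet eigenvalue gives $\lambda_1(D_b)\int_{D_b}(u^-)^2\le\int_{D_b}|\nabla u^-|^2$. If $u^-\not\equiv0$, combining the two inequalities yields $\lambda_1(D_b)\le\|qu^2\|_{L^\infty(D_b)}$, which contradicts the strict hypothesis. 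Hence $u^-\equiv0$, that is, $u\ge0$ in $D_b$.

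Finally, I upgrade $u\ge0$ to strict positivity. The function $u$ satisfies $-\Delta u=cu\ge0$, so it is a nonnegative supersolution of $-\Delta$. It is not identically zero, since it is positive on $|x|=b$ and continuous, hence positive near that circle. The strong maximum principle therefore gives $u>0$ in the open disk $D_b$. Equivalently, if $u$ vanished at an interior point, that point would be an interior minimum of a superharmonic function, forcing $u\equiv0$ on the connected set $D_b$, which is impossible.

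The argument is essentially routine. The only step requiring care is the justification that $u^-\in H_0^1(D_b)$ and that the integration by parts is legitimate. Both follow from the regularity of elements of $\mathcal C_\varrho$, whose absolutely convergent Zernike series extend analytically beyond the closed unit disk, together with the strict positivity of $u$ on the boundary circle.
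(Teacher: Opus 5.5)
Your proposal is correct and is essentially the paper's argument: the paper proves this lemma ``by a similar argument'' to Lemma~\ref{lem:annulus}, namely testing the equation with $u^-\in H_0^1(D_b)$, comparing with the Rayleigh quotient for $\lambda_1(D_b)$ to force $u^-\equiv0$, and then applying the strong maximum principle. Your version fills in the details the paper leaves implicit: the regularity of $u\in\mathcal C_\varrho$ that makes the integration by parts legitimate, and why $u\not\equiv0$.
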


\begin{lemma}\label{lem:firstEVb}
	The first eigenvalue of the Dirichlet Laplacian on $A_{a,b}$ satisfies
	\[
	\lambda_1(A_{a,b})
	\ge
	\frac{\pi^2}{(b-a)^2}
	-
	\frac1{4a^2}.
	\]
\end{lemma}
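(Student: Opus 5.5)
The plan is to reduce to a one-dimensional radial problem and use a Hardy-type inequality. I will work in polar coordinates $x=(s\cos\vartheta,s\sin\vartheta)$. For $\phi\in C_c^\infty(A_{a,b})$ I substitute
\[
\phi(s,\vartheta)=s^{-1/2}g(s,\vartheta).
\]
Then the estimate only needs to be proved for smooth $g$ vanishing near $s=a$ and $s=b$. The eigenvalue bound follows from the Rayleigh quotient characterisation
\[
\lambda_1(A_{a,b})=\inf_{\phi}\frac{\int|\nabla\phi|^2}{\int\phi^2}.
\]

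First I drop the angular part of the gradient, using $|\nabla\phi|^2\ge|\partial_s\phi|^2$. This gives
\[
\int_{A_{a,b}}|\nabla\phi|^2\,dx\ge\int_0^{2\pi}\!\!\int_a^b|\partial_s\phi|^2\,s\,ds\,d\vartheta,
\qquad
\int_{A_{a,b}}\phi^2\,dx=\int_0^{2\pi}\!\!\int_a^b g^2\,ds\,d\vartheta.
\]
Next I compute $\partial_s\phi=s^{-1/2}g_s-\tfrac12 s^{-3/2}g$. Hence
\[
|\partial_s\phi|^2 s=g_s^2-s^{-1}gg_s+\tfrac14 s^{-2}g^2 .
\]
I integrate the cross term by parts in $s$; the boundary terms vanish because $g$ has compact support in $(a,b)$. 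Writing $-s^{-1}gg_s=-\tfrac12 s^{-1}(g^2)_s$, integration by parts turns it into $-\tfrac12 s^{-2}g^2$. Combining, the radial energy becomes
\[
\int_a^b\Bigl(g_s^2-\tfrac14 s^{-2}g^2\Bigr)\,ds .
\]

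Finally, for each fixed $\vartheta$ I apply the one-dimensional Dirichlet Poincaré inequality on $(a,b)$:
\[
\int_a^b g_s^2\,ds\ge\frac{\pi^2}{(b-a)^2}\int_a^b g^2\,ds .
\]
I also use $s^{-2}\le a^{-2}$ on $(a,b)$. Together these give
\[
\int|\nabla\phi|^2\ge\Bigl(\frac{\pi^2}{(b-a)^2}-\frac1{4a^2}\Bigr)\int\phi^2 .
\]
Taking the infimum over $\phi$, and using density of $C_c^\infty$ in $H_0^1$, yields the claim.

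The only delicate point is the sign bookkeeping in the integration by parts, which must produce exactly $-\tfrac14 s^{-2}$ and not $+\tfrac34 s^{-2}$. I will verify it directly via $(g^2)_s=2gg_s$: after integration by parts, $-\tfrac12\int s^{-1}(g^2)_s$ becomes $-\tfrac12\int s^{-2}g^2$, and adding $\tfrac14\int s^{-2}g^2$ leaves $-\tfrac14\int s^{-2}g^2$. The bound is trivially true when the right-hand side is negative, so no further case analysis is needed.
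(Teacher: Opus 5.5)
Your proof is correct, and its core computation is the paper's: the substitution $\phi=s^{-1/2}g$ (the paper's $v=\sqrt r\,u$), the integration by parts that turns the cross term into $-\tfrac14 s^{-2}g^2$, the bound $s\ge a$, and the one-dimensional Dirichlet Poincar\'e inequality on $(a,b)$.

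The only difference is how you reduce to one dimension. The paper uses simplicity of $\lambda_1$ and rotation invariance of the annulus to conclude that the first eigenfunction is radial. It then takes the infimum of the Rayleigh quotient over radial functions only. You instead drop the angular part of the gradient and run the radial computation fibrewise in $\vartheta$ for arbitrary test functions.

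Your route needs no spectral input, neither simplicity nor symmetry of the ground state. It also gives the bound directly as a quadratic-form inequality on all of $H_0^1(A_{a,b})$. The paper's route is shorter on the page but relies on that symmetry fact.

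Your closing remark about the case of a negative right-hand side is unnecessary. The chain of inequalities holds whatever the sign of the right-hand side.
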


\begin{proof}
The first eigenvalue is simple and the annulus is rotation invariant, hence its positive first eigenfunction is rotation invariant. Hence
	\[
	\lambda_1(A_{a,b})
	=
	\inf_{\substack{u\in H_0^1(a,b)\\u\ne0}}
	\frac{
		\displaystyle
		\int_a^b r|u'(r)|^2\,dr
	}{
		\displaystyle
		\int_a^b r|u(r)|^2\,dr
	}.
	\]
	Set
	\[
	v(r)=\sqrt r\,u(r).
	\]
	Then
	\[
	u'(r)
	=
	\frac{v'(r)}{\sqrt r}
	-
	\frac{v(r)}{2r^{3/2}},
	\]
	and integration by parts gives
	\[
	\int_a^b r|u'(r)|^2\,dr
	=
	\int_a^b
	\left(
	|v'(r)|^2
	-
	\frac{|v(r)|^2}{4r^2}
	\right)dr.
	\]
	Moreover,
	\[
	\int_a^b r|u(r)|^2\,dr
	=
	\int_a^b|v(r)|^2\,dr.
	\]
	Since $r\ge a$,
	\[
	-\frac1{4r^2}
	\ge
	-\frac1{4a^2},
	\]
	while the one-dimensional Dirichlet Poincar\'e inequality gives
	\[
	\int_a^b|v'|^2\,dr
	\ge
	\frac{\pi^2}{(b-a)^2}
	\int_a^b|v|^2\,dr.
	\]
	The result follows.
\end{proof}

\begin{lemma}\label{lem:firstevB}
	For every $r>0$,
	\[
	\lambda_1(\mathbb D_r)
	=
	\frac{j_{0,1}^2}{r^2},
	\]
	where $j_{0,1}$ is the first positive zero of $J_0$.
\end{lemma}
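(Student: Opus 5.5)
The statement to prove is Lemma~\ref{lem:firstevB}: $\lambda_1(\mathbb D_r)=j_{0,1}^2/r^2$. I would argue in two steps: first reduce to the unit disk by scaling, then identify the first eigenfunction of the unit disk through separation of variables.

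\emph{Scaling.} If $\phi\in H_0^1(\mathbb D)$, then $\phi_r(x):=\phi(x/r)$ lies in $H_0^1(\mathbb D_r)$. A change of variables multiplies the Dirichlet integral by $1$ (in dimension two) and the $L^2$ norm squared by $r^2$. The Rayleigh quotient therefore scales by $r^{-2}$, so
\[
\lambda_1(\mathbb D_r)=r^{-2}\lambda_1(\mathbb D),
\]
and it suffices to show $\lambda_1(\mathbb D)=j_{0,1}^2$.

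\emph{Unit disk.} I would argue as in the proof of Lemma~\ref{lem:firstEVb}.
\begin{enumerate}
\item The first Dirichlet eigenvalue is simple, and its eigenfunction $\phi$ can be chosen positive.
\item Since the disk is rotation invariant, $\phi\circ R$ is again a positive first eigenfunction for every rotation $R$. Simplicity forces $\phi\circ R=c\,\phi$, and positivity together with normalisation gives $c=1$. Hence $\phi$ is radial, $\phi=\phi(s)$.
\item The eigenvalue equation $-\Delta\phi=\lambda\phi$ becomes the ODE
\[
\phi''+s^{-1}\phi'+\lambda\phi=0,
\]
with $\phi$ regular at $s=0$. The solutions regular at the origin are exactly the multiples of $J_0(\sqrt\lambda\,s)$; the second solution $Y_0$ is excluded because it is singular at $s=0$.
\item The boundary condition $\phi(1)=0$ gives $J_0(\sqrt\lambda)=0$.
\item Positivity of $\phi$ on $[0,1)$ means $J_0(\sqrt\lambda\,s)$ has no zero for $s\in[0,1)$. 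Hence $\sqrt\lambda$ is the first positive zero of $J_0$, namely $\sqrt\lambda=j_{0,1}$.
\end{enumerate}
Conversely, $J_0(j_{0,1}|x|)$ is a positive Dirichlet eigenfunction with eigenvalue $j_{0,1}^2$. Since only the first eigenfunction can be positive, $\lambda_1(\mathbb D)=j_{0,1}^2$, and combining with the scaling step gives the statement.

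\emph{Main obstacle.} The only non-routine point is justifying radial symmetry and the regularity of $\phi$ at the origin. Both are standard: simplicity of $\lambda_1$ yields the symmetry, and elliptic regularity makes $\phi$ smooth at the origin, which selects $J_0$ over $Y_0$. I would simply invoke these facts.
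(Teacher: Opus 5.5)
Your proof is correct. The paper states this lemma without proof as a classical fact, and your route (scaling to the unit disk, then the radial reduction via simplicity and rotation invariance) is the standard one; it uses the same symmetry device the paper employs in the proof of Lemma~\ref{lem:firstEVb}. The closing ``conversely'' step is harmless but redundant, since step 5 already gives $\lambda_1(\mathbb D)=j_{0,1}^2$.
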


\begin{proposition}\label{prop:positive}
	Let $u\in\mathcal C_\varrho$ solve
	\eqref{eq:elliptic-disk}, and let
	\[
	M\ge
	\|qu^2\|_{L^\infty(\mathbb D)}.
	\]
	Let
	\[
	0=r_0<r_1<\cdots<r_N=1.
	\]
	Assume that
	\[
	u>0
	\qquad
	\text{on }|x|=r_i,
	\quad
	i=1,\ldots,N-1,
	\]
	and that
	\[
	\frac{j_{0,1}^2}{r_1^2}>M,
	\]
	as well as
	\[
	\frac{\pi^2}{(r_i-r_{i-1})^2}
	-
	\frac1{4r_{i-1}^2}
	>M,
	\qquad
	i=2,\ldots,N.
	\]
	Then
	\[
	u>0
	\qquad
	\text{in }\mathbb D.
	\]
\end{proposition}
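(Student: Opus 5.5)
The plan is to decompose the closed unit disk into the central disk $D_{r_1}=\{|x|<r_1\}$, the annuli $A_{r_{i-1},r_i}$ for $i=2,\dots,N$, and the circles $|x|=r_i$. Positivity on each piece then comes from Lemma~\ref{lem:disk} and Lemma~\ref{lem:annulus}, with the spectral hypotheses made explicit by Lemma~\ref{lem:firstevB} and Lemma~\ref{lem:firstEVb}.

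\textbf{Step 1 (the central disk).} On $D_{r_1}$ we have $u>0$ on $|x|=r_1$ by hypothesis (if $N\ge2$). By Lemma~\ref{lem:firstevB},
\[
\lambda_1(D_{r_1})=\frac{j_{0,1}^2}{r_1^2}>M\ge\|qu^2\|_{L^\infty(D_{r_1})}.
\]
Hence Lemma~\ref{lem:disk} gives $u>0$ in $D_{r_1}$. Here we use $q=|f'|^2>0$, which holds because $f$ is conformal.

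\textbf{Step 2 (interior annuli).} For $2\le i\le N-1$, $u>0$ on both boundary circles $|x|=r_{i-1}$ and $|x|=r_i$. By Lemma~\ref{lem:firstEVb} and the hypothesis,
\[
\lambda_1(A_{r_{i-1},r_i})\ge\frac{\pi^2}{(r_i-r_{i-1})^2}-\frac1{4r_{i-1}^2}>M\ge\|qu^2\|_{L^\infty}.
\]
Lemma~\ref{lem:annulus} then yields $u>0$ in $A_{r_{i-1},r_i}$.

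\textbf{Step 3 (outer annulus).} On the outermost annulus $A_{r_{N-1},1}$ we have $u>0$ on $|x|=r_{N-1}$ and $u=0$ on $|x|=1$ by the Dirichlet condition. This is exactly the asymmetric boundary hypothesis of Lemma~\ref{lem:annulus} with $b=1$, so the same eigenvalue comparison gives $u>0$ there.

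\textbf{Step 4 (assembly).} The open pieces together with the circles $|x|=r_i$, $1\le i\le N-1$, on which positivity is assumed, cover $\mathbb D$. Therefore $u>0$ in $\mathbb D$.

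\textbf{Degenerate case and main obstacle.} If $N=1$, the single piece is $D_1=\mathbb D$ with boundary value $0$ rather than $>0$. Lemma~\ref{lem:disk} then does not apply directly; we would instead rerun its proof with $u^-\in H_0^1$, which gives $u\ge0$, and conclude $u>0$ by the strong maximum principle. This, however, requires $u\not\equiv0$, which must come from the validated enclosure. The main subtlety throughout is this nontriviality requirement in the strong maximum principle step of Lemma~\ref{lem:annulus}. In the generic case $N\ge2$ it is automatic on each piece, because $u>0$ on at least one boundary circle of that piece, so $u\not\equiv0$ there.
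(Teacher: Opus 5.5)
Your argument is correct and is exactly the paper's proof: Lemmas~\ref{lem:firstevB} and~\ref{lem:disk} on $\mathbb D_{r_1}$, then Lemmas~\ref{lem:firstEVb} and~\ref{lem:annulus} on each $A_{r_{i-1},r_i}$, with the outermost annulus covered by the $u\ge0$ boundary condition at $|x|=1$. One correction to your side remark: for $N=1$ nontriviality cannot come from any enclosure, because testing the equation with $u$ itself gives $j_{0,1}^2\int_{\mathbb D}u^2\le\int_{\mathbb D}|\nabla u|^2=\int_{\mathbb D}qu^4\le M\int_{\mathbb D}u^2$, which forces $u\equiv0$; the statement is therefore only meaningful for $N\ge2$, and the paper's proof also tacitly assumes this, since it needs $u>0$ on $|x|=r_1$.
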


\begin{proof}
	Lemma~\ref{lem:firstevB} and Lemma~\ref{lem:disk} imply positivity
	in $\mathbb D_{r_1}$. For each	$i=2,\ldots,N$, apply
	Lemmas~\ref{lem:firstEVb} and \ref{lem:annulus} to
$A_{r_{i-1},r_i}$.
	Proceeding inductively gives positivity throughout $\mathbb D$.
\end{proof}

% ============================================================
\section{Computer implementation and rigorous estimates}
\label{sec:cap}
% ============================================================

The analytical results of the preceding sections reduce the proofs of
Theorems~\ref{thm:conformal-stability} and
\ref{thm:elliptic-stability} to the verification of explicit
inequalities at a finite collection of reference configurations.

For the conformal-mapping problem, the rigorous computation produces
bounds for the residual and inverse defect at a reference boundary
function $r_0$, together with the constants entering the stability
estimates of Section~\ref{sec:stability}. These quantities determine
an explicit radius $\delta_r>0$ such that the conclusions of the
Theodorsen analysis hold for every
\[
r\in\mathcal A_\sigma,
\qquad
\|r-r_0\|_\sigma<\delta_r.
\]

For the elliptic problem, no new validation scheme is required.
The estimate
\[
\|q_r-q_{r_0}\|_{\rho_q}
\le
C_q\|r-r_0\|_\sigma,
\]
where $C_q(\delta):=\sqrt{2}C_D C_f\left(2\|f_{r_0}'\|_{\rho_q}+C_D C_f\delta\right)$,
provides a rigorous enclosure of the coefficients corresponding to
the entire boundary neighborhood. The existing Zernike-based
validation is applied directly to this enlarged enclosure.
Consequently, one computer-assisted calculation proves existence of
the elliptic solution simultaneously for all domains in the
validated neighborhood.

The implementation is written in Ada \cite{Ada,Gnat}. The source code and
data files are available at \cite{program}.

% ------------------------------------------------------------
\subsection{Approximation and validation programs}
% ------------------------------------------------------------

The numerical approximations are computed by
\[
{\tt Find\_Theo},
\qquad
{\tt PrepareWH},
\qquad
{\tt Find\_Elliptic}.
\]
The program {\tt Find\_Theo} computes a finite-dimensional
approximation of the Theodorsen solution. The program
{\tt PrepareWH} constructs accurate approximate Wiener--Hopf factors
and their inverses, see Appendix \ref{sec:appendix-WH}. Among other things, it computes the numerical approximation of the eigenvalues of very large matrices. This cannot be done with standard floating point numbers \cite{IEEE}, instead we use a 1024-bit representation provided by the library MPFR \cite{MPFR}. The program {\tt Find\_Elliptic} computes a finite
Zernike approximation of a solution of the transformed elliptic
equation.

These computations are nonrigorous and serve only to produce input
for the validation programs.

The rigorous calculations are carried out by
\[
{\tt Check\_Theo}
\qquad\text{and}\qquad
{\tt Check\_Elliptic}.
\]

For a reference boundary function $r_0$, {\tt Check\_Theo} computes
rigorous enclosures of the constants appearing in Sections~\ref{sec:stability} and
\ref{sec:verification}, checks the invertibility of $L_K$ and $D\mathcal F_{r_0}(\bar u)$ and determines a radius $\delta_r$ for which
the inequalities
\[
Z_0+\Lambda C_\Phi(\delta_r)\delta_r<1
\]
and
\[
Y_0
+
\Lambda C_F(\delta_r)\delta_r
+
\bigl(
Z_0+\Lambda C_\Phi(\delta_r)\delta_r
\bigr)R
\le R
\]
hold.

The program {\tt Check\_Elliptic} reads the validated reference
coefficient $q_{r_0}$ together with the rigorous perturbation bound
\[
\|q_r-q_{r_0}\|_{\rho_q}
\le
C_q\delta_{\mathrm{PDE}}.
\]
It incorporates this ball directly into the coefficient enclosure and
then applies the same Zernike-based a posteriori estimates used for a
fixed coefficient. Since the code implementing the elliptic validation accepts interval coefficients \(q\), the algorithm itself need not change.

% ------------------------------------------------------------
\subsection{Validated scalar and finite-dimensional enclosures}
% ------------------------------------------------------------

The rigorous computations use inclusion-preserving representations.
Let $\mathcal X$ and $\mathcal Y$ be normed spaces and let
\[
f:\mathcal X\longrightarrow\mathcal Y.
\]
An enclosure procedure $F$ associates to every admissible set
$X\subset\mathcal X$ a set
\[
F(X)\subset\mathcal Y
\]
such that
\[
f(X)\subset F(X).
\]

A representable real interval is encoded by
\[
{\tt S=(S.C,S.R)},
\]
where {\tt S.C} is the center and {\tt S.R} is a nonnegative radius.
For real scalars the corresponding type is {\tt Ball}; complex
enclosures are represented by the type {\tt Complex}. Elementary functions on
these types are implemented with rounding control. Vector and matrix
operations are extended componentwise.

These data types are standard components of our computer-assisted
proof library. Below we describe only the representations specific to
the present problem.

% ------------------------------------------------------------
\subsection{Representations of the function spaces}
% ------------------------------------------------------------

Elements of $\mathcal A_\rho$ are represented by the type
{\tt Fourier1}, introduced in \cite{ArioliKochTerracini} together with all necessary algebraic operations, and then expanded to analytic functions in \cite{ArioliKoch1}. Functions in the Zernike algebra $\mathcal C_\rho$ are represented by
{\tt Zernike2}. The related type {\tt Zernike}, also introduced in
\cite{ArioliKoch1}, represents functions with prescribed parity.
The type {\tt Zernike2} permits both parities and is implemented as a
pair of {\tt Zernike} objects.
For the Laurent algebra $\mathcal B_\rho$ we use the type
{\tt Laurent}, described next.

% ------------------------------------------------------------
\subsection{Rigorous representation of Laurent series}
\label{sec:implementation-laurent}
% ------------------------------------------------------------

An object of type {\tt Laurent} represents
\[
f\in\mathcal B_\rho
\]
as
\[f=f_K+f_0+f_P+f_M,\]
where
\[
f_K(z)
=
\sum_{|k|\le K}a_kz^k
\]
has interval coefficients $\{a_k\}$, while
\[
f_P(z)=\sum_{k>K}a_kz^k,
\qquad
f_M(z)=\sum_{k<-K}a_kz^k,
\]
play the role respectively of positive and negative tails, and
$$f_0=\sum_{k\in\mathbb Z}b_kz^k$$
is a generic remainder.
The implementation stores the coefficients $\{a_k\}$ and nonnegative bounds
\[
E_0^f,
\qquad
E_P^f,
\qquad
E_M^f
\]
such that
\[
\|f_0\|_\rho
\le
E_0^f,
\qquad
\|f_P\|_\rho
\le
E_P^f,
\qquad
\|f_M\|_\rho
\le
E_M^f.
\]
Concretely, an object {\tt Laurent} is
\[
{\tt F=(F.C,F.EM,F.EP,F.E0)},
\]
where {\tt F.C} is a set of interval coefficients indexed by
$-K,\ldots,K$ and {\tt F.EM,F.EP,F.E0} are positive representable floating point numbers.

Separating positive and negative tails is useful in the
Wiener--Hopf calculations because products of tails with opposite
signs may contribute to the retained central modes.

\subsection{Rigorous multiplication of Laurent series}
\label{sec:multiplication-laurent}

Let $f$ and $g$ have finite coefficients $a_k$ and $b_k$. For
$|m|\le K$, the retained finite part of
\[
h=fg
\]
is computed by interval convolution,
\[
c_m
=
\sum_{\substack{k+\ell=m\\-K\le k,\ell\le K}}
a_kb_\ell.
\]
Define
\[
A_+^f
=
\sum_{k=0}^K|a_k|\rho^k,
\qquad
A_-^f
=
\sum_{k=-K}^{0}|a_k|\rho^{|k|},
\]
and similarly for $g$.

The finite--finite product outside the retained range is bounded by
\[
E_P^{KK}
=
\sum_{m=K+1}^{2K}
\left|
\sum_{\substack{k+\ell=m\\-K\le k,\ell\le K}}
a_kb_\ell
\right|
\rho^m,
\]
and
\[
E_M^{KK}
=
\sum_{m=-2K}^{-K-1}
\left|
\sum_{\substack{k+\ell=m\\-K\le k,\ell\le K}}
a_kb_\ell
\right|
\rho^{|m|}.
\]
The positive and negative tail bounds may therefore be chosen as
\[
E_P^h
=
E_P^{KK}
+
E_P^gA_+^f
+
E_P^fA_+^g
+
E_P^fE_P^g,
\]
and
\[
E_M^h
=
E_M^{KK}
+
E_M^gA_-^f
+
E_M^fA_-^g
+
E_M^fE_M^g.
\]

The remaining contributions are absorbed into
\[
	E_0^h
	\le
	E_0^f\|g\|_\rho+E_0^g\|f\|_\rho+E_P^gA_-^f+E_M^gA_+^f+E_P^fA_-^g+E_M^fA_+^g
	+E_P^fE_M^g+E_M^fE_P^g.
\]
All convolutions and all bounds are evaluated using interval
arithmetic.

% ------------------------------------------------------------
\subsection{Numerical construction of Wiener--Hopf factors}
\label{sec:implementation-WH}
% ------------------------------------------------------------

The initial Wiener--Hopf factors are obtained from the roots of a
Laurent-polynomial approximation. This stage is used only to produce
accurate numerical approximations; the factors and their inverses are
subsequently certified by
Proposition~\ref{prop:WH-validation}.

For a monic polynomial
\[
p(z)
=
z^N+a_{N-1}z^{N-1}+\cdots+a_1z+a_0,
\]
initial approximations to its zeros are computed as the eigenvalues
of the companion matrix
\[
C=
\begin{pmatrix}
	0 & 0 & \cdots & 0 & -a_0\\
	1 & 0 & \cdots & 0 & -a_1\\
	0 & 1 & \cdots & 0 & -a_2\\
	\vdots & \vdots & \ddots & \vdots & \vdots\\
	0 & 0 & \cdots & 1 & -a_{N-1}
\end{pmatrix}.
\]
The calculation is performed in multiple precision.

The approximations are refined using the Aberth iteration. Given
\[
z_1^{(m)},\ldots,z_N^{(m)},
\]
define
\[
N_i^{(m)}
=
\frac{p(z_i^{(m)})}{p'(z_i^{(m)})},
\qquad
S_i^{(m)}
=
\sum_{\substack{j=1\\j\ne i}}^N
\frac1{z_i^{(m)}-z_j^{(m)}}.
\]
The update is
\[
z_i^{(m+1)}
=
z_i^{(m)}
-
\Delta_i^{(m)},
\]
where
\[
\Delta_i^{(m)}
=
\frac{N_i^{(m)}}
{1-N_i^{(m)}S_i^{(m)}}.
\]

The stopping criterion is purely numerical and is not used as a
rigorous root certification. The resulting roots are used only to
construct approximate Wiener--Hopf factors.

% ------------------------------------------------------------
\subsection{Generic Newton and contraction machinery}
% ------------------------------------------------------------

The packages {\tt Linear} and {\tt Linear.Contr} implement the
Newton-like and contraction arguments used in the
computer-assisted proofs. They are generic with respect to the
underlying function representation.

In the present work they are used to construct quasi-Newton maps,
compute rigorous residual and derivative bounds, and verify the
a posteriori inequalities at the reference configurations. The same
infrastructure has been used in previous computer-assisted proofs; see
\cite{ArioliKoch1,ArioliKoch3}.

\end{document}